\numberwithin{equation}{section}
\theoremstyle{plain}
\newtheorem{Th}{Theorem}[section]
\newtheorem{``Theorem''}[Th]{``Theorem''}
\newtheorem{Lemma}[Th]{Lemma}
\newtheorem{Cor}[Th]{Corollary}
 \theoremstyle{definition}
\newtheorem{Def}[Th]{Definition}
\newtheorem{Conj}[Th]{Conjecture}
\newtheorem{?}[Th]{Problem}
\newtheorem*{theorem**}{Theorem\theoremnum}
\newenvironment{theorem*}[1][]{%
  \edef\theoremnum{\if\relax\detokenize{#1}\relax\else~#1\fi}
  \begin{theorem**}
}{%
  \end{theorem**}
}  
\newtheorem*{corollary**}{Corollary\theoremnum}
\newenvironment{corollary*}[1][]{%
  \edef\theoremnum{\if\relax\detokenize{#1}\relax\else~#1\fi}
  \begin{corollary**}
}{%
  \end{corollary**}
}
\author{Antony T.H. Fung}
\date{\today}
\title{Integer surgeries rational homology cobordant to lens spaces}
\begin{document}

\begin{abstract}
The Cyclic Surgery Theorem and Moser's work on surgeries on torus knots imply that for any non-trivial knot in $S^3$, there are at most two integer surgeries that produce a lens space. This paper investigates how many positive integer surgeries on a given knot in $S^3$ can produce a manifold rational homology cobordant to a lens space. Tools include Greene and McCoy's work on changemaker lattices which come from Heegaard Floer $d$-invariants, and Aceto-Celoria-Park's work on rational cobordisms and integral homology which is based on Lisca's work on lens spaces.
\end{abstract}

\maketitle

\section{Introduction}

\indent Lens spaces are one of the simplest classes of 3-manifolds, and Dehn surgery is one of the simplest ways of constructing 3-manifolds. There are a lot of famous theorems and conjectures that concern when surgery on a knot in $S^3$ produces a lens space. For example, the Berge conjecture concerns ``which knots'', the Cyclic Surgery Theorem \cite{cyclic} concerns ``which slopes'', and Greene's work on the lens space realization problem \cite{realization} answers the question of ``which lens spaces''.\\
\\
\indent This paper addresses a generalization of these questions, namely when surgery on a knot in $S^3$ is smoothly rational homology cobordant to a lens space. The following definition is central to this paper:

\begin{Def} \label{lensbordant}
A \textit{lensbordant} surgery on a knot in an integer homology 3-sphere is a positive integer surgery which is smoothly rational homology cobordant to a lens space.
\end{Def}

\indent We make the following conjecture, which is an analog of the Cyclic Surgery Theorem.\\

\begin{Conj} \label{conjecture}
There exists a positive integer $N$ such that for all knots $K$ in $S^3$ satisfying $\nu^+(K)\neq 0$, there are less than $N$ lensbordant surgeries on $K$.
\end{Conj}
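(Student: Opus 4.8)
The plan is to translate ``lensbordant'' into arithmetic constraints on Heegaard Floer correction terms and on definite lattices, and then to argue that those constraints can be met only for a bounded number of slopes. Suppose $p$ is a lensbordant slope for $K$, so that $S^3_p(K)$ is rational homology cobordant to a lens space $L$; equivalently $S^3_p(K)\mathbin{\#}(-L)$ bounds a smooth rational homology $4$-ball. By Ozsv\'ath--Szab\'o's $d$-invariant obstruction, $H_1$ of this connected sum contains a subgroup $G$ with $|G|^2=|H_1|$ on which every correction term vanishes, so the $d$-invariants of $S^3_p(K)$ agree, along the relevant projection of $G$, with those of $L$. On the other hand, glue the trace $X_p(K)$ of the surgery --- a positive-definite $4$-manifold with intersection form $(p)$ --- to the rational homology cobordism and then to the (reversed) negative-definite linear plumbing bounded by $L$, and cap off the remaining $S^3$; this yields a closed positive-definite $4$-manifold, so by Donaldson's diagonalisation theorem its form is standard. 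Hence the generator of $H_2(X_p(K))$ embeds as a norm-$p$ vector in $\mathbb{Z}^n$ sitting in the orthogonal complement of a linear lattice coming from $L$, and Greene and McCoy's changemaker-lattice analysis forces this embedding to be of changemaker type.

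Next I would make both sides of the matching quantitative. On the surgery side, the Ni--Wu formula gives $d(S^3_p(K),i)=d(S^3_p(U),i)-2V_{\min(i,\,p-i)}(K)$ for all $p\ge 1$, where $V_0(K)\ge V_1(K)\ge\cdots\ge 0$ is the nonincreasing sequence of invariants extracted from the knot Floer complex of $K$; the hypothesis $\nu^+(K)\neq 0$ is precisely $V_0(K)\ge 1$, so a block of correction terms near the central $\mathrm{Spin}^c$ structure is depressed by at least $2$ from the model parabola $\frac{(2i-p)^2-p}{4p}$, and the width of this block is controlled by the knot and not by $p$. On the lens space side, the recursion for $d$-invariants of lens spaces, together with Lisca's classification of lens spaces that bound rational homology balls and Aceto--Celoria--Park's description of the admissible metabolizing subgroups, pins down the self-conjugate, nested pattern of correction terms that $L$ can contribute along $G$. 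Matching a locally depressed parabola against these rigid patterns should leave only finitely many candidate slopes for each knot, while the changemaker constraint on the norm-$p$ vector forces the full sequence $(V_i)$ into the very special shape realised by linear lattices (morally, the shapes of torus and cable knots).

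The genuine difficulty is promoting ``finitely many for each $K$'' to a bound $N$ independent of $K$: a naive genus estimate only produces something like $p\le c\cdot g_4(K)$, hence a $K$-dependent count. To obtain uniformity one must argue, in the spirit of the Cyclic Surgery Theorem, that two large lensbordant slopes already over-determine $K$ --- concretely, that knowing distinct slopes $p_1<p_2$ both admit the changemaker/linear-lattice matching confines $\Delta_K$ and the truncated sequence $(V_i)$ to finitely many explicit possibilities, each of which then supports at most one further such slope. The subtle technical point throughout is that $d$-invariants are invariant under integral but not rational homology cobordism, so the matching along $G$ is only partial; reconciling this with the lattice-embedding argument is exactly where Aceto--Celoria--Park's refinement of Lisca's work --- controlling how rational cobordism interacts with the integral homology of lens-space connected sums --- is indispensable, and I expect it to be the linchpin, and the step most likely to require genuinely new input beyond the existing machinery.
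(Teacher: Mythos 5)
The statement you are trying to prove is a \emph{conjecture}: the paper does not prove it, and neither does your proposal. Your sketch essentially reconstructs the machinery the paper uses for its \emph{partial} results --- gluing the surgery trace to the rational homology cobordism and to the plumbing bounded by the lens space, applying Donaldson diagonalisation, extracting a changemaker-type vector, and comparing the Ni--Wu coefficients $V_i(K)$ with the rigid $d$-invariant pattern of the lens space (this is, up to orientation conventions, Sections 2--4 of the paper, giving Theorem \ref{main result 0}: finiteness of lensbordant surgeries for each fixed $K$ with $\nu^+(K)\neq 0$, and Theorem \ref{main result 2}: at most $3$, $2$, or $1$ slopes once $r$ is fixed). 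But you yourself concede in the final paragraph that promoting ``finitely many for each $K$'' to a single $N$ independent of $K$ ``is most likely to require genuinely new input beyond the existing machinery.'' That concession is exactly the content of Conjecture \ref{conjecture}; the paper leaves it open and only verifies the analogous uniform bound for one of Greene's $26$ small changemaker families (Theorem \ref{greene type 1}, at most $7$ such surgeries), explicitly stating that the $7$ large families are not handled. So there is no proof here to compare against, and your proposal has the same gap the paper does: no mechanism is offered by which two (or boundedly many) lensbordant slopes ``over-determine'' $K$ uniformly across all changemaker structures, in particular across the large families built by weight expansion.

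Two technical points in your sketch would also need repair even for the partial results. First, a lensbordant slope is not $|H_1(L)|$: if the surgery is rational homology cobordant to a reduced $L(p,q)$, the slope is $r^2p$ for some $r\geq 1$ (Aceto--Celoria--Park), and the class $[\Sigma]$ of the capped Seifert surface maps to $r$ times a primitive class, so the changemaker vector $\sigma$ has norm $p$ while the slope is $r^2p$; the entire bookkeeping of ``relevant'' indices spaced $r$ apart (and the parity shift by $r/2$ when $r$ is even and $p$ is odd) is where the paper departs from Greene and McCoy, and your sketch does not account for it. Second, the $d$-invariant matching is not merely ``partial along a metabolizer'': the paper shows (Section 2) that the injection $H_1(L_Y)\to H_1(Y)$ is compatible with restriction of $\mathrm{spin}^c$ structures from the cobordism, which is what makes the full list of lens-space correction terms appear among those of the surgery at the relevant indices. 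Your appeal to the connected-sum rational-ball obstruction alone would not deliver that.
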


\indent The non-negative integer $\nu^+(K)$ is a lower bound on the 4-ball genus $g_4(K)$ coming from work by Rasmussen \cite{rasmussen} and Ni and Wu \cite{NiWu}. When $K$ is an $L$-space knot, $\nu^+(K)$ coincides with the genus $g(K)$.\\
\\
\indent We are not aware of any knot $K$ in $S^3$ satisfying $\nu^+(K)\neq 0$ having more than 5 lensbordant surgeries, though we also do not have a good reason to believe that such a knot cannot exist. There are examples of knots with 5 known lensbordant surgeries: When $K$ is the torus knot $T_{p,p+1}$, the $(p^2+p+1)$-surgery and the $(p^2+p-1)$-surgery are both lens spaces, while the $p^2$-surgery and the $(p+1)^2$-surgery both bound a smooth rational ball \cite[Th. 1.4]{aceto2017}, and hence being smoothly rational homology cobordant to $L(4,3)$. This gives 4 lensbordant surgeries. In particular, when $p=2$, the 8-surgery is smoothly rational homology cobordant to $L(2,1)$ \cite[Th. 1.3]{aceto2020surgeries}. When $p=3$, the 12-surgery produces $L(4,1)\#L(3,2)$, which is smoothly rational homology coboardant to $L(3,2)$. When $p=4$, the 20-surgery produces $L(5,1)\#L(4,3)$, which is smoothly rational homology coboardant to $L(5,1)$. Hence, when $p=2,3,4$, there are 5 known lensbordant surgeries. Note that in this paper, we take the convention where $L(p,q)$ is the $-\tfrac{p}{q}$-surgery on the unknot in $S^3$.\\
\\
\indent The first main result of this paper is

\begin{Th} \label{main result 0}
Let $K$ be a knot in $S^3$. If $\nu^+(K)\neq 0$, there is a finite number of lensbordant surgeries on $K$.
\end{Th}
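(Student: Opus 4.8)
The plan is to convert each lensbordant surgery into a Donaldson diagonalization problem that still remembers the Heegaard Floer data of $K$, and then to carry out a changemaker‑lattice analysis in which the hypothesis $\nu^+(K)\neq 0$ forces a uniform bound on the surgery slope. So first, \emph{from a lensbordant surgery to a closed negative‑definite manifold}: suppose $S^3_n(K)$ is rational homology cobordant to a lens space $L$ through a cobordism $W$; reversing orientations, view $W$ as a rational homology cobordism from $-S^3_n(K)$ to $-L$. Let $X_n(K)$ be the trace of the $n$-surgery, so $-X_n(K)$ is negative definite with $b_2=1$, generated by a class of square $-n$, and has boundary $-S^3_n(K)$. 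Since $-L$ is again a lens space it bounds a negative‑definite linear plumbing $P$ of some length $r$. Gluing $-X_n(K)$, then $W$, then $P$ along matching boundary components produces a closed $4$-manifold $M$; as $b_2(W)=0$ and $W$ has trivial relative rational homology, Novikov additivity and a Mayer--Vietoris count give $b_2(M)=1+r$ and signature $-1-r$, so $M$ is negative definite. The form on $H_2(M;\mathbb Z)/\mathrm{tors}$ is unimodular by Poincar\'e duality, so Donaldson's theorem identifies it with $(\mathbb Z^{r+1},-\mathrm{Id})$. Under this, the generator of $-X_n(K)$ becomes a vector $\sigma$ with $|\sigma|^2=n$, the plumbing classes span the linear lattice $\Lambda$ of $-L$, and the two are orthogonal and of full rank, so $\Lambda\subseteq\sigma^{\perp}$ with finite index; moreover $n\cdot|H_1(L)|$ is a perfect square, as the linking form of a rational homology cobordism forces.

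\emph{Putting the Heegaard Floer data back.} The embedding so far does not see $K$. On one side, the Ni--Wu surgery formula expresses $d(S^3_n(K),i)$ in terms of the unknot values $d(S^3_n(U),i)$ and the nonincreasing sequence $V_j(K)\geq 0$, and $\nu^+(K)\neq 0$ is exactly the statement $V_0(K)\geq 1$. On the other side, being a rational homology cobordism, $W$ pairs up the $\mathrm{Spin}^c$ structures of $-S^3_n(K)$ and $-L$ with equal correction terms. Write $N:=(-X_n(K))\cup W$, a negative‑definite filling of $-L$ with $b_2(N)=1$, and feed the $\mathrm{Spin}^c$ matching into the Ozsv\'ath--Szab\'o correction‑term inequalities for $N$ and for $P$; these are simultaneously sharp because $N\cup P=M$ is closed and negative definite. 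One thereby upgrades the last step to: after reordering coordinates, $\sigma$ is a changemaker vector, the surgery‑formula invariants it forces on $\sigma^{\perp}$ are exactly the $V_j(K)$, and in particular $\sigma$ is not the all‑ones vector. Since $W$ is merely a rational, not integral, homology cobordism, one controls $\sigma$ and this matching only on a metabolizer's worth of $\mathrm{Spin}^c$ structures; absorbing that defect is where Lisca's lattice‑embedding results and the rational‑cobordism techniques of Aceto--Celoria--Park are needed.

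\emph{Finiteness.} For every lensbordant $n$ one now has a changemaker vector $\sigma$ of norm $n$ whose forced invariants are the single fixed sequence $V_j(K)$ with $V_0(K)\geq 1$, and whose orthogonal complement contains with finite index a linear lattice $\Lambda$ of discriminant $|H_1(L)|$ (with $n\cdot|H_1(L)|$ a square). By the classification --- due to Lisca, and to Greene--McCoy --- of linear lattices that embed in the orthogonal complement of a changemaker vector, $\sigma$ is assembled from a bounded stock of standard pieces. Expressing $\sigma$ in this normal form turns the requirement ``$V$-sequence of $\sigma$ equals $V_j(K)$'' into an increasingly rigid system of equations which, as $n\to\infty$, forces $\sigma$ towards the all‑ones vector and hence $V_0(K)\to 0$ --- contradicting $V_0(K)\geq 1$ once $n$ exceeds a bound depending only on $\{V_j(K)\}$. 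So only finitely many $n$ can be lensbordant.

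\emph{Main obstacle.} The first two steps repackage established machinery; the real work is the last one --- proving, uniformly in $n$, that the orthogonal complement of a norm‑$n$ changemaker vector realizing a prescribed nonzero $V$-sequence contains a linear lattice of the relevant discriminant only for bounded $n$. This calls for a careful study of standard‑basis expansions of changemaker vectors in the style of Greene's solution to the lens space realization problem, Lisca's classification to limit which linear lattices occur, and the Aceto--Celoria--Park framework to handle the embedding index produced by the merely rational cobordism; several degenerate cases (small $n$, short plumbings, reducible linear lattices) must be separated out and checked directly. The hypothesis $\nu^+(K)\neq 0$ enters precisely to rule out the all‑ones changemaker --- equivalently, to rule out the behaviour of the unknot, which admits infinitely many lens‑space surgeries --- which is what lets the argument terminate.
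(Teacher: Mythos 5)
Your first two steps are, in spirit, the paper's own construction (plumbing $\cup$ rational cobordism $\cup$ reversed trace, Donaldson, $d$-invariant matching across the cobordism), but you mishandle the crucial $r$-scaling that a merely rational cobordism introduces. Writing the slope as $n=r^2p$ with $|H_1(L)|=p$, the generator of the trace does not become a norm-$n$ vector: it maps to $r\sigma$ plus torsion, where $\sigma$ has square $-p$, the lens lattice is exactly (not just up to finite index) $\sigma^{\perp}$ (Lemmas \ref{ortho comp} and \ref{mult r}), and only the $\text{spin}^\text{c}$ structures with label $i\equiv 0$ (or $r/2$) mod $r$ extend over the cobordism (Lemma \ref{mult r spinc}); consequently the lattice only constrains every $r$-th coefficient $V_i(K)$, not the whole sequence. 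Settling this is the content of Sections 2--3 here (Theorem \ref{main result 1}), not pre-existing machinery. One concrete casualty of ignoring it: your claim that $\nu^+(K)\neq 0$ forces $\sigma$ not to be the all-ones vector is false for $r\geq 2$ --- the $8$-surgery on the trefoil is rationally cobordant to $L(2,1)$ with $r=2$, $\sigma=(1,1)$, $\nu^+=1$.

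The genuine gap is the finiteness step itself. Your proposed mechanism --- that as $n\to\infty$ the changemaker equations force $\sigma$ toward the all-ones vector and hence $V_0(K)\to 0$ --- is unsubstantiated and fails precisely in the $r=1$ case: vectors such as $(t,1,1,\dots,1)$ with a fixed head and arbitrarily many $1$'s satisfy the changemaker/$V$-sequence constraints with a fixed nonzero $V_0$ while $n=p\to\infty$, so no slope bound in terms of $\{V_j(K)\}$ alone can emerge from those equations; such vectors are excluded only by the requirement that $\sigma^{\perp}$ be a linear lattice together with a McCoy-type determination argument, and even then the correct conclusion for $r=1$ is not a slope bound but a count (at most $3$ slopes). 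The paper's proof consists of exactly the two quantitative statements your outline defers to ``the real work'': (i) for $r\geq 2$, the relevant-coefficient identity $\nu^{+rel}=\tfrac12(rp-|\sigma|_1)$ (up to a parity correction) combined with $|\sigma|_1\leq p$ yields $r^2p\leq 4\nu^+(K)+5$ (Theorem \ref{main result 4}, Corollary \ref{main result 5}), which bounds $r$ and all slopes with $r\geq 2$; and (ii) for each fixed $r$, the relevant coefficients (plus the parity of $p$) determine $\sigma$, hence $p$, up to at most $3$ possibilities (Theorem \ref{main result 2}). Neither appears in your argument, so as written it is a plan whose decisive step is missing and whose heuristic for that step does not survive the $r=1$ case.
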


\indent This is implied by Theorem \ref{main result 2} and Corollary \ref{main result 5}, which are stated later in this section and proved in Section 4 and Section 5 respectively.\\
\\
\indent We do not know whether the condition $\nu^+(K)\neq 0$ can be weakened. However, we know that it is necessary to have some condition because the conclusion of the theorem is false whenever $K$ is a slice knot. When $K$ is a slice knot, all positive integer surgeries are integer homology cobordant to a lens space. We do not know whether there is any non-slice knot with an infinite number of lensbordant surgeries.\\
\\
\indent As a complement to Theorem \ref{main result 0}, we have the following result about knots with vanishing $\nu^+$:

\begin{Th} \label{main result 3}
Let $K$ be a knot in $S^3$ with $\nu^+(K)=0$ and let $m$ be a positive integer. If the $m$-surgery on $K$ is lensbordant, then it is smoothly rational homology cobordant to $L(m,m-1)$.
\end{Th}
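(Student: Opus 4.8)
The plan is to unwind ``lensbordant'' into the existence of a rational homology ball, use $\nu^+(K)=0$ to identify the correction terms and linking form of the surgery with those of $L(m,m-1)$, and then invoke the obstruction theory of Lisca and of Aceto--Celoria--Park to pin down the lens space. Throughout write $Y=S^3_m(K)$.

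First I would record what the hypotheses give. Since $\nu^+(K)=0$ we have $V_0(K)=0$, and because $V_0(K)\ge V_1(K)\ge\cdots\ge 0$ every $V_j(K)$ vanishes; the surgery formula for correction terms then gives $d(Y,\mathfrak s)=d(S^3_m(U),\mathfrak s)=d(L(m,m-1),\mathfrak s)$ for all $\mathfrak s$, under the standard identification $\operatorname{Spin}^c(Y)\cong\operatorname{Spin}^c(S^3_m(U))\cong\mathbb Z/m$, while the linking form of an integral surgery depends only on the coefficient, so $\lambda_Y\cong\lambda_{L(m,m-1)}$. Thus $Y$ has exactly the linking form and correction terms of $L(m,m-1)$, although $Y$ is not itself a lens space. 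On the other hand, ``$m$-surgery is lensbordant'' means there is a lens space $L$ and a smooth rational homology cobordism from $Y$ to $L$; equivalently $Y\#(-L)$ bounds a smooth rational homology ball $Z$. Since rational homology cobordism is an equivalence relation, it now suffices to prove that $L$ is rationally homology cobordant to $L(m,m-1)$, i.e.\ that $L\#(-L(m,m-1))=L\#L(m,1)$ bounds a smooth rational homology ball.

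For this last step I would feed $Z$ into the Lisca / Aceto--Celoria--Park machine. One caps $Z$ off to a closed negative-definite $4$-manifold --- on the $(-L)$-summand of $\partial Z$ by the canonical negative-definite linear plumbing bounding $L$, and on the $Y$-summand by a negative-definite filling of $-Y$, for which the trace of $(-m)$-surgery on the mirror of $K$ serves, contributing a rank-one lattice $\langle -m\rangle$ --- and applies Donaldson's diagonalization theorem. Because $Y$ is not a lens space, on its side one has only this surgery trace rather than a plumbing, so Donaldson's theorem alone is too weak; this is precisely the situation for which Aceto--Celoria--Park's refinement of Lisca's argument is built, and the missing input is supplied by the correction terms. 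Indeed, since $\partial Z=Y\#(-L)$ bounds a rational homology ball, its correction terms vanish on the spin$^c$ structures that extend over $Z$, so that $d(Y,\mathfrak s_1)=d(L,\mathfrak s_2)$ on the resulting metabolizer of $\lambda_Y\oplus(-\lambda_L)$; substituting $d(Y,\cdot)=d(L(m,m-1),\cdot)$ and $\lambda_Y\cong\lambda_{L(m,m-1)}$ turns this into exactly the linking-form and correction-term data a rational homology ball with boundary $L(m,m-1)\#(-L)$ would produce, which is what forces the lattice embedding from Donaldson's theorem to be the ``standard'' one Lisca's classification attaches to a pair of lens spaces cobounding a rational homology ball. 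Reading off that pair shows $L\#L(m,1)$ bounds a smooth rational homology ball, so $L$ is rationally homology cobordant to $L(m,m-1)$; by transitivity, so is $Y$.

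The hard part is precisely this middle step: one must verify that the correction-term equality $d(Y,\cdot)=d(L(m,m-1),\cdot)$ --- which is essentially all that remains of the hypothesis $\nu^+(K)=0$ --- genuinely substitutes, inside the Lisca / Aceto--Celoria--Park framework, for the plumbing filling of $Y$ that is unavailable, and one has to track the spin$^c$ correspondence across the rational cobordism carefully enough that this equality is applied to exactly the spin$^c$ structures extending over the capped-off manifold. A subsidiary point is that the trace of $+m$-surgery is positive definite, which forces one to work throughout with $-Y$ and negative-definite fillings; the genuine content of $\nu^+(K)=0$ is that this makes $-Y$ indistinguishable, for the purposes of negative-definite fillings and their correction-term constraints, from the corresponding unknot surgery, which is what licenses applying Lisca's lens-space classification.
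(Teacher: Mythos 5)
Your reduction to showing that $L\#L(m,1)$ bounds a rational homology ball is fine, and the preliminary observations ($\nu^+(K)=0$ forces $V_i(K)\equiv 0$, hence $d(Y,\cdot)=d(L(m,m-1),\cdot)$, and the linking forms agree) are correct. The gap is the middle step, and it is not a deferred verification but the entire content of the theorem. What Donaldson's theorem gives from your capped-off manifold is a full-rank embedding of $\Lambda(L)\oplus\langle -m\rangle$ into the diagonal lattice; but this embedding exists for \emph{every} lensbordant $m$-surgery, with no hypothesis on $\nu^+$ (it is exactly the situation of Lemma \ref{ortho comp}), so by itself it cannot single out $L(m,m-1)$ --- for instance a lens space surgery on a torus knot produces the same kind of embedding without the surgered manifold being cobordant to $L(m,m-1)$. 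Likewise, the correction-term data you transport across $Z$ only yields a metabolizer of the linking form of $L(m,m-1)\#(-L)$ on which the $d$-invariants vanish; that is a necessary condition for bounding a rational ball, not a sufficient one, and neither Lisca nor Aceto--Celoria--Park contains a statement upgrading ``the numerical data look like those of a bounding pair'' to ``the Donaldson embedding is the standard one.'' You never supply a mechanism by which $\nu^+(K)=0$ actually constrains the lattice embedding, and that mechanism is the hard part.

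A second, concrete symptom: when $r\ge 2$ (e.g.\ $m=4$ with $Y$ bounding a rational ball, which is cobordant to $S^3$ and hence to $L(4,3)$), the generator coming from the surgery trace maps to $r$ times a primitive class plus torsion in the closed manifold (Lemma \ref{mult r}), so the orthogonal complement of $\Lambda(L)$ is $\langle -p\rangle$, not your $\langle -m\rangle=\Lambda(m,1)$; the ``pair of lens spaces $L$ and $L(m,1)$'' is not even the correct lattice pair, and your sketch never uses the slope constraint $m=r^2p$ or the reduction to a reduced lens space. The paper's route through the crux is genuinely different: the $d$-invariant identity (Lemma \ref{8V}) is converted into the combinatorial changemaker condition on the primitive vector $\sigma$ (Lemma \ref{is changemaker}), and then $\nu^+=0$ enters through the $V$-coefficient linear system --- Lemma \ref{linear system} together with McCoy's Lemma 2.12 when $r=1$, and Lemma \ref{t0<2} when $r\ge 2$ --- forcing $\sigma=(1,\dots,1)$ (so the reduced lens space is $L(m,m-1)$) or $r=2$, $\sigma=(1)$ (so $Y$ is cobordant to $S^3$, hence to $L(4,3)$). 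Some argument of this lattice-plus-$V_i$ type, rather than a formal transfer of $d$-invariants and linking forms, is what your proposal is missing.
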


\indent Remark: When $m=1$, ``$L(m,m-1)$'' is referring to $S^3$.\\
\\
\indent Theorem \ref{main result 3} is proved in Section 4.\\
\\
\indent Aceto, Celoria, and Park defined the notion of a reduced lens space \cite[Def. 2.2]{ACP}. Any lens space is smoothly rational homology cobordant to a reduced lens space. For simplicity, in this paper we say that $S^3$ is reduced with $p=1$, so when we say ``reduced lens space'' it also includes $S^3$.

\begin{Th} \label{main result 1}
If a positive integer surgery on a knot in $S^3$ is smoothly rational homology cobordant to a reduced lens space, then that reduced lens space must also be a positive integer surgery on a knot in $S^3$.
\end{Th}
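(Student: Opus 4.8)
The plan is to convert the rational--cobordism hypothesis into an isometric embedding of the linear lattice of $L$ into a standard diagonal lattice, and then to feed that embedding into Greene's lens space realization theorem, using the reducedness of $L$ to ensure that the embedding is of the required changemaker type.

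Write $L=L(p,q)$ with $0<q<p$, let $k$ be the length of the expansion $p/q=[a_1,\dots,a_k]^-$ with all $a_i\ge 2$, and let $\Lambda(p,q)$ denote the associated positive-definite linear lattice of rank $k$ and determinant $p$, so that $L$ bounds the negative-definite linear plumbing $P$ with intersection form $-\Lambda(p,q)$. Suppose $S^3_m(K)$ is smoothly rational homology cobordant to $L$ via a cobordism $W$, and let $X$ be the trace of the $m$--surgery: since $m>0$, $X$ is a simply connected $4$--manifold with $\partial X=S^3_m(K)$ and positive-definite intersection form $(m)$. First I would glue $X$ to $W$ along $S^3_m(K)$; a Mayer--Vietoris computation over the rational homology sphere $S^3_m(K)$ shows the result $Z$ has $\partial Z=L$, $b_1(Z)=0$, and positive-definite intersection form $(n)$ on $H_2(Z;\mathbb Z)/\mathrm{tors}\cong\mathbb Z$ for some integer $n\ge 1$ (with $m=c^2n$ for some $c\ge1$). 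Reversing orientation and capping $-Z$ off with $P$ (gluing $\partial(-Z)=-L$ to $\partial P=L$) produces a closed, oriented, negative-definite smooth $4$--manifold $N$ with $b_2(N)=k+1$ and intersection form $(-n)\oplus(-\Lambda(p,q))$. By Donaldson's diagonalization theorem $N$ has standard form, so $\langle n\rangle\oplus\Lambda(p,q)$ embeds isometrically onto a full-rank sublattice of $\mathbb Z^{k+1}$; in particular $\Lambda(p,q)$ embeds isometrically in $\mathbb Z^{k+1}$ with rank-one orthogonal complement, whose saturation I write $\mathbb Z\sigma$.

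The crucial step is to show that, because $L$ is reduced, this embedding is --- after composing with an automorphism of $\mathbb Z^{k+1}$ --- a changemaker embedding: $\sigma$ is a changemaker vector and $\Lambda(p,q)=\sigma^\perp$. Here I would invoke the classification of isometric embeddings of linear lattices into standard diagonal lattices: Lisca's combinatorial analysis of such embeddings together with the changemaker-lattice structure theory of Greene and McCoy. The embeddings that are \emph{not} of changemaker type are, morally, exactly the ones whose associated cobordism exhibits $L(p,q)$ as rational homology cobordant to a lens space with strictly shorter continued fraction, and the defining property of a reduced lens space \cite[Def.~2.2]{ACP} is designed precisely to forbid this; pinning this reduction down is the point I expect to require real work. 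Once $\Lambda(p,q)=\sigma^\perp$ with $\sigma$ a changemaker, primitivity of $\sigma$ (its leading coordinate is $1$) forces $\sigma\cdot\sigma=\det\Lambda(p,q)=p$, and Greene's lens space realization theorem \cite{realization} then gives that $L(p,q)$ is obtained by $p$--surgery --- a positive integer surgery, as $p=|H_1(L(p,q))|$ and $p>0$ --- on a knot in $S^3$, which is the claim. (When $p=1$, $L=S^3$, which is $+1$--surgery on the unknot, and there is nothing to prove.)

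The main obstacle is the middle step: extracting a genuine changemaker structure from the bare lattice embedding handed to us by Donaldson's theorem, which needs both the delicate combinatorics of embeddings $\Lambda(p,q)\hookrightarrow\mathbb Z^{k+1}$ and a careful use of reducedness to eliminate the ``reducible'' embeddings. It may well be cleaner, and is closer to the toolkit advertised in the introduction, to carry out this step directly with Heegaard Floer $d$--invariants: a rational homology cobordism furnishes a metabolizer for the linking form along which the $d$--invariants of $S^3_m(K)$ --- computable from the $V_i(K)$ by Ni--Wu \cite{NiWu} --- agree with those of $L$, and matching these against the explicit $d$--invariants of the reduced lens space is how one would actually read off $\sigma$. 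A minor but persistent bookkeeping issue throughout is orientation conventions --- keeping straight which of $L$, $-L$ bounds the positive-definite $b_2=1$ filling and which bounds the linear plumbing --- together with the compatibility of the continued-fraction length $k$ used for $P$ with the one implicit in the realization theorem.
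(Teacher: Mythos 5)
Your global setup --- glue the surgery trace to the rational homology cobordism, cap off with the linear plumbing $P$, apply Donaldson's diagonalization to realize $\Lambda(p,q)$ as a corank-one sublattice of $\mathbb{Z}^{k+1}$, and then invoke Greene's realization theorem --- is the same skeleton as the paper's argument. But the step you yourself flag as needing ``real work'' is a genuine gap, and the mechanism you propose for it is not the one that can close it. Reducedness of $L(p,q)$ does not upgrade a bare Donaldson embedding to a changemaker embedding: Lisca's combinatorics classifies a different kind of embedding (the full-rank ones relevant to rational balls), and there is no result identifying the non-changemaker corank-one embeddings with cobordisms to lens spaces of shorter continued fraction; that heuristic is not how reducedness enters. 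In the paper, reducedness is used only through Aceto--Celoria--Park: it supplies the injectivity of $H_1(L(p,q))\to H_1(W)$, the isomorphism of the intersection forms of $P$ and $P\cup W$, and the slope constraint $m=r^2p$. The changemaker property itself is extracted from Heegaard Floer theory, not from the lattice embedding alone. Concretely one needs: (i) the divisibility statement that the surgery class $[\Sigma]$ maps to $r$ times a primitive class $\sigma$ with $\sigma^2=-p$ --- your ``$m=c^2n$'' leaves $c$ and $n$ undetermined, whereas the paper pins down $c=r$ and $n=p$ in Lemmas \ref{size rp}--\ref{mult r}, which in turn rest on the Section 2 analysis of the ACP map; (ii) a determination of exactly which spin$^{\mathrm{c}}$ structures on $S^3_{r^2p}(K)$ extend over $W$ (Lemma \ref{mult r spinc}, including the parity twist $i\equiv r/2 \pmod r$ when $r$ is even and $p$ is odd); and (iii) the sharpness of the lens-space plumbing combined with the Ni--Wu formula for $V_i$ and the rational-homology-cobordism invariance of $d$-invariants, matched across $W$ via the spin$^{\mathrm{c}}$ correspondence established in Section 2, yielding $-8V_i=\max\{\mathfrak{c}^2+(n+1)\}$ over characteristic vectors with the right pairing against $[\Sigma]$ (Lemma \ref{8V}). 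Only with that identity in hand does the Greene-style combinatorics of Lemma \ref{is changemaker} produce the subset-sum property of $\sigma$.

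Your closing remark that one could instead ``read off $\sigma$'' by matching $d$-invariants is indeed the correct direction --- it is exactly what the paper does --- but as written it is a one-sentence aspiration rather than an argument: you would still need to prove that spin$^{\mathrm{c}}$ structures and $d$-invariants of the two boundary components are matched through $W$ (this is the content of Section 2, where the ACP injection is refined), the divisibility of $[\Sigma]$ by $r$, and the vanishing of $V_i$ for the relevant large indices that drives the changemaker combinatorics. A smaller issue: your assertion that primitivity of $\sigma$ forces $\sigma\cdot\sigma=p$ also needs the determinant/index computation (in the paper this is the calculation $\langle\sigma,\sigma\rangle=\tfrac{1}{r^2}\langle[\Sigma],[\Sigma]\rangle=-p$, again resting on $[\Sigma]=r\sigma+\text{torsion}$), though once one has a genuine changemaker embedding of $\Lambda(p,q)$ this is not needed to quote the realization theorem. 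In summary: correct topological outline, but the Floer-theoretic core that actually yields the changemaker condition is missing.
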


\indent Section 3 of this paper is dedicated to proving Theorem \ref{main result 1}.\\
\\
\textbf{Questions:} Is there any interesting relationship between the two knots mentioned in Theorem \ref{main result 1}? Is there any nice construction for knots in $S^3$ with lensbordant surgeries similar to what Berge \cite{berge} did for knots in $S^3$ with lens space surgeries?\\
\\
\indent If a positive integer surgery on a knot $K$ in an integer homology 3-sphere is smoothly rational cobordant to a reduced $L(p,q)$, the surgery slope must be $r^2p$ for some positive integer $r$ \cite[Th 1.1]{ACP}. This motivates the following definition:

\begin{Def} \label{lensbordant2}
Let $r,p$ be positive integers. A surgery on a knot in an integer homology 3-sphere is \textit{$(r,p)$-lensbordant} if the surgery slope is $r^2p$ and the resulting manifold is smoothly rational homology cobordant to some reduced $L(p,q)$.
\end{Def}

\indent Our next result is an analog of the Cyclic Surgery Theorem in the direction of Conjecture \ref{conjecture}. The result states that for most knots $K$, when $r$ is fixed, the number of possible surgery slopes is very limited. More precisely:

\begin{Th} \label{main result 2}
Let $K$ be a knot in $S^3$ with $\nu^+(K)\neq 0$. Then,\\
(1) When $r=1$, there are at most 3 $(r,p)$-lensbordant surgeries on $K$. \\
(2) When $r$ is even, there are at most 2 $(r,p)$-lensbordant surgeries on $K$.\\
(3) When $r\geq 3$ is odd, there is at most 1 $(r,p)$-lensbordant surgery on $K$.
\end{Th}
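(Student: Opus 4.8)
The plan is to attach to each $(r,p)$-lensbordant surgery on $K$ a combinatorial object — a changemaker vector of norm $r^2p$ carrying an embedded linear lattice in its orthogonal complement — whose numerical data is controlled jointly by the slope $r^2p$ and by the invariants $V_i(K)$, and then to exploit the fact that $\nu^+(K)\neq 0$ (equivalently $V_0(K)\geq 1$), together with the constraint that all such objects for a fixed $K$ must reproduce the same sequence $\{V_i(K)\}$, in order to pin the admissible slopes down to a short list, with the parity of $r$ entering through the $2$-adic valuation of $r^2p$.

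First I would fix an $(r,p)$-lensbordant surgery. By Aceto--Celoria--Park we may take $L(p,q)$ reduced, and the rational homology cobordism $W$ from $S^3_{r^2p}(K)$ to $L(p,q)$ imposes matching of $d$-invariants along a metabolizer of $(\mathbb Z/r^2p)\oplus(\mathbb Z/p)$ of order $rp$; this is exactly the input of their lattice-embedding analysis, which is built on Lisca's study of linear lattices. Capping the $L(p,q)$ boundary of $W$ with the canonical negative-definite plumbing bounding $L(p,q)$ (which is sharp) gives a negative-definite $4$-manifold $X$ with $\partial X = S^3_{r^2p}(K)$ whose $d$-invariant bound is attained along the metabolizer; gluing $X$ to the reversed $2$-handle trace $-W_{r^2p}(K)$ yields a closed negative-definite manifold, and Donaldson diagonalization together with the Ni--Wu surgery formula realizes the class of the surgery $2$-handle as a changemaker vector $\sigma$ with $|\sigma|^2 = r^2p$ and $(\sigma)^\perp$ containing the linear lattice of the plumbing, the coordinates of $\sigma$ being tied to $V_i(K)$ through the surgery formula. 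Theorem~\ref{main result 1}, which identifies the reduced $L(p,q)$ as itself a positive integer surgery on a knot in $S^3$, further tells us that this linear lattice is of the special ``changemaker'' form coming from Greene's realization theorem.

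Next I would carry out the counting. Given two lensbordant surgeries on the same $K$, say with parameters $(r,p)$ and $(r,p')$, the resulting changemaker vectors $\sigma,\sigma'$ have norms $r^2p$ and $r^2p'$ but must recover the identical $\{V_i(K)\}$. Because $\nu^+(K)\neq 0$ we have $V_0(K)\geq 1$, which forbids $\sigma$ from being a permutation of $(1,\dots,1)$ and more precisely constrains its small coordinates; combined with the requirement that $(\sigma)^\perp$ carry a linear lattice of the allowed form, I expect this to force the slope $r^2p$ into a window of $O(1)$ values determined by $\{V_i(K)\}$, after which congruence conditions modulo powers of $2$ cut the window further. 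The parity split is where the $2$-primary part does its work: for $r$ even the matching of $d$-invariants at the self-conjugate spin$^c$ structures becomes over-determined and eliminates one surviving slope, leaving at most $2$; for $r\geq 3$ odd the extra factor $r^2\geq 9$ in the norm, against the changemaker growth condition, leaves room for only one; and for $r=1$ three values can survive. The example $K=T_{2,3}$, with $(2,1)$- and $(2,2)$-lensbordant surgeries of slopes $4$ and $8$, shows the bound $2$ in (2) is attained.

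The main obstacle is this counting step: classifying the changemaker vectors of norm $r^2p$ that are simultaneously ``$V$-compatible'' with a fixed $K$ having $V_0\geq 1$ and have an orthogonal complement containing a linear lattice of Greene's form, and showing that only $\leq 3$, $\leq 2$, or $\leq 1$ values of $p$ survive according to the parity of $r$. This is a finite but intricate lattice analysis in the spirit of Greene and McCoy and is the technical heart of the argument. A secondary subtlety to handle carefully is that $W$, being a rational homology cobordism, contributes nothing definite, so $X$ is sharp only along the metabolizer; one must verify that this still produces a genuine changemaker vector with enough control on the $V_i(K)$, rather than merely a constraint on a proper sublattice.
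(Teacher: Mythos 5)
There is a genuine gap: your write-up sets up the correct framework (the changemaker vector $\sigma$ with $\langle\sigma,\sigma\rangle=-p$ obtained from Donaldson diagonalization of $P\cup W\cup -W_{r^2p}$, the $d$-invariant matching across the rational homology cobordism, and Greene's realization theorem via Theorem \ref{main result 1}), but the counting step --- which is the entire content of Theorem \ref{main result 2} --- is not carried out; you explicitly defer it as ``a finite but intricate lattice analysis'' and only state that you \emph{expect} the slope to be forced into a window of $O(1)$ values. That expectation is the theorem, not an argument for it. Moreover, the heuristics you offer for the parity split do not reflect a workable mechanism: the bound of $2$ for even $r$ does not come from over-determined $d$-invariant matching at self-conjugate $\text{spin}^\text{c}$ structures, and the bound of $1$ for odd $r\geq 3$ does not come from the factor $r^2\geq 9$ ``against the changemaker growth condition.'' In the actual proof, the cobordism only constrains the $V$-coefficients at labels in a single congruence class modulo $r$ (the ``relevant'' coefficients, with offset $\tfrac{r}{2}$ exactly when $r$ is even and $p$ is odd), and these satisfy the linear system of Lemma \ref{linear system}, an $r$-weighted analog of McCoy's. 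The counting is then a \emph{uniqueness} statement: fixing $K$, $r$, and (when $r$ is even) the parity of $p$ fixes the relevant coefficients, and one shows these data determine $\sigma$, hence $p$, hence the slope $r^2p$. The number of admissible surgeries is then the number of congruence-class choices: two parities of $p$ when $r$ is even, one class when $r\geq 3$ is odd, and McCoy's theorem directly for $r=1$ (Lemma \ref{r=1 case}).

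The uniqueness step itself requires new work that your proposal does not anticipate: because consecutive relevant coefficients can drop by more than $1$, McCoy's reconstruction algorithm cannot be quoted as is --- one must replace $S_m$ by $S_{\leq m}$, prove the lower bound on $V^{rel}_0$ (Lemma \ref{t0 lower bound}), show that the gap parameter $\mu$ is at most $1$ outside an explicit exceptional family (Lemma \ref{mu at most 1}), dispose of the small cases $V^{rel}_0\leq 2$ separately (Lemma \ref{t0<2}, which is also where the hypothesis $\nu^+(K)\neq 0$ enters for $r\geq 3$), and recover the number of $1$'s in $\sigma$ from statement (2) of Lemma \ref{linear system}. None of this appears in your plan, and without it the claim that the relevant coefficients pin down $\sigma$ (equivalently, that at most one $p$ survives per congruence class) is unsupported. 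Your closing example ($T_{2,3}$ admitting both a $(2,1)$- and a $(2,2)$-lensbordant surgery) is a correct sharpness check for part (2), but it plays no role in establishing the upper bounds.
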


\indent Section 4 of this paper is dedicated to proving Theorem \ref{main result 2}.\\
\\
\indent The proofs of Theorem \ref{main result 1} and Theorem \ref{main result 2} involve the changemaker vector $\sigma$ described in \cite{realization}. Every lens space $L(p,q)$ is the boundary of a canonical negative definite plumbing 4-manifold $X(p,q)$ that arises from the Hirzebruch-Jung continued fraction of $\tfrac{p}{q}$. Let $n$ be the rank of $H_2(X(p,q);\mathbb{Z})$. When $L(p,q)$ is a positive integer surgery on a knot in $S^3$, the intersection form of $X(p,q)$ embeds in $\mathbb{Z}^{n+1}$ as the orthogonal complement of some vector $\sigma$ with entries that satisfy certain combinatorial properties \cite{l-space}. We say that $\sigma$ is a changemaker associated with $L(p,q)$. Note that $L(p,q)$ can have multiple associated changemakers. In this paper, whenever we say that a lensbordant surgery is associated with a changemaker $\sigma$, we mean that the surgery is rational homology cobordant to a reduced $L(p,q)$ associated with $\sigma$. Because of Theorem \ref{main result 1}, every lensbordant surgery is associated with some changemaker $\sigma$.\\
\\
\indent In Section 5, we establish some bounds for the surgery slope $r^2p$. At the start of Section 5, we show that

\begin{Th} \label{main result 4}
Suppose there is an $(r,p)$-lensbordant surgery on a knot $K$ in $S^3$ with associated changemaker $\sigma$, then
$$2\nu^+(K)+2(r-1)\geq r^2p-r|\sigma|_1\geq 2\nu^+(K).$$
\end{Th}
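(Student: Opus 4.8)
The plan is to extract the inequality from the changemaker description of the lens space $L(p,q)$ together with the $d$-invariant surgery formula, and then transfer it across the rational homology cobordism. Let me think about what objects are in play.

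We have a knot $K$ in $S^3$ and an $(r,p)$-lensbordant surgery with slope $m = r^2 p$. By Definition~\ref{lensbordant2}, $S^3_m(K)$ is smoothly rational homology cobordant to a reduced lens space $L(p,q)$, and by hypothesis $L(p,q)$ is associated with the changemaker $\sigma = (\sigma_0, \sigma_1, \dots, \sigma_n)$. By Theorem~\ref{main result 1}, $L(p,q)$ itself is realized as a positive integer surgery on some knot $K'$ in $S^3$; in fact the surgery coefficient for $K'$ is $p$ itself (since $r=1$ for the reduced lens space), and the changemaker formalism from \cite{l-space} says that $\sigma_0^2 + \sigma_1^2 + \dots + \sigma_n^2 = p$ with $\sigma_0 = 1$ (or whatever normalization the paper uses) and the entries being a changemaker sequence.

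**The key computation: $d$-invariants.**

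The main step will be the comparison of Heegaard Floer $d$-invariants. A rational homology cobordism between two rational homology spheres forces an isomorphism of their torsion linking forms (or at least a compatible structure) and, crucially, equality of $d$-invariants across matched $\mathrm{spin}^c$ structures. So the multiset of $d$-invariants of $S^3_{r^2p}(K)$ equals the multiset of $d$-invariants of $L(p,q)$ (up to the identification of $\mathrm{spin}^c$ structures, which both have order $r^2 p$ and $p$ respectively — so actually one must be careful: the cobordism has $H_1$ that accounts for the ratio, and the $d$-invariants of $S^3_{r^2p}(K)$ in the $\mathrm{spin}^c$ structures that extend over the cobordism match those of $L(p,q)$).

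On the one side, the $d$-invariants of $S^3_{r^2p}(K)$ are given by the Ni--Wu formula in terms of the lens space $d$-invariants of $S^3_{r^2p}(\text{unknot}) = L(r^2p, 1)$ shifted by $-2V_{\min(i,\, r^2p - i)}$ where $V_j$ is the Rasmussen/Ni--Wu sequence, and $\nu^+(K) = \min\{j : V_j = 0\}$. On the other side the $d$-invariants of $L(p,q)$ are controlled by the recursive lens space formula, and the changemaker lattice $\mathbb{Z}^{n+1}$ with $\sigma^\perp \cong \Lambda(p,q)$ encodes them: the relevant quantity is $|\sigma|_1 = \sum_i |\sigma_i|$, which governs the length of a "short" characteristic covector and hence the extremal $d$-invariant. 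The inequality $r^2 p - r|\sigma|_1 \geq 2\nu^+(K)$ should come from picking a specific $\mathrm{spin}^c$ structure — likely the one where $\nu^+$ is detected, i.e. corresponding to $j = \nu^+(K)$ — and writing down that the $d$-invariant there, computed both ways, yields exactly this. The upper bound $2\nu^+(K) + 2(r-1) \geq r^2 p - r|\sigma|_1$ should come from the fact that $V_{j}$ drops by at most $1$ at each step (the function $j \mapsto V_j$ is non-increasing with steps of size $0$ or... actually $V_j - V_{j+1} \in \{0,1\}$), so the value at the relevant index is at most $\nu^+(K)$ away in a controlled sense, with the $(r-1)$ term absorbing the difference between the lens space $L(r^2p,1)$ baseline and the actual $L(p,q)$.

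**Structuring the argument and the main obstacle.**

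So concretely I would: (1) Recall from \cite{ACP} / \cite{l-space} the precise changemaker setup, fixing the identification $\sigma^\perp = \Lambda(p,q)$ inside $\mathbb{Z}^{n+1}$ and recording $|\sigma|^2 = \sigma_0^2 + \dots + \sigma_n^2 = p$ and the interpretation of $|\sigma|_1$. (2) State the $d$-invariant matching coming from the rational homology cobordism, being careful about which $\mathrm{spin}^c$ structures correspond. (3) Invoke the Ni--Wu formula $d(S^3_{r^2p}(K), i) = d(L(r^2p,1), i) - 2 V_{\phi(i)}$ where $\phi(i) = \min(i, r^2p - i)$ appropriately indexed. (4) Compute the corresponding $d$-invariant of $L(p,q)$ via the changemaker lattice — the key lattice-theoretic input is that the minimal norm of a characteristic covector in a coset, which computes the $d$-invariant, is controlled by $|\sigma|_1$; there should be a clean formula relating the extremal $d$-invariant of $L(p,q)$ to $p$ and $|\sigma|_1$. (5) Combine: choosing $i$ so that $\phi(i)$ is as large as possible subject to $V_{\phi(i)}$ still being forced nonzero (namely $\phi(i) = \nu^+(K) - 1$ or $\nu^+(K)$) and equating the two expressions gives both inequalities after rearranging, with the $r$-dependence entering through how $i \bmod p$ versus $i \bmod r^2p$ interact.

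The hard part will be step (4) — getting the sharp relationship between the changemaker data $|\sigma|_1$ and the extremal $d$-invariants of $L(p,q)$ — and making the $\mathrm{spin}^c$ bookkeeping in step (2) precise enough that the right coset is being compared. The factor of $r$ multiplying $|\sigma|_1$ (rather than $r^2$) strongly suggests that the relevant characteristic vector in the surgery lattice has been scaled by $r$ in one direction, so I expect the proof to pass through an explicit description of how the changemaker lattice for $L(p,q)$ sits inside the (larger-rank) changemaker-type lattice governing the slope-$r^2p$ surgery, with the diagonal $r \cdot e_0$ vector accounting for the scaling; pinning down that embedding cleanly is where the real work lies, while the $\nu^+$ estimates on either end are comparatively routine monotonicity arguments for the $V_j$ sequence.
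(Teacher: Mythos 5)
Your outline follows the same broad route as the paper: match $d$-invariants across the cobordism in the $\text{spin}^\text{c}$ structures that extend over it, use the Ni--Wu formula on the surgery side, and use Donaldson diagonalization of the closed-up manifold with the class $[\Sigma]=r\sigma+\text{torsion}$ on the lattice side. Your guess that the single factor of $r$ in $r|\sigma|_1$ reflects $[\Sigma]$ being $r$ times a primitive class is exactly right, and the lower bound is within reach of what you sketch: taking the characteristic vector $(1,\dots,1)$ in the coset determined by $[\Sigma]$ forces $V_{\frac12(r^2p-r|\sigma|_1)}=0$, hence $2\nu^+(K)\leq r^2p-r|\sigma|_1$.

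The genuine gap is the upper bound $2\nu^+(K)+2(r-1)\geq r^2p-r|\sigma|_1$, and the mechanism you propose for it is not the right one. It does not come from ``$V_j$ drops by at most $1$ per step,'' nor from a ``baseline difference between $L(r^2p,1)$ and $L(p,q)$'' (the unknot baseline is already subtracted in the Ni--Wu formula), and the congruence bookkeeping is mod $r$, not ``$i$ mod $p$ versus mod $r^2p$.'' In the paper the bound rests on two ingredients your proposal defers: (i) the precise determination (Lemma \ref{mult r spinc}) that the $\text{spin}^\text{c}$ structures on $K_{r^2p}$ extending over $W$ are exactly those whose label satisfies $i\equiv 0$ (or $r/2$ when $r$ is even and $p$ is odd) mod $r$, so the indices at which the $d$-invariant comparison is available form an arithmetic progression of step $r$ --- this spacing, combined with the parity fact $p\equiv|\sigma|_1 \pmod 2$, is the actual source of the term $2(r-1)$; and (ii) the sharpness equality $-8V_i=\max\{\mathfrak{c}^2+(n+1)\}$ over characteristic vectors in the prescribed coset (Lemma \ref{8V}), whose forward direction shows that $V_i=0$ at an admissible index forces some $\mathfrak{c}\in\{\pm1\}^{n+1}$ with $\langle\mathfrak{c},\sigma\rangle\geq-|\sigma|_1$, hence forces $2i\geq r^2p-r|\sigma|_1$; this is what makes the coefficient at the previous admissible index $\frac12(r^2p-r|\sigma|_1)-r$ nonzero and yields $\nu^+>\frac12(r^2p-r|\sigma|_1)-r$. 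These two points (packaged in the paper as statement (2) of Lemma \ref{linear system}, after which the proof is a few lines plus the degenerate case $\nu^{+rel}=0$ handled by Lemma \ref{t0<2}) are precisely where the content lies, so as it stands the attempt does not establish the stated inequality.
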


\indent Using the fact that $|\sigma|_1\leq p$, this implies

\begin{Cor} \label{main result 5}
Suppose there is an $(r,p)$-lensbordant surgery on a knot $K$ in $S^3$ with associated changemaker $\sigma$, then\\
(1) If $r\geq 3$, then $\dfrac{2r^2}{(r-1)(r-2)}\nu^+(K)\geq r^2p\geq 2\nu^+(K)+r$\\
(2) If $r\geq 2$, then $4\nu^+(K)+5\geq r^2p\geq 2\nu^+(K)+r$
\end{Cor}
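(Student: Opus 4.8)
The plan is to obtain both parts by pure arithmetic from Theorem~\ref{main result 4}, the only extra inputs being the bound $|\sigma|_1\le p$ noted in the text and the obvious bound $|\sigma|_1\ge 1$ (the changemaker $\sigma$ is nonzero). Write $(\star)$ for the chain $2\nu^+(K)+2(r-1)\ge r^2p-r|\sigma|_1\ge 2\nu^+(K)$ of Theorem~\ref{main result 4}. The first step is to feed these two bounds into $(\star)$: substituting $|\sigma|_1\le p$ into the left inequality gives $r(r-1)p\le 2\nu^+(K)+2(r-1)$, while substituting $|\sigma|_1\ge 1$ into the right inequality gives $r^2p\ge 2\nu^+(K)+r|\sigma|_1\ge 2\nu^+(K)+r$. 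The latter is precisely the lower bound claimed in both (1) and (2), so only the two upper bounds remain.

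For (1) (so $r\ge 3$), I would rearrange $r(r-1)p\le 2\nu^+(K)+2(r-1)$ into $2\nu^+(K)\ge(r-1)(rp-2)$ and then use $p\ge 1$ in the form $rp-2\ge(r-2)p$ to get $2\nu^+(K)\ge(r-1)(r-2)p$; multiplying by $r^2/\bigl((r-1)(r-2)\bigr)>0$ yields $\tfrac{2r^2}{(r-1)(r-2)}\nu^+(K)\ge r^2p$.

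For (2), the case $r=2$ is immediate: then $2p\le 2\nu^+(K)+2$, so $4p\le 4\nu^+(K)+4\le 4\nu^+(K)+5$. For $r\ge 3$ I would combine two facts. First, the inequality $2\nu^+(K)\ge(r-1)(r-2)p\ge(r-1)(r-2)$ from the proof of (1), together with $(r-3)(r-4)\ge 0$, forces $\nu^+(K)\ge\tfrac12(r-1)(r-2)\ge 2r-5$. Second, rearranging $r(r-1)p\le 2\nu^+(K)+2(r-1)$ gives $r^2p\le\tfrac{2r}{r-1}\nu^+(K)+2r\le 3\nu^+(K)+2r$, since $\tfrac{2r}{r-1}\le 3$ for $r\ge 3$. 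Writing $2r=(2r-5)+5\le\nu^+(K)+5$ then gives $r^2p\le 3\nu^+(K)+\nu^+(K)+5=4\nu^+(K)+5$.

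I expect the only subtle point to be the $r\ge 3$ case of (2): the naive estimate $r^2p\le\tfrac{2r}{r-1}\nu^+(K)+2r$ carries an additive $2r$ that is too large on its own for large $r$, and the resolution is to notice that part (1) already makes $\nu^+(K)$ grow quadratically in $r$, which is more than enough to absorb it; the arithmetic closes exactly because $(r-1)(r-2)-2(2r-5)=(r-3)(r-4)\ge 0$. Everything else is routine manipulation of the chain $(\star)$.
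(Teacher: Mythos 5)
Your proposal is correct, and the lower bound and part (1) are essentially the paper's own argument: both feed $|\sigma|_1\geq 1$ and $|\sigma|_1\leq p$ into Theorem \ref{main result 4}, and your rearrangement $2\nu^+\geq (r-1)(rp-2)\geq (r-1)(r-2)p$ is just a reshuffling of the paper's inequality (\ref{eq:8}) combined with $p\geq 1$. Where you genuinely diverge is part (2) for $r\geq 3$. The paper disposes of $r\geq 6$ by noting $\tfrac{2r^2}{(r-1)(r-2)}<4$ and then checks $r=2,3,4,5$ one at a time, invoking Lemma \ref{t0<2} to get $\nu^+\geq 1$ when $r\geq 3$; you instead give a uniform argument: the same arithmetic already yields $\nu^+\geq\tfrac12(r-1)(r-2)\geq 2r-5$ (the last step using $(r-3)(r-4)\geq 0$, which holds because $r$ is an integer, so you should say so explicitly), and then $r^2p\leq\tfrac{2r}{r-1}\nu^++2r\leq 3\nu^++2r\leq 4\nu^++5$. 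Your route avoids both the case analysis and any appeal to Lemma \ref{t0<2}, making the corollary a purely arithmetic consequence of Theorem \ref{main result 4}; the trade-off is that you coarsen $\tfrac{2r}{r-1}$ to $3$ and rely on the quadratic growth of $\nu^+$ in $r$ to absorb the additive $2r$, whereas the paper's case-by-case check incidentally produces slightly sharper constants for specific small $r$ (e.g. $4\nu^++4$ when $r=2,4,5$). Both proofs are valid; yours is cleaner and more scalable, the paper's is marginally sharper in the small cases.
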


\indent Theorem \ref{main result 4} and Corollary \ref{main result 5} are proved in Section 5.\\
\\
\indent For every knot $K$ in $S^3$, based on work by Rasmussen \cite{rasmussen} and Ni and Wu \cite{NiWu}, there is an associated sequence of non-negative integers $V_i(K)$. The non-negative integer $\nu^+(K)$ is the smallest index $i$ such that $V_i(K)=0$. Also, when $K$ is an $L$-space knot, the coefficients $V_i(K)$ coincide with the torsion coefficients $t_i(K)$ which can be computed from the Alexander polynomial.\\
\\
\indent In Section 6, we will look in depth into the combinatorial conditions established in Section 4. We will show that

\begin{Th}\label{imprecise theorem}
Suppose there is an $(r,p)$-lensbordant surgery on a knot $K$ in $S^3$. If $r,p$ are both even, the surgery slope $r^2p$ must equal $8V_0(K)$.
\end{Th}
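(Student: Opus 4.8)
The plan is to read the identity directly off the rational homology cobordism, using the Heegaard Floer $d$-invariant correspondence together with the Ni--Wu surgery formula, letting each parity hypothesis do exactly one job. Write $m=r^{2}p$. By Theorem~\ref{main result 1} there is a reduced lens space $L(p,q)$ (an integer surgery on a knot in $S^{3}$) and a smooth rational homology cobordism $W$ from $S^{3}_{m}(K)$ to $L(p,q)$. In the Aceto--Celoria--Park framework, $W$ determines a metabolizer $\mathcal G\le H_{1}(S^{3}_{m}(K))=\mathbb Z/m$; using that $L(p,q)$ is reduced one may take $W$ so that the kernel of $H_{1}(L(p,q))\to H_{1}(W)$ is trivial, whence $\mathcal G$ is the unique subgroup of order $r$, every spin$^{c}$ structure of $L(p,q)$ extends over $W$, the spin$^{c}$ structures of $S^{3}_{m}(K)$ extending over $W$ form a single coset $C$ of $\mathcal G^{\perp}$ (the orthogonal complement under the linking form), and two elements of $C$ lying in a common $\mathcal G$-coset have the same $d$-invariant in $S^{3}_{m}(K)$ (equal to the $d$-invariant of the corresponding spin$^{c}$ structure of $L(p,q)$).

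The main step is to locate the coset $C$. Since $m$ is even, $S^{3}_{m}(K)$ has exactly two self-conjugate spin$^{c}$ structures, those indexed $0$ and $m/2$ in the surgery labelling; since $r$ is even, $m/2=r\cdot(rp/2)$ is divisible by $r$, so both lie in $\mathcal G^{\perp}=r\,\mathbb Z/m$. Because $W$, hence the set of extending structures, is conjugation invariant, $C$ is a conjugation-invariant coset of $\mathcal G^{\perp}$, so --- $r$ being even --- $C$ is either $\mathcal G^{\perp}$ itself or $\tfrac r2+\mathcal G^{\perp}$. This is where ``$p$ even'' is used: $L(p,q)$ then has two self-conjugate spin$^{c}$ structures, both extending over $W$, so $C$ must contain two $\mathcal G$-cosets fixed by conjugation; a short congruence (the condition reduces to $2t\equiv-1\pmod p$) shows $\tfrac r2+\mathcal G^{\perp}$ has no such coset when $p$ is even, forcing $C=\mathcal G^{\perp}$. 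A similar computation (the condition now being $2t\equiv0\pmod p$) identifies the two conjugation-fixed $\mathcal G$-cosets of $\mathcal G^{\perp}$ as $\mathcal G$ and $\tfrac{rp}{2}+\mathcal G$, and shows that $\mathcal G$ contains both $0$ and $m/2$ whereas $\tfrac{rp}{2}+\mathcal G$ contains neither. Hence $0$ and $m/2$ lie in the same $\mathcal G$-coset inside $C$, and therefore $d(S^{3}_{m}(K),0)=d(S^{3}_{m}(K),m/2)$.

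The conclusion is then immediate from the Ni--Wu formula \cite{NiWu}: the left-hand side equals $\tfrac{m-1}{4}-2V_{0}(K)$ and the right-hand side equals $-\tfrac14-2V_{m/2}(K)$, while $V_{m/2}(K)=0$ since $m/2\ge\nu^{+}(K)$ by Corollary~\ref{main result 5}(2) (the case $\nu^{+}(K)=0$ being vacuous here: by Theorem~\ref{main result 4} an $(r,p)$-lensbordant surgery with $r$ even then forces $rp\le 2$, so $p$ cannot be even). Equating the two sides gives $\tfrac{m-1}{4}-2V_{0}(K)=-\tfrac14$, that is, $r^{2}p=m=8V_{0}(K)$.

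I expect the middle paragraph to be the real obstacle: pinning down $C$, equivalently showing that for $p$ even both self-conjugate spin$^{c}$ structures of $S^{3}_{m}(K)$ extend over $W$ and are carried to a single spin$^{c}$ structure of $L(p,q)$. The cobordism formalism by itself only cuts $C$ down to two candidates; eliminating the wrong one is exactly the point at which the hypothesis ``$p$ even'' becomes indispensable (for $p$ odd the conclusion is false --- witness $4$-surgery on the right-handed trefoil, which is rational homology cobordant to $S^{3}$), and it is also the point at which one must invoke the reduced-lens-space structure and the changemaker combinatorics of Section~4 rather than homology alone.
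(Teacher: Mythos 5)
Your argument is correct in substance, but it reaches the result by a genuinely different route than the paper. The paper's proof is purely combinatorial, inside the changemaker framework: by Lemma \ref{linear system}(2), $\nu^{+rel}=\tfrac12(rp-|\sigma|_1)$, and $V_0=V_0^{rel}=V^{\sigma}_{\nu^{+rel}}$; the structure theory of the relevant coefficients of $\sigma$ (Lemma \ref{V structure} together with Lemma \ref{some values}(1), proved via the rational greedy algorithm) then evaluates this to $V_0=\tfrac{r^2p}{8}$. You instead argue at the level of spin$^{\mathrm{c}}$ structures across the cobordism: the labels $0$ and $m/2$ both extend over $W$ and are carried to one and the same spin$^{\mathrm{c}}$ structure of $L(p,q)$, so $d(K_{r^2p},0)=d(K_{r^2p},m/2)$, and the Ni--Wu formula plus $V_{m/2}=0$ (from Theorem \ref{main result 4} or Corollary \ref{main result 5}, which are proved before Section 6, so there is no circularity) forces $m=8V_0$; your handling of the $\nu^+=0$ case is also fine. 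The facts you attribute to the ``ACP framework'' are exactly what the paper supplies in Sections 2--3: injectivity of $H_1(L(p,q))\to H_1(W)$ (Lemma \ref{injective}), the kernel count $|\ker(H_1(K_{r^2p})\to H_1(W))|=r$ (Lemmas \ref{size rp} and \ref{Zr}; note this step genuinely needs the surjectivity of the map $s$ from Section 2 and is not a formal metabolizer fact), and the extension of every spin$^{\mathrm{c}}$ structure of $L(p,q)$ over $W$ (the last lemma of Section 2). Granted these, the step you flag as the likely obstacle actually goes through exactly as you sketch it: the fibers of the extending set over spin$^{\mathrm{c}}$ structures of $L(p,q)$ are cosets of the order-$r$ subgroup, the fiber over a self-conjugate structure is conjugation-invariant, and the congruence $2t\equiv-1\pmod p$ has no solution for $p$ even, so no changemaker combinatorics is needed there --- alternatively you could simply quote Lemma \ref{mult r spinc}, which already states that when $p$ is even the extending labels are precisely the multiples of $r$. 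As for what each approach buys: yours is shorter and more conceptual for this parity case, explaining $r^2p=8V_0$ as the coincidence of the $d$-invariants at the two self-conjugate spin$^{\mathrm{c}}$ structures, while the paper's computation is uniform in $\sigma$ and simultaneously yields the sharper statements (2) and (3) of Theorem \ref{main result 6} (the bounds in the other parity cases and the partition criterion), which your method does not obviously recover.
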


\indent Note that when $r,p$ are not both even, the surgery slope $r^2p$ is still very close to $8V_0(K)$. See Theorem \ref{main result 6} in Section 6 for a precise statement.\\
\\
\indent Because of Theorem \ref	{main result 1}, we can focus on lens spaces that are positive integer surgeries on knots in $S^3$. These lens spaces are classified. Berge \cite{berge} found examples of integer surgeries on knots in $S^3$ producing lens spaces, and Rasmussen \cite{rasmussen2007} organized them nicely into a list of Berge types. Greene \cite{realization} showed that this is the complete list of lens spaces that are integer surgeries on knots in $S^3$. He showed this by analyzing the possible structures of changemakers that arise from positive integer surgeries on knots in $S^3$ producing lens spaces. Greene concluded that such a changemaker must be in one of the 33 forms that he wrote down in \cite{realization}, and then noted that for all 33 such structures, the resulting lens spaces are within this list of Berge types. Among these 33 structures in \cite{realization}, 26 are classified as ``small families'' and 7 are classified as ``large families''.\\
\\
\indent One way to tackle Conjecture \ref{conjecture} is to show that for each of these 33 changemaker structures described by Greene, there exists some $N$ such that for all knots $K$ in $S^3$, there are at most $N$ lensbordant surgeries associated with a changemaker having that structure. In Section 7, we do this for one of the small families. This small family structure has changemakers in the form

$$\sigma=\begin{pmatrix}
    4s+3,&2s+1,&s+1,&s,&\smash{\underbrace{\begin{matrix}
        1,&\dots&,&1
    \end{matrix}}_{s\text{ copies of }1\text{'s}}}
\end{pmatrix}$$

\bigskip

\noindent where $s\geq 2$.\\
\\
\indent In this paper, we show that

\begin{Th}\label{greene type 1}
Any knot in $S^3$ has at most 7 lensbordant surgeries with an associated changemaker coming from family mentioned above.
\end{Th}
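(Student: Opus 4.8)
The plan is to combine the quantitative bounds of Sections 4 and 5 with the parity-sensitive analysis of Section 6, specialized to the changemaker $\sigma=\sigma(s)$ of the displayed family. A direct computation gives $p(s):=|\sigma(s)|^2=22s^2+31s+11$ and $|\sigma(s)|_1=9s+5$, and in particular $p(s)\equiv s+1\pmod 2$. By Definition \ref{lensbordant2}, a lensbordant surgery on $K$ associated with a changemaker of this shape is $(r,p(s))$-lensbordant for some integers $r\ge 1$ and $s\ge 2$, and its slope is the positive integer $m=r^2p(s)$. Since distinct surgeries have distinct slopes, it suffices to show that the set of integers of the form $r^2p(s)$ that occur, for our fixed $K$, has at most $7$ elements.

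First I would dispose of the case $\nu^+(K)=0$: here Theorem \ref{main result 4} forces $r^2p(s)-r(9s+5)\le 2(r-1)$, hence $r\,p(s)\le 9s+6$, which is impossible since $p(s)=22s^2+31s+11>9s+6$ for all $s\ge 2$; so no such surgery exists, and we may assume $\nu^+(K)\ge 1$. Next I record a structural fact: by Theorem \ref{main result 4},
\[
2\nu^+(K)\le r^2p(s)-r(9s+5)\le 2\nu^+(K)+2(r-1),
\]
and, viewed as a function of $s$ for fixed $r$, the middle quantity has consecutive differences $r^2(44s+53)-9r$, which exceed $2(r-1)$ for every $s\ge 2$; so for each $r$ there is at most one admissible $s$. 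Together with Corollary \ref{main result 5}, which (using $p(s)\ge p(2)=161$) bounds $r$ in terms of $\nu^+(K)$, this already yields finiteness, but the bound $7$, being independent of $K$, requires the sharper input of Section 6.

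The heart of the proof is to substitute $\sigma(s)$ into the combinatorial conditions established in Section 4 and to run the analysis of Section 6 in this special case, organized by the parities of $r$ and of $p(s)$ (equivalently of $s$). I expect the outcome to be that every admissible slope $m=r^2p(s)$ lies in a set of at most seven integers about $8V_0(K)$, most naturally the interval $[\,8V_0(K)-3,\,8V_0(K)+3\,]$. Indeed, when $r$ is even and $s$ odd, so that $p(s)$ is even, Theorem \ref{imprecise theorem} already pins $m=8V_0(K)$ exactly, while the refinement recorded in Theorem \ref{main result 6} should keep $m$ inside this window in the three remaining parity classes. Since that interval contains exactly seven integers, at most seven slopes --- hence at most seven lensbordant surgeries of the asserted type --- can occur for a fixed knot $K$.

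The main obstacle is precisely this last step: running the combinatorial bookkeeping of Section 4 for the one-parameter family $\sigma(s)$ uniformly over $s\ge 2$ and $r\ge 1$, and controlling the regimes in which $s$ or $r$ is large. There the changemaker-lattice relations, together with the matching of Heegaard Floer $d$-invariants across the rational homology cobordism (via Theorem \ref{main result 1} and the Aceto-Celoria-Park and Lisca framework), become delicate, and one must verify that the resulting system of polynomial (in)equalities in $r$ and $s$, after the parity split, forces $r^2p(s)$ into the claimed bounded set rather than spreading out as $V_0(K)$ grows. Confirming this tightness --- and that no parity class secretly contributes an eighth slope --- is the technical core of Section 7.
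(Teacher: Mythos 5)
Your reductions at the start are fine (the disposal of $\nu^+(K)=0$, and the observation that for each fixed $r$ Theorem \ref{main result 4} admits at most one $s$, since consecutive values of $r^2p(s)-r|\sigma(s)|_1$ jump by $r^2(44s+53)-9r>2(r-1)$), but the step you yourself flag as the "technical core" is a genuine gap, and the mechanism you propose for it does not work. Theorem \ref{main result 6} does not confine the slope to the seven integers $[\,8V_0(K)-3,\,8V_0(K)+3\,]$: for $r$ odd the window is $[\,8V_0-3\,odd(\sigma),\,8V_0+odd(\sigma)\,]$, and for this family $odd(\sigma)=s+3$ grows linearly in $s$; for $r$ even and $p(s)$ odd the window is $[\,8V_0-2r,\,8V_0+2r\,]$, growing with $r$. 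Since Corollary \ref{main result 5} only gives $r^2p\leq 4\nu^+(K)+5$, the number of admissible values of $r$ (hence of candidate slopes) can a priori grow like $\sqrt{\nu^+(K)}$, so nothing in your outline produces a bound independent of $K$; only the sub-case $r$ even, $p$ even is genuinely pinned to the single slope $8V_0(K)$. The coincidence that your interval contains seven integers is not where the paper's $7$ comes from.

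The paper's route is different and supplies exactly the uniform-in-$r$ input you are missing. First, Lemma \ref{at most one r} shows that for a fixed changemaker $\sigma$ (here $\sigma(s)\neq(1)$ always) there is at most one associated lensbordant surgery, so it suffices to bound the number of admissible values of $s$ --- the transpose of your "one $s$ per $r$" observation, and the one that actually helps. Then, for $r=1$, the number of indices with $V_i=1$ equals $T^{rel}_1=4s+3$, determining $s$ uniquely; and for $r\geq 2$, $s\geq 5$, Theorem \ref{greene type 1 2} shows that two statistics extracted from the $V$-sequence of $K$ (the count $a$ of indices with $296\leq V_i\leq 300$ and the associated level $b$) force $s\in\{\lfloor\sqrt{b/11}\rfloor,\lfloor\sqrt{b/11}\rfloor+1,\lfloor\sqrt{b/11}\rfloor+2\}$, a determination of $s$ up to three values that holds simultaneously for all $r\geq 2$ and all parities. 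Adding the unexamined possibilities $s=2,3,4$ gives $1+3+3=7$. Establishing something of that kind --- explicit evaluations of $T^\sigma_m$ at well-chosen $m$ (Lemmas 7.4--7.7), converted into statements about the $V_i$ via Lemma \ref{conversion} --- is what your proposal would still need; the parity-window argument you sketch cannot be repaired to give a $K$-independent constant.
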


\indent Theorem \ref{greene type 1} is proved in Section 7.\\
\\
\indent We believe that similar techniques can be use on all 26 small families. However, we do not see how this approach extends to the 7 large families. Some large families are expected to be difficult to tackle because they contain infinite families of non-reduced lens spaces, which are being studied by the author in a separate project.\\
\\
\indent So far, everything we have discussed addresses surgeries on knots in $S^3$. One may ask whether this generalizes to knots in other integer homology 3-spheres. Caudell addressed the lens space realization problem for knots in the Poincar\'e homology sphere \cite{caudell} $\mathcal{P}$. In $\mathcal{P}$, we need to make more assumptions. We will show that

\begin{Th} \label{poincare knot}
Suppose $r^2p$-surgery on a knot $K\subset\mathcal{P}$ produces an L-space that is smoothly $\mathbb{Z}_2$-homology cobordant to a reduced $L(p,q)$. Let

$$i(K,r,p)=\begin{cases}2g(K)+r\text{ if }p\text{ is odd} \\ 2g(K) \quad \ \ \text{ if }p\text{ is even}\end{cases}$$

\noindent If $r^2p\geq i(K,r,p)$, then $L(p,q)$ must be a positive integer surgery on a knot in $\mathcal{P}$.\\
If $r^2p<i(K,r,p)$, then $L(p,q)$ must be a positive integer surgery on a knot in $S^3$.
\end{Th}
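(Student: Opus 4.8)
The plan is to carry out the argument behind Theorem \ref{main result 1} and Theorem \ref{main result 2} with $\mathcal P$ in place of $S^3$, keeping track of the correction coming from the fact that $\mathcal P$ bounds the negative-definite $E_8$-manifold (so that $d(\mathcal P)=2$), and then to feed the resulting lattice data into Caudell's classification \cite{caudell} in one regime and into Greene's \cite{realization} in the other. First I would separate the cases $p$ odd and $p$ even. When $p$ is odd, $H_1(L(p,q);\mathbb Z_2)=0$, so if $Y:=\mathcal P_{r^2p}(K)$ is $\mathbb Z_2$-homology cobordant to $L(p,q)$ then $r$ must be odd and the cobordism is automatically a rational homology cobordism; when $p$ is even the cobordism may carry $b_2=1$ and one has to work with the weaker information a $\mathbb Z_2$-homology cobordism preserves. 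In either case the hypothesis that $Y$ is an L-space forces $r^2p\ge 2g(K)-1$, so $Y$ lies in the large-surgery range, and the surgery formula of Ni--Wu computes $d(Y,\cdot)$ exactly as it would for a knot in $S^3$, except shifted up by the constant $d(\mathcal P)=2$ (since $\mathcal P_{r^2p}(U)$ is a connected sum of $\mathcal P$ with a lens space), minus $2V_i(K)$; this additive constant $2$ is the only change from the $S^3$ situation.

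Next I would transfer the correction-term data across the cobordism to the reduced $L(p,q)$, exactly as in the proof of Theorem \ref{main result 1}, and then run Greene's lattice machinery: matching the two descriptions of the correction terms of $L(p,q)$ produces an embedding of the Hirzebruch--Jung lattice of $L(p,q)$ into a diagonal lattice, with orthogonal complement spanned by a vector of square $r^2p$. The extra $2$ inherited from $d(\mathcal P)$ must then be absorbed in one of two ways. Either the diagonal lattice is large enough to contain an $E_8$-block orthogonal to the linear lattice --- the $+2$ being realized geometrically by the $E_8$-manifold --- in which case the residual data is exactly the changemaker-type data that Caudell \cite{caudell} shows characterizes positive integer surgeries on knots in $\mathcal P$, and the first conclusion of the theorem holds. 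Or no $E_8$-block can be split off, the embedding collapses to the usual changemaker picture, and Greene's realization theorem \cite{realization} (equivalently Theorem \ref{main result 1}) forces $L(p,q)$ to be a positive integer surgery on a knot in $S^3$.

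The dividing line between the two alternatives should be exactly whether $r^2p\ge i(K,r,p)$: an orthogonal $E_8$-block fits precisely when the slope is large relative to twice the genus. To pin this down I would combine the genus bound above with the analogue of Theorem \ref{main result 4} --- the same proof, shifted by $d(\mathcal P)=2$ --- relating $r^2p$, $|\sigma|_1$ and $V_0(K)$, and check that the $+2$ can be accommodated exactly when $r^2p\ge 2g(K)$ for $p$ even and when $r^2p\ge 2g(K)+r$ for $p$ odd; the extra $r$ reflects the parity of the continued-fraction length of $p/q$ for a reduced $L(p,q)$, hence the indexing of spin$^c$ structures used in the matching, which is the content of Theorem \ref{main result 6}.

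The step I expect to be the main obstacle is the $p$ even case, where the hypothesis gives only a $\mathbb Z_2$-homology cobordism: there the correction terms of $Y$ and $L(p,q)$ need not agree on the nose, so the clean reduction to a lattice embedding is not directly available. I would handle it using that reduced lens spaces with $p$ even are classified by Lisca and by Aceto--Celoria--Park into a short list --- either checking the statement directly against that list, or upgrading the $\mathbb Z_2$-homology cobordism to a rational one in this situation --- and the delicate point is extracting there the sharp threshold $2g(K)$ rather than $2g(K)+r$. A secondary difficulty, present already for $p$ odd, is verifying that when the $E_8$-block fails to split off the residual embedding is \emph{exactly} of changemaker type, rather than merely some embedding of a linear lattice into a diagonal lattice; this is where Greene's complete list of changemakers and the ``reduced'' hypothesis on $L(p,q)$ enter.
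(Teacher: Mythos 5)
Your overall shape is right (a dichotomy between a diagonal picture fed into Greene's machinery and an $-E_8\oplus{}$diagonal picture fed into Caudell's, with the shift $d(\mathcal{P})=2$), but the pivotal step is missing. You treat the dichotomy as ``embed into a diagonal lattice, then ask whether an $E_8$-block can be split off orthogonally.'' Donaldson's theorem is not available here: the relevant 4-manifold $Z=P\cup_{L(p,q)}W\cup_{K_{r^2p}}W_{-r^2p}$ is not closed; its boundary is $\mathcal{P}$. The paper instead invokes Scaduto's instanton-homology result: a smooth negative definite 4-manifold with boundary $\mathcal{P}$ and no 2-torsion in its homology has intersection form either $-\mathbb{Z}^{n+1}$ or $-E_8\oplus-\mathbb{Z}^{n-7}$. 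This is also where the $\mathbb{Z}_2$-homology-cobordism hypothesis actually enters: it makes $|H_1(W)|$ odd, hence $H_1(Z)$ 2-torsion free, so Scaduto applies (and it supplies the rational homology cobordism used for the $d$-invariant transfer). Your reading of that hypothesis --- automatic for $p$ odd, problematic for $p$ even, Lisca/Aceto--Celoria--Park lists needed --- misses its real role, and the $p$-even worry you flag does not arise in the paper's argument.

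Second, your mechanism for the threshold $i(K,r,p)$ is not the right one. In the paper, which of the two intersection forms occurs is decided by whether there exists a \emph{relevant} index $i$ (a spin$^{\text{c}}$ label on $K_{r^2p}$ extending over $W$, as in Lemma \ref{mult r spinc}) with $t_i=0$: such an $i$ forces, via the analogue of Lemma \ref{8V} with the $+8$ shift, a characteristic vector of square $-(n-7)$, which is impossible in $-\mathbb{Z}^{n+1}$; conversely, when the form is $-E_8\oplus-\mathbb{Z}^{n-7}$ the short vector $(0,(1,\dots,1))$ produces a relevant index with vanishing torsion coefficient. Since $t_i=0$ exactly when $i\geq g(K)$ for an L-space knot, ``some relevant $i\geq g(K)$'' unwinds precisely to $r^2p\geq 2g(K)+r$ for $p$ odd and $r^2p\geq 2g(K)$ for $p$ even; this has nothing to do with Theorem \ref{main result 6} or the continued-fraction length of $p/q$, and the route you propose through a shifted Theorem \ref{main result 4} would not yield the sharp threshold. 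Finally, in the $E_8$ case you assert that the residual data ``is exactly'' Caudell's $E_8$-changemaker data; verifying the second (Short-vector) condition of that definition is genuine work in the paper (it uses the relevant coefficients equal to $1$ together with Caudell's bound when $\langle s,s\rangle\leq-4$), and in the diagonal case the changemaker argument must likewise be rerun with $t_i=1$ playing the role that $V_i=0$ played over $S^3$.
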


\indent Theorem \ref{poincare knot} is proved in Section 8. Note that we do not know whether the conditions in Theorem \ref{poincare knot} can be weakened.\\
\\
\textbf{Question:} Does Theorem \ref{poincare knot} generalize to knots in other integer homology spheres? One potentially interesting candidate to consider would be $\Sigma(2,3,7)$. In \cite{tange2}, Tange gave some examples of positive integer surgeries on knots in $\Sigma(2,3,7)$ yielding lens spaces.\\
\\
\textbf{Convention:} In this paper, unless specified, homology and cohomology always take integer coefficients.\\
\\
\textbf{Acknowledgements:} The author would like to thank his adviser Joshua Greene for inspiring this project and also for many helpful conversations throughout the project. The author would also like to thank Duncan McCoy for pointing out that using $V$ coefficients instead of torsion coefficients allows results to be stated for general knots in $S^3$ instead of just L-space knots in $S^3$, and thank Paolo Aceto for mentioning examples of knots in $S^3$ with 5 lensbordant surgeries.

\newpage

\section{Analyzing the Aceto-Celoria-Park map}
In \cite{ACP}, Aceto, Celoria, and Park showed that when a closed 3-manifold $Y$ is smoothly rational homology cobordant to a reduced connected sum of lens spaces $L_Y$ (see \cite[Def. 2.2]{ACP}), there is an injective map $H_1(L_Y)\rightarrow H_1(Y)$. They proved this by showing that $H_1(L_Y)$ injects into a group that $H_1(Y)$ surjects onto, and note that if $G\rightarrow G'$ is a surjective map between finite abelian groups then there exists an injection $G'\rightarrow G$.\\
\\
\indent The goal of this section is to show that this interacts well with the $\text{spin}^\text{c}$ structures (and hence the $d$-invariants).\\
\\
\indent We use the notations in \cite{ACP}. Let $W$ be the smooth rational cobordism between $L_Y$ and $Y$. Let $P$ be the canonical negative definite plumbed 4-manifold bounded by $L_Y$. Let $X:=P\cup_L W$.\\
\\
\indent Aceto, Celoria, and Park showed that the intersection forms of $P$ and $X$ are isomorphic \cite[Prop. 4.1]{ACP}, and its proof implies that the isomorphism is induced by the inclusion $P\rightarrow X$. Since $H_2(P)$ is free, we have the decomposition
$$H_2(X)=Free(H_2(X))\oplus Torsion(H_2(X))$$
\noindent with $Free(H_2(X))$ being the image of $H_2(P)\xrightarrow{i_*} H_2(X)$.

\begin{Lemma}\label{injective}
The map $H_1(L_Y)\xrightarrow{i_*}H_1(W)$ induced by inclusion is injective.
\end{Lemma}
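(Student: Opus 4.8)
The plan is to compute the order of $H_1(X)$ in two different ways and compare. Since one of the two computations identifies $H_1(X)$ with $\operatorname{coker}(i_*)$ on the nose, matching the two orders will force $\ker(i_*)=0$, which is exactly the claim. Throughout write $L=L_Y$, and recall: $H_1(P)=0$ and $H_2(P)$ is free, say of rank $n$, so by universal coefficients $H^2(P)\cong\mathbb{Z}^{n}$ and hence, by Lefschetz duality, $H_2(P,L)\cong\mathbb{Z}^{n}$ is free; $H_2(L)=0$ and $H_1(L)$ is finite because $L$ is a rational homology sphere; and $H_1(W)$ is finite because $W$ is a rational homology cobordism between rational homology spheres. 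We may take $P$, $W$, $X$, $L$ all connected.

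First I would run the long exact sequence of the pair $(X,W)$, using the excision isomorphisms $H_*(X,W)\cong H_*(P,L)$ induced by inclusion. The long exact sequence of $(P,L)$, together with $H_2(L)=0$ and $H_1(P)=0$, gives $H_1(P,L)=0$ and a short exact sequence $0\to H_2(P)\to H_2(P,L)\to H_1(L)\to 0$. Feeding these in, the sequence of $(X,W)$ reads
$$H_2(X)\xrightarrow{\;q\;}H_2(X,W)\xrightarrow{\;\partial\;}H_1(W)\xrightarrow{\;j_*\;}H_1(X)\longrightarrow H_1(X,W)=0,$$
so $j_*$ is surjective and $\ker j_*=\operatorname{image}\partial\cong H_2(X,W)/\operatorname{image}q$. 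Now I use the decomposition of $H_2(X)$ into its free part $i_*(H_2(P))$ and its torsion part recorded above: $q$ kills the torsion part (its target is free), and by naturality of the excision isomorphism $q$ restricts on $i_*(H_2(P))$ to the map $H_2(P)\to H_2(P,L)$, whose cokernel is $H_1(L)$. Hence $\operatorname{image}q$ has index $|H_1(L)|$ in $H_2(X,W)$, so $|\ker j_*|=|H_1(L)|$, and therefore $|H_1(X)|=|H_1(W)|/|H_1(L)|$.

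Second I would use the Mayer--Vietoris sequence of $X=P\cup_L W$. Since $H_2(L)=0$, $H_1(P)=0$, and $H_0(L)\to H_0(P)\oplus H_0(W)$ is injective, the relevant stretch collapses to $H_1(L)\xrightarrow{\pm i_*}H_1(W)\xrightarrow{j_*}H_1(X)\to 0$ — the first arrow is $\pm i_*$ because its $H_1(P)$-component vanishes, and the last arrow agrees with the $j_*$ from the previous paragraph. Hence $H_1(X)\cong\operatorname{coker}(i_*)$, so $|H_1(X)|=|H_1(W)|/|\operatorname{image}i_*|=|H_1(W)|\cdot|\ker i_*|/|H_1(L)|$. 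Comparing with the first computation gives $|\ker i_*|=1$, which is the lemma.

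The routine parts are the long exact sequences and the excision identifications; the step that needs genuine care is the claim that $\operatorname{image}q$ has index $|H_1(L)|$, i.e.\ that under the excision isomorphism $H_2(X,W)\cong H_2(P,L)$ the map $q$ really restricts on the free part $i_*(H_2(P))$ to the boundary map of the pair $(P,L)$. This is where \cite[Prop.~4.1]{ACP} (as recalled above) enters: it gives that $i_*\colon H_2(P)\to H_2(X)$ is injective with image a direct summand equal to the free part, and it makes the square coming from the inclusion of pairs $(P,L)\hookrightarrow(X,W)$ commute as needed. Sign and orientation conventions in Mayer--Vietoris are immaterial here since only kernels and group orders enter the argument; the rational homology cobordism hypothesis on $W$ enters only through the finiteness of $H_1(W)$ and through \cite[Prop.~4.1]{ACP}.
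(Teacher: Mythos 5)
Your argument is correct. It uses the same raw ingredients as the paper --- the excision isomorphism $H_*(X,W)\cong H_*(P,L_Y)$, the freeness of $H_2(X,W)\cong H_2(P,L_Y)\cong H^2(P)$, and the fact (extracted from \cite[Prop.~4.1]{ACP}) that $i_*(H_2(P))$ is a free complement to the torsion of $H_2(X)$ --- but assembles them along a genuinely different route. The paper stays at the $H_2$ level: from the same segment of the long exact sequence of $(X,W)$ it deduces that the torsion of $H_2(X)$ lies in the image of $H_2(W)\to H_2(X)$, hence $H_2(P)\oplus H_2(W)\to H_2(X)$ is surjective, and injectivity of $H_1(L_Y)\to H_1(W)$ then drops straight out of exactness of Mayer--Vietoris together with $H_1(P)=0$. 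You never prove that surjectivity; instead you compute $|H_1(X)|$ twice --- once as $|H_1(W)|/|H_1(L_Y)|$ from the pair $(X,W)$ (identifying $\operatorname{im}q$ with $h(H_2(P))$ via the commuting square), once as $|\operatorname{coker} i_*|$ from Mayer--Vietoris at the $H_1$ level --- and compare orders. Both are valid; what the paper's version buys is brevity and robustness, since it needs neither the finiteness of $H_1(W)$ and $H_1(L_Y)$ nor the connectivity bookkeeping ($H_1(X,W)=0$, injectivity of $H_0(L_Y)\to H_0(P)\oplus H_0(W)$) that your counting step genuinely requires (the rational homology cobordism hypothesis does supply the finiteness, as you note). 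One small attribution nit: the commutativity of the square relating $q$ to $h$ is pure naturality of the long exact sequence under the inclusion of pairs $(P,L_Y)\hookrightarrow(X,W)$ --- it is exactly the diagram the paper writes down right after this lemma --- so the only input you really need from \cite[Prop.~4.1]{ACP} is that $i_*(H_2(P))$ together with the torsion generates $H_2(X)$.
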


\begin{proof}
By excision and the neighborhood collar theorem, the inclusion $(P,L_Y)\rightarrow(X,W)$ induces isomorphisms $H_*(P,L_Y)\cong H_*(X,W)$.\\
\\
\indent Consider the long exact seqence of the pair $(X,W)$:
$$\rightarrow H_2(W)\xrightarrow{i_*}H_2(X)\rightarrow H_2(X,W)\rightarrow$$
\noindent Since $H_1(P)=0$, $H_2(X,W)\cong H_2(P,L_Y)\cong H^2(P)$ is free, and hence the image of the map $H_2(W)\xrightarrow{i_*}H_2(X)$ contains all torsion elements of $H_2(X)$. Now we consider the Mayer-Vietoris sequence
$$0\rightarrow H_2(P)\oplus H_2(W)\xrightarrow{i_*-i_*}H_2(X)\rightarrow H_1(L_Y)\xrightarrow{i_*}H_1(W)\rightarrow$$
\noindent Since the image of the map $H_2(P)\xrightarrow{i_*}H_2(X)$ is $Free(H_2(X))$ and the image of	the map $H_2(W)\xrightarrow{i_*}H_2(X)$ contains all torsion elements, we know that the map $H_2(P)\oplus H_2(W)\xrightarrow{i_*-i_*}H_2(X)$ is surjective, and hence the map $H_1(L_Y)\xrightarrow{i_*}H_1(W)$ is injective.
\end{proof}

\indent Consider the diagram

\bigskip

\begin{tikzcd}
0\arrow[r] & H_2(X)\arrow[r,"\phi"] & H_2(X,Y)\arrow[r,"b"]\arrow[d] & H_1(Y)\arrow[r] & \ \\
& & H_2(X,W) & & \\
0\arrow[r] & H_2(P)\arrow[r,"h"]\arrow[uu,"i_*"] & H_2(P,L_Y)\arrow[r]\arrow[u,"\cong"] & H_1(L_Y)\arrow[r] & 0
\end{tikzcd}

\bigskip

\noindent The two rows are the long exact sequences of the pairs $(X,Y)$ and $(P,L_Y)$. $i_*$ is induced by the inclusion $P\rightarrow X$. The map $H_2(X,Y)\rightarrow H_2(X,W)$ comes from the long exact sequence of the triple $(Y,W,X)$. The map $H_2(P,L_Y)\rightarrow H_2(X,W)$ is excision.\\
\\
\indent For notational convenience in later parts of this section, we label the map $H_2(X)\rightarrow H_2(X,Y)$ as $\phi$, the map $H_2(P)\rightarrow H_2(P,L_Y)$ as $h$, and the map $H_2(X,Y)\rightarrow H_1(Y)$ as $b$.

\begin{Lemma}
This diagram commutes.
\end{Lemma}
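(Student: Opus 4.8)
The plan is to notice that every map in the diagram except the two connecting homomorphisms --- the map $b$ and the unlabelled map $H_2(P,L_Y)\to H_1(L_Y)$ --- is induced by an inclusion of pairs of spaces, and that neither of these two connecting homomorphisms borders a square of the diagram. Consequently the whole commutativity statement should reduce to functoriality of relative homology, with no $\pm$ signs to track.

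First I would spell out the identifications, writing $H_2(Z)$ for $H_2(Z,\varnothing)$. The map $i_*$ is induced by $(P,\varnothing)\hookrightarrow(X,\varnothing)$. The map $\phi$, being the middle map in the long exact sequence of the pair $(X,Y)$, is induced by $(X,\varnothing)\hookrightarrow(X,Y)$, and likewise $h$ is induced by $(P,\varnothing)\hookrightarrow(P,L_Y)$. The map $H_2(X,Y)\to H_2(X,W)$ coming from the long exact sequence of the triple $(Y,W,X)$ is, since $Y\subset W$, induced by $(X,Y)\hookrightarrow(X,W)$. Finally, the excision isomorphism $H_2(P,L_Y)\xrightarrow{\cong}H_2(X,W)$ is --- exactly as described in the text above --- induced by the inclusion $(P,L_Y)\hookrightarrow(X,W)$.

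With these identifications, the only genuine cell of the diagram is the pentagon with vertices $H_2(P)$, $H_2(X)$, $H_2(X,Y)$, $H_2(X,W)$, and $H_2(P,L_Y)$; all other regions are degenerate, involving a zero object or one of the two connecting homomorphisms. So it remains to check that the composite $H_2(P)\xrightarrow{i_*}H_2(X)\xrightarrow{\phi}H_2(X,Y)\to H_2(X,W)$ equals $H_2(P)\xrightarrow{h}H_2(P,L_Y)\xrightarrow{\cong}H_2(X,W)$. By functoriality of relative homology, the first composite is the map induced by the chain of pair inclusions $(P,\varnothing)\hookrightarrow(X,\varnothing)\hookrightarrow(X,Y)\hookrightarrow(X,W)$ and the second is induced by $(P,\varnothing)\hookrightarrow(P,L_Y)\hookrightarrow(X,W)$. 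Both chains compose to the same map of pairs --- the inclusion $(P,\varnothing)\hookrightarrow(X,W)$, legitimate because $\varnothing\subset Y\subset W$ and $\varnothing\subset L_Y\subset W$ --- so the two maps on $H_2$ agree and the diagram commutes.

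The only step requiring any care --- the closest thing to an obstacle --- is the bookkeeping in the second paragraph: correctly recognizing which maps in the long exact sequences of the pairs $(X,Y)$ and $(P,L_Y)$ and of the triple $(Y,W,X)$ are the inclusion-induced ones (rather than connecting homomorphisms), and confirming that the ``$\cong$'' arrow of the diagram is literally the inclusion-induced map $H_2(P,L_Y)\to H_2(X,W)$; the text has already pinned down this last point. Once every map is written as induced by a map of pairs, there is nothing left to compute.
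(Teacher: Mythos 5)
Your proof is correct and takes essentially the same approach as the paper: the paper's one-line argument checks the same pentagon at the chain level, observing that a cycle $\alpha\in C_2(P)$ is sent both ways to $[\alpha]\in C_2(X)/C_2(W)$, which is exactly the chain-level form of your observation that every arrow involved is inclusion-induced and both composites come from the single inclusion of pairs $(P,\varnothing)\hookrightarrow(X,W)$.
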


\begin{proof}
Given a cycle $\alpha\in C_2(P)$, both directions map $\alpha$ to $[\alpha]\in\dfrac{C_2(X)}{C_2(W)}$, where we view $\alpha\in C_2(P)\subset C_2(X)$.\\
\end{proof}

As mentioned in the proof of Lemma \ref{injective}, $H_2(X,W)$ is free. Hence, in the map $H_2(X,Y)\rightarrow H_2(X,W)$, all torsion elements of $H_2(X,Y)$ are mapped to $0$. Hence, it induces a map
$$f:\dfrac{H_2(X,Y)}{Torsion(H_2(X,Y))}\rightarrow H_2(X,W)$$
Treating $Free(H_2(X))\cong Free(H_2(X))\oplus\{0\}$ as a subgroup of $H_2(X)$, we obtain the following new commutative diagram where everything except $H_1(L_Y)$ is free.

\bigskip

\begin{tikzcd}
0\arrow[r] & Free(H_2(X))\arrow[r,"g"] & \dfrac{H_2(X,Y)}{Torsion(H_2(X,Y))}\arrow[r]\arrow[d,"f"] & \ \\
& & H_2(X,W) & & \\
0\arrow[r] & H_2(P)\arrow[r,"h"]\arrow[uu,"\cong"] & H_2(P,L_Y)\arrow[r]\arrow[u,"\cong"] & H_1(L_Y)\arrow[r] & 0
\end{tikzcd}

\bigskip

\noindent The map on top, which we labeled as $g$, is obtained by composing $\phi|_{Free(H_2(X))}$ with the projection 
$$H_2(X,Y)\rightarrow\dfrac{H_2(X,Y)}{Torsion(H_2(X,Y))}$$
\indent Using notations in \cite[\S 4]{ACP}, let $Q_X$ be the matrix representation of the intersection form of $X$, which also represents the map $g$ under some suitable basis. Similarly, let $Q_P$ be the matrix representation of the intersection form of $P$, which also represents the map $H_2(P)\rightarrow H_2(P,L_Y)$ under some suitable basis.\\
\\
\indent Since $(H_2(P),Q_P)$ and $(H_2(X),Q_X)$ are isomorphic \cite[Prop. 4.1]{ACP}, we know that $det(Q_P)=det(Q_X)$. Since $|H_1(L_Y)|$ is finite, we know that $det(Q_P)\neq 0$. Therefore, $f$ has has determinant 1, and hence we know that $f$ is an isomorphism.\\
\\
\indent By the commutativity of the diagram, we know that the image of the map $f\circ g$ coincides with the image of the map $H_2(P)\rightarrow H_2(P,L_Y)\cong H_2(X,W)$. Hence, $f$ induces an isomorphism
$$\bar{f}:\dfrac{H_2(X,Y)}{G}\rightarrow \dfrac{H_2(P,L_Y)}{h(H_2(P))}$$
\noindent where $G$ is the subgroup of $H_2(X,Y)$ generated by elements in $\phi(Free(H_2(X)))$ and $Torsion(H_2(X,Y))$.\\
\\
\indent A homomorphism cannot map a torsion element to a non-torsion element. Therefore, the image of $Torsion(H_2(X))$ under $\phi$ lies within $Torsion(H_2(X,Y))$. Hence, $G$ is exactly the subgroup of $H_2(X,Y)$ generated by elements in $\phi(H_2(X))$ and $Torsion(H_2(X,Y))$.\\
\\
\indent We go back to our first diagram

\bigskip

\begin{tikzcd}
0\arrow[r] & H_2(X)\arrow[r,"\phi"] & H_2(X,Y)\arrow[r,"b"]\arrow[d] & H_1(Y)\arrow[r] & \ \\
& & H_2(X,W) & & \\
0\arrow[r] & H_2(P)\arrow[r,"h"]\arrow[uu,"i_*"] & H_2(P,L_Y)\arrow[r]\arrow[u,"\cong"] & H_1(L_Y)\arrow[r] & 0
\end{tikzcd}

\bigskip

\indent Let $\overline{H_1(Y)}$ be the image of $b$, which is a subgroup of $H_1(Y)$.\\
\\
\indent We see that $H_1(L_Y)\cong\dfrac{H_2(P,L_Y)}{h(H_2(P))}$ and $\dfrac{H_2(X,Y)}{\phi(H_2(X))}\cong\overline{H_1(Y)}$. Hence, we can view $\bar{f}$ as an isomorphism
$$\dfrac{\overline{H_1(Y)}}{b(Torsion(H_2(X,Y)))}\rightarrow H_1(L_Y)$$
\noindent Now we compose this with the projection
$$\overline{H_1(Y)}\rightarrow\dfrac{\overline{H_1(Y)}}{b(Torsion(H_2(X,Y)))}$$
\noindent We obtain a surjective map $\overline{H_1(Y)}\rightarrow H_1(L_Y)$ which we denote as $s$.\\
\\
\indent Now, we consider the maps $H_1(L_Y)\rightarrow H_1(W)$ and $H_1(Y)\rightarrow H_1(W)$ induced by inclusion.

\begin{Lemma} \label{triangle}
The diagram 
\begin{tikzcd}
\overline{H_1(Y)}\arrow[rd,"i_*|_{\overline{H_1(Y)}}"]\arrow[dd,"s"] & \\
 & H_1(W) \\
H_1(L_Y)\arrow[ru,"i_*"] & 
\end{tikzcd}
 commutes
\end{Lemma}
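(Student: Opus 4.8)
The plan is to verify the commutativity directly on the chain level, in the same spirit as the proof of commutativity of the large diagram above. Unwinding the definitions of $s$ and of the two inclusion-induced maps, it suffices to show the following: given $y\in\overline{H_1(Y)}$, choose a relative $2$-cycle $\alpha\in C_2(X)$ with $\partial\alpha\in C_1(Y)$ whose class $[\alpha]\in H_2(X,Y)$ satisfies $b([\alpha])=y$; then the images of $s(y)$ and of $y$ in $H_1(W)$ coincide.

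The first step is to fix a presentation of $X$ as a union of two open sets, one deformation retracting onto $P$, the other onto $W$, with intersection deformation retracting onto $L_Y$, so that after subdivision every chain of $X$ splits as a sum of a chain supported in $P$ and a chain supported in $W$. Write $\alpha=\alpha_P+\alpha_W$ accordingly, with $\alpha_P\in C_2(P)$ and $\alpha_W\in C_2(W)$. Since $\partial\alpha\in C_1(Y)\subset C_1(W)$ and $\partial\alpha_W\in C_1(W)$, we get $\partial\alpha_P=\partial\alpha-\partial\alpha_W\in C_1(P)\cap C_1(W)=C_1(L_Y)$; hence $\alpha_P$ is a relative $2$-cycle of $(P,L_Y)$, defining a class $[\alpha_P]\in H_2(P,L_Y)$. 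The triple map $H_2(X,Y)\to H_2(X,W)$ sends $[\alpha]$ to $[\alpha]=[\alpha_P]$ (as $\alpha_W\in C_2(W)$), and this class is exactly the image of $[\alpha_P]\in H_2(P,L_Y)$ under the excision isomorphism. Tracing through the definition of $\bar f$ and the identification $H_2(P,L_Y)/h(H_2(P))\cong H_1(L_Y)$, the class of $[\alpha]$ in $H_2(X,Y)/G$ is carried to $\partial_P[\alpha_P]=[\partial\alpha_P]\in H_1(L_Y)$; and by the construction of $s$ (together with $b([\alpha])=y$) this says precisely that $s(y)=[\partial\alpha_P]\in H_1(L_Y)$. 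On the other hand, $y=b([\alpha])=[\partial\alpha]=[\partial\alpha_P+\partial\alpha_W]\in H_1(Y)$.

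It then remains to compare the two images in $H_1(W)$: the composite $i_*\circ s$ sends $y$ to $i_*[\partial\alpha_P]=[\partial\alpha_P]\in H_1(W)$, while $i_*|_{\overline{H_1(Y)}}$ sends $y$ to $[\partial\alpha_P+\partial\alpha_W]\in H_1(W)$. Their difference is $[\partial\alpha_W]$, which vanishes in $H_1(W)$ because $\partial\alpha_W$ is the boundary of the $2$-chain $\alpha_W\in C_2(W)$. Hence $i_*\circ s=i_*|_{\overline{H_1(Y)}}$, which is the assertion.

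The only delicate points are bookkeeping rather than conceptual. First, one must justify the decomposition $\alpha=\alpha_P+\alpha_W$ with $C_*(P)\cap C_*(W)=C_*(L_Y)$, which requires thickening $P$ and $W$ to open neighborhoods and subdividing — the standard Mayer--Vietoris hygiene that is already implicit in the arguments earlier in this section. Second, one must check that the chain-level class $[\partial\alpha_P]$ indeed equals $s(y)$, i.e.\ that it survives faithfully through the several quotient identifications used to build $s$ (the isomorphism $b\colon H_2(X,Y)/\phi(H_2(X))\xrightarrow{\cong}\overline{H_1(Y)}$, the projection onto $H_2(X,Y)/G$, the isomorphism $\bar f$, and the identification $H_2(P,L_Y)/h(H_2(P))\cong H_1(L_Y)$). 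Both points are routine once representatives are tracked carefully; alternatively the computation could be packaged as an instance of the naturality of the connecting homomorphisms of the triples $(L_Y,W,X)$ and $(Y,W,X)$ with respect to the relevant inclusions into $W$, but the chain-level version seems the most transparent.
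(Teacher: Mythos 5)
Your proposal is correct and follows essentially the same route as the paper's own proof: pick a relative cycle $\alpha$ representing a preimage of $y$ under $b$, split it via Mayer--Vietoris as a chain in $C_2(P)$ plus a chain in $C_2(W)$, identify $s(y)$ with the class of the boundary of the $P$-part, and observe that the difference from $\partial\alpha$ is the boundary of the $W$-part, hence trivial in $H_1(W)$. The paper's version is just your argument with $\alpha=\beta-\gamma$ in place of $\alpha=\alpha_P+\alpha_W$ and with the bookkeeping you flag left implicit.
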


\begin{proof}
Let $y\in\overline{H_1(Y)}$.\\
\\
\indent Let $\alpha\in C_2(X)$ be such that $\alpha$ represents a cycle in $C_2(X,Y)$ that represents an element in $H_2(X,Y)$ that gets mapped to $y$ under $b$. By considering how the map $b$ works in the long exact sequence of a pair, we see that $y$ is represented by $\partial\alpha\in C_1(Y)\subset C_1(X)$.\\
\\
\indent By considering the chain maps being used in defining the Mayer-Vietoris sequence for $P\cup W=X$, we can write $\alpha=\beta-\gamma$, where $\beta\in C_2(P)$ and $\gamma\in C_2(W)$. Under

$$H_2(X,Y)\rightarrow H_2(X,W)\cong H_2(P,L_Y)\rightarrow H_1(L_Y)$$

\bigskip

\noindent The element $[\alpha]\in H_2(X,Y)$ is being mapped to $[\beta]\in H_2(P,L_Y)$, which is then being mapped to $[\partial\beta]\in H_1(L_Y)$, where $\partial\beta\in C_1(L_Y)\subset C_1(P)$.\\
\\
\indent Now we have $s(y)=[\partial\beta]$. From $\alpha=\beta-\gamma$, we know that $\partial\gamma=\partial\beta-\partial\alpha$. Hence, inside $C_1(W)$, $\partial\beta$ and $\partial\alpha$ differ by a boundary element, and therefore represent the same element in $H_1(W)$.
\end{proof}

\begin{Lemma}
Let $A_1,\dots,A_k$ be distinct $\text{spin}^\text{c}$ structures on $L_Y$. Then there exists distinct $\text{spin}^\text{c}$ structures $B_1,\dots,B_k$ on $Y$ such that for each $i$, $A_i$ and $B_i$ are restrictions of the same $\text{spin}^\text{c}$ structure on $W$.
\end{Lemma}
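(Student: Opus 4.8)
The plan is to use the machinery just developed, especially Lemma~\ref{triangle} and the compatibility of the Aceto–Celoria–Park injection with $\text{spin}^\text{c}$ structures. The key point is that $\text{spin}^\text{c}$ structures on a manifold with torsion-free $H_1$ form a torsor over $H^2$, and restriction maps between $\text{spin}^\text{c}$ torsors are affine over the corresponding cohomology restriction maps. So I would first fix an auxiliary $\text{spin}^\text{c}$ structure $\mathfrak{s}_W$ on $W$ and let $\mathfrak{s}_{L_Y}$, $\mathfrak{s}_Y$ be its restrictions to $L_Y$ and $Y$; then the set of all $\text{spin}^\text{c}$ structures on $L_Y$ (resp.\ $Y$) that extend over $W$ is precisely $\mathfrak{s}_{L_Y} + \operatorname{im}\bigl(H^2(W)\to H^2(L_Y)\bigr)$ (resp.\ $\mathfrak{s}_Y + \operatorname{im}\bigl(H^2(W)\to H^2(Y)\bigr)$), because the obstruction to extending a $\text{spin}^\text{c}$ structure over $W$ given its restriction lives in a relative cohomology group that, via the long exact sequence of the pair $(W,\partial W)$, is controlled by the cokernel of restriction. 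Dually, working with $H_1$ instead of $H^2$ (the manifolds are 3-dimensional, so Poincaré–Lefschetz duality converts $H^2(W) \to H^2(\partial W)$ into the $H_1$ statements already in play), the $\text{spin}^\text{c}$ structures on $Y$ extending over $W$ are a torsor over $i_*^{-1}(0) \subseteq H_1(Y)$ where $i_* : H_1(Y) \to H_1(W)$, and similarly on the $L_Y$ side.

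Next I would reduce to a purely group-theoretic statement. Given the $k$ distinct $A_i \in \text{spin}^\text{c}(L_Y)$, I want to produce $B_i \in \text{spin}^\text{c}(Y)$ with each $(A_i, B_i)$ jointly extending over $W$. Only the $A_i$ that extend over $W$ at all can possibly work, but in fact the hypothesis must implicitly restrict to those — and here is where I need to be careful; see below. Assuming each $A_i$ extends, write $A_i = A_1 + a_i$ with $a_i \in \operatorname{im}\bigl(H^2(W)\to H^2(L_Y)\bigr)$, and pick $\mathfrak{s}_W^{(i)}$ on $W$ extending $A_i$. Let $B_i$ be the restriction of $\mathfrak{s}_W^{(i)}$ to $Y$. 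By construction $(A_i,B_i)$ jointly extend. The only thing to check is that the $B_i$ are \emph{distinct}: if $B_i = B_j$ for $i \neq j$, then $\mathfrak{s}_W^{(i)}$ and $\mathfrak{s}_W^{(j)}$ restrict to the same $\text{spin}^\text{c}$ structure on $Y$, so they differ by an element of $\ker\bigl(H^2(W) \to H^2(Y)\bigr)$; I must show such an element restricts to $0$ on $L_Y$ as well, forcing $A_i = A_j$, a contradiction. This is exactly the statement that $\ker\bigl(H^2(W)\to H^2(Y)\bigr)$ maps to $0$ under $H^2(W)\to H^2(L_Y)$, equivalently (by duality and Lemma~\ref{triangle}) that the surjection $s : \overline{H_1(Y)} \to H_1(L_Y)$ is compatible with the inclusions into $H_1(W)$: an element of $H_1(W)$ coming from $Y$ but dying in $H_1(W)$-with-respect-to... — more precisely, I should run this through Lemma~\ref{triangle}, which says $i_* \circ s = i_*|_{\overline{H_1(Y)}}$, so if a class $y$ maps to $0$ in $H_1(W)$ then $s(y)$ maps to $0$ in $H_1(W)$, i.e.\ the set of ``$L_Y$ directions killed in $W$'' is the $s$-image of the ``$Y$ directions killed in $W$''.

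So the core step is: the affine-difference map $\text{spin}^\text{c}(Y) \supseteq \{\text{extends over }W\} \to \{\text{extends over }W\} \subseteq \text{spin}^\text{c}(L_Y)$, which is modeled on a quotient of $\ker(i_*: H_1(Y)\to H_1(W))$ by $\ker(i_*: H_1(Y)\to H_1(W)) \cap (\text{stuff from }b\text{-torsion})$, maps onto the analogous set on $L_Y$ — and this is precisely the surjectivity of $s$ combined with Lemma~\ref{triangle}. I would organize the write-up as: (i) recall the torsor description and the ``extends over $W$ iff difference lies in the image of restriction'' principle; (ii) translate everything to $H_1$ via duality so that Lemma~\ref{injective} and Lemma~\ref{triangle} apply verbatim; (iii) given the $A_i$, lift to $W$, restrict to $Y$, and use Lemma~\ref{triangle} (the relation $i_* \circ s = i_*|_{\overline{H_1(Y)}}$) to deduce distinctness of the $B_i$ from distinctness of the $A_i$.

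\textbf{Main obstacle.} The delicate point is bookkeeping the two torsion quotients correctly: $\text{spin}^\text{c}$ structures on $Y$ naturally form a torsor over $H^2(Y) \cong H_1(Y)$, but Lemma~\ref{triangle} only sees the subgroup $\overline{H_1(Y)} = \operatorname{im}(b)$ and only after further quotienting by $b(\operatorname{Torsion}(H_2(X,Y)))$. I expect the real work is verifying that, after passing through $W$, exactly the right subquotients match up — i.e.\ that a $\text{spin}^\text{c}$ structure on $L_Y$ extends over $W$ if and only if it lies in the image of $\text{spin}^\text{c}(W) \to \text{spin}^\text{c}(L_Y)$, and that this image, translated to $H_1$, is governed by $s$ rather than by some larger or smaller subgroup. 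Once that identification is pinned down, distinctness is a one-line consequence of $i_* \circ s = i_*|_{\overline{H_1(Y)}}$ together with injectivity of $i_*: H_1(L_Y)\to H_1(W)$ from Lemma~\ref{injective}.
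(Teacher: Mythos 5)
There is a genuine gap, and it sits exactly where you deferred it. The lemma carries no hypothesis that the $A_i$ extend over $W$: they are arbitrary distinct $\text{spin}^\text{c}$ structures on $L_Y$, and the content of the statement (which the subsequent corollary on $d$-invariants needs in full) is precisely that \emph{every} $\text{spin}^\text{c}$ structure on $L_Y$ admits an extension over $W$, with a controlled restriction to $Y$. Your remark that ``the hypothesis must implicitly restrict to those'' is a misreading, and the item you flag as the ``main obstacle'' is not bookkeeping to be checked later --- it is the heart of the argument, and you never carry it out. The paper closes it as follows: dualize the restriction $H^2(W)\to H^2(L_Y)\oplus H^2(Y)$, via the long exact sequence of $(W,\partial W)$ and Poincar\'e--Lefschetz duality, into $i_*\colon H_1(L_Y)\oplus H_1(Y)\to H_1(W)$. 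Fix one $\mathfrak{s}$ on $W$ and write $A_l=a_l\cdot\mathfrak{s}|_{L_Y}$ with $a_l\in H_1(L_Y)$. Surjectivity of $s$ gives $b_l\in\overline{H_1(Y)}$ with $s(b_l)=a_l$, and Lemma~\ref{triangle} gives $i_*(b_l)=i_*(a_l)$ in $H_1(W)$, so the pair $(a_l,b_l)$ (with the appropriate sign) lies in $\ker i_*$, hence by exactness in the image of $i^*\colon H^2(W)\to H^2(L_Y)\oplus H^2(Y)$; any preimage $c_l$ yields the common extension $c_l\cdot\mathfrak{s}$, with $B_l:=b_l\cdot\mathfrak{s}|_Y$. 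Distinctness is then immediate, since $a_l=s(b_l)$ is determined by $b_l$. Without this construction, your proposal proves at best a weaker statement restricted to extendable $A_i$, which is not the lemma.

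On the part you did argue: once extensions of the $A_i$ are granted, your distinctness step is correct, but the mechanism is not the one you cite. The needed fact $\ker\bigl(H^2(W)\to H^2(Y)\bigr)\subseteq\ker\bigl(H^2(W)\to H^2(L_Y)\bigr)$ follows from exactness together with Lemma~\ref{injective}: if $c\in H^2(W)$ restricts to zero on $Y$, then under duality the pair $\bigl(\rho_L(c),0\bigr)$ lies in $\ker\bigl(H_1(L_Y)\oplus H_1(Y)\to H_1(W)\bigr)$, so $\rho_L(c)\in\ker\bigl(H_1(L_Y)\to H_1(W)\bigr)=0$. Lemma~\ref{triangle} and the surjectivity of $s$ are not what powers this step; they are what powers the extension step you omitted. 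Note also that your route needs this separate kernel argument only because you choose arbitrary extensions $\mathfrak{s}_W^{(i)}$, whereas the paper's construction ties $B_l$ to $A_l$ through $s$ and gets distinctness for free.
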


\begin{proof}
Consider the long exact sequence

\begin{tikzcd}
 \ \arrow[r] & H^2(W)\arrow[r,"i^*"] & H^2(L_Y)\oplus H^2(Y)\arrow[r] & H^3(W,L_Y\cup Y)\arrow[r] & \ \\
 & & H_1(L_Y)\oplus H_1(Y) \arrow[r,"i_*"]\arrow[u,leftrightarrow,"\cong"] & H_1(W)\arrow[u,leftrightarrow,"\cong"] &
\end{tikzcd}

\noindent Let $\mathfrak{s}$ be a $\text{spin}^\text{c}$ structure on $W$. For any $l$, $A_l=a_l\cdot\mathfrak{s}|_{L_Y}$ for some $a_l\in H^2(L_Y)\cong H_1(L_Y)$. By Lemma \ref{triangle} and the surjectivity of $s$, we know that there exists some $b_l\in\overline{H_1(Y)}\subset H_1(Y)\cong H^2(Y)$ such that $(a_l,b_l)$ is in the kernal of $i_*$, and hence it is in the image of $i^*$. Let $c_l\in H^2(W)$ be such that $i^*(c_l)=(a_l,b_l)$. We set $B_l:=b_l\cdot\mathfrak{s}|_Y$. Then $c_l\cdot\mathfrak{s}$ restricts to $A_l$ and $B_l$ respectively. The $B_l$'s are distinct because given any $b_l\in\overline{H_1(Y)}$, we can use $s$ to find the corresponding $a_l$ in $H_1(L_Y)$.
\end{proof}

\begin{Cor}
The list of $d$-invariants of $Y$ is extended from the list of $d$-invariants of $L_Y$.
\end{Cor}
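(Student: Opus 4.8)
The plan is to combine the preceding Lemma with the standard invariance of $d$-invariants under a rational homology cobordism. First I would record the consequence of $W$ being a smooth rational homology cobordism: since $H_2(W;\mathbb{Q})=0$, we have $b_2^+(W)=b_2^-(W)=0$, and for every $\text{spin}^\text{c}$ structure $\mathfrak{s}$ on $W$ the class $c_1(\mathfrak{s})$ is torsion, so $c_1(\mathfrak{s})^2=0$. Applying Ozsv\'ath and Szab\'o's inequality relating the $d$-invariants of the two ends of a negative semi-definite cobordism to $W$, and then to the reversed cobordism $\overline{W}$ (both inputs being negative semi-definite with vanishing $b_2$), forces the equality $d(L_Y,\mathfrak{s}|_{L_Y})=d(Y,\mathfrak{s}|_Y)$ for every $\text{spin}^\text{c}$ structure $\mathfrak{s}$ on $W$.

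Next I would take $A_1,\dots,A_k$ to be the full list of $\text{spin}^\text{c}$ structures on $L_Y$, so $k=|H_1(L_Y)|$, and apply the preceding Lemma. This produces distinct $\text{spin}^\text{c}$ structures $B_1,\dots,B_k$ on $Y$ and $\text{spin}^\text{c}$ structures $\mathfrak{s}_1,\dots,\mathfrak{s}_k$ on $W$ with $\mathfrak{s}_i|_{L_Y}=A_i$ and $\mathfrak{s}_i|_Y=B_i$ for each $i$. By the previous step, $d(L_Y,A_i)=d(Y,B_i)$ for every $i$. Since the $B_i$ are distinct, the multiset $\{d(Y,B_i): 1\le i\le k\}$ is a sub-multiset of the full list of $d$-invariants of $Y$; on the other hand it equals the full list $\{d(L_Y,A_i): 1\le i\le k\}$ of $d$-invariants of $L_Y$. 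That is precisely the claim that the list of $d$-invariants of $Y$ is obtained by extending the list of $d$-invariants of $L_Y$.

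I do not expect a substantive obstacle here: the geometric and algebraic work is already contained in the earlier analysis of the Aceto--Celoria--Park map and in the preceding Lemma. The only place warranting a little care is the passage from the Ozsv\'ath--Szab\'o inequality to an \emph{equality} of $d$-invariants across $W$; this is justified by the fact that a rational homology cobordism can be run in either direction and remains negative semi-definite, so the two inequalities obtained are mutually opposite and collapse to an equality.
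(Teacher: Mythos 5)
Your argument is correct and is essentially the paper's: the paper's proof simply invokes the preceding Lemma together with the invariance of $d$-invariants under smooth rational homology cobordism, which is exactly what you do (you merely spell out the standard two-sided Ozsv\'ath--Szab\'o inequality argument behind that invariance).
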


\begin{proof}
That is because $d$-invariants are invariant under smooth rational homology cobordism.
\end{proof}

\newpage

\section{Proof of Theorem \ref{main result 1}}

\indent This section is dedicated to proving Theorem \ref{main result 1}.

\begin{theorem*}[\ref{main result 1}]
If a positive integer surgery on a knot in $S^3$ is smoothly rational homology cobordant to a reduced lens space, then that reduced lens space must also be a positive integer surgery on a knot in $S^3$.
\end{theorem*}

\indent We continue the work in Section 2, using the same notations. But now $L_Y$ is a reduced $L(p,q)$, and $Y$ is the $r^2p$-surgery on $K$. We denote the $r^2p$-surgery on $K$ as $K_{r^2p}$.\\
\\
\indent The goal of this section is to show that $L(p,q)$ is a positive integer surgery on a knot in $S^3$. To show that, we will use Greene's work on the lens space realization problem \cite[Th. 1.7]{realization}. To state the statement of \cite[Th. 1.7]{realization}, we need the following definition:

\begin{Def} \cite[Def. 1.5]{realization}\label{changemaker def}
A changemaker vector is a vector $\sigma=(\sigma_0,\dots,\sigma_n)\in\mathbb{Z}^{n+1}_{>0}$ such that for all $k\in\mathbb{Z}$ satisfying $0\leq k\leq\sigma_0+\dots+\sigma_n$, there exists some $A\subseteq\{\sigma_0,\dots,\sigma_n\}$ such that the sum of the numbers in $A$ equals to $k$.
\end{Def}

\indent Note that we requires the $\sigma$ to be in $\mathbb{Z}^{n+1}_{>0}$, which implies the entries of $\sigma$ to be all non-zero. This is consistent with \cite{realization}, but different from some other papers (for example \cite{l-space}).\\
\\
\indent In this paper, we will choose a basis in a way such that $\sigma_0\geq\dots\geq\sigma_n$, which is the opposite of the ordering used in \cite[Def. 1.5]{realization}.\\
\\
\indent Let $q'$ be such that $qq^{'}\equiv 1$ (mod $p$). Let $\Lambda(p,q)$ be the lattice coming from the intersection pairing on $P$. Recall from Section 2 that $P$ is the canonical negative definite plumbing bounded by $L_Y$, and in this section we are setting $L_Y=L(p,q)$.\\
\\
\indent Greene showed that $\Lambda(p,q)$ embeds as the orthogonal complement to a changemaker vector in $\mathbb{Z}^{n+1}$ if and only if at least one of $L(p,q)$, $L(p,q^{'})$ appears on Berge's list of lens spaces obtained from a positive integer surgery on a knot in $S^3$ \cite[Th. 1.7]{realization}.\\
\\
\indent Since $L(p,q^{'})$ is homeomorphic to $L(p,q)$ in an orientation preserving manner, to prove Theorem \ref{main result 1}, it suffices to show that $\Lambda(p,q)$ embeds as the orthogonal complement to a changemaker vector in $\mathbb{Z}^{n+1}$. This is what we will do in this section.\\
\\
\indent Similar to the set up in \cite[\S 2]{l-space}, we let $W_{r^2p}$ be the 4-manifold obtained by attaching a $r^2p$-framed 2-handle along $K\subset S^3=\partial D^4$. By construction, we have $H_1(W_{r^2p})=0$. The manifold $W_{r^2p}$ has boundary $K_{r^2p}$. Let $Z$ be the closed 4-manifold $X\cup_{K_{r^2p}} W_{-r^2p}$. $H_2(-W_{r^2p})$ is generated by the class of a surface $\Sigma$ obtained by gluing the core of the 2-handle to a Seifert surface of $K$. $\Sigma$ has the property that the intersection pairing of $[\Sigma]$ with itself gives $-r^2p$. Each $\text{spin}^\text{c}$ structure on $K_{r^2p}$ comes with a label $i\in\{0,\dots,r^2p-1\}$. This label has the property that for all $\mathfrak{s}\in\text{Spin}^\text{c}(-W_{r^2p})$ that extends the $\text{spin}^\text{c}$ structure on $K_{r^2p}$ with label $i$, we have $\langle c_1(\mathfrak{s}),[\Sigma]\rangle+r^2p\equiv 2i$ (mod $2r^2p$).\\
\\
\indent Note that we do not completely follow the notations in \cite[\S 2]{l-space}. We defined $W$ to be the cobordism in order to be consistent with \cite{ACP}, and later in this chapter we will define $\sigma$ slightly differently from \cite[\S 2]{l-space}.

\bigskip

\begin{center}
\includegraphics[width=0.5\textwidth]{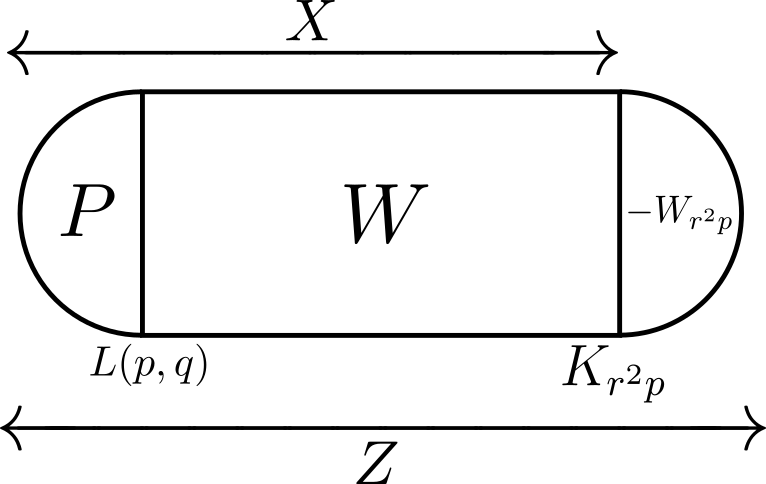}\\
Figure 1. The set up of Section 3.
\end{center}

\bigskip

\begin{Lemma} \label{primitive}
$H_2(P)$ embeds primitively in $H_2(Z)$, in the sense that for any $k\in\mathbb{Z}$ and $z\in H_2(Z)$, if $kz\in H_2(P)$, then $z$ must equals to $p+\tau$ for some $p\in H_2(P)$ and $\tau$ being a torsion element in $H_2(Z)$.
\end{Lemma}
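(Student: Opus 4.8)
The plan is to exhibit $H_2(P)$ as a primitive sublattice of the unimodular lattice $U:=H_2(Z)/Torsion(H_2(Z))$; the heart of the matter will be a divisibility computation for the class $[\Sigma]$.

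First I would record the homological algebra of $Z=X\cup_{K_{r^2p}}W_{-r^2p}$. Since $K_{r^2p}$ is a rational homology sphere, Mayer--Vietoris over $\mathbb{Q}$ gives $b_2(Z)=b_2(X)+b_2(W_{-r^2p})=n+1$, and over $\mathbb{Z}$ (using $H_2(K_{r^2p})=0$) it gives an exact sequence
\[0\to H_2(X)\oplus H_2(W_{-r^2p})\xrightarrow{\ \iota\ }H_2(Z)\to C\to 0,\qquad C\cong\ker\bigl(H_1(K_{r^2p})\xrightarrow{\ \delta\ }H_1(X)\bigr),\]
where $C$ is cyclic and $H_1(X)$ is finite (as $W$ has $b_1=0$). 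The intersection form on $U$ is unimodular because $Z$ is closed. The composite $H_2(P)\xrightarrow{i_*}H_2(X)\hookrightarrow H_2(Z)$ is injective with torsion-free image, since its image in $H_2(X)$ is the direct summand $Free(H_2(X))$; so it descends to an embedding of $\Lambda(p,q)=(H_2(P),Q_P)$ onto a negative definite sublattice $L\subseteq U$ of rank $n$ with $|\det L|=p$ (using that the inclusion preserves intersection forms and $\det Q_P=\pm p$). The Lemma is equivalent to the statement that $L$ is a primitive sublattice of $U$.

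On the lattice side, $L$ is nondegenerate of rank $n$ in the unimodular lattice $U$ of rank $n+1$, so $L^\perp=\mathbb{Z}v$ for a primitive $v$, and $L^{\perp\perp}=v^\perp\supseteq L$. Writing $t=[v^\perp:L]$, the discriminant identity for primitive sublattices of a unimodular lattice gives $|\det v^\perp|=|v\cdot v|$, hence $p=|\det L|=t^2|v\cdot v|$; thus $L$ is primitive iff $t=1$. The class $[\Sigma]$ is carried by $W_{-r^2p}$ and therefore is orthogonal to every class of $P$, so its image $\overline{[\Sigma]}\in U$ lies in $L^\perp=\mathbb{Z}v$; and $\overline{[\Sigma]}\cdot\overline{[\Sigma]}=-r^2p\neq 0$ forces $\overline{[\Sigma]}=\pm mv$ with $m^2|v\cdot v|=r^2p$, whence $m=rt$. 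I would then compute in two ways the positive generator $d$ of the image of the homomorphism $H_2(Z)\to\mathbb{Z}$, $w\mapsto[\Sigma]\cdot w$. On the one hand it factors through $U$ as $\pm m\,(v\cdot-)$, and $v\cdot-\colon U\to\mathbb{Z}$ is onto since $v$ is primitive, so $d=m=rt$. On the other hand $[\Sigma]$ is represented by a closed surface in $\mathrm{int}\,W_{-r^2p}$, so every transverse intersection with a cycle representing $w$ occurs in $W_{-r^2p}$; thus $[\Sigma]\cdot w$ equals the relative intersection number of $[\Sigma]$ with the image of $w$ in $H_2(Z,X)\cong H_2(W_{-r^2p},K_{r^2p})=\mathbb{Z}\langle c\rangle$, where $c$ is the cocore disk and $[\Sigma]\cdot c=\pm1$. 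Since $\im(H_2(Z)\to H_2(Z,X))$ is the kernel of the map $\mathbb{Z}\langle c\rangle\to H_1(X)$ sending $c$ to the image of the meridian $[\mu]$, it follows that $d$ equals the order of $\delta([\mu])$ in $H_1(X)$; in particular $rt$ is the order of $\delta([\mu])$ in $H_1(X)$.

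The main obstacle is to bound $d$ so as to force $t=1$. Here $|C|=|\ker\delta|=r^2p/d$, because $H_1(K_{r^2p})$ is cyclic of order $r^2p$ and $\delta([\mu])$ has order $d$. On the other hand $\im\iota$ maps in $U$ exactly onto $L+\mathbb{Z}\,\overline{[\Sigma]}$ (the torsion of $H_2(X)$ dies in $U$, and $Free(H_2(X))$ maps onto $L$), so $C$ surjects onto $U/(L+\mathbb{Z}\,\overline{[\Sigma]})$, whose order a direct computation in $U$ evaluates as
\[\bigl|U/(L+\mathbb{Z}\,\overline{[\Sigma]})\bigr|=[U:v^\perp\oplus\mathbb{Z}v]\cdot[v^\perp\oplus\mathbb{Z}v:L\oplus\mathbb{Z}(mv)]=|v\cdot v|\cdot(t\cdot m)=r\,(t^2|v\cdot v|)=rp.\]
Hence $rp\mid|C|=r^2p/d$, i.e. $d\mid r$, and combined with $d=rt$ this gives $t=1$, so $L$ is primitive in $U$. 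Finally, given $z\in H_2(Z)$ and a nonzero integer $k$ with $kz\in H_2(P)$, the class $k\overline z$ lies in $L$, which is saturated in $U$, so $\overline z\in L$; pulling back through the isomorphism $H_2(P)\xrightarrow{\sim}L$ yields $w\in H_2(P)$ with $z-w\in Torsion(H_2(Z))$, i.e. $z=w+\tau$ with $\tau$ torsion, which is the asserted conclusion.
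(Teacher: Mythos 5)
Your proposal is correct, but it takes a genuinely different and considerably heavier route than the paper's. The paper's proof is a few lines of homological algebra: by Lemma \ref{injective} the Mayer--Vietoris map gives $H_2(X)\cong H_2(P)\oplus H_2(W)$, and excision gives $H_3(Z,X)=0$ and $H_2(Z,X)\cong H^2(-W_{r^2p})\cong\mathbb{Z}$; so in $0\to H_2(X)\to H_2(Z)\to\mathbb{Z}$ the class $kz$ maps to $0$, and since $\mathbb{Z}$ is torsion-free so does $z$ (here, as in your write-up, $k\neq 0$ is implicitly required), whence $z\in H_2(P)\oplus H_2(W)$ with $H_2(W)$ finite. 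You instead pass to the unimodular lattice $U=H_2(Z)/\mathrm{Torsion}$, identify the image $L\cong\Lambda(p,q)$ via the Section 2 splitting of $H_2(X)$, and prove saturation by an arithmetic pincer: on the lattice side the divisibility of $\overline{[\Sigma]}$ in $U$ is $m=rt$ with $t=[v^\perp:L]$, while the surjection of $C=\ker\bigl(H_1(K_{r^2p})\to H_1(X)\bigr)$ onto $U/(L+\mathbb{Z}\overline{[\Sigma]})$, whose order is $|v\cdot v|\cdot tm=rp$, bounds that divisibility by $r$, forcing $t=1$. I checked the discriminant and index computations and the identification of the divisibility with the order of the image of the belt circle in $H_1(X)$; these are all correct (the transversality phrasing is better justified by Poincar\'e--Lefschetz duality, i.e.\ that intersection with $[\Sigma]$ factors through $H_2(Z,X)\cong H_2(W_{-r^2p},K_{r^2p})$, and nondegeneracy of $L$ guarantees $v\cdot v\neq0$, but these are routine). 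What your approach buys is extra information: without using definiteness or Donaldson's theorem, you obtain along the way that $H_2(P)$ maps onto the full orthogonal complement of $[\Sigma]$ modulo torsion and that $[\Sigma]$ is $r$ times a primitive class, which is essentially the content of Lemmas \ref{ortho comp} and \ref{mult r} (and the order counts of Lemmas \ref{size rp} and \ref{Zr}) that the paper derives afterwards from the present lemma. What it costs is length and reliance on several lattice-theoretic facts, where the paper's argument needs only the long exact sequence of the pair $(Z,X)$ and Lemma \ref{injective}.
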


\begin{proof}
Suppose $kz$ is in $H_2(P)$.\\
\\
\indent Consider the Mayer-Vietoris sequence
$$0\rightarrow H_2(P)\oplus H_2(W)\rightarrow H_2(P\cup W)\rightarrow H_1(L(p,q))\rightarrow H_1(P)\oplus H_1(W)\rightarrow\dots$$
\noindent By Lemma \ref{injective}, we know that the map $H_2(P)\oplus H_2(W)\rightarrow H_2(P\cup W)$ is an isomorphism.\\
\\
\indent Since $H_2(P)$ embeds in $H_2(P\cup W)$, we can also treat $kz$ as an element of $H_2(P\cup W)$. Now, consider the exact sequence of a pair
$$\dots\rightarrow H_3(Z,P\cup W)\rightarrow H_2(P\cup W)\rightarrow H_2(Z)\rightarrow H_2(Z,P\cup W)\rightarrow\dots$$
\noindent By excision, we have
$$H_3(Z,P\cup W)\cong H_3(-W_{r^2p},K_{r^2p})\cong H^1(-W_{r^2p})\cong 0$$
\noindent Also by excsion, we have
$$H_2(Z,P\cup W)\cong H_2(-W_{r^2p},K_{r^2p})\cong H^2(-W_{r^2p})\cong\mathbb{Z}$$
\noindent So, the exact sequence simlpifies to
$$0\rightarrow H_2(P\cup W)\rightarrow H_2(Z)\rightarrow\mathbb{Z}\rightarrow\dots$$
\noindent Since $kz$ is in $H_2(P\cup W)$, it gets mapped to $0$ under the map $H_2(Z)\rightarrow\mathbb{Z}$. Since $\mathbb{Z}$ is free, $z$ also gets mapped to $0$ under the map $H_2(Z)\rightarrow\mathbb{Z}$. Hence, $z\in H_2(P\cup W)$.\\
\\
\indent Since $H_2(P)\oplus H_2(W)\rightarrow H_2(P\cup W)$ is an isomorphism, we must have $z=p+\tau$ for some $p\in H_2(P)$ and $\tau\in H_2(W)$.
\end{proof}

\begin{Lemma} \label{ortho comp}
In the intersection form of $Z$, $H_2(P)$ is the orthogonal copmlement of $H_2(W\cup -W_{r^2p})$.
\end{Lemma}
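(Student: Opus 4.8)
The plan is to write $Z=P\cup_{L_Y}N$ with $N:=W\cup_{K_{r^2p}}(-W_{r^2p})$, a compact oriented $4$-manifold whose only boundary component is a copy of $L_Y$, and to compare $Q_Z$ with the intersection forms of the two pieces $P$ and $N$. First I would prove that $\im\big(H_2(P)\to H_2(Z)\big)$ is orthogonal to $\im\big(H_2(N)\to H_2(Z)\big)$: a class coming from $H_2(P)$ is represented by a $2$-cycle carried by $P$, which can be pushed off $\partial P$ into $\operatorname{int}(P)$ along a collar $L_Y\times[0,1]\subset P$, while any class coming from $H_2(N)$ is represented by a cycle in $\operatorname{int}(N)$; since these interiors are disjoint, the algebraic intersection number vanishes. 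Thus $\im\big(H_2(P)\to H_2(Z)\big)\subseteq\im\big(H_2(N)\to H_2(Z)\big)^{\perp}$, and since torsion classes pair trivially under $Q_Z$, also $\operatorname{Tors}\big(H_2(Z)\big)\subseteq\im\big(H_2(N)\to H_2(Z)\big)^{\perp}$.

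Next I would fix the size of the orthogonal complement by a rank count. The Mayer--Vietoris sequence of $Z=P\cup_{L_Y}N$ with rational coefficients, using $H_k(L_Y;\mathbb{Q})=0$ for $k>0$, gives $H_2(Z;\mathbb{Q})=\im\big(H_2(P;\mathbb{Q})\to H_2(Z;\mathbb{Q})\big)\oplus\im\big(H_2(N;\mathbb{Q})\to H_2(Z;\mathbb{Q})\big)$ with both maps injective, so $b_2(Z)=b_2(P)+b_2(N)$; a separate Mayer--Vietoris computation for $N=W\cup(-W_{r^2p})$, using that $W$ is a rational homology cobordism and $K_{r^2p}$ is a rational homology sphere, gives $b_2(N)=1$. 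Since $Z$ is closed and oriented, $Q_Z$ is nondegenerate over $\mathbb{Q}$, so the rational orthogonal complement of $\im\big(H_2(N;\mathbb{Q})\to H_2(Z;\mathbb{Q})\big)$ has dimension $b_2(Z)-b_2(N)=b_2(P)$; combined with the containment from the first step, this identifies it with $\im\big(H_2(P;\mathbb{Q})\to H_2(Z;\mathbb{Q})\big)$.

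Finally I would pass back to $\mathbb{Z}$. Given $z\in H_2(Z)$ orthogonal to $\im\big(H_2(N)\to H_2(Z)\big)$, the previous step shows that the image of $z$ in $H_2(Z;\mathbb{Q})$ lies in the $\mathbb{Q}$-span of $\im\big(H_2(P)\to H_2(Z)\big)$, so a nonzero multiple of $z$ equals an integral class from $H_2(P)$ plus a torsion class of $H_2(Z)$; multiplying once more by the order of that torsion class, some nonzero multiple of $z$ lies in $\im\big(H_2(P)\to H_2(Z)\big)$, and Lemma \ref{primitive} then gives $z=z'+\tau$ with $z'\in\im\big(H_2(P)\to H_2(Z)\big)$ and $\tau$ torsion. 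Together with the reverse containment this shows the orthogonal complement of $H_2(W\cup -W_{r^2p})$ in $Q_Z$ is $\im\big(H_2(P)\to H_2(Z)\big)+\operatorname{Tors}\big(H_2(Z)\big)$, which is the asserted identification once one remembers that the intersection form of a closed oriented $4$-manifold vanishes on torsion. The one point requiring care is exactly this torsion bookkeeping in the last step: the orthogonal complement agrees with the image of $H_2(P)$ only modulo $\operatorname{Tors}(H_2(Z))$, and it is the primitivity statement of Lemma \ref{primitive}, together with nondegeneracy of $Q_Z$, that pins down the integral answer; the geometric orthogonality and the Mayer--Vietoris rank count are routine.
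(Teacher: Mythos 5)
Your proof is correct and follows essentially the same route as the paper's: the paper likewise combines the Mayer--Vietoris sequence for $Z=P\cup_{L(p,q)}(W\cup -W_{r^2p})$ (giving a full-rank embedding of $H_2(P)\oplus H_2(W\cup -W_{r^2p})$ into $H_2(Z)$ since $H_1(L(p,q))$ is finite), the geometric orthogonality of the two pieces, and Lemma \ref{primitive} to identify $H_2(P)$ as the orthogonal complement. Your write-up simply makes explicit the rational rank count, the nondegeneracy of $Q_Z$, and the torsion bookkeeping that the paper's three-line proof leaves implicit.
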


\begin{proof}
From the Mayer-Vietoris sequence
$$0\rightarrow H_2(P)\oplus H_2(W\cup -W_{r^2p})\rightarrow H_2(Z)\rightarrow H_1(L(p,q))\rightarrow\dots$$
\noindent We know that $H_2(P)\oplus H_2(W\cup -W_{r^2p})\rightarrow H_2(Z)$ is a full rank embedding. Since $H_2(P)$ and $H_2(W\cup -W_{r^2p})$ are orthogonal to each other, Lemma \ref{primitive} implies that $H_2(P)$ is the orthogonal complement of $H_2(W\cup -W_{r^2p})$.
\end{proof}

\begin{Lemma} \label{sur tor}
The image of the map $H_2(W)\rightarrow H_2(W\cup -W_{r^2p})$ induced by inclusion is exactly the torsion subgroup of $H_2(W\cup -W_{r^2p})$.
\end{Lemma}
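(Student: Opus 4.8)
The plan is to compute $Torsion\bigl(H_2(W\cup -W_{r^2p})\bigr)$ from the long exact sequence of the pair $\bigl(W\cup -W_{r^2p},\,W\bigr)$, after identifying the relative homology with that of $-W_{r^2p}$ by excision. Throughout write $M:=W\cup_Y(-W_{r^2p})$ with $Y=K_{r^2p}$, and let $\iota_W\colon H_2(W)\to H_2(M)$ be the map induced by inclusion; the claim is that $\im(\iota_W)=Torsion(H_2(M))$.

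For one inclusion I would argue as follows. Since $W$ is a rational homology cobordism between rational homology spheres, the inclusion $Y\hookrightarrow W$ is a rational homology equivalence, so $H_2(W;\mathbb{Q})\cong H_2(Y;\mathbb{Q})=0$. Hence $H_2(W)$ is finite, so $\im(\iota_W)$ is finite, and therefore $\im(\iota_W)\subseteq Torsion(H_2(M))$.

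The reverse inclusion is the heart of the matter, and I would deduce it from the statement that $H_2(M)/\im(\iota_W)$ is torsion-free. To see this, consider the long exact sequence of the pair $(M,W)$, namely $\cdots\to H_3(M,W)\to H_2(W)\xrightarrow{\iota_W}H_2(M)\to H_2(M,W)\to\cdots$, which identifies $H_2(M)/\im(\iota_W)$ with a subgroup of $H_2(M,W)$. By the same excision-and-collar argument used in the proof of Lemma~\ref{injective} — now applied to the inclusion $(-W_{r^2p},Y)\hookrightarrow(M,W)$ — one has $H_*(M,W)\cong H_*(-W_{r^2p},Y)$, and since $Y=\partial(-W_{r^2p})$, Lefschetz duality gives $H_k(-W_{r^2p},Y)\cong H^{4-k}(-W_{r^2p})$. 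Because $-W_{r^2p}$ has the homology of $D^4$ with a single $2$-handle attached, $H_1(-W_{r^2p})=0$, so by the universal coefficient theorem $H^2(-W_{r^2p})$ is torsion-free (in fact $\cong\mathbb{Z}$) and $H^1(-W_{r^2p})=0$. Thus $H_2(M,W)\cong H^2(-W_{r^2p})$ is torsion-free, hence $H_2(M)/\im(\iota_W)$ is torsion-free, so every torsion class in $H_2(M)$ maps to $0$ there and therefore lies in $\im(\iota_W)$. Combined with the previous paragraph this gives $\im(\iota_W)=Torsion(H_2(M))$. (The vanishing $H_3(M,W)\cong H^1(-W_{r^2p})=0$ also shows $\iota_W$ is injective, though that is not needed here.)

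I do not anticipate a genuine obstacle. The only points requiring care are making the excision isomorphism $H_*(M,W)\cong H_*(-W_{r^2p},Y)$ precise and invoking Lefschetz duality with the correct boundary piece (here the whole of $\partial(-W_{r^2p})$); both are routine and directly parallel to arguments already carried out in Section~2. Orientation conventions (so that $M$ is an honest oriented cobordism) should also be pinned down, but they are irrelevant to the homological computation above since $H_1(-W_{r^2p})=0$ in any case.
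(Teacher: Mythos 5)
Your proposal is correct and follows essentially the same route as the paper: the long exact sequence of the pair $(W\cup -W_{r^2p},W)$, excision to $H_2(-W_{r^2p},K_{r^2p})\cong H^2(-W_{r^2p})$, torsion-freeness from $H_1(-W_{r^2p})=0$, and finiteness of $H_2(W)$ for the opposite inclusion. Your extra details (justifying finiteness of $H_2(W)$ via the rational homology cobordism and spelling out the duality) are fine and consistent with the paper's argument.
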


\begin{proof}
By excision and the neighborhood collar theorem, we have $H_2(W\cup -W_{r^2p},W)\cong H_2(-W_{r^2p},K_{r^2p})\cong H^2(-W_{r^2p})$, which has no torsion because $H_1(-W_{r^2p})=0$.\\
\\
\indent Consider the exact sequence of a pair
$$\rightarrow H_2(W)\rightarrow H_2(W\cup -W_{r^2p})\rightarrow H_2(W\cup -W_{r^2p},W)\rightarrow$$
\noindent Since $H_2(W\cup -W_{r^2p},W)$ is free, the image of the map $H_2(W)\rightarrow H_2(W\cup -W_{r^2p})$ must contains all torsion elements. Since $H_2(W)$ is finite, the image is exactly the torsion subgroup.
\end{proof}

\begin{Lemma} \label{size rp}
The map $H_1(L(p,q))\oplus H_1(K_{r^2p})\rightarrow H_1(W)$ induced by the inclusion $L(p,q)\cup K_{r^2p}\rightarrow W$ has kernal of size $rp$ and image of size $rp$.
\end{Lemma}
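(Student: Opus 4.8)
The plan is to read everything off the long exact sequence of the pair $(W,\partial W)$, using Lefschetz duality to control the relative groups. First note that $\partial W = L(p,q)\sqcup K_{r^2p}$, so the map in the statement is precisely the inclusion--induced map $j\colon H_1(\partial W)\to H_1(W)$; since both boundary components are rational homology $3$--spheres we have $H_2(\partial W)=0$ and $|H_1(\partial W)| = p\cdot r^2p = r^2p^2$. Because $W$ is a rational homology cobordism between rational homology spheres, $H_*(W;\mathbb{Q})\cong H_*(S^3;\mathbb{Q})$, so $H_1(W)$ and $H_2(W)$ are finite and $H_3(W)\cong\mathbb{Z}$ (it has rank $1$ and is torsion--free, being $\cong H^1(W,\partial W)$). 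Since $|\ker j|\cdot|\im j| = |H_1(\partial W)| = r^2p^2$, it suffices to prove $|\ker j| = |\im j|$; each will then automatically equal $rp$.

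To that end I would run the sequence
$$H_2(\partial W)\to H_2(W)\xrightarrow{\alpha} H_2(W,\partial W)\xrightarrow{\partial} H_1(\partial W)\xrightarrow{j} H_1(W)\xrightarrow{\beta} H_1(W,\partial W)\to H_0(\partial W)\to H_0(W)$$
and compute its two ends. On the left, $H_2(\partial W)=0$ makes $\alpha$ injective, so $\ker j = \im\partial \cong H_2(W,\partial W)/H_2(W)$; Lefschetz duality together with universal coefficients gives $H_2(W,\partial W)\cong H^2(W)\cong\operatorname{Ext}(H_1W,\mathbb{Z})\cong H_1(W)$ (the $\operatorname{Hom}(H_2W,\mathbb{Z})$ summand vanishes as $H_2W$ is finite), whence $|\ker j| = |H_1(W)|/|H_2(W)|$. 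On the right, $H_1(W,\partial W)\cong H^3(W)\cong\mathbb{Z}\oplus H_2(W)$ (using $H_3W\cong\mathbb{Z}$ and $\operatorname{Ext}(H_2W,\mathbb{Z})\cong H_2W$), while $\operatorname{coker}\beta\cong\ker\big(H_0(\partial W)\to H_0(W)\big)\cong\mathbb{Z}$ because $\partial W$ has exactly two components and $W$ is connected; as $H_1(W)$ is finite, $\im\beta$ is contained in the torsion subgroup $H_2(W)$ of $\mathbb{Z}\oplus H_2(W)$, and the cokernel being $\mathbb{Z}$ forces $\im\beta = H_2(W)$, so $|\im j| = |\ker\beta| = |H_1(W)|/|H_2(W)|$. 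Thus $|\ker j| = |\im j| = |H_1(W)|/|H_2(W)| =: m$ and $m^2 = r^2p^2$, giving $m = rp$.

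The computation is routine homological algebra; the one point that needs a little care is making sure the two ends of the sequence genuinely match, which is exactly what the dualities $H_2(W,\partial W)\cong H^2(W)$ and $H_1(W,\partial W)\cong H^3(W)$ accomplish — they turn the long exact sequence of $(W,\partial W)$ into a self-dual one. The bookkeeping I would watch most carefully is the free rank of $H^3(W)$ and of $\operatorname{coker}\beta$, since those two copies of $\mathbb{Z}$ are precisely what makes the orders come out symmetric, and the fact that $\partial W$ has exactly two components. (An alternative route, closer to the machinery already set up in this section, would be to apply Mayer--Vietoris to $W\cup_{K_{r^2p}}(-W_{r^2p})$ and combine Lemma~\ref{sur tor} with the self-intersection $[\Sigma]^2=-r^2p$ to compute $|\im(H_1(K_{r^2p})\to H_1(W))|$ directly; but one would still need to account for the $H_1(L(p,q))$ summand, so the pair sequence is tidier.)
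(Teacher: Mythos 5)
Your proof is correct and follows essentially the same route as the paper: the long exact sequence of the pair $(W,\partial W)$, Poincar\'e--Lefschetz duality together with the universal coefficient theorem to identify $H_2(W,\partial W)\cong H_1(W)$ and $H_1(W,\partial W)\cong\mathbb{Z}\oplus H_2(W)$, and the count $|\ker j|=|\im j|=|H_1(W)|/|H_2(W)|$ forcing both to equal $rp$. You in fact spell out the cokernel bookkeeping at the right end of the sequence more explicitly than the paper does, but the argument is the same.
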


\begin{proof}
Consider the exact sequence of a pair

$$
\begin{aligned}
&0\rightarrow H_2(W)\rightarrow H_2(W,L(p,q)\cup K_{r^2p})\rightarrow H_1(L(p,q)\cup K_{r^2p})\rightarrow H_1(W)\\
&\rightarrow H_1(W,L(p,q)\cup K_{r^2p})\rightarrow\mathbb{Z}^2\rightarrow\dots
\end{aligned}
$$

\bigskip

\noindent Using Poincar\'{e}-Lefschetz duality and universal coefficient theorem, we get

$$
\begin{aligned}
&0\rightarrow H_2(W)\rightarrow H_1(W)\rightarrow H_1(L(p,q))\oplus H_1(K_{r^2p})\rightarrow H_1(W)\\
&\rightarrow H_2(W)\oplus\mathbb{Z}\rightarrow\mathbb{Z}^2\rightarrow\dots
\end{aligned}
$$

\bigskip

\noindent which simplifies to
$$0\rightarrow H_2(W)\rightarrow H_1(W)\rightarrow H_1(L(p,q))\oplus H_1(K_{r^2p})\rightarrow H_1(W)\rightarrow H_2(W)\rightarrow 0$$
\noindent Hence, the map $H_1(L(p,q))\oplus H_1(K_{r^2p})\rightarrow H_1(W)$ has kernal of size $\dfrac{|H_1(W)|}{|H_2(W)|}$ and image of size $\dfrac{|H_1(W)|}{|H_2(W)|}$. Since the kernal and the image has the same size, their sizes must both be $\sqrt{|H_1(L(p,q))\oplus H_1(K_{r^2p})|}=rp$.
\end{proof}

\begin{Lemma} \label{Zr}
The kernel of the map $H_1(K_{r^2p})\rightarrow H_1(W)$ induced by inclusion is isomorphic to $\mathbb{Z}_r$ (cyclic group of order $r$).
\end{Lemma}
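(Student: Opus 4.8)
The plan is to combine the homological information already established with the explicit structure of $H_1$ of surgered manifolds. We know $H_1(K_{r^2p}) \cong \mathbb{Z}_{r^2p}$ and $H_1(L(p,q)) \cong \mathbb{Z}_p$. From Lemma \ref{size rp}, the image of $H_1(L(p,q)) \oplus H_1(K_{r^2p}) \to H_1(W)$ has size $rp$. The key extra input should be that the map $H_1(L(p,q)) \to H_1(W)$ is \emph{injective}: this follows from Lemma \ref{injective}, since that map factors as $H_1(L(p,q)) = H_1(L_Y) \xrightarrow{i_*} H_1(W)$, where $L_Y = L(p,q)$. So the image of the $L(p,q)$-factor alone already has size $p$.

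First I would fit the two restriction maps together. Since $L(p,q)$ and $K_{r^2p}$ are the two boundary components of $W$, and $W$ is a \emph{rational homology cobordism} between them, the images of $i_*\colon H_1(L(p,q)) \to H_1(W)$ and $i_*\colon H_1(K_{r^2p}) \to H_1(W)$ should in fact coincide (both equal the ``diagonal'' image, as is standard for rational homology cobordisms; alternatively, one argues that each inclusion induces a surjection onto the same subgroup after tensoring with $\mathbb{Q}$ and tracks torsion, or invokes that $W$ is built from $X = P \cup_L W$ together with the surgery picture so that the long exact sequence of $(W, L(p,q) \cup K_{r^2p})$ pins down the common image). Granting this, the image of $H_1(K_{r^2p}) \to H_1(W)$ has size $p$, so by the first isomorphism theorem the kernel has size $\lvert \mathbb{Z}_{r^2p}\rvert / p = r^2$.

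Wait — that gives $r^2$, not $r$, so the images cannot literally coincide when $r > 1$; the correct statement must be that the image of $H_1(K_{r^2p})$ has size $pr$. So I would instead argue as follows: by Lemma \ref{size rp} the combined image has size $rp$; by Lemma \ref{injective} the $L(p,q)$-summand contributes a subgroup of size $p$; and the image of $H_1(K_{r^2p})$ must therefore be a subgroup of $H_1(W)$ sandwiched so that, together with the order-$p$ subgroup, it generates an order-$rp$ group. The cleanest route is to note that $H_1(W)$ has a subgroup isomorphic to $H_1(L(p,q)) \cong \mathbb{Z}_p$ by Lemma \ref{injective}, and that the image of $H_1(K_{r^2p}) \cong \mathbb{Z}_{r^2p}$ is cyclic; a cyclic subgroup of order $pr$ containing the full $\mathbb{Z}_p$ is exactly what is forced, and then the kernel of $\mathbb{Z}_{r^2p} \twoheadrightarrow \mathbb{Z}_{pr}$ is $\mathbb{Z}_r$. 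One must verify the image has order exactly $pr$ (not a divisor): the lower bound comes from the image containing $\mathbb{Z}_p$ and from the exact-sequence computation in Lemma \ref{size rp} forcing the two summands' images to together have order $rp$ with the $K$-image cyclic; the upper bound is automatic.

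The main obstacle I anticipate is pinning down the image of $H_1(K_{r^2p}) \to H_1(W)$ precisely rather than just up to the order constraints — i.e. ruling out that the $L(p,q)$-image and the $K_{r^2p}$-image overlap in the ``wrong'' way. I expect this is handled by a careful diagram chase in the long exact sequence of $(W, L(p,q) \sqcup K_{r^2p})$ used in Lemma \ref{size rp}, combined with the injectivity from Lemma \ref{injective}: injectivity of the $L(p,q)$-restriction means the kernel of the combined map maps injectively to the $K_{r^2p}$-summand, so the kernel is a subgroup of $\mathbb{Z}_{r^2p}$ of order $rp$... which again is not $r$. This tension tells me the right bookkeeping is that $\ker\bigl(H_1(K_{r^2p}) \to H_1(W)\bigr)$ is cyclic of order $r$ because the combined kernel of order $rp$ from Lemma \ref{size rp} projects onto $H_1(L(p,q))$ of order $p$ (surjectively, by a snake-lemma argument using that $H_1(L(p,q)) \to H_1(W)$ is injective so every relation is witnessed on the $K$ side), leaving a kernel of order $rp/p = r$ on the $K_{r^2p}$-factor; that this order-$r$ group is cyclic is immediate since it is a subgroup of the cyclic group $\mathbb{Z}_{r^2p}$.
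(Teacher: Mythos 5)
Your bookkeeping at the end is structurally the right shape, but it hinges on one claim that you assert rather than prove, and that claim is exactly the nontrivial content of the lemma: that the image of $H_1(L(p,q))\rightarrow H_1(W)$ is contained in the image of $H_1(K_{r^2p})\rightarrow H_1(W)$ (equivalently, that the order-$rp$ kernel from Lemma \ref{size rp} projects \emph{onto} $H_1(L(p,q))$). Your stated reasons --- injectivity from Lemma \ref{injective} plus the order count from Lemma \ref{size rp}, or a ``snake-lemma argument using that $H_1(L(p,q))\to H_1(W)$ is injective so every relation is witnessed on the $K$ side'' --- do not yield this. Indeed, all of those constraints are consistent with a scenario such as $H_1(W)\supseteq\mathbb{Z}_p\oplus\mathbb{Z}_r$ where the $L(p,q)$-image is $\mathbb{Z}_p\oplus 0$ and the $K_{r^2p}$-image is $0\oplus\mathbb{Z}_r$: the $L(p,q)$ map is injective, the combined image has order $rp$ and the combined kernel has order $rp$, yet the kernel of $H_1(K_{r^2p})\rightarrow H_1(W)$ has order $rp$, not $r$. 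So injectivity and the counting lemma alone cannot prove Lemma \ref{Zr}; your earlier phrasing ``a cyclic subgroup of order $pr$ containing the full $\mathbb{Z}_p$ is exactly what is forced'' is likewise unsupported.

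The missing input is supplied by Section 2 and is what the paper's proof actually cites: the surjectivity of the map $s\colon\overline{H_1(Y)}\rightarrow H_1(L_Y)$ together with the commutativity of the triangle in Lemma \ref{triangle}. Given $a\in H_1(L(p,q))$, surjectivity of $s$ produces $y\in\overline{H_1(Y)}\subseteq H_1(K_{r^2p})$ with $s(y)=a$, and commutativity gives $i_*(a)=i_*(y)$ in $H_1(W)$; hence the $L(p,q)$-image lies inside the $K_{r^2p}$-image. From there the paper concludes as you would: the combined image equals the $K_{r^2p}$-image, so by Lemma \ref{size rp} that image has order $rp$, the kernel has order $r^2p/(rp)=r$, and it is cyclic because $H_1(K_{r^2p})$ is. To repair your proposal you need to import this containment from Lemma \ref{triangle} (or reprove the Aceto--Celoria--Park-style comparison of Section 2); it cannot be extracted from Lemmas \ref{injective} and \ref{size rp} alone.
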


\begin{proof}
Due to the surjectivitiy of the map labeled as ``$s$'' in Lemma \ref{triangle}, we know that the image of the map $H_1(L(p,q))\rightarrow H_1(W)$ is contained in the image of the map $H_1(K_{r^2p})\rightarrow H_1(W)$. Hence, the image of the map $H_1(L(p,q))\oplus H_1(K_{r^2p})\rightarrow H_1(W)$ is the same as the image of the map $H_1(K_{r^2p})\rightarrow H_1(W)$.\\
\\
\indent Therefore, by Lemma \ref{size rp}, the map $H_1(K_{r^2p})\rightarrow H_1(W)$ has an image of size $rp$ and a kernal of size $\dfrac{r^2p}{rp}=r$. Since $H_1(K_{r^2p})$ is cyclic, the kernal is a cyclic group of order $r$.
\end{proof}

\indent Recall that $H_2(-W_{r^2p})$ is generated by $[\Sigma]$. The follow Lemma is about the generator of the free part of $H_2(W\cup -W_{r^2p})$:

\begin{Lemma} \label{mult r}
The map $H_2(-W_{r^2p})\rightarrow H_2(W\cup -W_{r^2p})$ induced by inclusion maps $[\Sigma]$ to $r$ times a generator of the free part of $H_2(W\cup -W_{r^2p})$ plus a torsion element (can be $0$).
\end{Lemma}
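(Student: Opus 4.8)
The plan is to read everything off the Mayer--Vietoris sequence for the decomposition $W\cup_{K_{r^2p}}(-W_{r^2p})$, combined with Lemma \ref{Zr} and Lemma \ref{sur tor}. Write $M:=H_2(W\cup -W_{r^2p})$. Since $K_{r^2p}$ is a rational homology sphere we have $H_2(K_{r^2p})=0$, and $H_1(-W_{r^2p})=0$ by construction, so the sequence contains
$$0\to H_2(W)\oplus H_2(-W_{r^2p})\xrightarrow{\ j\ }M\to H_1(K_{r^2p})\to H_1(W).$$
By exactness $j$ is injective, and $M/\operatorname{im}(j)$ is isomorphic to the image of $M\to H_1(K_{r^2p})$, which is $\ker\bigl(H_1(K_{r^2p})\to H_1(W)\bigr)\cong\mathbb{Z}_r$ by Lemma \ref{Zr}.

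The next step is to identify $\operatorname{im}(j)$ precisely. The summand $H_2(-W_{r^2p})$ is infinite cyclic on $[\Sigma]$ while $H_2(W)$ is finite, and by Lemma \ref{sur tor} the image of the first summand $H_2(W)$ inside $M$ is exactly $T:=\operatorname{Torsion}(M)$. Hence $\operatorname{im}(j)=T+\mathbb{Z}\cdot v$, where $v\in M$ denotes the image of $[\Sigma]$ under $H_2(-W_{r^2p})\to M$.

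Finally I would kill the torsion. Since $T\subseteq\operatorname{im}(j)$ and $M/T\cong\mathbb{Z}$, the third isomorphism theorem gives
$$\mathbb{Z}_r\;\cong\;M/\operatorname{im}(j)\;\cong\;(M/T)\big/\bigl(\operatorname{im}(j)/T\bigr)\;=\;(M/T)\big/\mathbb{Z}\bar{v},$$
where $\bar v$ is the class of $v$ in $M/T$. As $M/T$ is free, $\bar v\neq 0$, so $\bar v=k\,x_0$ for some $k\neq 0$ and some generator $x_0$ of $M/T$, and the displayed isomorphism forces $\mathbb{Z}/k\mathbb{Z}\cong\mathbb{Z}_r$, i.e. $|k|=r$. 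Choosing a splitting $M\cong(M/T)\oplus T$, replacing $x_0$ by $-x_0$ if necessary so that $k=r$, and letting $g$ be the image of $x_0$ under the splitting, we get $v-rg\in T$, i.e. $v=rg+\tau$ with $g$ a generator of the free part of $M$ and $\tau\in T$, as claimed. The one point deserving care is the input from Lemma \ref{sur tor} — that $\operatorname{im}(j)$ really contains all of $\operatorname{Torsion}(M)$ — without which the quotient $M/\operatorname{im}(j)$ could not be computed by first passing to $M/T$; everything else is elementary diagram chasing and finite abelian group arithmetic.
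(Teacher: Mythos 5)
Your proposal is correct and follows essentially the same route as the paper: the Mayer--Vietoris sequence for $W\cup_{K_{r^2p}}(-W_{r^2p})$ simplified via Lemma \ref{Zr}, combined with Lemma \ref{sur tor} to identify the torsion, is exactly the paper's argument, and you merely make explicit the finite-abelian-group bookkeeping that the paper compresses into ``we conclude.'' (The parenthetical justification ``as $M/T$ is free, $\bar v\neq 0$'' is not quite the right reason, but nonvanishing of $\bar v$ follows immediately from the finiteness of the quotient $\mathbb{Z}_r$, so nothing is lost.)
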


\begin{proof}
Consider the Mayer-Vietoris sequence
$$0\rightarrow H_2(W)\oplus H_2(-W_{r^2p})\rightarrow H_2(W\cup -W_{r^2p})\rightarrow H_1(K_{r^2p})\rightarrow H_1(W)\rightarrow\dots$$
\noindent By Lemma \ref{Zr}, we can simplify this into
$$0\rightarrow H_2(W)\oplus H_2(-W_{r^2p})\rightarrow H_2(W\cup -W_{r^2p})\rightarrow\mathbb{Z}_r\rightarrow 0$$
\noindent Combining this with Lemma \ref{sur tor}, we conclude that $H_2(-W_{r^2p})\rightarrow H_2(W\cup -W_{r^2p})$ has to map $[\Sigma]$ to $r$ times a generator of the free part of $H_2(W\cup -W_{r^2p})$ plus a torsion element (can be $0$).
\end{proof}

\indent Let $\sigma\in H_2(W\cup -W_{r^2p})$ be a generator of the free part such that $[\Sigma]=r\sigma+torsion$.\\
\\
\indent Recall that the intersection pairing of $[\Sigma]$ with itself gives $-r^2p$. So we have
$$\langle\sigma,\sigma\rangle=\dfrac{1}{r^2}\langle[\Sigma],[\Sigma]\rangle=\dfrac{-r^2p}{r^2}=-p$$
\noindent Since $P$ is negative definite, by Lemma \ref{ortho comp}, we know that $Z$ is negative definite. By Donaldson's Theorem \cite{donaldson}, we know that the intersection pairing on $Z$ is $-\mathbb{Z}^{n+1}$, where $n$ is the rank of $H_2(P)$. Moreover, under the identification $H_2(Z)\cong-\mathbb{Z}^{n+1}$, the image of the first Chern class map $c_1:\text{Spin}^\text{c}(Z)\rightarrow H^2(Z)\cong H_2(Z)\cong-\mathbb{Z}^{n+1}$ is exactly the set of characteristic vectors
$$\text{Char}(-\mathbb{Z}^{n+1}):=\{v\in-\mathbb{Z}^{n+1}|\forall w\in-\mathbb{Z}^{n+1} \ \langle v,w\rangle\equiv\langle w,w\rangle\text{ (mod 2)}\}$$
\noindent where $H^2(Z)\cong H_2(Z)$ is Poincar\'e duality.\\
\\
\indent In any orthonormal basis, the set of characteristic vectors of $-\mathbb{Z}^{n+1}$ is exactly the set of vectors with all entries being odd.\\
\\
\indent Now we can prove the following:

\begin{Lemma} \label{mult r spinc}
A $\text{spin}^\text{c}$ structure on $K_{r^2p}$ extends to $W$ if and only if the label $i$ on the $\text{spin}^\text{c}$ structure satisfies $i\equiv\begin{cases}\dfrac{r}{2}\quad(mod \ r)\text{   if }r\text{ is even and }p\text{ is odd}\\0\quad(mod \ r)\text{   otherwise}\end{cases}$.
\end{Lemma}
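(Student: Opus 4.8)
The plan is to pass to the closed $4$-manifold $Z$ and detect which $\mathrm{spin}^c$ structures on $K_{r^2p}$ extend over $W$ by means of the first Chern class, Donaldson's theorem, and the labelling convention on $K_{r^2p}$.

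\emph{Step 1: reduction to $Z$.} Since $H_1(P)=0$ and $H_1(-W_{r^2p})=0$, Lefschetz duality gives $H^3(P,L(p,q))=0$ and $H^3(-W_{r^2p},K_{r^2p})=0$, so the restriction maps $\mathrm{Spin}^c(P)\to\mathrm{Spin}^c(L(p,q))$ and $\mathrm{Spin}^c(-W_{r^2p})\to\mathrm{Spin}^c(K_{r^2p})$ are onto. Using this I would show that, for a $\mathrm{spin}^c$ structure $\mathfrak t$ on $K_{r^2p}$, the following are equivalent: (a) $\mathfrak t$ extends over $W$; (b) $\mathfrak t$ extends over $X=P\cup W$; (c) there is $\mathfrak s\in\mathrm{Spin}^c(Z)$ restricting to $\mathfrak t$ on the separating hypersurface $K_{r^2p}$. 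Indeed (a)$\Leftrightarrow$(b) by capping (resp. uncapping) an extension of the induced structure on $L(p,q)$ over $P$, and (b)$\Leftrightarrow$(c) by gluing an extension over $X$ to an extension over $-W_{r^2p}$ (they agree on $K_{r^2p}$), or by restricting.

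\emph{Step 2: Chern class computation.} By Lemma \ref{ortho comp} and Donaldson's theorem, $H_2(Z)/\mathrm{tors}\cong-\mathbb{Z}^{n+1}$, $\im(c_1)$ is the full set of characteristic vectors (all coordinates odd), and $\langle\sigma,\sigma\rangle=-p$. One first checks that $\sigma$ is primitive in $\mathbb{Z}^{n+1}$: running the exact sequence of the pair $(Z,W\cup -W_{r^2p})$ exactly as in Lemma \ref{primitive}, but with $W\cup -W_{r^2p}$ in place of $P$, one has $H_3(Z,W\cup -W_{r^2p})\cong H^1(P)=0$ and $H_2(Z,W\cup -W_{r^2p})\cong H_2(P,L(p,q))\cong H^2(P)$ is free, so $\im\!\big(H_2(W\cup -W_{r^2p})\to H_2(Z)\big)$ is a direct summand and its rank-one free part $\mathbb{Z}\sigma$ spans a rank-one summand of $H_2(Z)/\mathrm{tors}$. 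Hence $\gcd(\sigma_0,\dots,\sigma_n)=1$, and since $\sum_j\sigma_j^2=p$ we also get $|\sigma|_1:=\sum_j\sigma_j\equiv p\pmod 2$. Now for $\mathfrak s\in\mathrm{Spin}^c(Z)$ with $c_1(\mathfrak s)=(c_0,\dots,c_n)$ (all $c_j$ odd): since $[\Sigma]=r\sigma+(\text{torsion})$ and torsion pairs trivially, $\langle c_1(\mathfrak s),[\Sigma]\rangle=-r\sum_j c_j\sigma_j$, so applying the labelling property to $\mathfrak s|_{-W_{r^2p}}$ together with naturality of $c_1$ and of the intersection pairing, the label $i$ of $\mathfrak s|_{K_{r^2p}}$ satisfies $2i\equiv r^2p-r\sum_j c_j\sigma_j\pmod{2r^2p}$.

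\emph{Step 3: arithmetic.} Combining Steps 1 and 2, the labels extending over $W$ are exactly those $i\in\{0,\dots,r^2p-1\}$ admitting odd $c_j$ with $2i\equiv r^2p-r\sum_j c_j\sigma_j\pmod{2r^2p}$. Since $\gcd(\sigma_j)=1$, the set $\{\sum_j c_j\sigma_j:c_j\text{ odd}\}$ is precisely $\{m\in\mathbb{Z}:m\equiv|\sigma|_1\equiv p\pmod 2\}$, which is symmetric under $m\mapsto -m$; so the condition becomes: there exists $m\equiv p\pmod 2$ with $2i\equiv r(rp+m)\pmod{2r^2p}$. Noting that $rp+m$ is even exactly when $r$ is odd or $p$ is even, a short case split (writing $rp+m=2l$ or $2l+1$) gives $i\equiv r/2\pmod r$ when $r$ is even and $p$ is odd, and $i\equiv 0\pmod r$ otherwise; in every case one gets $rp$ admissible labels, consistent with Lemma \ref{size rp}. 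For the converse one realizes a given admissible congruence by some characteristic vector (possible because $\im(c_1)$ is all of $\mathrm{Char}(-\mathbb{Z}^{n+1})$), producing $\mathfrak s\in\mathrm{Spin}^c(Z)$ whose restriction to $K_{r^2p}$ necessarily carries the prescribed label.

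\emph{Main obstacle.} Steps 1 and 3 are routine (a naturality bookkeeping and an elementary parity analysis). The point requiring the most care is transporting the labelling convention — which is phrased in terms of $-W_{r^2p}$ — to $\mathrm{Spin}^c(Z)$, i.e. checking that restriction, gluing, and $\langle c_1(-),[\Sigma]\rangle$ are all compatible with the inclusions $P,W,-W_{r^2p}\hookrightarrow Z$ and that possible torsion in $H_2(Z)$ is harmless (it is, since $[\Sigma]$ pairs trivially with torsion). The only other non-formal input is the primitivity of $\sigma$, which is a verbatim repeat of the pair-sequence argument already used for $H_2(P)$.
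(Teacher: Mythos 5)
Your argument is correct, and it reaches the statement by a partly different route than the paper. The forward implication is the same in both: pass to $Z$ (the paper makes the same remark that triviality of $H_1(P)$ and $H_1(-W_{r^2p})$ lets one replace $W$ by $Z$), use Donaldson's theorem and $[\Sigma]=r\sigma+\text{torsion}$, and read off the parity of $\langle c_1(\mathfrak{s}),\sigma\rangle+rp$ from the characteristic condition $\langle c_1(\mathfrak{s}),\sigma\rangle\equiv\langle\sigma,\sigma\rangle=-p\pmod 2$. Where you diverge is the converse. The paper never realizes labels directly: it counts, showing via Lemma \ref{size rp} (which rests on Lemma \ref{injective}) that exactly $rp$ $\text{spin}^\text{c}$ structures on $K_{r^2p}$ extend over $W$, observes that exactly $rp$ residues satisfy the stated congruence, and concludes that the one proved inclusion must be an equality. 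You instead prove each admissible label is realized: you establish that $\sigma$ is primitive in $H_2(Z)/\mathrm{tors}$ by rerunning the pair-sequence argument of Lemma \ref{primitive} with $W\cup -W_{r^2p}$ in place of $P$ (this is sound: $H_3(Z,W\cup -W_{r^2p})\cong H_3(P,L(p,q))=0$ and $H_2(Z,W\cup -W_{r^2p})\cong H^2(P)$ is free, so the image of $H_2(W\cup -W_{r^2p})$ is a summand), deduce $\gcd(\sigma_j)=1$ so that $\{\sum_j c_j\sigma_j : c_j\text{ odd}\}$ is the full parity class of $p$, and then use surjectivity of $c_1$ onto $\mathrm{Char}(-\mathbb{Z}^{n+1})$ to build the extension. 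Each approach buys something: the paper's counting argument needs no primitivity statement about $\sigma$ and leans on the cobordism homology already computed, while yours is constructive at the level of $Z$, makes the "if" direction independent of Lemma \ref{size rp} (you use it only as a consistency check), and isolates a primitivity fact about $\sigma$ that the paper only obtains implicitly much later (a changemaker has a unit entry, but Lemma \ref{is changemaker} comes after this point, so your topological derivation is the right one here). Your one extra obligation — that torsion classes pair trivially so the label of $\mathfrak{s}|_{K_{r^2p}}$ is computed by the free part of $c_1(\mathfrak{s})$ — is handled the same way the paper handles it in Lemma \ref{8V}, so there is no gap.
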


Note that since $P$ and $-W_{r^2p}$ have trivial $H_1$, a $\text{spin}^\text{c}$ structure on on $K_{r^2p}$ extends to $W$ if and only if it extends to $Z$. So, in the statement of Lemma \ref{mult r spinc}, we can always replace the $W$ with $Z$.

\begin{proof}[Proof of Lemma \ref{mult r spinc}]
Consider the map $H_1(L(p,q))\oplus H_1(K_{r^2p})\rightarrow H_1(W)$ mentioned in Lemma \ref{size rp}. By Lemma \ref{size rp}, it has a kernal of size $rp$. Since the map $H_1(L(p,q))\rightarrow H_1(W)$ is injective, the kernal of $H_1(L(p,q))\oplus H_1(K_{r^2p})\rightarrow H_1(W)$ cannot contain two different elements with the same $H_1(K_{r^2p})$ component. Hence, the $rp$ elements in the kernal of $H_1(L(p,q))\oplus H_1(K_{r^2p})\rightarrow H_1(W)$ all have different $H_1(K_{r^2p})$ components.\\
\\
\indent Therefore, by applying Poincar\'{e}-Lefschetz duality on the exact sequence of a pair
$$\dots\rightarrow H_2(W,L(p,q)\cup K_{r^2p})\rightarrow H_1(L(p,q))\oplus H_1(K_{r^2p})\rightarrow H_1(W)\rightarrow\dots$$
\noindent we can conclude that the map $H^2(W)\rightarrow H^2(K_{r^2p})$ has image of size $rp$. Therefore there are exactly $rp$ $\text{spin}^\text{c}$ structures on $K_{r^2p}$ that extend to $W$.\\
\\
\indent Notice that there are exactly $rp$ possible values of $i \ (mod \ r^2p)$ satisfying
\begin{equation}\label{eq:1}\tag{1}
i\equiv\begin{cases}\dfrac{r}{2}\quad(mod \ r)\text{   if }r\text{ is even and }p\text{ is odd}\\0\quad(mod \ r)\text{   otherwise}\end{cases}
\end{equation}
\noindent Therefore, to prove Lemma \ref{mult r spinc}, it suffices to show that a $\text{spin}^\text{c}$ structure on $K_{r^2p}$ extends to $W$ implies that its label $i$ satisfies (\ref{eq:1}). By the definition of the labeling, this is equivalent to showing that for any $\text{spin}^\text{c}$ structure $\mathfrak{s}$ on $Z$,
$$\langle c_1(\mathfrak{s}),[\Sigma]\rangle+r^2p\equiv\begin{cases}r\quad(mod \ 2r)\text{   if }r\text{ is even and }p\text{ is odd}\\0\quad(mod \ 2r)\text{   otherwise}\end{cases}$$
\noindent Since $\langle c_1(\mathfrak{s}),[\Sigma]\rangle+r^2p=r(\langle c_1(\mathfrak{s}),\sigma\rangle+rp)$, we only need to figure out whether $\langle c_1(\mathfrak{s}),\sigma\rangle+rp$ is odd or even.\\
\\
\indent Since $c_1(\mathfrak{s})$ is a characteristic vector, we have
$$\langle c_1(\mathfrak{s}),\sigma\rangle\equiv\langle \sigma,\sigma\rangle=-p\quad(mod \ 2)$$
\noindent Hence, $\langle c_1(\mathfrak{s}),\sigma\rangle+rp$ is odd when $r$ is even and $p$ is odd, and it is even otherwise.
\end{proof}

\indent By \cite[Th. 1.7]{realization} and Lemma \ref{ortho comp}, to prove Theorem \ref{main result 1}, we only need to show that $\sigma$ embeds as a changemaker vector in the lattice $-\mathbb{Z}^{n+1}$.\\
\\
\indent By choosing suitable orthonormal basis, we let $\sigma=(\sigma_0,\dots,\sigma_n)$ with
$$0\leq\sigma_n\leq\dots\leq\sigma_0$$
\indent For every knot $K$ in $S^3$, based on work by Rasmussen \cite{rasmussen} and also Ni and Wu \cite{NiWu}, there is a sequence of non-negative integers $V_i(K)$ satisfying the following properties:
\begin{itemize}
\item $\forall i\geq 0$, $V_i(K)-V_{i+1}(K)\in\{0,1\}$
\item $V_{g(K)}(K)=0$, where $g(K)$ is the genus of the knot
\item For all positive integer $k$, for all $i$ with $0\leq i\leq k-1$, we have
$$-2V_{\text{min}\{i,k-i\}}=d(K_k,i)-d(U_k,i)$$
where $K_k$ is the $k$-surgery on $K$, and $U_k$ is the $k$-surgery on the unknot (which is $L(k,k-1)$). The $i$ behind denotes the $\text{spin}^\text{c}$ structure with label $i$, and the $d$ is the $d$-invariant.
\end{itemize}

\noindent The last property implies that for $0\leq i\leq\dfrac{r^2p}{2}$, we have
$$-2V_i(K)=d(K_{r^2p},i)-d(U_{r^2p},i)$$
\indent The smooth concordance invariant $\nu^+(K)$ is the smallest $i$ such that $V_i(K)=0$. It is a lower bound of the 4-ball genus of $K$ \cite[Prop. 2.4]{HomWu}. When $K$ is an L-space knot, $\nu^+$ coincides with the genus of $K$ and these $V$ coefficients coincide with the torsion coefficients that can be computed from the Alexander polynomial.\\
\\
\indent For notational simplicity, we often drop the $(K)$ and only write $V_i$ and $\nu^+$.\\
\\
\indent We now proceed to prove that $\sigma$ is a changemaker vector. Our proof will follow the argument in \cite[\S2-3]{l-space} that shows that $\sigma$ is a changemaker. The only difference is that we use the $V$ coefficients instead of torsion coefficients because we are not assuming $K$ to be an L-space knot, and there are some other minor changes because of the $r$ factor coming from our cobordism which did not exist in \cite[\S2-3]{l-space}.\\
\\
\indent Based on Lemma \ref{mult r spinc}, we make the following definition:

\begin{Def} \label{relevant}
We say that the coefficient $V_i$ is a \textit{relevant coefficient} if the index satisfies $0\leq i\leq\dfrac{r^2p}{2}$ and
$$i\equiv\begin{cases}\dfrac{r}{2}\quad(mod \ r)\text{   if }r\text{ is even and }p\text{ is odd}\\0\quad(mod \ r)\text{   otherwise}\end{cases}$$
\noindent In that case, we say that the index $i$ is \textit{relevant}.
\end{Def}

\begin{Lemma} \label{8V}
Let $i$ be relevant. Then
$$-8V_i=\max_{\substack{\mathfrak{c}\in Char(-\mathbb{Z}^{n+1}) \\ \langle\mathfrak{c},[\Sigma]\rangle+r^2p=2i}}(\mathfrak{c}^2+(n+1))$$
\end{Lemma}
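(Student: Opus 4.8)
The plan is to reduce the statement to the standard $d$-invariant formula for connected sums/surgeries and the identification of $H_2(Z)$ with $-\mathbb{Z}^{n+1}$. Recall that $Z = X \cup_{K_{r^2p}} W_{-r^2p}$ is closed negative definite with intersection form $-\mathbb{Z}^{n+1}$, and $Z$ is built from the 2-handle cobordism $-W_{r^2p}$ (from $K_{r^2p}$ to $S^3$) together with $X = P \cup_{L(p,q)} W$. The key point is that $d(K_{r^2p}, i) = d(Y, B_i)$ for the $\mathrm{spin}^{\mathrm c}$ structure $B_i$ on $Y = K_{r^2p}$ with label $i$, by the earlier Corollary on $d$-invariants being preserved under rational homology cobordism — wait, more precisely $Y = K_{r^2p}$ itself, and the $d$-invariants of $Y$ we want to compute.

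**First** I would recall the Ozsváth–Szabó inequality for the 2-handle cobordism. For any $\mathrm{spin}^{\mathrm c}$ structure $\mathfrak{t}$ on the negative definite $-W_{r^2p}$ (which reverses orientation so that, glued on, it is negative definite) extending the label-$i$ structure on $K_{r^2p}$, one has
$$
d(K_{r^2p}, i) \geq \frac{c_1(\mathfrak{t})^2 + b_2(-W_{r^2p})}{4} + d(S^3) = \frac{c_1(\mathfrak{t})^2 + 1}{4},
$$
and the precise statement from Ni–Wu / Rasmussen is that $d(K_{r^2p},i) - d(U_{r^2p},i) = -2V_{\min\{i, r^2p-i\}}$, combined with the fact that for the unknot the analogous maximum over characteristic covectors of $-\mathbb{Z}$ is sharp. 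So $-8V_i = 4\bigl(d(K_{r^2p},i) - d(U_{r^2p},i)\bigr)$, and I need to show the right-hand side equals $\max_{\mathfrak{c}}(\mathfrak{c}^2 + (n+1))$ where the max is over $\mathfrak{c} \in \mathrm{Char}(-\mathbb{Z}^{n+1})$ with $\langle \mathfrak{c}, [\Sigma]\rangle + r^2p = 2i$.

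**The main step** is the computation via $Z$. Since $Z = X \cup W_{-r^2p}$ is closed negative definite, removing a small ball gives a cobordism argument: $X$ (rational homology cobordism $W$ capped by the plumbing $P$, which bounds $L(p,q)$, a $d$-invariant-computable space) gives, by the $d$-invariant surgery formula applied through $Z$, that for a relevant $i$,
$$
d(K_{r^2p}, i) - d(U_{r^2p}, i) = \frac{1}{4}\max_{\substack{\mathfrak{c}\in\mathrm{Char}(-\mathbb{Z}^{n+1})\\ \langle\mathfrak{c},[\Sigma]\rangle + r^2p = 2i}} \bigl(\mathfrak{c}^2 + (n+1)\bigr).
$$
This follows because the $d$-invariant of a connected-sum-type / surgery decomposition along a negative definite filling equals the maximum of $\tfrac14(c_1^2 + \mathrm{rk})$ over characteristic vectors with the prescribed restriction — the relevant references being Ozsváth–Szabó's rational surgery formula and the identification in \cite{l-space} of the label-$i$ structures with the linear-functional constraint $\langle \mathfrak{c}, [\Sigma]\rangle + r^2p = 2i$, which here picks up the factor $r$ exactly as in the proof of Lemma~\ref{mult r spinc}. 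One combines: (a) $d$-invariants are rational-homology-cobordism invariants (the Corollary in Section 2), so $d(Y,\cdot)$ only sees $L(p,q)$ plus $P$; (b) $d(L(p,q))$ computed via $P$ matches $d(U_{r^2p})$ shifted appropriately; (c) $L$-space-type sharpness so the inequality is an equality.

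**The hard part** will be bookkeeping the $\mathrm{spin}^{\mathrm c}$-structure correspondence precisely: matching the label $i$ on $K_{r^2p}$ (defined via $\langle c_1(\mathfrak{s}),[\Sigma]\rangle + r^2p \equiv 2i \pmod{2r^2p}$) with characteristic vectors $\mathfrak{c}$ on $-\mathbb{Z}^{n+1} = H_2(Z)$ satisfying the \emph{exact} equation $\langle \mathfrak{c}, [\Sigma]\rangle + r^2p = 2i$ rather than just a congruence, and verifying that restricting the max to relevant $i$ (so that $\langle \mathfrak{c},\sigma\rangle + rp$ has the correct parity, by Lemma~\ref{mult r spinc}) is exactly what makes the characteristic-vector count work out. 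I would handle this by following \cite[\S 2--3]{l-space} essentially verbatim, inserting the factor $[\Sigma] = r\sigma + \text{torsion}$ wherever a single generator appeared before; the torsion term is harmless since pairing a characteristic (hence free-part) vector against it vanishes. The sharpness of the inequality — that the max is actually achieved — uses that $L(p,q)$ is an $L$-space and the plumbing $P$ is sharp in the sense of Ozsváth–Szabó, which is standard for negative-definite plumbings bounding $L$-spaces.
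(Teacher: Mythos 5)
Your proposal follows essentially the same route as the paper: decompose along the closed negative definite $Z$, use sharpness of the plumbing $P$ together with the comparison to the unknot trace, invoke rational homology cobordism invariance of $d$ and the Ni--Wu identity, identify extendable $\text{spin}^\text{c}$ structures via Lemma \ref{mult r spinc}, and upgrade the congruence on $\langle\mathfrak{c},[\Sigma]\rangle$ to an exact equality by the standard argument from \cite{l-space} and \cite[Lemma 2.9]{mccoy}; your observation that the torsion part of $[\Sigma]=r\sigma+\text{torsion}$ pairs trivially with $c_1$ is also correct and is exactly what makes the $r$-factor bookkeeping go through.

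Two points should be corrected when you write this up. First, your recalled inequality $d(K_{r^2p},i)\geq\tfrac{c_1(\mathfrak{t})^2+1}{4}+d(S^3)$ for the piece $-W_{r^2p}$ has the wrong direction (for a negative definite cobordism the $d$-invariant of the outgoing end is bounded \emph{below}, which here yields $c_1(\mathfrak{t})^2+1\leq-4d(K_{r^2p},i)$, not a lower bound on $d(K_{r^2p},i)$); more importantly, the ingredient actually needed, and the one the paper uses, is the unknot comparison $c_1(\mathfrak{s}^{-W_{r^2p}}_i)^2+1\leq-4d(U_{r^2p},i)$ with equality attainable, which is a statement about the labels and the trace of surgery on the unknot rather than an Ozsv\'ath--Szab\'o bound applied to $K$ itself (you do gesture at this with the unknot sharpness remark, so this is a fixable slip rather than a structural problem). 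Second, in the congruence-to-equality step the half of relevance that does the work is $0\leq i\leq\tfrac{r^2p}{2}$: replacing $\mathfrak{c}$ by $\mathfrak{c}'=\mathfrak{c}+2m[\Sigma]$ changes $\mathfrak{c}^2$ by $4r^2pm^2+(8i-4r^2p)m$, and it is precisely the range condition that forces this quadratic to be nonnegative for integer $m$; the mod-$r$ parity condition in the definition of relevance plays a different role, namely extendability over $W$ (Lemma \ref{mult r spinc}), which is what guarantees the maximum is actually achieved. Neither issue derails the plan, but both need to be stated correctly for the proof to close.
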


\begin{proof}[Proof of Lemma \ref{8V}]
Let $i$ be relevant. There is a $\text{spin}^\text{c}$ structure on $K_{r^2p}$ with label $i$. By Lemma \ref{mult r spinc}, this $\text{spin}^\text{c}$ structure extends to some $\text{spin}^\text{c}$ structure $\mathfrak{s}^W_i$ on $W$. Let $\mathfrak{s}^{L(p,q)}_i$ be the restriction of $\mathfrak{s}^W_i$ on $L(p,q)$. (Warning: In the literature, there is a convention of labeling $\text{spin}^\text{c}$ structures on lens spaces with numbers $0,\dots,p-1$. The label of $\mathfrak{s}^{L(p,q)}_i$ under such a convention may not necessarily be $i$.)\\
\\
\indent Now we recreate \cite[Lemma 2.3]{l-space}, but under the context of this paper. Since the canonical negative plumbing of a lens space is always sharp (see \cite[Lemma 2.1]{l-space}), the argument in the proof of \cite[Lemma 2.3]{l-space} implies that for any $\text{spin}^\text{c}$ structure $\mathfrak{s}^P_i$ on $P$ that extends $\mathfrak{s}^{L(p,q)}_i$, we have
$$c_1(\mathfrak{s}^P_i)^2+n\leq 4d(\mathfrak{s}^{L(p,q)}_i)$$
\noindent And for any $\text{spin}^\text{c}$ structure $\mathfrak{s}^{-W_{r^2p}}_i$ on $-W_{r^2p}$ that extends $(K_{r^2p},i)$, we have
$$c_1(\mathfrak{s}^{-W_{r^2p}}_i)^2+1\leq -4d(U_{r^2p},i)$$
\noindent Furthermore, $\mathfrak{s}^P_i$ and $\mathfrak{s}^{-W_{r^2p}}_i$ can be chosen such that equality holds. Adding these two inequalities together, we have
$$c_1(\mathfrak{s}^P_i)^2+c_1(\mathfrak{s}^{-W_{r^2p}}_i)^2+(n+1)\leq 4d(\mathfrak{s}^{L(p,q)}_i)-4d(U_{r^2p},i)$$
\indent Now we look at the cobordism $W$ that exists in this paper but not in \cite{l-space}. Since the $d$-invariant is invariant under smooth rational homology cobordism, we know that
$$d(\mathfrak{s}^{L(p,q)}_i)=d(K_{r^2p},i)$$
\noindent Also, since $L(p,q)$ and $K_{r^2p}$ are rational homology spheres and $H_2(W)$ is finite, we have
$$c_1(\mathfrak{s})^2=c_1(\mathfrak{s}^P_i)^2+c_1(\mathfrak{s}^{-W_{r^2p}}_i)^2$$
\noindent Hence, for any $\text{spin}^\text{c}$ structure $\mathfrak{s}$ on $Z$ that extends $(K_{r^2p},i)$, we have
$$c_1(\mathfrak{s})^2+(n+1)\leq 4d(K_{r^2p},i)-4d(U_{r^2p},i)=-8V_i$$
\noindent and for any relevant $i$, $\mathfrak{s}$ can be chosen such that equality holds.\\
\\
\indent Due to the way the labeling of $\text{spin}^\text{c}$ structures on $K_{r^2p}$ works, we know that $\text{spin}^\text{c}$ structure $\mathfrak{s}$ on $Z$ that extends $(K_{r^2p},i)$ are exactly the ones that satisfies
\begin{equation}\label{eq:2}\tag{2}
\langle c_1(\mathfrak{s}),[\Sigma]\rangle+r^2p\equiv 2i\text{ (mod }2r^2p\text{)}
\end{equation}
Hence, we conclude that
\begin{equation}\label{eq:3}\tag{3}
-8V_i=\max_{\substack{\mathfrak{c}\in Char(-\mathbb{Z}^{n+1}) \\ \langle\mathfrak{c},[\Sigma]\rangle+r^2p\equiv 2i\text{ (mod }2r^2p\text{)}}}(\mathfrak{c}^2+(n+1))
\end{equation}
\indent (\ref{eq:3}) is very close to the statement of Lemma \ref{8V}. The only difference is that in Lemma \ref{8V}, it says ``$=2i$'' instead of ``$\equiv 2i$ (mod $2r^2p$)'' under the $\max$.\\
\\
\indent To finish the proof of Lemma \ref{8V}, it suffices to show that for any $\mathfrak{c}=c_1(\mathfrak{s})\in Char(-\mathbb{Z}^{n+1})$ satisfying (\ref{eq:2}), there exists some $\mathfrak{c}'$ with ${\mathfrak{c}'}^2\geq\mathfrak{c}^2$ satisfying
$$\langle \mathfrak{c}',[\Sigma]\rangle+r^2p=2i$$
\noindent To do this, we apply the argument in the proof of \cite[Lemma 2.9]{mccoy}. Let $\mathfrak{c}=c_1(\mathfrak{s})\in Char(-\mathbb{Z}^{n+1})$ satisfying (\ref{eq:2}). Then
$$\langle\mathfrak{c},[\Sigma]\rangle+r^2p=2i+2mr^2p$$
\noindent for some $m\in\mathbb{Z}$.\\
\\
\indent Let $\mathfrak{c}':=\mathfrak{c}+2m[\Sigma]$. Then we have
$$\langle\mathfrak{c}',[\Sigma]\rangle+r^2p=2i+2mr^2p+2m\langle[\Sigma],[\Sigma]\rangle=2i$$
\noindent And also 
\begin{align*}
{\mathfrak{c}'}^2&=\mathfrak{c}^2+4m\langle\mathfrak{c},[\Sigma]\rangle+4m^2\langle[\Sigma],[\Sigma]\rangle\\
&=\mathfrak{c}^2+4m(2i+2mr^2p-r^2p)+4m^2(-r^2p)\\
&=\mathfrak{c}^2+4r^2pm^2+(8i-4r^2p)m
\end{align*}
\noindent Viewing $4r^2pm^2+(8i-4r^2p)m$ as a quadratic in $m$, the roots are 0 and $(1-\dfrac{2i}{r^2p})$. Since $i$ is relevant, we have $0\leq 2i\leq r^2p$. Hence, there is no integers strictly between $0$ and $(1-\dfrac{2i}{r^2p})$. Therefore, since $m$ is an integer, we have $4r^2pm^2+(8i-4r^2p)m\geq 0$, and hence ${\mathfrak{c}'}^2\geq\mathfrak{c}^2$.
\end{proof}

\begin{Lemma} \label{is changemaker}
$\sigma$ is a changemaker vector.
\end{Lemma}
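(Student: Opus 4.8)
The plan is to adapt the proof that $\sigma$ is a changemaker from Greene's lens space realization paper and from \cite[\S2-3]{l-space}, replacing torsion coefficients by the $V_i$ coefficients throughout and keeping track of the extra factor of $r$ coming from the cobordism $W$. Recall that $\sigma \in -\mathbb{Z}^{n+1}$ is the generator of the free part with $\langle \sigma,\sigma\rangle = -p$ and $[\Sigma] = r\sigma + \text{torsion}$. Writing $\sigma = (\sigma_0,\dots,\sigma_n)$ in a fixed orthonormal basis with $\sigma_0 \geq \dots \geq \sigma_n \geq 0$, I first want to observe that $\sigma_0 + \dots + \sigma_n = |\sigma|_1$ is related to $p$ and that none of the $\sigma_i$ can be zero: indeed $H_2(P)$ is the orthogonal complement of $\sigma$ (after quotienting torsion, by Lemma \ref{ortho comp}), and if some $\sigma_i = 0$ then the corresponding standard basis vector would lie in $H_2(P)$, but $P$ is the canonical \emph{reduced} negative-definite plumbing, whose intersection lattice contains no vector of square $-1$; this contradiction forces all entries positive.

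The heart of the argument is the "changemaker" inequality, which I would extract from Lemma \ref{8V}. For each relevant index $i$ we have
$$-8V_i = \max_{\substack{\mathfrak{c}\in \mathrm{Char}(-\mathbb{Z}^{n+1}) \\ \langle\mathfrak{c},[\Sigma]\rangle + r^2p = 2i}} (\mathfrak{c}^2 + (n+1)).$$
Since $[\Sigma] = r\sigma + \tau$ with $\tau$ torsion and $\mathfrak{c}$ is characteristic (hence in the free part's language $\mathfrak{c}$ has all odd entries), $\langle \mathfrak{c}, [\Sigma]\rangle = r\langle \mathfrak{c},\sigma\rangle$, so the constraint becomes $r\langle\mathfrak{c},\sigma\rangle + r^2p = 2i$, i.e. $\langle\mathfrak{c},\sigma\rangle = \frac{2i}{r} - rp$. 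Writing $\sigma\cdot\sigma = -p$ and using that $\mathfrak{c}^2 \le -(n+1)$ always (negative definite), one sees $V_0 = 0$ forces, with $i = 0$ relevant when $r$ is not of the special parity, that the all-odd characteristic vector $\mathfrak{c}$ closest to the hyperplane $\langle\mathfrak{c},\sigma\rangle = -rp$ achieves $\mathfrak{c}^2 = -(n+1)$, i.e. $\mathfrak{c} = (\pm 1,\dots,\pm 1)$; this is exactly the condition that there is a sign vector with $\sum \pm\sigma_j = -p$, equivalently $\sum$ over some subset equals... Following Greene, the standard move is: pick the relevant index $i$ corresponding to the partial sum $\sigma_0 + \dots + \sigma_{j-1}$ one wants to realize, choose $\mathfrak{c}$ with entries $-1$ in positions $0,\dots,j-1$ and $+1$ afterward (or the reverse), check this $\mathfrak{c}$ satisfies the congruence defining relevance via a parity computation like the one in Lemma \ref{mult r spinc}, and then the inequality $\mathfrak{c}^2 + (n+1) \le -8V_i \le 0$ together with the optimality statement shows that for this $i$ the maximizing $\mathfrak{c}$ forces $\sigma_j \le \sigma_0 + \dots + \sigma_{j-1} + 1$ — which is precisely the defining condition of a changemaker vector (by the well-known equivalence between the "every $k$ is a subset sum" formulation in Definition \ref{changemaker def} and the inductive inequality $\sigma_j \le 1 + \sum_{l<j}\sigma_l$).

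Concretely, the key steps in order: (1) establish $\sigma_i > 0$ for all $i$; (2) reduce Lemma \ref{8V}'s constraint to a statement purely about $\sigma$ using $[\Sigma] = r\sigma + \text{torsion}$ and $\langle\mathfrak{c},\sigma\rangle \equiv \langle\sigma,\sigma\rangle \pmod 2$; (3) for each $j \ge 1$, set $i_j := \tfrac{r}{2}(rp - \sum_{l<j}\sigma_l + \sigma_j - \dots)$ — more precisely choose the sign vector $\mathfrak{c}$ realizing the partial sum and compute the corresponding $i$, verifying it is relevant by matching the parity/mod-$r$ condition in Definition \ref{relevant} (this uses $\langle\mathfrak{c},\sigma\rangle \equiv -p \pmod 2$ exactly as in the proof of Lemma \ref{mult r spinc}, and the fact that $i = r(\langle\mathfrak{c},\sigma\rangle + rp)/2$ is automatically divisible by $r$ or congruent to $r/2$ mod $r$ accordingly); (4) feed this $\mathfrak{c}$ into the inequality of Lemma \ref{8V}, and since $-8V_{i_j} \le 0$ deduce $\mathfrak{c}^2 + (n+1) \le 0$, which unwinds to $\sigma_j \le 1 + \sum_{l<j}\sigma_l$; (5) conclude by the standard equivalence that $\sigma$ is a changemaker vector. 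The main obstacle I expect is step (3): correctly bookkeeping which index $i$ is "relevant" in each of the parity cases for $r$ and $p$, and checking that the sign vectors $\mathfrak{c}$ that witness the partial-sum conditions do land on relevant indices — the $r$-factor means the congruence conditions are subtler than in \cite{l-space}, and one must be careful that $0 \le i \le r^2p/2$ holds so that Lemma \ref{8V} actually applies. A secondary subtlety is handling the $j=0$ base case (forcing $\sigma_0 = 1$ is impossible, so rather one shows $V_0 = 0$ is consistent and gives the seed of the induction) and ruling out $\sigma_i = 0$ cleanly using sharpness/reducedness of the plumbing $P$.
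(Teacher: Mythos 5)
Your setup is on track (nonvanishing of the entries of $\sigma$, rewriting the constraint in Lemma \ref{8V} as $r\langle\mathfrak{c},\sigma\rangle+r^2p=2i$, the parity bookkeeping as in Lemma \ref{mult r spinc}), but the crucial step (4) has the logical direction reversed, and as written it cannot produce the changemaker condition. Every characteristic vector of $-\mathbb{Z}^{n+1}$ satisfies $\mathfrak{c}^2+(n+1)\le 0$ automatically, with equality precisely when $\mathfrak{c}\in\{\pm 1\}^{n+1}$; so feeding a hand-picked sign vector into the inequality $\mathfrak{c}^2+(n+1)\le -8V_{i}\le 0$ tells you only that $V_{i}=0$ for the corresponding index and imposes no constraint at all on $\sigma$ --- it does not ``unwind'' to $\sigma_j\le 1+\sum_{l<j}\sigma_l$. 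The information that makes $\sigma$ a changemaker flows the other way: you must use the \emph{attainment} of the maximum in Lemma \ref{8V}, at indices where you already know $V_i=0$, to \emph{produce} sign vectors realizing prescribed pairings, rather than testing sign vectors you chose yourself against the inequality. Your proposal never establishes the needed vanishing (the aside that ``$V_0=0$ forces\dots'' is not available, since in the interesting case $\nu^+\neq 0$ one has $V_0\neq 0$), and without it the equality in Lemma \ref{8V} gives you nothing at the indices you want.

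For comparison, the paper's argument runs as follows: take $\mathfrak{c}=(1,\dots,1)$, for which $\mathfrak{c}^2+(n+1)=0$ and $\langle\mathfrak{c},[\Sigma]\rangle+r^2p=r^2p-r|\sigma|_1$; the inequality direction of Lemma \ref{8V} then gives $V_{\frac12 r(rp-|\sigma|_1)}=0$, hence $\nu^+\le\frac12 r(rp-|\sigma|_1)$ and, by monotonicity, $V_i=0$ for every relevant $i$ with $\frac12 r(rp-|\sigma|_1)\le i\le\frac{r^2p}{2}$. At each such relevant $i$ the equality in Lemma \ref{8V} forces a maximizer with $\mathfrak{c}^2=-(n+1)$, i.e.\ $\mathfrak{c}\in\{\pm1\}^{n+1}$, satisfying $\langle\mathfrak{c},\sigma\rangle+rp=\frac{2i}{r}$ (shifted by $1$ when $r$ is even and $p$ is odd). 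Letting $i$ range over this window and replacing $\mathfrak{c}$ by $-\mathfrak{c}$ to cover the symmetric half, one gets, for every integer $k$ with $0\le k\le|\sigma|_1$, a vector $\chi\in\{0,1\}^{n+1}$ (via $\mathfrak{c}=(-1,\dots,-1)+2\chi$) with $\langle\chi,\sigma\rangle=k$; together with the nonvanishing of the entries this is exactly Definition \ref{changemaker def}, with no need for the inductive inequality $\sigma_j\le 1+\sum_{l<j}\sigma_l$ at all. Your step (1) is essentially fine (the paper cites Lemma \ref{ortho comp} and the indecomposability of linear lattices from Greene, which is the same point as your ``no square $-1$ vector'' observation), but steps (3)--(4) need to be replaced by the vanishing-plus-attainment argument above, with the two parity cases for $(r,p)$ handled as you anticipated.
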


\begin{proof}
First of all, similar to the argument in \cite[\S 3.4]{realization}, by Lemma 3.3 and the indecomposability of linear lattices \cite[Cor. 3.5]{realization}, we know that all entries of $\sigma$ are non-zero.\\
\\
\noindent When $\mathfrak{c}=(1,\dots,1)$, we have
\begin{align*}
\langle\mathfrak{c},[\Sigma]\rangle+r^2p &= r\langle\mathfrak{c},\sigma\rangle+r^2p\\
&=-r|\sigma|_1+r^2p\\
&=2\left(\dfrac{r^2p-r|\sigma|_1}{2}\right)
\end{align*}
\noindent Also, $\mathfrak{c}^2+(n+1)=0$.\\
\\
\indent By Lemma \ref{8V}, since those $V_i$ coefficients are all non-negative, we can conclude that
$$V_{\tfrac{1}{2}r(rp-|\sigma|_1)}=0$$
\noindent Therefore, $\nu^+\leq\tfrac{1}{2}r(rp-|\sigma|_1)$. Hence, for all $i\geq\tfrac{1}{2}r(rp-|\sigma|_1)$, we have $V_i=0$.\\
\\
\indent By Lemma \ref{8V}, we know that for all relevant $i\geq\tfrac{1}{2}r(rp-|\sigma|_1)$, there exists some $\mathfrak{c}\in Char(-\mathbb{Z}^{n+1})$ such that $\mathfrak{c}^2=-(n+1)$ and $\langle\mathfrak{c},[\Sigma]\rangle+r^2p=2i$. This is equivalent to saying that there exists some $\mathfrak{c}\in\{\pm1\}^{n+1}$ such that
$$r\langle\mathfrak{c},\sigma\rangle+r^2p=2i$$
\indent For the rest of the proof, we will separate the cases according to Lemma \ref{mult r spinc}. The proofs for the two cases are almost identical.\\
\\
\noindent\underline{Case 1: $r$ is odd or $p$ is even}\\
\indent Let $i$ be relevant and $i\geq\tfrac{1}{2}r(rp-|\sigma|_1)$. When $r$ is odd or $p$ is even, that is equivalent to saying that $i$ is a multiple of $r$ that satisfies
$$\tfrac{1}{2}r(rp-|\sigma|_1)\leq i\leq\tfrac{r^2p}{2}$$
\indent Let $i':=\dfrac{i}{r}$.\\
\\
\indent Then we have $\tfrac{rp-|\sigma'|_1}{2}\leq i'\leq\tfrac{rp}{2}$, and there exists $\mathfrak{c}\in\{\pm1\}^{n+1}$ such that
$$r\langle \mathfrak{c},\sigma\rangle+r^2p=2i'r$$
\noindent which simplifies to
\begin{equation}\label{eq:4}\tag{4}
\langle \mathfrak{c},\sigma\rangle+rp=2i'
\end{equation}
\indent Let $i'':=rp-i'$. Then we have $\tfrac{rp}{2}\leq i''\leq\tfrac{rp+|\sigma|_1}{2}$. Substituting $i'=rp-i''$ into (\ref{eq:4}) and rearranging, we get
$$\langle -\mathfrak{c},\sigma\rangle+rp=2i''$$
\noindent Notice that $\mathfrak{c}\in\{\pm1\}^{n+1}$ implies $-\mathfrak{c}\in\{\pm1\}^{n+1}$. Hence, we can conclude that for any integer $i'$ satisfying $\tfrac{rp-|\sigma'|_1}{2}\leq i'\leq\tfrac{rp+|\sigma'|_1}{2}$, there exists $\mathfrak{c}\in\{\pm1\}^{n+1}$ such that (\ref{eq:4}) holds.\\
\\
\indent Note that since $|\sigma|_1\equiv\langle\sigma,\sigma\rangle=-p$ (mod $2$), $\tfrac{rp\pm|\sigma|_1}{2}$ must be integers under Case 1.\\
\\
\indent Setting $j:=i'-\tfrac{rp+|\sigma|_1}{2}$ and substituting it into (\ref{eq:4}), we arrive to the conclusion that for any integer $j$ satisfying $-|\sigma|_1\leq j\leq0$, there exists $\mathfrak{c}\in\{\pm1\}^{n+1}$ such that
$$\langle \mathfrak{c},\sigma\rangle=2j+|\sigma|_1$$
\noindent Setting $\mathfrak{c}=(-1,\dots,-1)+2\chi$ for some $\chi\in\{0,1\}^{n+1}$, we arrive to the conclusion that for any integer $j$ satisfying $-|\sigma|_1\leq j\leq0$, there exists $\chi\in\{0,1\}^{n+1}$ such that
$$\langle \chi,\sigma\rangle=j$$
\noindent Together with the fact that all entries of $\sigma$ are non-zero, this implies $\sigma$ being a changemaker vector.\\
\\
\noindent\underline{Case 2: $r$ is even and $p$ is odd}\\
\indent Let $i$ be relevant and $i\geq\tfrac{1}{2}r(rp-|\sigma|_1)$. When $r$ is even and $p$ is odd, that is equivalent to saying that $i$ satisfies $i\equiv\tfrac{r}{2}$ (mod $r$) and
$$\tfrac{1}{2}r(rp-|\sigma|_1)\leq i\leq\tfrac{r^2p}{2}$$
\indent Let $i':=\tfrac{i}{r}-\tfrac{1}{2}$.\\
\\
\indent Then we have $\tfrac{rp-|\sigma'|_1}{2}\leq i'+\tfrac{1}{2}\leq\tfrac{rp}{2}$, and there exists $\mathfrak{c}\in\{\pm1\}^{n+1}$ such that
$$r\langle\mathfrak{c},\sigma\rangle+r^2p=2\left(i'+\tfrac{1}{2}\right)r$$
\noindent which simplifies to
\begin{equation}\label{eq:5}\tag{5}
\langle\mathfrak{c},\sigma\rangle+rp=2i'+1
\end{equation}
\indent Let $i'':=rp-i'-1$. Then $\tfrac{rp}{2}\leq i''+\tfrac{1}{2}\leq\tfrac{rp+|\sigma|_1}{2}$ and
$$\langle -\mathfrak{c},\sigma\rangle+rp=2i''+1$$
\noindent Hence (\ref{eq:5}) holds for $\tfrac{rp-|\sigma|_1}{2}\leq i''+\tfrac{1}{2}\leq\tfrac{rp+|\sigma|_1}{2}$.\\
\\
\indent Note that in Case 2, $rp+|\sigma|_1$ is odd. Therefore, $\tfrac{rp+|\sigma|_1-1}{2}$ is an integer.\\
\\
\indent Setting $j:=i'-\tfrac{rp+|\sigma'|_1-1}{2}$ and substituting it into (\ref{eq:5}), we see that for any integer $j$ satisfying $-|\sigma'|_1\leq j\leq0$, there exists $\mathfrak{c}\in\{\pm1\}^{n+1}$ such that
$$\langle \mathfrak{c},\sigma\rangle=2j+|\sigma|_1$$
\noindent The rest of the proof is the same as Case 1.
\end{proof}

\begin{proof}[Proof of Theorem \ref{main result 1}]
Theorem \ref{main result 1} follows from \cite[Th. 1.7]{realization}, Lemma \ref{ortho comp}, and Lemma \ref{is changemaker}.
\end{proof}

\newpage

\section{Number of possible surgeries when fixing $r$}

\indent In this section we will show that when we fix $K\subset S^3$ and a value of $r$, the number of $(r,p)$-lensbordant surgeries is very limited. We will follow an argument similar to \cite[\S 2]{mccoy}, with some changes to take $r$ into account.\\
\\
\indent Recall that we defined the coefficient $V_i$ to relevant if $0\leq i\leq\dfrac{r^2p}{2}$ and
$$i\equiv\begin{cases}\dfrac{r}{2}\quad(mod \ r)\text{   if }r\text{ is even and }p\text{ is odd}\\0\quad(mod \ r)\text{   otherwise}\end{cases}$$
\noindent Sometimes it is convenient to consider the relevant coefficients as a sequence itself:

\begin{Def}\label{rel coeff as a seq}
For $i$ satisfying
$$0\leq i\leq\begin{cases}\frac{rp}{2}-1\text{   if }r\text{ is even and }p\text{ is odd}\\ \frac{rp-1}{2}\text{   if }r,p\text{ are both odd}\\ \frac{rp}{2}\text{   if }p\text{ is even}\end{cases}$$
\noindent we define
$$V^{rel}_i:=\begin{cases}V_{\frac{r}{2}+ir}\text{   if }r\text{ is even and }p\text{ is odd}\\V_{ir}\text{   otherwise}\end{cases}$$
\end{Def}

\indent This range of $i$ for $V^{rel}_i$ corresponds to the range of $i$ for $V_i$ to be relevant.\\
\\
\indent Since the $V$ coefficients of a knot are always monotonic decreasing, the relevant coefficients are also monotonic decreasing. However, consecutive relevant coefficients can differ by more than 1.\\
\\
\indent We define $\nu^{+rel}$ to be the number of non-zero relevant coefficients. Equivalently,
$$\nu^{+rel}:=min\{i|V^{rel}_i=0\}$$
\noindent Similar to how McCoy \cite{mccoy} defined $T_m$, we define $T^{rel}_m$ be the number of relevant coefficients less than or equal to $m$. Equivalently,
$$T^{rel}_m:=|\{0\leq i<\nu^{+rel} \mid 0<V^{rel}_i\leq m\}|$$
\noindent Note that due to monotonicity, we have $\nu^{+rel}=T^{rel}_{V^{rel}_0}$.\\
\\
\indent We define $S_m=\{\alpha\in\mathbb{Z}^{dim(\sigma)}_{\geq 0} \mid \sum\limits_j\alpha_j(\alpha_j+1)=2m\}$. This is exactly the same as how McCoy defined $S_m$ in \cite{mccoy}.\\
\\
\indent We also define $S_{\leq m}=\{\alpha\in\mathbb{Z}^{dim(\sigma)}_{\geq 0} \mid \sum\limits_j\alpha_j(\alpha_j+1)\leq 2m\}$. Due to the fact that consecutive relevant coefficients can decrease by more than 1, some arguments later in this section will not work if we use $S_m$ like how it is used in \cite{mccoy}. Instead, we will need to consider $S_{\leq m}$.\\
\\
\indent First we prove the following properties, which is an analog of \cite[Lemma 2.10]{mccoy}:

\begin{Lemma} \label{linear system}
The following three statements are true:\\
(1) For all $m$ satisfying $0\leq m<V^{rel}_0$, we have $T^{rel}_m=\displaystyle\max_{\alpha\in S_{\leq m}}\sigma\cdot\alpha$\\
\\
(2) $\nu^{+rel}=\begin{cases}-\dfrac{1}{2}+\dfrac{1}{2}(rp-|\sigma|_1)\text{   if }r\text{ is even and }p\text{ is odd}\\ \dfrac{1}{2}(rp-|\sigma|_1)\text{   otherwise}\end{cases}$\\
\\
(3) $\nu^{+rel}=T^{rel}_{V^{rel}_0}\leq\displaystyle\max_{\alpha\in S_{V^{rel}_0}}\sigma\cdot\alpha$
\end{Lemma}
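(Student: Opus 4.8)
The plan is to convert the $d$-invariant identity from Lemma \ref{8V} into the three combinatorial statements by carefully tracking which indices $i$ are relevant and translating characteristic covectors $\mathfrak c \in \mathrm{Char}(-\mathbb Z^{n+1})$ into nonnegative integer vectors $\alpha$. First I would recall that for relevant $i$, Lemma \ref{8V} gives
$$-8V_i = \max_{\substack{\mathfrak c \in \mathrm{Char}(-\mathbb Z^{n+1}) \\ \langle \mathfrak c,[\Sigma]\rangle + r^2 p = 2i}} (\mathfrak c^2 + (n+1)).$$
Writing $[\Sigma] = r\sigma + \tau$ with $\tau$ torsion, the constraint becomes $r(\langle \mathfrak c, \sigma\rangle + rp) = 2i$, i.e. $\langle \mathfrak c, \sigma\rangle + rp = 2i/r$ (or $2(i'+\tfrac12)$ in the even/odd case), which is exactly the index rescaling built into Definition \ref{rel coeff as a seq}. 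Following McCoy's substitution, I would set $\mathfrak c = (1,\dots,1) - 2\alpha$ for $\alpha \in \mathbb Z_{\geq 0}^{n+1}$ (so that $\alpha_j \geq 0$ captures the "at most" freedom), giving $\mathfrak c^2 + (n+1) = 4\langle \alpha, \alpha + (1,\dots,1)\rangle = -4\sum_j \alpha_j(\alpha_j+1)$ up to sign, hence $V_i^{rel} = \sum_j \alpha_j(\alpha_j+1)/2$ for the optimal $\alpha$, and $\langle \mathfrak c, \sigma\rangle = |\sigma|_1 - 2\sigma\cdot\alpha$, so $\sigma\cdot\alpha$ controls the index $i$.

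For part (2), I would take $\mathfrak c = (1,\dots,1)$ (i.e.\ $\alpha = 0$), which forces $V_i^{rel} = 0$ at the index $i'$ with $2i' = rp - |\sigma|_1$ (resp.\ $2i'+1 = rp - |\sigma|_1$), matching the claimed value of $\nu^{+rel}$; combined with the fact established in the proof of Lemma \ref{is changemaker} that $V$ vanishes at and beyond $\tfrac12 r(rp-|\sigma|_1)$, and monotonicity, this pins $\nu^{+rel}$ exactly. For part (1), fix $m < V_0^{rel}$. Given $\alpha \in S_{\leq m}$, the covector $\mathfrak c = (1,\dots,1) - 2\alpha$ realizes an index $i$ with $V_i^{rel} \leq \sum \alpha_j(\alpha_j+1)/2 \leq m$ and with (rescaled) index proportional to $\sigma\cdot\alpha$; conversely every relevant index with $V_i^{rel} \leq m$ arises this way from the maximizing $\mathfrak c$ in Lemma \ref{8V}. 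Counting: $T^{rel}_m$ is the number of relevant $i$ below $\nu^{+rel}$ with $0 < V^{rel}_i \le m$, and by monotonicity this equals the largest relevant index $i$ with $V^{rel}_i \le m$, which by the above dictionary equals $\max_{\alpha\in S_{\le m}} \sigma\cdot\alpha$. The reason $S_{\leq m}$ rather than $S_m$ is needed is exactly that consecutive $V^{rel}$ values can drop by more than $1$, so the extremal index may only be attained by an $\alpha$ with $\sum\alpha_j(\alpha_j+1) < 2m$. Part (3) is then immediate by plugging $m = V_0^{rel}$ into (1), using $\nu^{+rel} = T^{rel}_{V_0^{rel}}$ from monotonicity, and noting $S_{V_0^{rel}} \subseteq S_{\leq V_0^{rel}}$ while the maximizing $\alpha$ for $m = V_0^{rel}$ can be taken in $S_{V_0^{rel}}$ itself since one can always inflate a coordinate to make $\sum \alpha_j(\alpha_j+1) = 2V_0^{rel}$ without decreasing $\sigma\cdot\alpha$ — this last point needs a short argument but is exactly McCoy's.

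The main obstacle I expect is bookkeeping the index rescaling cleanly across the two parity cases (whether $r$ is even and $p$ odd), since the "$+\tfrac12$" shift in Definition \ref{rel coeff as a seq} and the off-by-one in the $\nu^{+rel}$ formula must be reconciled with the mod-$2r^2p$ versus exact-equality subtlety already handled in Lemma \ref{8V}; the substitution $\mathfrak c \mapsto \alpha$ and the quadratic identity are routine once that is set up. I would organize the proof by first proving (2) directly from $\alpha = 0$, then (1) via the two-way dictionary above, then deriving (3) as a formal consequence, handling both parity cases in parallel with the shift absorbed into the definition of $V^{rel}_i$.
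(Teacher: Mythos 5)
Your overall strategy is the same as the paper's (rewrite Lemma \ref{8V} via $\mathfrak{c}=\pm\bigl((1,\dots,1)-2\alpha\bigr)$ and translate indices into values of $\sigma\cdot\alpha$), and your treatment of statement (2) is essentially fine. But there is a genuine gap in your argument for statement (1). You assert that ``given $\alpha\in S_{\leq m}$, the covector $\mathfrak{c}=(1,\dots,1)-2\alpha$ realizes an index $i$ with $V^{rel}_i\leq m$,'' but the index determined by $\alpha$ is $i=\nu^{+rel}-\sigma\cdot\alpha$ (in the rescaled indexing), and nothing in your dictionary prevents this from being \emph{negative}, in which case it indexes no relevant coefficient at all and the inequality $\max_{\alpha\in S_{\leq m}}\sigma\cdot\alpha\leq T^{rel}_m$ is unjustified. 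This is exactly the hard part of the paper's proof: one assumes the minimal achievable index is negative, takes the smallest positive ``overshoot'' $k$, shows $k\leq\sigma_l$ for the last nonzero entry $\alpha_l$, and then uses the \emph{changemaker property} of $\sigma$ to decrement a subset of entries summing to $k$, producing an $\alpha'$ that realizes index $0$ within $S_{\leq m}$ and hence forces $V^{rel}_0\leq m$, contradicting the hypothesis $m<V^{rel}_0$. Your proposal never invokes the changemaker property in part (1) and never uses the hypothesis $m<V^{rel}_0$; since the claimed equality is false for $m\geq V^{rel}_0$ in general (the left side is capped at $\nu^{+rel}$ while the right side grows without bound), the hypothesis must enter somewhere, and this is where.

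Relatedly, your derivation of (3) ``by plugging $m=V^{rel}_0$ into (1)'' is not legitimate: (1) is only stated and only provable for $m<V^{rel}_0$, precisely because of the negative-index issue above (the paper's Remark 2 makes this explicit, and equality at $m=V^{rel}_0$ is only recovered later, for $r\geq 2$, in Corollary \ref{linear system equality}). The paper instead proves (3) directly: apply the min formula coming from Lemma \ref{8V} at index $0$ to get some $\alpha\in\mathbb{Z}^{n+1}$ with $\sum_j\alpha_j(\alpha_j+1)=2V^{rel}_0$ and $\sigma\cdot\alpha=\nu^{+rel}$, then replace any negative entry $\alpha_j$ by $-1-\alpha_j$ (which preserves $\sum_j\alpha_j(\alpha_j+1)$ and does not decrease $\sigma\cdot\alpha$) to land in $S_{V^{rel}_0}$, yielding the inequality $\nu^{+rel}\leq\max_{\alpha\in S_{V^{rel}_0}}\sigma\cdot\alpha$. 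You should restructure (3) along these lines rather than deducing it from (1), and supply the changemaker/contradiction argument to close the gap in (1).
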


Note that when $r\geq 2$, statement (1) holds for $m=V^{rel}_0$ as well. We will prove this in Section 6 (Corollary \ref{linear system equality}) after we study more about the behavior of relevant coefficients.

\begin{proof}[Proof of Lemma \ref{linear system}]
Lemma $\ref{8V}$ states when for any relevant $i$, 
$$-8V_i=\max_{\substack{\mathfrak{c}\in Char(-\mathbb{Z}^{n+1}) \\ \langle\mathfrak{c},[\Sigma]\rangle+r^2p=2i}}(\mathfrak{c}^2+(n+1))$$
\noindent which can be rewritten as
\begin{align*}
-8V^{rel}_i&=\max_{\substack{\mathfrak{c}\in Char(-\mathbb{Z}^{n+1}) \\ r\langle\mathfrak{c},\sigma\rangle+r^2p=2ir+r}}(\mathfrak{c}^2+(n+1))\text{   if }r\text{ is even and }p\text{ is odd}\\
-8V^{rel}_i&=\max_{\substack{\mathfrak{c}\in Char(-\mathbb{Z}^{n+1}) \\ r\langle\mathfrak{c},\sigma\rangle+r^2p=2ir}}(\mathfrak{c}^2+(n+1))\text{   otherwise}
\end{align*}
\noindent The condition of $\mathfrak{c}$ being a characteristic vector can be stated as $\mathfrak{c}=2\alpha+(1,\dots,1)$ for some $\alpha\in\mathbb{Z}^{n+1}$. Now we can rewrite the above as
\begin{align*}
-8V^{rel}_i&=\max_{rp-|\sigma|_1-2\sum\limits_j\sigma_j\alpha_j=2i+1}((n+1)-\sum\limits_j((2\alpha_j+1)^2))\text{   if }r\text{ is even and }p\text{ is odd}\\
-8V^{rel}_i&=\max_{rp-|\sigma|_1-2\sum\limits_j\sigma_j\alpha_j=2i}((n+1)-\sum\limits_j((2\alpha_j+1)^2))\text{   otherwise}
\end{align*}
\noindent This can further simplify to
\begin{align}
2V^{rel}_i&=\min_{rp-|\sigma|_1-2\sum\limits_j\sigma_j\alpha_j=2i+1}\sum\limits_j\alpha_j(\alpha_j+1)\text{   if }r\text{ is even and }p\text{ is odd}\label{eq:6}\tag{6}\\
2V^{rel}_i&=\min_{rp-|\sigma|_1-2\sum\limits_j\sigma_j\alpha_j=2i}\sum\limits_j\alpha_j(\alpha_j+1)\text{   otherwise}\nonumber
\end{align}
\indent For any $m$ with $0\leq m<V^{rel}_0$, consider the minimal integer $i$ such that there exists $\alpha$ that satisfies
\begin{equation}\label{eq:7}\tag{7}
rp-|\sigma|_1-2\sum\limits_j\sigma_j\alpha_j=\begin{cases}2i+1\text{   if }r\text{ is even and }p\text{ is odd}\\2i\text{   otherwise}\end{cases}
\end{equation}
\noindent and
$$\sum\limits_j\alpha_j(\alpha_j+1)\leq 2m$$
\noindent Notice that all entries of such an $\alpha$ must be non-negative because if there is any negative entry $\alpha_j$, we can replace it with $-1-\alpha_j$ which lowers $i$ but doesn't change $\sum\limits_j\alpha_j(\alpha_j+1)$. Hence, such an $\alpha$ must be in $S_{\leq m}$. (Note that the entries being non-negative is in the definition of $\alpha\in S_{\leq m}$)\\
\\
\indent From the argument above, we see that if such an $i$ is non-negative, we have $i=min\{t \mid V^{rel}_t\leq m\}$.\\
\\
\indent Now we argue that $m<V^{rel}_0$ implies such an $i$ being non-negative: Assume the contrary and suppose such a minimal $i$ is negative. Let $k$ be the smallest positive integer such that there exists $\alpha\in S_{\leq m}$ with
$$rp-|\sigma|_1-2\sum\limits_j\sigma_j\alpha_j=\begin{cases}2(-k)+1\text{   if }r\text{ is even and }p\text{ is odd}\\2(-k)\text{   otherwise}\end{cases}$$
Let $l$ be maximal such that $\alpha_l\neq 0$. If $k>\sigma_l$, we can decrease $\alpha_l$ by 1 to produce a new $\alpha$ that is still in $S_{\leq m}$ but $k$ decreases by $\sigma_l$, contradicting with the minimality of $k$. Hence, we know that $k\leq\sigma_l$. Since $\sigma$ is a changemaker, $\exists A\subseteq\{l,\dots,n\}$ such that $k=\sum\limits_{j\in A}\sigma_j$. Consider $\alpha'$ where
$$\alpha'_j:=\begin{cases}\alpha_j-1\text{   if }j\in A \\ \alpha_j\text{   otherwise}\end{cases}$$
\noindent Then $\alpha'\in\mathbb{Z}_{\geq-1}^{n+1}$ satisfies $\sum\limits_j\alpha'_j(\alpha'_j+1)\leq m$ and
$$rp-|\sigma|_1-2\sum\limits_j\sigma_j\alpha'_j=\begin{cases}1\text{   if }r\text{ is even and }p\text{ is odd}\\0\text{   otherwise}\end{cases}$$
\indent (\ref{eq:6}) implies that $V_0^{rel}\leq m$, which contradicts with $m<V^{rel}_0$. Hence, we can conclude that $m<V^{rel}_0$ implies such an $i$ being non-negative, and therefore we have $i=min\{t \mid V^{rel}_t\leq m\}$. So, we can conclude that
$$min\{t \mid V^{rel}_t\leq m\}=\begin{cases}\displaystyle\min_{\alpha\in S_{\leq m}}\frac{1}{2}\left(rp-|\sigma|_1-1-2\sum\limits_j\sigma_j\alpha_j\right)\text{   if }r\text{ is even and }p\text{ is odd}\\ \displaystyle\min_{\alpha\in S_{\leq m}}\frac{1}{2}\left(rp-|\sigma|_1-2\sum\limits_j\sigma_j\alpha_j\right)\text{   otherwise}\end{cases}$$
\noindent Setting $m=0$ gives statement (2) of Lemma \ref{linear system}. By monotonicity of relevant coefficients, we know that $T^{rel}_m=\nu^{+rel}-min\{t \mid V^{rel}_t\leq m\}$. That gives statement (1) of Lemma \ref{linear system}.\\
\\
\indent This argument does not fully apply to the $m=V^{rel}_0$ because of the issue of $i$ potentially being negative, causing it to not represent a relevant coefficient. However, we can still apply (\ref{eq:6}) to see that there exists some $\alpha\in\mathbb{Z}^{n+1}$ such that $2V^{rel}_0=\sum\limits_j\alpha_j(\alpha_j+1)$ and 
$$rp-|\sigma|_1-2\sum\limits_j\sigma_j\alpha_j=\begin{cases}1\text{   if }r\text{ is even and }p\text{ is odd}\\0\text{   otherwise}\end{cases}$$
\noindent We see that $\nu^{+rel}=T^{rel}_{V^{rel}_0}=\sigma\cdot\alpha$ for this particular $\alpha$. Hence,
$$\nu^{+rel}=T^{rel}_{V^{rel}_0}\leq\displaystyle\max_{\substack{\alpha\in\mathbb{Z}^{n+1} \\ 2V^{rel}_0=\sum\limits_j\alpha_j(\alpha_j+1)}}\sigma\cdot\alpha$$
\noindent If $\alpha$ has any negative entry $\alpha_j$, we can replace it with $-1-\alpha_j$ which increases $\sigma\cdot\alpha$ but keeping $\sum\limits_j\alpha_j(\alpha_j+1)$ unchanged. Hence, we can take the max to be over $\alpha\in S_{V^{rel}_0}$, which gives statement (3) of Lemma \ref{linear system}.
\end{proof}

\noindent Remark 1: Since relevant coefficients can skip numbers, $2m$ may not equal to $V^{rel}_i$ for any $i$, and hence we need to introduce $S_{\leq m}$ in this paper instead of just $S_m$ as in \cite[\S 2]{mccoy}.\\
\\
Remark 2: The reason why the $m=V^{rel}_0$ case was dealt with separately in the proof is because of the issue of $i$ potentially being negative. If there is some condition that implies $\max\limits_{\alpha\in S_{V^{rel}_0}}\sigma\cdot\alpha-\max\limits_{\alpha\in S_{V^{rel}_0-1}}\sigma\cdot\alpha$ to be at most 1, since the minimal $i$ for $m=V^{rel}_0-1$ in (\ref{eq:7}) is positive, such an $i$ for $m=V^{rel}_0$ must be non-negative. Hence, under such condition, statement (1) of Lemma \ref{linear system} would also apply to $m=V^{rel}_0$. In Section 6, we will show that $r\geq 2$ is such a condition.\\
\\
Most of the remaining content of this section is for proving:

\begin{theorem*}[\ref{main result 2}]
Let $K$ be a knot in $S^3$ with $\nu^+(K)\neq 0$. Then,\\
(1) When $r=1$, there are at most 3 $(r,p)$-lensbordant surgeries on $K$. \\
(2) When $r$ is even, there are at most 2 $(r,p)$-lensbordant surgeries on $K$.\\
(3) When $r\geq 3$ is odd, there is at most 1 $(r,p)$-lensbordant surgeries on $K$.
\end{theorem*}

\indent Many ideas that go into the proof come from \cite[\S 2]{mccoy}.\\
\\
\indent First, we prove

\begin{Lemma}\label{r=1 case}
Statement (1) of Theorem \ref{main result 2} is true.
\end{Lemma}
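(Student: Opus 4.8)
### Plan for proving Lemma~\ref{r=1 case}

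When $r=1$, the notion of relevant coefficient collapses to the ordinary $V$-coefficients: every index $0\le i\le \tfrac{p}{2}$ is relevant, $V^{rel}_i = V_i$, $\nu^{+rel}=\nu^+$, and $T^{rel}_m = T_m$ in McCoy's notation. The plan is therefore to reduce the $r=1$ case directly to the combinatorial machinery of \cite[\S 2]{mccoy}. By Lemma~\ref{is changemaker}, each $(1,p)$-lensbordant surgery on $K$ produces a changemaker vector $\sigma$ with $|\sigma|_2^2 = p$ (since $\langle\sigma,\sigma\rangle = -p$), and by Lemma~\ref{linear system}(2) we have $\nu^+ = \tfrac12(p - |\sigma|_1)$, exactly the relation McCoy uses. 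Combined with Lemma~\ref{linear system}(1) and (3), the function $m\mapsto T_m$ is determined by $\sigma$ through $T_m = \max_{\alpha\in S_{\le m}}\sigma\cdot\alpha$, and this is precisely the setup of \cite[Lemma 2.10 etc.]{mccoy}. So the first step is to observe that all of McCoy's lemmas bounding the number of slopes $p$ that can be realized by changemakers compatible with a fixed sequence $(V_i(K))$ apply verbatim here.

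Second, I would invoke McCoy's count. In \cite{mccoy} it is shown (in the language of lens space surgeries) that for a fixed knot $K$ with $\nu^+(K)\neq 0$, there are at most three integers $p$ admitting a changemaker vector whose associated "$T$-sequence" matches that of $K$ — and moreover the three candidates are tightly constrained, essentially $p \in \{2\nu^+ - 1, 2\nu^+, 2\nu^+ + 1\}$ up to the small corrections coming from $|\sigma|_1$. Since by Theorem~\ref{main result 4} (with $r=1$) we have $2\nu^+ + 2(r-1) \ge r^2p - r|\sigma|_1 = p - |\sigma|_1 \ge 2\nu^+$, i.e. $p - |\sigma|_1 \in \{2\nu^+\}$ when $r=1$, so in fact $|\sigma|_1 = p - 2\nu^+$ is pinned down; one then runs McCoy's argument that at most three values of $p$ are consistent with this and with $\sigma$ being a changemaker realizing $(V_i)$. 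I would present this as: each $(1,p)$-lensbordant surgery forces $\sigma$ to be a changemaker realizing the $d$-invariant data of $K$, McCoy's analysis shows at most three such $p$ exist, hence at most three $(1,p)$-lensbordant surgeries.

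The main obstacle is making the reduction to McCoy rigorous rather than gestural: McCoy's theorem is stated for lens space surgeries (where $\sigma$ is a changemaker and the torsion/$V$-coefficients coincide because the knot is an $L$-space knot), whereas here $K$ is an arbitrary knot in $S^3$ with $\nu^+(K)\neq 0$ and the relevant input is the abstract sequence $(V_i(K))$ together with the linear-programming characterization in Lemma~\ref{linear system}. I would need to check that every step of McCoy's bound uses only the properties (monotonicity, $V_i - V_{i+1}\in\{0,1\}$, eventual vanishing, and the relation $T_m = \max_{\alpha\in S_{\le m}}\sigma\cdot\alpha$) that we have established in full generality here — in particular that nowhere does he use that the $V_i$ are torsion coefficients of an $L$-space knot. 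This is plausible because his argument is purely about changemaker combinatorics, but spelling out which of his lemmas transfer (and noting that for $r=1$ we have $S_{\le m}$ versus $S_m$ causing no discrepancy, since consecutive relevant coefficients now differ by at most $1$ by the $V_i - V_{i+1}\in\{0,1\}$ property) is where the real work lies. Once that is in place, the bound of $3$ is immediate.
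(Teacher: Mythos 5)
Your proposal is correct and follows essentially the same route as the paper: the paper's proof of this lemma simply observes that for $r=1$ the conditions of Lemma \ref{linear system} reduce to those of \cite[Lemma 2.10]{mccoy}, so that the arguments of \cite[\S 2--4]{mccoy} apply directly to give the bound of 3. Your additional checks (that $V^{rel}_i=V_i$, $T^{rel}_m=T_m$, the $S_{\le m}$ versus $S_m$ issue disappears since $V_i-V_{i+1}\in\{0,1\}$, and that McCoy's combinatorics uses only the $V$-coefficient properties rather than the $L$-space hypothesis) are exactly the points implicit in the paper's one-line reduction.
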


\begin{proof}
When $r=1$, the conditions in Lemma \ref{linear system} become the conditions in \cite[Lemma 2.10]{mccoy}. Therefore, arguments in \cite[\S 2-4]{mccoy} apply directly.
\end{proof}

\indent For statement (2) and (3), notice that specifying $K$ and with the value of $r$, and also specifying the parity of $p$ when $r$ is even, together determines all the relevant coefficients. So it suffices to show that the relevant coefficients and the value of $r$, and the parity of $p$ when $r$ is even, together determines $\sigma$. That is because $\sigma$ determines $p$, which determines the surgery slope $r^2p$ when we know the value of $r$.\\
\\
\indent We deal with the trivial cases first.

\begin{Lemma} \label{t0<2}
If $V^{rel}_0=0$ and $r\geq 2$, then $r=2$ and $\sigma=(1)$.\\
\\
\noindent If $V^{rel}_0=1$ and $r\geq 2$, then it must be one of the following cases:\\
$r=2$ and $\sigma=(1,1) \ or \ (1,1,1)$\\
$r=3$ and $\sigma=(1)$\\
$r=4$ and $\sigma=(1)$\\
\\
\noindent If $V^{rel}_0=2$ and $r\geq 2$ and $\sigma_0=2$ and $\sigma_1=1$, then $r=2$ and $\sigma=(2,1)$.
\end{Lemma}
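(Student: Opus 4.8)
The plan is to exploit Lemma \ref{linear system}, which relates the relevant coefficients of $K$ to the changemaker $\sigma$, together with the structural constraints $\sigma_0\geq\dots\geq\sigma_n$, $\sigma$ being a changemaker, and $p=|\sigma|^2=\langle\sigma,\sigma\rangle$. The key quantitative input is part (2) of Lemma \ref{linear system}: under Case~1 (\,$r$ odd or $p$ even\,) we have $\nu^{+rel}=\tfrac12(rp-|\sigma|_1)$, and under Case~2 (\,$r$ even, $p$ odd\,) we have $\nu^{+rel}=-\tfrac12+\tfrac12(rp-|\sigma|_1)$. When $V^{rel}_0=0$ we have $\nu^{+rel}=0$, so $rp=|\sigma|_1$ in Case~1 and $rp=|\sigma|_1+1$ in Case~2. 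Since $|\sigma|_1=\sum\sigma_i\leq\sum\sigma_i^2=p$ with equality iff every $\sigma_i=1$, the inequality $rp\leq|\sigma|_1+1\leq p+1$ forces $r\leq 1+1/p$, hence (as $r\geq2$) $p=1$, $r=2$, and $\sigma=(1)$. First I would carry out this estimate; it disposes of the $V^{rel}_0=0$ case cleanly.

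For $V^{rel}_0=1$ the same identities give $\tfrac12(rp-|\sigma|_1)\in\{0,1\}$ (Case~1) or $-\tfrac12+\tfrac12(rp-|\sigma|_1)\in\{0,1\}$ (Case~2), i.e. $rp-|\sigma|_1\in\{0,1,2\}$ in Case~1 and $rp-|\sigma|_1\in\{1,3\}$ in Case~2. Writing $p=|\sigma|^2=\sum\sigma_i^2$ and $|\sigma|_1=\sum\sigma_i$, the quantity $p-|\sigma|_1=\sum\sigma_i(\sigma_i-1)$ is a sum of non-negative even numbers; combined with $rp-|\sigma|_1=(r-1)p+(p-|\sigma|_1)$ and $r\geq2$, the bound $rp-|\sigma|_1\leq 3$ forces $(r-1)p\leq 3$, so $p\leq 3$ and $r\leq 4$. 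One then runs through the finitely many changemakers with $p=\sum\sigma_i^2\leq 3$: these are $\sigma=(1)$ (\,$p=1$\,), $\sigma=(1,1)$ (\,$p=2$\,), and $\sigma=(1,1,1)$ (\,$p=3$\,); note $\sigma=(\sqrt2)$ etc. are not integral, and any $\sigma$ with an entry $\geq 2$ already has $p\geq4$. For each surviving pair $(r,\sigma)$ one checks the parity constraint (in Case~2 one needs $p$ odd) and verifies $\nu^{+rel}=1$, not $0$ — equivalently, that the relevant-coefficient sequence is not identically zero — which rules out the combinations giving $\nu^{+rel}=0$ and pins down exactly the list $r=2,\sigma\in\{(1,1),(1,1,1)\}$, $r=3,\sigma=(1)$, $r=4,\sigma=(1)$.

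For $V^{rel}_0=2$ with the extra hypotheses $\sigma_0=2$, $\sigma_1=1$, I would again start from part~(2) of Lemma \ref{linear system}: $\nu^{+rel}=2$ (Case~1) or the Case~2 analog, giving $rp-|\sigma|_1\in\{4,5\}$ or $\{3,5\}$ respectively, hence $(r-1)p\leq 5$. Since $\sigma_0=2$ forces $p=\sum\sigma_i^2\geq 4$, we get $(r-1)\cdot 4\leq 5$, so $r=2$ and $p\in\{4,5\}$. The changemakers with $\sigma_0=2$, $\sigma_1=1$ and $p=\sum\sigma_i^2\in\{4,5\}$ are $\sigma=(2,1)$ (\,$p=5$\,) and — excluded since it would need $p=4=4+1\cdot1\cdot$(number of $1$'s) giving no valid length — one checks $(2,1)$ is the only option: with $p=4$ we would need $\sigma=(2,\underbrace{1,\dots}_{})$ with $\sum\sigma_i^2=4$, impossible once $\sigma_1=1$. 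Finally I would confirm $\sigma=(2,1)$ is genuinely a changemaker (it is: $0,1,2,3$ are all subset-sums), that with $r=2$ one gets $rp-|\sigma|_1=2\cdot5-3=7$, so one is in Case~2 (\,$p=5$ odd\,) and $\nu^{+rel}=-\tfrac12+\tfrac72=3\neq 2$ — which would be a contradiction, signalling I should instead only claim $r=2,\sigma=(2,1)$ after correctly matching the case, so the careful bookkeeping of which parity case applies is where I must be most cautious.

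The main obstacle I anticipate is precisely this case-splitting bookkeeping: the two formulas for $\nu^{+rel}$ in Lemma \ref{linear system}(2) differ by the additive $-\tfrac12$, and whether one is in Case~1 or Case~2 depends on the parity of $p$, which in turn is read off from $\sigma$; so one must hold $\sigma$, $p$, $r$, the parity case, and the value of $\nu^{+rel}$ simultaneously consistent. Once the parameter ranges $p\leq 3$ (resp. $p\leq5$) and $r\leq4$ (resp. $r=2$) are extracted, the verification reduces to a short finite check, but getting those ranges requires the clean inequality $(r-1)p\leq rp-|\sigma|_1+( \,|\sigma|_1-p\,)\leq rp-|\sigma|_1$, which I would isolate as the workhorse estimate at the start.
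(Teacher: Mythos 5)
Your argument for the $V^{rel}_0=0$ case is fine and matches the paper's. But the cases $V^{rel}_0=1$ and $V^{rel}_0=2$ rest on a false premise: you treat the hypothesis on $V^{rel}_0$ (the \emph{value} of the largest relevant coefficient) as if it bounded $\nu^{+rel}$ (the \emph{number} of non-zero relevant coefficients), asserting $\nu^{+rel}\in\{0,1\}$ when $V^{rel}_0=1$ and $\nu^{+rel}=2$ when $V^{rel}_0=2$. Neither follows: the relevant coefficients are a monotone sequence and there can be many indices where they equal $1$ (or $2$), so $\nu^{+rel}$ can a priori exceed $V^{rel}_0$. Indeed the configuration the lemma actually allows in the third case, $r=2$, $\sigma=(2,1)$, has $p=5$, $|\sigma|_1=3$ and hence $\nu^{+rel}=-\tfrac12+\tfrac72=3\neq 2$; your own final check turns this up, and under your assumed identity it would \emph{exclude} the unique admissible case rather than confirm it. This is not a parity-bookkeeping issue, as you suggest, but a sign that the starting identity is wrong. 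Consequently your bounds $rp-|\sigma|_1\leq 3$ (resp.\ $\leq 5$), and hence $p\leq 3$, $r\leq 4$ (resp.\ $r=2$), are unjustified, and nothing in your argument rules out, say, $\sigma$ consisting of many $1$'s with $r=2$ in the $V^{rel}_0=1$ case.

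The missing ingredient is statement (3) of Lemma \ref{linear system}: $\nu^{+rel}=T^{rel}_{V^{rel}_0}\leq\max_{\alpha\in S_{V^{rel}_0}}\sigma\cdot\alpha$, which equals $\sigma_0$ when $V^{rel}_0=1$ and $\sigma_0+\sigma_1$ when $V^{rel}_0=2$. The paper combines this upper bound on $\nu^{+rel}$ with the lower bound from statement (2), using $rp-|\sigma|_1=\sum_i(r\sigma_i^2-\sigma_i)\geq r\sigma_0^2-\sigma_0$, to get $r\sigma_0\leq 3+1/\sigma_0$ in the $V^{rel}_0=1$ case; with $r\geq 2$ this forces $\sigma_0=1$, hence $\nu^{+rel}\leq 1$, and only then does the finite enumeration you describe become legitimate (and it yields exactly the stated list). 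Similarly, for $V^{rel}_0=2$ with $\sigma_0=2$, $\sigma_1=1$ one gets $\nu^{+rel}\leq 3$, and writing $\sigma=(2,1,\dots,1)$ with $x$ ones, the lower bound $\nu^{+rel}\geq-\tfrac12+\tfrac12\bigl(r(4+x)-(2+x)\bigr)=2r-1+\tfrac{(r-1)x-1}{2}$ forces $r=2$, $x=1$. You need to replace your assumed equalities $\nu^{+rel}=V^{rel}_0$ by this use of statement (3) for the proof to go through.
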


\begin{proof}
If $V^{rel}_0=0$, that means there are no non-zero relevant coefficients. Therefore
$$0=\nu^{+rel}=\begin{cases}-\dfrac{1}{2}+\dfrac{1}{2}(rp-|\sigma|_1)\text{   if }r\text{ is even and }p\text{ is odd}\\\dfrac{1}{2}(rp-|\sigma|_1)\text{   otherwise}\end{cases}$$
\noindent Since $p\geq |\sigma|_1$, given that $r\geq 2$, the only way this expression can be 0 is when $r=2$ and $p=|\sigma|_1=1$.\\
\\
\indent If $V^{rel}_0=1$, that means all non-zero relevant coefficients are $1$. Hence we have $\nu^{+rel}=T^{rel}_1$. Now we have
$$0\neq\nu^{+rel}=T^{rel}_1\leq\max_{\alpha\in S_1}\sigma\cdot\alpha=\sigma_0$$
\noindent Hence we have
$$\begin{cases}-\dfrac{1}{2}+\dfrac{1}{2}(r\sigma_0^2-\sigma_0)\leq-\dfrac{1}{2}+\dfrac{1}{2}(rp-|\sigma|_1)=g^{rel}\leq\sigma_0\text{   if }r\text{ is even and }p\text{ is odd}\\ \dfrac{1}{2}(r\sigma_0^2-\sigma_0)\leq \dfrac{1}{2}(rp-|\sigma|_1)=g^{rel}\leq\sigma_0\text{   otherwise}\end{cases}$$
\noindent Rearranging, we have
$$r\sigma_0\leq\begin{cases}3+\frac{1}{\sigma_0}\text{   if }r\text{ is even and }p\text{ is odd}\\ 3\text{   otherwise}\end{cases}$$
\noindent So, we have $r\leq 4$. Given that $r\geq 2$, we also have $\sigma_0=1$. So we can conclude that $\nu^{+rel}\leq 1$ and all entries of $\sigma$ are $1$. Since $\nu^{+rel}\neq 0$, we have $\nu^{+rel}=1$. By considering the expression for $\nu^{+rel}$ in terms of $r,p,|\sigma|_1$, we can see that the only possibilities are as stated in the lemma.\\
\\
\indent If $V^{rel}_0=2$ and $r\geq 2$ and $\sigma_0=2$ and $\sigma_1=1$, then
$$\nu^{+rel}=T^{rel}_2\leq\max_{\alpha\in S_2}\sigma\cdot\alpha=\sigma_0+\sigma_1=3$$
\indent Let $x$ be the number of 1's in $\sigma$. Then
\begin{align*}
\nu^{+rel}&\geq -\dfrac{1}{2}+\dfrac{1}{2}(rp-|\sigma|_1)\\
&= -\dfrac{1}{2}+\dfrac{1}{2}\left(r(4+x)-(2+x)\right)\\
&=2r-1+\dfrac{(r-1)x-1}{2}
\end{align*}
Hence, we must have $r=2$ and $x=1$.
\end{proof}

\indent Theorem \ref{main result 3} comes as a corollary.

\begin{theorem*}[\ref{main result 3}]
Let $K$ be a knot in $S^3$ with $\nu^+(K)=0$ and let $m$ be a positive integer. If the $m$-surgery on $K$ is lensbordant, then it is smoothly rational homology cobordant to $L(m,m-1)$.
\end{theorem*}

\begin{proof}[Proof of Theorem \ref{main result 3}]
When $\nu^+=0$, Lemma \ref{t0<2} implies that either $r=1$, or $r=2$ with $\sigma=(1)$.\\
\\
\indent The changemaker $\sigma=(1)$ represents $S^3$. Hence, in the $r=2$ with $\sigma=(1)$ case, $m=4$ and $Y$ is smoothly rational homology cobordant to $S^3$. Since $S^3$ is smoothly rational homology cobordant to $L(4,3)$, $Y$ is also smoothly rational homology cobordant to $L(4,3)$.\\
\\
\indent When $r=1$, the conditions in Lemma \ref{linear system} become the conditions in \cite[Lemma 2.10]{mccoy}. Therefore, when $\nu^+(K)=0$, all entries of $\sigma$ are 1's \cite[Lemma 2.12]{mccoy}. These changemakers correspond to $L(m,m-1)$.
\end{proof}

\indent Given Lemma \ref{r=1 case} and \ref{t0<2}, \textbf{from this point onwards, until the end of this section, we assume that $r\geq 2$ and $V^{rel}_0\geq 2$}.\\
\\
\indent Now we can define
$$\mu:=\min_{1\leq i<V^{rel}_0}(T^{rel}_i-T^{rel}_{i-1})$$
\noindent Note that with $r\geq 2$, $\mu$ can be 0 here. This is very different from what McCoy did in \cite{mccoy} where $\mu$ must be at least 1.

\begin{Lemma} \label{t0 lower bound}
$$V^{rel}_0\geq\begin{cases}\frac{1}{8}((r^2p-2r+1)-(n+1))\text{   if }r\text{ is even and }p\text{ is odd}\\ \frac{1}{8}(r^2p-(n+1))\text{   otherwise}\end{cases}$$
\end{Lemma}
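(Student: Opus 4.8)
The plan is to bound $V^{rel}_0$ from below by producing a characteristic vector $\mathfrak{c}$ whose square $\mathfrak{c}^2$ is small (in absolute value) and which satisfies the relevant congruence condition, then feed this into Lemma \ref{8V}. Recall that by Lemma \ref{8V}, for any relevant $i$ we have $-8V_i = \max(\mathfrak{c}^2 + (n+1))$ over characteristic $\mathfrak{c}$ with $\langle \mathfrak{c}, [\Sigma]\rangle + r^2p = 2i$. In particular, taking $i$ to correspond to $V^{rel}_0$ (the index $i = \tfrac{r}{2}$ when $r$ even and $p$ odd, or $i=0$ otherwise, which is relevant), we get $-8V^{rel}_0 \geq \mathfrak{c}^2 + (n+1)$ for \emph{every} characteristic vector $\mathfrak{c}$ satisfying $\langle\mathfrak{c},[\Sigma]\rangle + r^2p = 2i$, hence $8V^{rel}_0 \leq -\mathfrak{c}^2 - (n+1)$ is false in that direction — so instead I want the reverse: I want to choose a specific $\mathfrak{c}$ making $-\mathfrak{c}^2$ large, giving a \emph{lower} bound on $V^{rel}_0$. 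Concretely, $8V^{rel}_0 = -\mathfrak{c}^2 - (n+1)$ for the optimal $\mathfrak{c}$, so $8V^{rel}_0 \geq -\mathfrak{c}'^2 - (n+1)$ for any valid $\mathfrak{c}'$; I just need one valid $\mathfrak{c}'$ with $-\mathfrak{c}'^2$ as large as the claimed bound demands.

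\textbf{First I would} identify the target value of $i$ and the constraint. For $r$ odd or $p$ even, we need $\langle\mathfrak{c},[\Sigma]\rangle + r^2p = 0$, i.e. $\langle\mathfrak{c},[\Sigma]\rangle = -r^2p$; for $r$ even and $p$ odd, we need $\langle\mathfrak{c},[\Sigma]\rangle + r^2p = r$, i.e. $\langle\mathfrak{c},[\Sigma]\rangle = r - r^2p = -r(r p - 1)$. Using $[\Sigma] = r\sigma + (\text{torsion})$ and $\langle\sigma,\sigma\rangle = -p$, one natural candidate is $\mathfrak{c} = (1,1,\dots,1)$: then $\langle\mathfrak{c},[\Sigma]\rangle = r\langle\mathfrak{c},\sigma\rangle = -r|\sigma|_1$, which does not in general hit the target. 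So instead I would look for $\mathfrak{c} \in \{\pm 1\}^{n+1}$ — all entries $\pm 1$, so $\mathfrak{c}^2 = -(n+1)$ — with $\langle\mathfrak{c},\sigma\rangle$ equal to the required value divided by $r$. From the changemaker property (used in the proof of Lemma \ref{is changemaker}), such sign vectors realize every value of $\langle\mathfrak{c},\sigma\rangle$ of the correct parity in the range $[-|\sigma|_1,|\sigma|_1]$; but the target $-rp$ (resp. $-(rp-1)$) lies \emph{outside} this range when $rp > |\sigma|_1$, so a pure $\pm 1$ vector won't do. I therefore need to allow one or more entries of $\mathfrak{c}$ to be $\pm 3$ (or larger), paying $\mathfrak{c}^2$-cost $-(n+1) - 8(\text{number of such coordinate bumps, counted with multiplicity})$ roughly, and the bound $V^{rel}_0 \geq \tfrac{1}{8}(r^2p - (n+1))$ (resp. with $r^2p - 2r + 1$) is exactly accounting for how much larger-than-$\pm1$ we are forced to go to reach the target inner product.

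\textbf{The cleanest route} is probably not to construct $\mathfrak{c}$ explicitly but to use the variational characterization already derived: in the proof of Lemma \ref{linear system}, equation (\ref{eq:6}) gives $2V^{rel}_0 = \min \sum_j \alpha_j(\alpha_j+1)$ over $\alpha \in \mathbb{Z}^{n+1}$ subject to $rp - |\sigma|_1 - 2\sum_j \sigma_j\alpha_j$ equalling $1$ (resp. $0$). Writing $\mathfrak{c} = 2\alpha + (1,\dots,1)$, we have $\mathfrak{c}^2 = -\sum_j(2\alpha_j+1)^2 = -(n+1) - 4\sum_j\alpha_j(\alpha_j+1)$, so $\sum_j \alpha_j(\alpha_j+1) = \tfrac{1}{4}(-\mathfrak{c}^2 - (n+1))$ and $2V^{rel}_0 = \tfrac{1}{4}(-\mathfrak{c}^2 - (n+1))$ for the optimal characteristic vector, i.e. $8V^{rel}_0 = -\mathfrak{c}^2 - (n+1)$. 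Now I bound $-\mathfrak{c}^2$ below: for \emph{any} characteristic vector $\mathfrak{c}'$ with the right inner product, $-\mathfrak{c}'^2 \leq -\mathfrak{c}^2$ is false — I want $-\mathfrak{c}^2 \geq -\mathfrak{c}'^2$, which is automatic since $\mathfrak{c}$ is the maximizer of $\mathfrak{c}^2$, i.e. minimizer of $-\mathfrak{c}^2$; that goes the wrong way again. So I must bound $-\mathfrak{c}^2$ below directly using the constraint. The key inequality is: since each $|2\alpha_j + 1| \geq 1$ and $\sum_j \sigma_j(2\alpha_j+1) = \langle\mathfrak{c},\sigma\rangle$, by Cauchy–Schwarz or a crude term-by-term bound, $|\langle\mathfrak{c},\sigma\rangle| \leq \sum_j \sigma_j |2\alpha_j+1| \leq \big(\max_j|2\alpha_j+1|\big) |\sigma|_1$ — but a sharper and more useful bound uses $\sigma_j \leq \sigma_0$ and the relation to $\sum (2\alpha_j+1)^2$. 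Actually the right comparison: $r^2 p = -\langle[\Sigma],[\Sigma]\rangle$ compared with $\langle\mathfrak{c},[\Sigma]\rangle^2 / (-\mathfrak{c}^2)$ via Cauchy–Schwarz in the negative-definite form gives $\langle\mathfrak{c},[\Sigma]\rangle^2 \leq (-\mathfrak{c}^2)(-\langle[\Sigma],[\Sigma]\rangle) = (-\mathfrak{c}^2)(r^2p)$. With $\langle\mathfrak{c},[\Sigma]\rangle = -r^2p$ (resp. $-r(rp-1)$) this yields $r^4p^2 \leq (-\mathfrak{c}^2) r^2 p$, i.e. $-\mathfrak{c}^2 \geq r^2 p$ (resp. $-\mathfrak{c}^2 \geq r(rp-1)^2/p \cdot \tfrac{1}{r} = (rp-1)^2/p$, which I'd need to massage to $r^2p - 2r + 1$). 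Then $8V^{rel}_0 = -\mathfrak{c}^2 - (n+1) \geq r^2p - (n+1)$ (resp. $\geq r^2 p - 2r + 1 - (n+1)$ after checking $(rp-1)^2/p \geq r^2p - 2r + 1$, which holds since $(rp-1)^2 = r^2p^2 - 2rp + 1 \geq p(r^2p - 2r+1) = r^2p^2 - 2rp + p$ iff $1 \geq p$... hmm, this fails for $p > 1$, so I'd need to be more careful and perhaps use the sharper statement $\langle\mathfrak c,[\Sigma]\rangle^2 \leq (-\mathfrak c^2)(r^2p)$ together with $-\mathfrak{c}^2 \geq n+1$ is not enough — I'd actually bound using that $\langle\mathfrak{c},[\Sigma]\rangle/r = \langle\mathfrak{c},\sigma\rangle$ is an integer and refine the Cauchy–Schwarz step on $\sigma$ rather than $[\Sigma]$).

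\textbf{The main obstacle} I expect is getting the constant exactly right in the $r$-even-$p$-odd case, where the target inner product is $-r(rp-1)$ rather than $-r^2p$ and the arithmetic of the Cauchy–Schwarz bound must be pushed to give precisely $r^2p - 2r + 1$ and not something weaker. I anticipate the cleanest fix is to apply Cauchy–Schwarz to $\langle\mathfrak{c},\sigma\rangle$ directly — $\langle\mathfrak{c},\sigma\rangle^2 \leq (-\mathfrak{c}^2)(-\langle\sigma,\sigma\rangle) = (-\mathfrak{c}^2)\,p$ — so that $-\mathfrak{c}^2 \geq \langle\mathfrak{c},\sigma\rangle^2/p$; with $\langle\mathfrak{c},\sigma\rangle = -rp$ this gives $-\mathfrak{c}^2 \geq r^2p$ and with $\langle\mathfrak{c},\sigma\rangle = -(rp-1)$ it gives $-\mathfrak{c}^2 \geq (rp-1)^2/p \geq r^2p - 2r$, whence $-\mathfrak{c}^2 \geq r^2p - 2r + 1$ since $-\mathfrak{c}^2$ is an integer $\equiv n+1 \pmod 8$... one then checks the residue works out, or simply notes $(rp-1)^2/p = r^2 p - 2r + 1/p > r^2p - 2r$, and integrality of $-\mathfrak{c}^2$ forces $-\mathfrak{c}^2 \geq r^2p - 2r + 1$. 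Substituting into $8V^{rel}_0 = -\mathfrak{c}^2 - (n+1)$ gives the stated bound in both cases, completing the proof.
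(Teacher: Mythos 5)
Your final argument is correct and is essentially the paper's own proof: take the characteristic vector $\mathfrak{c}$ realizing $8V^{rel}_0=-\mathfrak{c}^2-(n+1)$ subject to $\langle\mathfrak{c},\sigma\rangle+rp=1$ (resp.\ $0$), apply Cauchy--Schwarz against $\sigma$ using $\langle\sigma,\sigma\rangle=-p$ to get $-\mathfrak{c}^2\geq (rp-1)^2/p$ (resp.\ $r^2p$), and use integrality of $\mathfrak{c}^2$ to sharpen $(rp-1)^2/p=r^2p-2r+1/p$ to $r^2p-2r+1$. The early detours (the reversed inequality in the first paragraph and the attempted explicit construction of $\pm1/\pm3$ vectors) are correctly recognized and abandoned, so the proof you settle on matches the paper's.
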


\begin{proof}
At the start of the proof of Lemma \ref{linear system}, we see that
$$-8V^{rel}_0=\mathfrak{c}^2+(n+1)$$
for some characteristic vector $\mathfrak{c}$ satisfying
$$\langle \mathfrak{c},\sigma\rangle+rp=\begin{cases}1\text{   if }r\text{ is even and }p\text{ is odd}\\0\text{   otherwise}\end{cases}$$
By Cauchy-Schwarz inequality and the fact that $\sigma^2=-p$, we have
$$|\mathfrak{c}^2|=\frac{|\mathfrak{c}^2||\sigma^2|}{p}\geq\frac{|\langle \mathfrak{c},\sigma\rangle|^2}{p}\geq\begin{cases}\frac{(rp-1)^2}{p}\text{   if }r\text{ is even and }p\text{ is odd}\\ \frac{(rp)^2}{p}\text{   otherwise}\end{cases}$$
Simplifying the expressions and considering the fact that $|\mathfrak{c}^2|\in\mathbb{Z}$, we have
$$|\mathfrak{c}^2|\geq\begin{cases}r^2p-2r+1\text{   if }r\text{ is even and }p\text{ is odd}\\ r^2p\text{   otherwise}\end{cases}$$
Result follows by substituting this back to $-8V^{rel}_0=\mathfrak{c}^2+(n+1)$.
\end{proof}

\begin{Lemma} \label{mu at most 1}
If $\mu\geq 2$, then $V_0^{rel}=2$ and $\sigma_0=2$ and $\sigma_1=1$.
\end{Lemma}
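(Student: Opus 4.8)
The plan is to play off three facts against one another: that $\mu\geq 2$ forces the relevant coefficients to drop quickly, so $\nu^{+rel}\geq 2V_0^{rel}-1$; the exact formula for $\nu^{+rel}$ in Lemma~\ref{linear system}(2); and the lower bound $8V_0^{rel}\geq r^2p-2r+1-(n+1)$ from Lemma~\ref{t0 lower bound} (its ``$r$ even, $p$ odd'' form is the weaker one, hence valid in every case). Before doing this I would record that $\sigma_0\geq 2$: since $V_0^{rel}\geq 2$ the index $i=1$ occurs in the minimum defining $\mu$, and Lemma~\ref{linear system}(1) at $m=0,1$ gives $T_0^{rel}=0$ and $T_1^{rel}=\sigma_0$ (the set $S_{\leq 1}$ being $\{0\}$ together with the coordinate vectors), whence $\sigma_0=T_1^{rel}-T_0^{rel}\geq\mu\geq 2$; in particular the length of $\sigma$ is at least $2$, since $(1)$ is the only changemaker of length $1$.

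Next I would combine the three facts. From $\mu\geq 2$ each of the values $1,\dots,V_0^{rel}-1$ is attained at least twice by the non-increasing sequence of relevant coefficients and $V_0^{rel}$ at least once, so $\nu^{+rel}\geq 2V_0^{rel}-1$; together with $2\nu^{+rel}\leq rp-|\sigma|_1$ and $8V_0^{rel}\geq r^2p-2r+1-(n+1)$ this yields
\[
rp-|\sigma|_1\;\geq\;2\nu^{+rel}\;\geq\;4V_0^{rel}-2\;\geq\;\tfrac12\bigl(r^2p-2r+1-(n+1)\bigr)-2,
\]
which rearranges to $(n+1)+2r+3\geq r(r-2)p+2|\sigma|_1$. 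If $r\geq 3$ then $r(r-2)p\geq rp\geq 4r$ (using $r-2\geq 1$ and $p=\sum_j\sigma_j^2\geq\sigma_0^2\geq 4$) while $2|\sigma|_1\geq 2(n+1)$, so the right-hand side is at least $4r+2(n+1)$, forcing the absurdity $3\geq 2r+(n+1)$. Hence $r=2$, and the displayed inequality collapses to $(n+1)+7\geq 2|\sigma|_1$.

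It remains, with $r=2$, to establish $V_0^{rel}=2$ and then $\sigma_0=2$, $\sigma_1=1$. Suppose $V_0^{rel}\geq 3$. Then $i=2$ also lies in the range of $\mu$ and $T_2^{rel}=\sigma_0+\sigma_1$ (from $S_{\leq 2}$), so $\sigma_1=T_2^{rel}-T_1^{rel}\geq 2$; feeding $|\sigma|_1\geq\sigma_0+\sigma_1+(n-1)\geq n+3$ into $(n+1)+7\geq 2|\sigma|_1$ forces $n\leq 2$, and since $\sigma_1\geq 2$ forbids length $2$ we get length exactly $3$, i.e.\ $\sigma=(\sigma_0,\sigma_1,1)$ with $\sigma_0\geq\sigma_1\geq 2$. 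Requiring that $2$ be a subset sum forces $\sigma_1=2$, and the changemaker condition then forces $\sigma_0\leq 4$, leaving $\sigma\in\{(2,2,1),(3,2,1),(4,2,1)\}$. For each of these (with $r=2$, so $p$ and every $V_i^{rel}$ determined) a direct computation from Lemma~\ref{linear system} gives $(\nu^{+rel},V_0^{rel})=(6,4),(11,7),(17,10)$ respectively, each contradicting $\nu^{+rel}\geq 2V_0^{rel}-1$; so $V_0^{rel}=2$. Finally, with $V_0^{rel}=2$, Lemma~\ref{linear system}(3) gives $3\leq\nu^{+rel}=T_2^{rel}\leq\sigma_0+\sigma_1$, while $(n+1)+7\geq 2|\sigma|_1\geq 2(\sigma_0+\sigma_1+n-1)$ gives $\sigma_0+\sigma_1\leq 4$; with $\sigma_0\geq\sigma_1\geq 1$ and $\sigma_0\geq 2$ this leaves $(\sigma_0,\sigma_1)\in\{(2,1),(3,1),(2,2)\}$. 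For $(3,1)$ and $(2,2)$ the changemaker condition and $n\leq 2$ force $\sigma=(3,1,1)$, resp.\ $(2,2,1)$, for which Lemma~\ref{linear system}(2) gives $\nu^{+rel}=8$, resp.\ $6$ --- both exceeding $\sigma_0+\sigma_1=4$, contradicting $\nu^{+rel}\leq\sigma_0+\sigma_1$. Hence $(\sigma_0,\sigma_1)=(2,1)$.

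The conceptual core is the inequality chain of the second paragraph forcing $r=2$; the hard part will be bookkeeping the finitely many small cases cleanly --- squeezing enough out of the changemaker condition and the crude bound on the length of $\sigma$ that the explicit evaluations stay short, and taking care that the boundary index $m=V_0^{rel}$ (where statement (1) of Lemma~\ref{linear system} is not yet available) is handled via statement (3) rather than (1).
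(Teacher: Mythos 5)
Your proof is correct, but it takes a genuinely different route from the paper's. The paper follows McCoy's template: it sets $M$ to be (roughly) the lower bound for $V_0^{rel}$ from Lemma \ref{t0 lower bound} minus one, uses Lemma \ref{linear system}(1) to pick a maximizer $\alpha\in S_{\leq M}$ of $\sigma\cdot\alpha$, shows $\alpha\in S_M$, and then exploits $\mu\geq 2$ through the key inequality $\sigma_l\geq T^{rel}_M-T^{rel}_{M-\alpha_l}\geq\alpha_l\mu\geq 2\alpha_l$ for every index $l$; this gives $M\leq\tfrac18(p+2|\sigma|_1)$, hence $\sum_l(\sigma_l^2-\sigma_l)\leq 3$, so $\sigma$ is a $2$ followed by $1$'s or all $1$'s, and the observations $T^{rel}_1=\sigma_0$, $T^{rel}_2=\sigma_0+\sigma_1$ finish as in your last step. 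You instead replace the maximizer/$\sigma_l\geq\alpha_l\mu$ mechanism by the cruder count $\nu^{+rel}\geq 2V_0^{rel}-1$, and play it against Lemma \ref{linear system}(2) and Lemma \ref{t0 lower bound} to get $(n+1)+2r+3\geq r(r-2)p+2|\sigma|_1$, which kills $r\geq 3$ and leaves a finite amount of bookkeeping settled by the changemaker subset-sum condition and explicit evaluations; I checked your numbers, and the pairs $(\nu^{+rel},V_0^{rel})=(6,4),(11,7),(17,10)$ for $(2,2,1),(3,2,1),(4,2,1)$ and the values $8,6$ for $(3,1,1),(2,2,1)$ are right, with the lower bounds on $V_0^{rel}$ correctly coming from statement (3) rather than (1). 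What each approach buys: the paper's argument avoids case enumeration, bounds all entries of $\sigma$ at once, and stays close to McCoy's machinery (so it scales to similar situations), while yours is more elementary in its central inequality, and it extracts $r=2$ as a byproduct, a fact the paper only obtains afterwards via the third clause of Lemma \ref{t0<2}; the cost is the explicit small-case computations, which you handled correctly.
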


\begin{proof}
Suppose $\mu\geq 2$. We define
$$M:=\begin{cases}\lceil\frac{1}{8}((r^2p-2r+1)-(n+1))\rceil-1\text{   if }r\text{ is even and }p\text{ is odd}\\ \lceil\frac{1}{8}(r^2p-(n+1))\rceil-1\text{   otherwise}\end{cases}$$
which is an analogue of $N-1$ in \cite[\S 2]{mccoy}.\\
\\
\indent By Lemma \ref{t0 lower bound}, we have $V_0^{rel}>M$. Hence, by Lemma \ref{linear system}, we know that
$$T^{rel}_M=\displaystyle\max_{\alpha\in S_{\leq M}}\sigma\cdot\alpha$$
\indent Take such an $\alpha$.\\
\\
First of all, we argue that $\alpha\in S_M$ (without the $\leq$). If $\alpha\not\in S_M$, then we must have $M\geq 1$ and $\max\limits_{\alpha\in S_{\leq M}}\sigma\cdot\alpha=\max\limits_{\alpha\in S_{\leq M-1}}\sigma\cdot\alpha$. This implies $T^{rel}_M=T^{rel}_{M-1}$, which contradicts with $\mu\geq 2$. Hence, $\alpha\in S_M$.\\
\\
\indent For any index $l$ with $\alpha_l>0$, if we consider $\alpha^{'}$ defined with $\alpha^{'}_i=\begin{cases}\alpha_i\text{ if }i\neq l\\ \alpha_i-1\text{ if }i=l\end{cases}$, we have $\alpha^{'}\in S_{\leq M-\alpha_l}$. So we have
$$T^{rel}_{M-\alpha_l}\geq\alpha^{'}\cdot\sigma=\alpha\cdot\sigma-\sigma_l=T^{rel}_M-\sigma_l$$
Hence,
$$\sigma_l\geq T^{rel}_M-T^{rel}_{M-\alpha_l}\geq\alpha_l\mu$$
\indent Since $\mu\geq 2$, from our discussion above, for all index $l$ we have $\sigma_l\geq 2\alpha_l$ (this also holds when $\alpha_l=0$ because the right hand side becomes $0$). Hence,
\begin{align*}
M&=\dfrac{1}{2}\displaystyle\sum_{l}\alpha_l(\alpha_l+1)\\
&\leq\dfrac{1}{2}\displaystyle\sum_{l}\dfrac{\sigma_l}{2}\left(\dfrac{\sigma_l}{2}+1\right)\\
&=\dfrac{1}{8}\displaystyle\sum_{l}\sigma_l(\sigma_l+2)\\
&=\dfrac{1}{8}(p+2|\sigma|_1)
\end{align*}
\noindent Combining this with the definition of $M$, we have
$$\dfrac{1}{8}(p+2|\sigma|_1)\geq\begin{cases}\left\lceil\frac{1}{8}((r^2p-2r+1)-(n+1))\right\rceil-1\text{   if }r\text{ is even and }p\text{ is odd}\\ \left\lceil\frac{1}{8}(r^2p-(n+1))\right\rceil-1\text{   otherwise}\end{cases}$$
\noindent Since $r\geq 2$ and $|\sigma|_1\geq(n+1)$ (because $\sigma$ has no $0$ entries), we have
$$\dfrac{1}{8}(p+2|\sigma|_1)\geq\begin{cases}\frac{1}{8}((4p-4+1)-|\sigma|_1)-1\text{   if }r\text{ is even and }p\text{ is odd}\\ \frac{1}{8}(4p-|\sigma|_1)-1\text{   otherwise}\end{cases}$$
\noindent Rearranging this, we get
$$\begin{cases}\dfrac{11}{3}\geq p-|\sigma|_1\text{   if }r\text{ is even and }p\text{ is odd}\\[3ex] \dfrac{8}{3}\geq p-|\sigma|_1\text{   otherwise}\end{cases}$$
Since $p-|\sigma|_1$ is an integer, this implies
$$3\geq\displaystyle\sum_{l}(\sigma_l^2-\sigma_l)$$
Therefore, the entries of $\sigma$ is either a 2 followed by a bunch of 1's or simply all 1's.\\
\\
\indent Since $V^{rel}_0\geq 2$, we have $T^{rel}_0=0$ and $T^{rel}_1=\sigma_0$. Also, if $V^{rel}_0\geq 3$, we have $T^{rel}_2=\sigma_0+\sigma_1$. Therefore, the only way for $\mu$ to be at least 2 is when $V^{rel}_0=2$ and $\sigma$ is a 2 followed by a bunch of 1's.
\end{proof}

\indent Now we can prove Theorem \ref{main result 2}. Statement (1) of Theorem \ref{main result 2} has already been proven in Lemma \ref{r=1 case}. So we only need to prove statements (2) and (3) of Theorem \ref{main result 2}.

\begin{proof}[Proof of statement (2) and (3) of Theorem \ref{main result 2}]
Fix the knot $K$. Fix $r\geq 2$. Also, if $r$ is even, we fix the parity of $p$ (odd or even).\\
\\
\indent We want to show that these together determines $p$, which determines the surgery slope $r^2p$ since $r$ is fixed.\\
\\
\indent These data determines the relevant coefficients. So, it suffices to show that the relevant coefficients together with the value of $r$ determines $\sigma$ (which determines $p$).\\
\\
\indent The algorithm McCoy described in the proof of \cite[Lemma 2.16]{mccoy} shows that statement (1) of our Lemma \ref{linear system} determines all entries of $\sigma$ strictly greater than $\mu$. Together with Lemma \ref{mu at most 1}, this implies that all entries of $\sigma$ except for the 1's are determined, except for a special case which was addressed in Lemma \ref{t0<2}. Given that $r\geq 2$, statement (2) of Lemma \ref{linear system} determines the number of 1's.
\end{proof}

\newpage

\begin{center}
\includegraphics[width=1\textwidth]{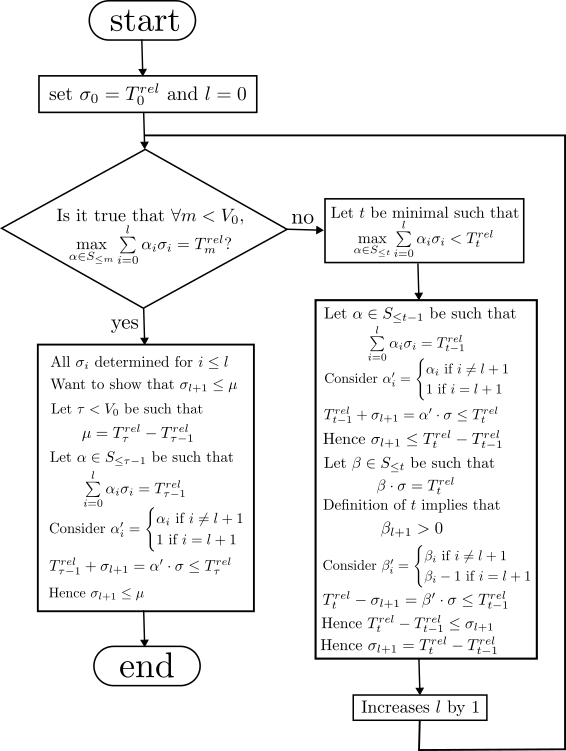}\\
Figure 2. Flowchart showing McCoy's algorithm and argument, but in notations used in this paper.
\end{center}

\newpage

\section{Some bounds on the surgery slope $r^2p$}
In this section, we establish some inequalities involving the surgery slope $r^2p$.

\begin{theorem*}[\ref{main result 4}]
Suppose there is an $(r,p)$-lensbordant surgery on a knot $K$ in $S^3$ with associated changemaker $\sigma$, then
$$2\nu^+(K)+2(r-1)\geq r^2p-r|\sigma|_1\geq 2\nu^+(K)$$
\end{theorem*}

\begin{proof}
When $\nu^{+rel}=0$, Lemma \ref{t0<2} implies that either $r=2$ with $\sigma=(1)$, or $r=1$. When $r=2$ and $\sigma=(1)$, the inequality becomes $2\geq 2\geq 0$. When $r=1$, $V_0=V^{rel}_0=0$, and hence $\nu^+=0$. As discussed in the proof of Theorem \ref{main result 3}, $\sigma$ must be $(1,\dots,1)$ in this case, and the inequality becomes $0\geq 0\geq 0$.\\
\\
\indent Now we assume that $\nu^{+rel}\neq 0$\\
\\
\underline{Case 1: $r$ is odd or $p$ is even}\\
\indent We have $\nu^{+rel}=\dfrac{1}{2}(rp-|\sigma|_1)$. Hence,
$$V^{rel}_{\tfrac{1}{2}(rp-|\sigma|_1)}=0 \quad\text{and}\quad V^{rel}_{\tfrac{1}{2}(rp-|\sigma|_1)-1}\neq 0$$
So we have 
$$V_{\tfrac{1}{2}(r^2p-r|\sigma|_1)}=0 \quad\text{and}\quad V_{\tfrac{1}{2}(r^2p-r|\sigma|_1)-r}\neq 0$$
This implies
$$\dfrac{1}{2}(r^2p-r|\sigma|_1)\geq\nu^+>\dfrac{1}{2}(r^2p-r|\sigma|_1)-r$$
Rearranging, we have
$$2\nu^++2r>r^2p-r|\sigma|_1\geq 2\nu^+$$
Since $p\equiv|\sigma|_1$ (mod 2), the value $r^2p-r|\sigma|_1$ must be an even integer. Hence
$$2\nu^++2(r-1)\geq r^2p-r|\sigma|_1\geq 2\nu^+$$
\noindent\underline{Case 2: $r$ is even and $p$ is odd}\\
\indent We have $\nu^{+rel}=-\dfrac{1}{2}+\dfrac{1}{2}(rp-|\sigma|_1)$. Hence,
$$V^{rel}_{-\tfrac{1}{2}+\tfrac{1}{2}(rp-|\sigma|_1)}=0 \quad\text{and}\quad V^{rel}_{-\tfrac{3}{2}+\tfrac{1}{2}(rp-|\sigma|_1)}\neq 0$$
So we have 
$$V_{\tfrac{1}{2}(r^2p-r|\sigma|_1)}=0 \quad\text{and}\quad V_{\tfrac{1}{2}(r^2p-r|\sigma|_1)-r}\neq 0$$
The rest of the proof is the same as Case 1.
\end{proof}

Now we prove Corollary \ref{main result 5}.

\begin{corollary*}[\ref{main result 5}]
Suppose there is an $(r,p)$-lensbordant surgery on a knot $K$ in $S^3$ with associated changemaker $\sigma$, then\\
(1) If $r\geq 3$, then $\dfrac{2r^2}{(r-1)(r-2)}\nu^+(K)\geq r^2p\geq 2\nu^+(K)+r$\\
(2) If $r\geq 2$, then $4\nu^+(K)+5\geq r^2p\geq 2\nu^+(K)+r$
\end{corollary*}

\begin{proof}
Since $|\sigma|_1\geq 1$, Theorem \ref{main result 4} implies that $r^2p\geq 2\nu^++r$.\\
\\
\indent Since $p\geq|\sigma|_1$, Theorem \ref{main result 4} implies that $2\nu^++2(r-1)\geq r(r-1)p$. So we have
\begin{equation}\label{eq:8}\tag{8}
\dfrac{2r}{r-1}\nu^++2r\geq r^2p
\end{equation}
Then we have
$$\dfrac{r-2}{r}r^2p=r^2p-\dfrac{2}{r}r^2p\leq\left(\dfrac{2r}{r-1}\nu^++2r\right)-2r=\dfrac{2r}{r-1}\nu^+$$
Statement (1) follows.\\
\\
\indent When $r\geq 6$, $\dfrac{2r^2}{(r-1)(r-2)}<4$. So, we only need to check statement (2) for $r=2,3,4,5$.\\
\\
When $r=2$, (\ref{eq:8}) implies $r^2p\leq 4\nu^++4$\\
\\
By Lemma \ref{t0<2}, when $r\geq 3$, we have $\nu^+\geq 1$.\\
When $r=3$, (\ref{eq:8}) implies $r^2p\leq 3\nu^++6\leq 4\nu^++5$.\\
\\
When $r=4$ and $\nu^+\geq 3$, (\ref{eq:8}) implies $r^2p\leq \tfrac{8}{3}\nu^++8=4\nu^++4-\tfrac{4}{3}(\nu^+-3)\leq 4\nu^++4$.\\
When $r=4$ and $\nu^+\leq 2$, statement (1) implies $r^2p\leq\tfrac{16}{3}\nu^+=4\nu^++\tfrac{8}{3}-\tfrac{4}{3}(2-\nu^+)\leq 4\nu^++\tfrac{8}{3}$.\\
\\
When $r=5$ and $\nu^+\geq 4$, (\ref{eq:8}) implies $r^2p\leq \tfrac{5}{2}\nu^++10=4\nu^++4-\tfrac{3}{2}(\nu^+-4)\leq 4\nu^++4$.\\
When $r=5$ and $\nu^+\leq 3$, statement (1) implies $r^2p\leq\tfrac{25}{6}\nu^+=4\nu^++\tfrac{1}{2}-\tfrac{1}{6}(3-\nu^+)\leq 4\nu^++\tfrac{1}{2}$.
\end{proof}

\newpage

\section{Structure of relevant coefficients}
In the introduction, the remark after the statement of Theorem \ref{imprecise theorem} states that the surgery slope $r^2p$ is very close to $8V_0(K)$. The precise statement is

\begin{Th} \label{main result 6}
(1) If $r,p$ are both even, then $r^2p=8V_0(K)$.\\
(2) If $r$ is even and $p$ is odd, then $8V_0(K)-2r\leq r^2p\leq 8V_0(K)+2r$.\\
(3) If $r$ is odd, then $8V_0(K)-3odd(\sigma)\leq r^2p\leq 8V_0(K)+odd(\sigma)$\\
where $odd(\sigma)$ is the number of odd entries in $\sigma$.\\
\\
Furthermore, in the case when $r$ is odd, $r^2p=8V_0(K)+odd(\sigma)$ if and only if the even entries of $\sigma$ can be partitioned into two sets with equal sum.
\end{Th}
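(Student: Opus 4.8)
The plan is to evaluate the optimization in Lemma~\ref{linear system} --- namely formula~\eqref{eq:6} at the single index $i=0$ --- directly, and to compare the resulting value of $V^{rel}_0$ with $r^2p$. Recall that at $i=0$ this formula expresses $2V^{rel}_0$ as the minimum of $\sum_j\alpha_j(\alpha_j+1)$ over all integer vectors $\alpha$ subject to one linear equation $\sum_j\sigma_j\alpha_j=c$, where $c=\tfrac12(rp-|\sigma|_1)$ in the ``otherwise'' case and $c=\tfrac12(rp-|\sigma|_1-1)$ when $r$ is even and $p$ is odd. The idea is that the ``ideal'' cost $\tfrac14r^2p$ is achieved, up to a small error, by $\alpha_j=\lfloor r\sigma_j/2\rfloor$ corrected by a $0/1$ vector furnished by the changemaker property of $\sigma$; we use $\sum_j\sigma_j^2=p$ and $p\equiv|\sigma|_1\pmod 2$ throughout.

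First I would dispatch parts (1) and (2), where $r$ is even and hence every $r\sigma_j$ is even. Substituting $\alpha_j=\tfrac{r\sigma_j}{2}-\delta_j$ with $\delta_j\in\mathbb{Z}$ converts the objective of~\eqref{eq:6} into $\tfrac14r^2p-\tfrac{r}{2}\varepsilon+\sum_j\delta_j(\delta_j-1)$ (with $\varepsilon=0$ if $p$ is even, $\varepsilon=1$ if $p$ is odd) and the constraint into $\sum_j\sigma_j\delta_j=\tfrac12(|\sigma|_1+\varepsilon)$, a nonnegative integer at most $|\sigma|_1$. The changemaker property supplies a $0/1$-valued $\delta$ meeting the constraint, and since $\delta_j(\delta_j-1)\ge0$ with equality exactly on $\{0,1\}$ this $\delta$ is optimal, so the minimum equals $\tfrac14r^2p-\tfrac{r}{2}\varepsilon$. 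Thus $8V^{rel}_0=r^2p$ when $p$ is even, which is part (1) since then $V^{rel}_0=V_0$; and $8V^{rel}_0=r^2p-2r$ when $p$ is odd, where $V^{rel}_0=V_{r/2}$, so part (2) follows from $0\le V_0-V_{r/2}\le r/2$, itself immediate from $V_i-V_{i+1}\in\{0,1\}$.

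For part (3), $r$ is odd and $V^{rel}_0=V_0$, but now the parity of each $\sigma_j$ matters, so I would split the coordinates into $O=\{j:\sigma_j\text{ odd}\}$ and $E=\{j:\sigma_j\text{ even}\}$, so that $|O|=\mathrm{odd}(\sigma)$. For the bound $r^2p\le 8V_0+\mathrm{odd}(\sigma)$, write $\alpha_j=\tfrac{r\sigma_j-1}{2}+u_j$ on $O$ and $\alpha_j=\tfrac{r\sigma_j}{2}+v_j$ on $E$, and set $w_j=v_j+\tfrac12$; then~\eqref{eq:6} reads $2V_0=\tfrac14(r^2p-(n+1))+\min(\sum_{j\in O}u_j^2+\sum_{j\in E}w_j^2)$ subject to $\sum_{j\in O}\sigma_ju_j+\sum_{j\in E}\sigma_jw_j=0$. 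As $u_j^2\ge0$ and $w_j^2\ge\tfrac14$, the minimum is at least $\tfrac14|E|$, giving $8V_0\ge r^2p-(n+1)+|E|=r^2p-|O|$; moreover it equals $\tfrac14|E|$ precisely when all $u_j=0$ and all $w_j=\pm\tfrac12$, in which case the constraint $\sum_{j\in E}\sigma_jw_j=0$ says exactly that the even entries of $\sigma$ split into two sets of equal sum, and conversely such a split realizes $\tfrac14|E|$ --- this gives the ``furthermore'' clause. For the reverse bound $r^2p\ge 8V_0-3\,\mathrm{odd}(\sigma)$, take \emph{any} subset $A$ with $\sum_{j\in A}\sigma_j=\tfrac12\sum_{j\in E}\sigma_j$ (which exists by the changemaker property, since this target lies between $0$ and $|\sigma|_1$) and test $\alpha_j=\lfloor r\sigma_j/2\rfloor-\mathbf{1}_{\{j\in A\}}$; using $r\sigma_j-2\lfloor r\sigma_j/2\rfloor=\mathbf{1}_{\{j\in O\}}$ (valid since $r$ is odd) one finds $\sum_j\alpha_j(\alpha_j+1)=\tfrac14(r^2p-|O|)+|A\cap O|\le\tfrac14(r^2p+3|O|)$, hence $8V_0\le r^2p+3\,\mathrm{odd}(\sigma)$.

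The main obstacle is this reverse bound in part (3): one needs a single $\alpha$ meeting the linear constraint whose cost overshoots $\tfrac14(r^2p-(n+1))$ by at most $|O|$, and a naive correction set $A$ can be spoiled by the large even entries of $\sigma$. The identity $r\sigma_j-2\lfloor r\sigma_j/2\rfloor=\mathbf{1}_{\{j\in O\}}$ resolves this: the excess contributed by $A$ telescopes to $|A\cap O|\le|O|$ regardless of which $A$ is chosen, so only the changemaker property is needed. Spotting this cancellation, and noticing on the other side that $w_j\in\tfrac12+\mathbb{Z}$ forces the sharp floor $\tfrac14|E|$ pinning down the equality case, are the delicate points; the remaining steps are routine algebra with $\sum_j\sigma_j^2=p$.
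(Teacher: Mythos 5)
Your proposal is correct, and it reaches Theorem \ref{main result 6} by a genuinely more direct route than the paper. The paper never evaluates (\ref{eq:6}) at $i=0$ head-on: it first builds the auxiliary coefficients $T^\sigma_m$ and $V^\sigma_i$ of the changemaker, proves the periodicity statement of Lemma \ref{V structure} via the rational-greedy (coin-game) formalism together with Lemma \ref{a-b leq x} and Lemma \ref{a-b geq x+1}, computes the boundary values in Lemma \ref{some values}, and only then assembles the theorem by writing $V^{rel}_0=V^\sigma_{\nu^{+rel}}$ and peeling off multiples of $p$ from the index. You compress all of this into a single completion-of-the-square in the constrained quadratic minimum (\ref{eq:6}) at $i=0$: for $r$ even the substitution $\alpha_j=\tfrac{r\sigma_j}{2}-\delta_j$ reduces the problem to minimizing $\sum_j\delta_j(\delta_j-1)$ subject to $\sum_j\sigma_j\delta_j=\tfrac12(|\sigma|_1+\varepsilon)$, where the changemaker property supplies an optimal $0/1$ solution (and your passage from $V_{r/2}$ to $V_0$ in case (2) via $V_i-V_{i+1}\in\{0,1\}$ matches the paper's final step); for $r$ odd, centering at $\alpha_j+\tfrac12$ and splitting the coordinates by parity of $\sigma_j$ makes the half-integer residues on the even coordinates force the floor $\tfrac14|E|$, which pins down the equality criterion as the equal-sum partition of the even entries, while the test vector built from a changemaker subset $A$ with $\sum_{j\in A}\sigma_j=\tfrac12\sum_{\sigma_j\text{ even}}\sigma_j$ gives the lower bound $8V_0\leq r^2p+3\,odd(\sigma)$ thanks to the telescoping excess $|A\cap O|\leq|O|$. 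These optimizers are essentially the same ones hidden inside the paper's proof of Lemma \ref{some values}, but you reach them without any of the structure theory; I verified your algebra (the constraint transformations, the identity $\sum_j\alpha_j(\alpha_j+1)=\tfrac14(r^2p-|O|)+|A\cap O|$, and the equality analysis) and it is sound, including the implicit check that the indices used ($0$, or $r/2$ when $r$ is even and $p$ is odd) are relevant so that (\ref{eq:6}) applies. What the paper's longer route buys is reusable machinery: Lemma \ref{V structure} and Lemma \ref{a-b geq x+1} are needed again for Corollary \ref{linear system equality} and in Section 7, whereas your argument, while shorter and self-contained, serves only this theorem.
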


Note that Theorem \ref{imprecise theorem} is statement (1) of Theorem \ref{main result 6}. The aim of this section is to gain a deeper understanding on relevant coefficients, and then prove Theorem \ref{main result 6}.

\bigskip

\begin{center}
\begin{tabular}{ c|c|c } 
 & $r$ even & $r$ odd \\ 
\hline
$p$ even & $r^2p=8V_0$ & $8V_0-3odd(\sigma)\leq r^2p\leq 8V_0+odd(\sigma)$ \\ 
       &             &  upper bound is equality if and only if \\
\cline{1-2}
$p$ odd & $8V_0-2r\leq r^2p\leq 8V_0+2r$ & the even entries of $\sigma$ can be partitioned \\
      &                                & into two sets with equal sum
\end{tabular}\\
Table 1. Table describing Theorem \ref{main result 6}.
\end{center}

\bigskip

The $T^{rel}_m$ in statement (1) of Lemma \ref{linear system} is defined with the relevant coefficients. Besides $m$, it also depends on the knot $K$, the value of $r$, and the parity of $p$ if $r$ is even. However, the right hand side $\displaystyle\max_{\alpha\in S_{\leq m}}\sigma\cdot\alpha$ only depends on $m$ and $\sigma$. Hence, it makes sense to think of that as some coefficients coming from $\sigma$ itself.

\begin{Def} \label{sigma rel coeff}
For $m\geq 0$, we define
$$T^{\sigma}_m:=\displaystyle\max_{\alpha\in S_{\leq m}}\sigma\cdot\alpha$$
and we define the relevant coefficients $V^{\sigma}_i$ (for $i>0$) of the changemaker vector $\sigma$ to be the monotonic \textbf{increasing} sequence of positive integers such that $T^{\sigma}_m$ is the number of relevant coefficients less than or equal to $m$. Equivalently,
$$V^{\sigma}_i:=\min\{m \mid i\leq T^{\sigma}_m\}, \quad i\in\mathbb{Z}_{>0}$$
\end{Def}

We also define $V^{\sigma}_0$ to be $0$, but we do not call it a relevant coefficient to avoid making the above definition more complicated.\\
\\
\indent Now statement (1) and (3) of Lemma \ref{linear system} can be rewritten as:
$$T^{rel}_m=T^{\sigma}_{m}\text{ for all }m<V^{rel}_0$$
and
$$T^{rel}_{V^{rel}_0}\leq T^{\sigma}_{V^{rel}_0}$$
Together they imply
$$V^{rel}_i=V^{\sigma}_{\nu^{+rel}-i}\text{ for all }0\leq i<\nu^{+rel}$$
\indent Recall that the definition of $S_{\leq m}$ is
$$S_{\leq m}:=\{\alpha\in\mathbb{Z}^{dim(\sigma)}_{\geq 0} \mid \Sigma\alpha_j(\alpha_j+1)\leq 2m\}$$
Notice that $\tfrac{1}{2}\alpha_j(\alpha_j+1)$ is exactly $1+2+\dots+\alpha_j$. Hence, we can interpret $T^{\sigma}_m$ as the answer to the following combinatorial problem:\\
\\
\textit{Consider the following game. There is a shop where you can spend coins to buy items. The available items are labeled $I_0,\dots,I_n$. You can buy any non-negative integer amount of each of the items. For each individual item, it costs 1 coin to buy the first one, 2 coins to buy the second one, 3 coins to buy the third one, etc. After finish doing all the purchases, each $I_i$ you own gives you $\sigma_i$ points. If you start with $m$ coins, what is the highest amount of points that you can get?}\\
\\
\indent A way to approach this game is to consider the coin cost per point in each purchase option. Purchasing the item $I_i$ for the $k^{th}$ time costs $k$ coins to obtain $\sigma_i$ points, hence it is a $\dfrac{k}{\sigma_i}$ coin per point purchase. One intuitive thing to do is the use a greedy algorithm by always making the purchase with the lowest coin cost per point among all available purchase options until running out of coins.\\
\\
\indent This greedy algorithm does not always give the optimal solution. At the end, you might have some coins left that is not enough to purchase the next lowest coin per point purchase option and are forced to purchase a cheaper option with a higher cost per point, or simply having some leftover coins with no available purchase options.\\
\\
\indent However, consider a modified version of the game where you are allowed to make fractional purchases under the same coin per point. For example, being able to buy $(4+\tfrac{2}{3})$ copies of $I_i$ by spending $(1+2+3+4+\tfrac{2}{3}\times 5)$ coins, gaining $(4+\tfrac{2}{3})\sigma_i$ points.\\
\\
\indent In this modified version of the game, a greedy algorithm will not result in any leftover coins, and all the purchased options have a lower or equal coin per point than any unpurchased option. Hence, a greedy algorithm must give the optimal solution.

\begin{Def}
We call a greedy algorithm on the modified version of the game a \textit{rational greedy algorithm}. We write $T^{\sigma,\mathbb{Q}}_m$ to denote the amount of points obtained with $m$ coins under the rational greedy algorithm.\\
\\
Given $m,\sigma$, we say that the resulting rational greedy algorithm is \textit{up to $x\in\mathbb{Q}$ coin per point} if the resulting rational greedy algorithm makes a non-zero a amount of purchase options with $x$ coin per point and no purchase options with $>x$ coin per point.
\end{Def}

\indent Because of the way rational greedy algorithm works, a rational greedy algorithm up to $x$ coin per point must make all available purchases that are $<x$ coin per point.\\
\\
\indent Also, given $\sigma$ and $m$, if a rational greedy algorithm is up to $x$ coin per point, then all other optimal solutions to the modified version of the game must also take the same form: Purchasing all available purchases that are $<x$ coin per point, and not purchasing any options that are $>x$ coin per point. That is because any other way of spending all $m$ coins will result in having a higher average coin per point, and thus a lower total point at the end.\\
\\
\indent Hence, in this modified version of the game, all optimal solutions must be rational greedy algorithms up to $x$ coin per cost. The only difference allowed in different optimal solutions is picking different $x$ coin per point purchase options.

\begin{Def}
If an optimal solution in the modified version of the game does not include fractional purchases, we say that the solution is \textit{integral}. Equivalently, an integral optimal solution in the modified version of the game is a solution that is valid in the original version of the game.\\
\\
If an optimal solution in the modified version of the game is a rational greedy algorithm up to $x$ coin per point that purchases all options that are $x$ coin per point, we say that the solution is \textit{full}.
\end{Def}

Clearly, a full rational greedy algorithm must also be integral.\\
\\
\indent All purchase options in the original version of the game are valid purchase options in the modified version of the game. Hence, $T^{\sigma,\mathbb{Q}}_m\geq T^{\sigma}_m$. In particular, $T^{\sigma,\mathbb{Q}}_m=T^{\sigma}_m$ if and only if there exist an integral implementation of the rational greedy algorithm to the modified version of the game.\\
\\
\indent In this section we will use these ideas to help investigating the structure of relevant coefficients.

\begin{Lemma}\label{multiples of p on T}
For any positive integer $x$,
$$T^\sigma_{\tfrac{x^2p+x|\sigma|_1}{2}}=xp$$
\end{Lemma}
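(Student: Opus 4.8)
The plan is to interpret both sides of the identity through the shopping game described just above the statement. The quantity $T^\sigma_m$ with $m=\tfrac{x^2p+x|\sigma|_1}{2}$ is the maximum number of points obtainable from $m$ coins, and we want to show this equals $xp$. I would first exhibit a concrete purchasing strategy achieving $xp$ points with exactly $m$ coins, and then argue that no strategy can do better. For the lower bound, the natural candidate is to buy exactly $x$ copies of every item $I_i$: the cost is $\sum_i (1+2+\dots+x)=\sum_i \tfrac{x(x+1)}{2}=\tfrac{x(x+1)}{2}(n+1)$ — wait, this does not match $m$, so instead the right strategy is to buy, for each $i$, exactly $x\sigma_i$ "units" in the fractional-free sense is wrong too; rather one buys copies of $I_i$ a total of... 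Let me reconsider: the cost of buying $\sigma_i$ copies of $I_i$ is $\tfrac{\sigma_i(\sigma_i+1)}{2}$ and this yields $\sigma_i^2$ points. What actually works is to buy $x$ copies of each $I_i$, giving $x\sum_i\sigma_i = x|\sigma|_1$ points at cost $\tfrac{x(x+1)}{2}(n+1)$, which is still not obviously $m$. The correct approach uses $\sum_i \sigma_i(\sigma_i+1)/2$-type bookkeeping together with $\sum_i \sigma_i^2 = |\sigma|^2 = p$ (since $\sigma^2=-p$ in the negative lattice, so $|\sigma|_2^2=p$): buying $\sigma_i$ copies of each $I_i$ costs $\sum_i\tfrac{\sigma_i(\sigma_i+1)}{2}=\tfrac{p+|\sigma|_1}{2}$ and yields $\sum_i\sigma_i^2=p$ points, which is exactly the $x=1$ case. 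For general $x$, buying $x\sigma_i$ copies of $I_i$ costs $\sum_i\tfrac{x\sigma_i(x\sigma_i+1)}{2}=\tfrac{x^2p+x|\sigma|_1}{2}=m$ and yields $\sum_i x\sigma_i\cdot\sigma_i = x p$ points. This gives $T^\sigma_m\geq xp$, i.e.\ the $\alpha$ with $\alpha_i=x\sigma_i$ lies in $S_{\leq m}$ with $\sigma\cdot\alpha=xp$.

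For the upper bound, I would invoke the rational greedy analysis from the discussion preceding the lemma. The key observation is that the purchase "$k$-th copy of $I_i$" has coin-per-point ratio $k/\sigma_i$, and the strategy just described buys precisely those purchases with ratio $k/\sigma_i\leq x$, i.e.\ $k\leq x\sigma_i$ — so it buys \emph{all} purchases of ratio $\le x$ and none of ratio $>x$. Therefore it is a \emph{full} rational greedy algorithm up to coin-per-point $x$, hence integral and optimal: by the remarks in the text, $T^{\sigma,\mathbb{Q}}_m = T^\sigma_m$ and the optimal value is this strategy's value $xp$. The one subtlety is whether spending exactly $m$ coins with this strategy leaves no remainder and whether $x$ is actually "attained" as the top ratio — since $x=x\sigma_i/\sigma_i$ is realized for every $i$ (the $x\sigma_i$-th copy of $I_i$), the ratio $x$ is genuinely used and fully exhausted, so the algorithm is full. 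This pins down $T^\sigma_m = xp$ exactly.

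The main obstacle, and the place requiring the most care, is verifying rigorously that this strategy is a \emph{full} rational greedy algorithm and invoking the (somewhat informally stated) optimality principle from the preceding discussion — in particular checking that the cost accounting $\sum_i \tfrac{x\sigma_i(x\sigma_i+1)}{2} = \tfrac{x^2 p + x|\sigma|_1}{2}$ uses $\sum_i\sigma_i^2=p$ correctly, and that no purchase of ratio strictly greater than $x$ could be substituted in to increase the point total without exceeding $m$ coins (which follows because all ratio-$\le x$ purchases are already bought, so any leftover coins beyond $m$ would be needed, but there are none). I would write this up by first stating the cost/point identity as a one-line computation, then citing the full-greedy optimality discussion to conclude $T^\sigma_m = xp$.
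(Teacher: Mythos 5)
Your proposal is correct and matches the paper's own proof: both exhibit the purchase of $x\sigma_i$ copies of each item $I_i$, compute the cost $\tfrac{1}{2}\sum_i x\sigma_i(x\sigma_i+1)=\tfrac{x^2p+x|\sigma|_1}{2}$ and value $\sum_i x\sigma_i\cdot\sigma_i=xp$ using $\sum_i\sigma_i^2=p$, and conclude via the observation that this is a full (hence integral) rational greedy algorithm up to $x$ coin per point, so $T^{\sigma,\mathbb{Q}}_m=T^{\sigma}_m=xp$. The only difference is the exploratory detour at the start of your write-up, which you would simply delete in a final version.
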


\begin{proof}
Consider the full rational greedy algorithm up to $x$ coin per point. So, for each $i$, it purchases $x\sigma_i$ copies of the item $I_i$. The total amount of coins spent is
\begin{align*}
\dfrac{1}{2}\sum\limits_i(x\sigma_i)(x\sigma_i+1)&=\dfrac{1}{2}x^2\sum\limits_i\sigma_i+\dfrac{1}{2}x\sum\limits_i\sigma_i\\
&=\dfrac{x^2p+x|\sigma|_1}{2}
\end{align*}
\noindent And the number of points gained is
$$\sum\limits_i(x\sigma_i)\sigma_i=xp$$
Hence, $T^{\sigma,\mathbb{Q}}_{\tfrac{x^2p+x|\sigma|_1}{2}}=xp$. Since the greedy algorithm is full, it must be integral. Therefore,
$$T^{\sigma,\mathbb{Q}}_{\tfrac{x^2p+x|\sigma|_1}{2}}=T^\sigma_{\tfrac{x^2p+x|\sigma|_1}{2}}$$
\end{proof}

\begin{Cor} \label{multiples of p on V}
For any positive integer $x$,
$$V^\sigma_{xp}=\dfrac{x^2p+x|\sigma|_1}{2}$$
\end{Cor}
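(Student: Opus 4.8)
The plan is to simply unwind the definition $V^\sigma_{xp}=\min\{m\mid xp\le T^\sigma_m\}$ from Definition~\ref{sigma rel coeff} and pin down this minimum using Lemma~\ref{multiples of p on T} together with the strict monotonicity of the rational greedy value $T^{\sigma,\mathbb{Q}}_m$.

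First I would record the upper bound. Set $m_0:=\tfrac{x^2p+x|\sigma|_1}{2}$; this is an integer since $x^2p+x|\sigma|_1=\sum_i x\sigma_i(x\sigma_i+1)$ is a sum of products of consecutive integers. Lemma~\ref{multiples of p on T} gives $T^\sigma_{m_0}=xp$, so $xp\le T^\sigma_{m_0}$, and hence $V^\sigma_{xp}\le m_0$ straight from the definition of $V^\sigma$.

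Next I would establish the matching lower bound, namely that $T^\sigma_m<xp$ for every $m<m_0$. Here I would invoke the rational version of the game: $T^\sigma_m\le T^{\sigma,\mathbb{Q}}_m$ for all $m$, and $T^{\sigma,\mathbb{Q}}_m$ is a strictly increasing function of $m$, because each additional coin is spent by the rational greedy algorithm at some finite positive coin-per-point rate (the purchase options for $I_0$ alone are unbounded, so an extra coin always buys a positive amount of points). By the proof of Lemma~\ref{multiples of p on T}, the full rational greedy algorithm up to $x$ coin per point spends exactly $m_0$ coins to obtain $xp$ points, so $T^{\sigma,\mathbb{Q}}_{m_0}=xp$; strict monotonicity then forces $T^{\sigma,\mathbb{Q}}_m<xp$, hence $T^\sigma_m<xp$, for every $m<m_0$. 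Thus $m_0$ is the least $m$ with $xp\le T^\sigma_m$, which is exactly the assertion $V^\sigma_{xp}=m_0$.

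I do not expect a genuine obstacle: the corollary is essentially Lemma~\ref{multiples of p on T} read through Definition~\ref{sigma rel coeff}, and the only point that needs a sentence of care is the strict monotonicity of $T^{\sigma,\mathbb{Q}}_m$. (One could instead argue directly that no $\alpha\in S_{\le m_0-1}$ achieves $\sigma\cdot\alpha\ge xp$, but the rational-greedy phrasing above is cleaner and avoids casework on the entries of $\sigma$.)
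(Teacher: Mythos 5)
Your proof is correct and follows essentially the same route as the paper: the upper bound comes from Lemma \ref{multiples of p on T} via the definition of $V^\sigma$, and the lower bound comes from $T^\sigma_m\leq T^{\sigma,\mathbb{Q}}_m$ together with the strict monotonicity of the rational greedy value (the paper phrases this as ``every coin contributes to some points under a rational greedy algorithm''). The only cosmetic difference is that you spell out the integrality of $\tfrac{x^2p+x|\sigma|_1}{2}$ and the reason for strict monotonicity, which the paper leaves implicit.
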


\begin{proof}
From Lemma \ref{multiples of p on T}, we know there is exactly $xp$ relevant coefficients less than or equal to $\dfrac{x^2p+x|\sigma|_1}{2}$. Since the relevant coefficients of a changemaker vector are monotonic increasing, we know that
$$V^\sigma_{xp}\leq\dfrac{x^2p+x|\sigma|_1}{2}$$
To prove $V^\sigma_{xp}=\dfrac{x^2p+x|\sigma|_1}{2}$, it suffices to show that
$$T^\sigma_{\tfrac{x^2p+x|\sigma|_1}{2}-1}<xp$$
i.e. there are strictly less than $xp$ relevant coefficients that are less than or equal to $\dfrac{x^2p+x|\sigma|_1}{2}-1$ (which implies that $V^\sigma_{xp}$ cannot be less than or equal to $\tfrac{x^2p+x|\sigma|_1}{2}-1$)\\
\\
\indent This follows from the way Lemma \ref{multiples of p on T} is proved. $xp$ points is obtained from a rational greedy algorithm with $\tfrac{x^2p+x|\sigma|_1}{2}$ coins. Since every coin contributes to some points under a rational greedy algorithm, strictly less than $xp$ points is obtained from a rational greedy algorithm with strictly less than $\tfrac{x^2p+x|\sigma|_1}{2}$ coins. Hence
$$T^\sigma_{\tfrac{x^2p+x|\sigma|_1}{2}-1}\leq T^{\sigma,\mathbb{Q}}_{\tfrac{x^2p+x|\sigma|_1}{2}-1}<xp$$
\end{proof}

Roughly speaking, Corollary \ref{multiples of p on V} reveals that the relevant coefficents increase in a quadratic rate. The next thing we aim to show is that the first $p$ relevant coefficients of $\sigma$ determines the rest of them in a simple manner. Precisely, we want to show that for all positive integer $x$ and $0\leq k<p$,
\begin{equation}\label{eq:9}\tag{9}
V^{\sigma}_{xp+k}=\dfrac{x^2p+x|\sigma|_1}{2}+xk+V^{\sigma}_k
\end{equation}
\noindent This is particularly useful for those who wish to do some programming on this topic because the size of $S_{\leq m}$ increases quickly with $m$, which can cause run-time and memory issues. Based on data from some programming efforts related to this paper, when $m=100$, there are 157,452 $\alpha\in\mathbb{Z}^{something}_{>0}$ such that $\alpha_0\geq\alpha_1\geq\dots$ and $\sum\limits_j \alpha_j(\alpha_j+1)\leq 2m$. When $m=200$, the number becomes 13,552,451.\\
\\
\indent To help proving (\ref{eq:9}), we prove some more lemmas first.

\begin{Lemma}\label{multiples of p on T but minus}
For any positive integer $x$,
$$T^\sigma_{\tfrac{x^2p-x|\sigma|_1}{2}}=xp-|\sigma|_1$$
\end{Lemma}

\begin{proof}
Consider the full rational greedy algorithm that purchases all options strictly less than $x$ coin per point. So, for each $i$, it purchases $x\sigma_i-1$ copies of the item $I_i$. The total amount of coins spent is
$$\dfrac{1}{2}\sum\limits_i(x\sigma_i-1)(x\sigma_i)=\dfrac{x^2p-x|\sigma|_1}{2}$$
And the number of points gained is
$$\sum\limits_i(x\sigma_i-1)\sigma_i=xp-|\sigma|_1$$
Result follows from the fact that the rational greedy algorithm is full, and therefore integral.
\end{proof}

\begin{Cor} \label{multiples of p on V but minus}
$$V^\sigma_{xp-|\sigma|_1}=\dfrac{x^2p-x|\sigma|_1}{2}$$
\end{Cor}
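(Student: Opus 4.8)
The plan is to deduce this from Lemma \ref{multiples of p on T but minus} in exactly the way Corollary \ref{multiples of p on V} is deduced from Lemma \ref{multiples of p on T}. First I would record that $m:=\tfrac{x^2p-x|\sigma|_1}{2}$ is a nonnegative integer: since $|\sigma|_1=\sum_i\sigma_i\equiv\sum_i\sigma_i^2=p\pmod 2$, the quantity $xp-|\sigma|_1$ is even for every positive integer $x$, and $p\ge|\sigma|_1$ gives $m\ge 0$. In the degenerate case $p=|\sigma|_1$ (equivalently $\sigma=(1,\dots,1)$) with $x=1$ the claimed identity reads $V^\sigma_0=0$, which holds by the definition of $V^\sigma_0$; so from then on I may assume $xp-|\sigma|_1>0$.

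Next I would use Lemma \ref{multiples of p on T but minus} to get $T^\sigma_m=xp-|\sigma|_1$, i.e. there are exactly $xp-|\sigma|_1$ relevant coefficients of $\sigma$ that are $\le m$. Since the relevant coefficients $V^\sigma_i$ of $\sigma$ form a monotone increasing sequence (Definition \ref{sigma rel coeff}), this gives at once $V^\sigma_{xp-|\sigma|_1}\le m$. For the reverse inequality it suffices to show $T^\sigma_{m-1}<xp-|\sigma|_1$: then strictly fewer than $xp-|\sigma|_1$ relevant coefficients are $\le m-1$, so $V^\sigma_{xp-|\sigma|_1}$ cannot be $\le m-1$, and combined with the previous bound we conclude $V^\sigma_{xp-|\sigma|_1}=m$.

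The inequality $T^\sigma_{m-1}<xp-|\sigma|_1$ is where the work already done in the proof of Lemma \ref{multiples of p on T but minus} pays off. That proof exhibits a \emph{full} rational greedy algorithm achieving $T^{\sigma,\mathbb Q}_m=xp-|\sigma|_1$ points with $m$ coins; being full it is integral, so $T^{\sigma}_m=T^{\sigma,\mathbb Q}_m$. Moreover, under a rational greedy algorithm every coin strictly increases the point total (the per-point costs are positive and one can always buy further copies of $I_0$), so $T^{\sigma,\mathbb Q}_{m-1}<T^{\sigma,\mathbb Q}_m$. Hence $T^\sigma_{m-1}\le T^{\sigma,\mathbb Q}_{m-1}<T^{\sigma,\mathbb Q}_m=xp-|\sigma|_1$, as needed.

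I do not anticipate any genuine obstacle: the argument is a near-verbatim transcription of the proof of Corollary \ref{multiples of p on V}, the only points demanding a little care being the integrality and nonnegativity of $m$ and the degenerate case $\sigma=(1,\dots,1)$. All of the substance sits in Lemma \ref{multiples of p on T but minus} together with the monotonicity built into the definition of the $V^\sigma_i$.
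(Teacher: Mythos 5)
Your proposal is correct and matches the paper's argument: the paper likewise deduces the corollary by noting that Lemma \ref{multiples of p on T but minus} comes from an integral (full) rational greedy algorithm and then repeating the proof of Corollary \ref{multiples of p on V}, i.e.\ monotonicity gives $V^\sigma_{xp-|\sigma|_1}\leq\tfrac{x^2p-x|\sigma|_1}{2}$ and the strict bound $T^\sigma_{m-1}\leq T^{\sigma,\mathbb Q}_{m-1}<xp-|\sigma|_1$ gives equality. Your extra remarks on integrality, nonnegativity, and the degenerate case $\sigma=(1,\dots,1)$ are harmless additions, not a different route.
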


\begin{proof}
Since Lemma \ref{multiples of p on T but minus} came from an integral rational greedy algorithm, the argument used in the proof of Corollary \ref{multiples of p on V} can be applied.
\end{proof}

\begin{Lemma} \label{a-b leq x}
For any positive integer $x$, if $a\leq\dfrac{x^2p+x|\sigma|_1}{2}$ and $T^{\sigma}_{a-1}<T^{\sigma}_{a}$ and $T^{\sigma}_b=T^{\sigma}_{a-1}$, then $a-b\leq x$.
\end{Lemma}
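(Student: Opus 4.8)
The plan is to prove the stronger assertion that $a-b_0\le x$, where $b_0:=V^\sigma_{T^\sigma_{a-1}}$ is the smallest coin count realizing the value $w:=T^\sigma_{a-1}$; since every $b$ with $T^\sigma_b=w$ automatically satisfies $b\ge b_0$, this gives the Lemma. I work throughout in the language of the purchasing game, so $T^\sigma_m=\max_{\alpha\in S_{\leq m}}\sigma\cdot\alpha$ is the most points buyable with $m$ coins, $p=\sum_j\sigma_j^2=\sigma\cdot\sigma$, and the vector $\alpha^\ast:=(x\sigma_0,\dots,x\sigma_n)$, which buys $x\sigma_j$ copies of $I_j$, has $\sigma\cdot\alpha^\ast=xp$ and cost $\tfrac{x^2p+x|\sigma|_1}{2}$.

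First I would extract two facts from the hypotheses. Since $a\le\tfrac{x^2p+x|\sigma|_1}{2}=V^\sigma_{xp}$ by Corollary \ref{multiples of p on V}, we get $a-1<V^\sigma_{xp}$, hence $w=T^\sigma_{a-1}<xp$; in particular $b_0\le a-1$. And since $T^\sigma_{a-1}<T^\sigma_a$, any $\alpha$ with $\sigma\cdot\alpha\ge w+1$ has $\mathrm{cost}(\alpha)\ge a$, since a cheaper one would violate $T^\sigma_{a-1}=w$. Fix a cost-minimal $\alpha'$ with $\sigma\cdot\alpha'\ge w$, so $\mathrm{cost}(\alpha')=b_0<a$; then necessarily $\sigma\cdot\alpha'=w$, as $\sigma$-value $\ge w+1$ would force cost $\ge a>b_0$. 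It now suffices to produce from $\alpha'$ a vector of value $w+1$ by spending at most $x$ additional coins: that vector has cost $\ge a$ and cost $\le b_0+x$, so $a-b_0\le x$.

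The construction splits on $\mu:=\min_l\alpha'_l$. If $\mu\le x-1$, buy one more copy of an item $I_l$ with $\alpha'_l=\mu$: this costs $\mu+1\le x$ and raises the value by $\sigma_l\ge 1$. If $\mu\ge x$, then $\sigma\cdot\alpha'=w<xp=\sigma\cdot\alpha^\ast$ forces some coordinate to be deficient, $\alpha'_l<x\sigma_l$; since $\mu\ge x$, any deficient coordinate has $\sigma_l\ge 2$, so $t:=\min\{\sigma_l:\alpha'_l<x\sigma_l\}\ge 2$. Pick $l_1$ with $\sigma_{l_1}=t$ and $\alpha'_{l_1}<xt$; by minimality of $t$, every coordinate with $\sigma_l<t$ satisfies $\alpha'_l\ge x\sigma_l$. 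Here the changemaker property enters: writing $j^\ast$ for the largest index with $\sigma_{j^\ast}\ge t$, the tail $(\sigma_{j^\ast+1},\dots,\sigma_n)$ is again a changemaker vector whose sum is $\ge\sigma_{j^\ast}-1\ge t-1$, so some $R\subseteq\{j^\ast+1,\dots,n\}$ has $\sum_{l\in R}\sigma_l=t-1$; every $l\in R$ has $\sigma_l<t$, hence $\alpha'_l\ge x\sigma_l$, and $l_1\notin R$ because $l_1\le j^\ast$. Replace $\alpha'$ by $\tilde\alpha$ with $\tilde\alpha_{l_1}=\alpha'_{l_1}+1$ and $\tilde\alpha_l=\alpha'_l-1$ for $l\in R$: then $\sigma\cdot\tilde\alpha=w+t-(t-1)=w+1$, and the cost changes by $(\alpha'_{l_1}+1)-\sum_{l\in R}\alpha'_l\le x\sigma_{l_1}-\sum_{l\in R}x\sigma_l=xt-x(t-1)=x$, using $\alpha'_{l_1}+1\le x\sigma_{l_1}$ and $\alpha'_l\ge x\sigma_l$ for $l\in R$. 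This finishes both cases.

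The routine degenerate possibilities (for instance $x=1$, or $\sigma=(1,\dots,1)$) are absorbed into the $\mu\le x-1$ analysis. The real content — and the step I expect to need the most care — is the case $\mu\ge x$: a single extra purchase is too expensive once every item already has at least $x$ copies, so one must trade the next good-rate copy of $I_{l_1}$ against the top copies of several cheaper, fuller items, and it is precisely the subset-sum property of the tails of a changemaker vector that guarantees such a trade exists moving the value by exactly $+1$ while increasing the cost by at most $x$.
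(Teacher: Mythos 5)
Your argument is correct, and it rests on the same basic mechanism as the paper's proof: take a purchase vector realizing the relevant value of $T^\sigma$, and perform a unit exchange (one extra copy of a deficient item traded against single copies of items whose $\sigma$-values sum to one less), with the changemaker subset-sum property producing the exchange and the thresholds $x\sigma_j$ controlling the change in cost, so that a vector of value $T^\sigma_{a-1}+1$ is produced for at most $x$ extra coins. The genuine difference is how the exchange is selected, and it matters. The paper increments at the \emph{index-minimal} deficient coordinate $i$ and decomposes $\sigma_i-1$ as a subset sum; but the entries in that decomposition are strictly smaller than $\sigma_i$, hence sit at indices after $i$ in the decreasing ordering, where minimality of the index gives no information, so the concluding estimate there relies on the bounds $\alpha_{i_j}\geq x\sigma_{i_j}$ that are not justified as literally written (the displayed ``$n$''s there are also evidently ``$x$''s). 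Your split on $\mu=\min_l\alpha'_l$ and, in the hard case $\mu\geq x$, your choice of the deficient coordinate of \emph{minimal value} $t$ supplies exactly what is needed: minimality of $t$ forces every coordinate of value less than $t$ to be full, so the decremented coordinates satisfy $\alpha'_l\geq x\sigma_l$ while the incremented one satisfies $\alpha'_{l_1}+1\leq x\sigma_{l_1}$, which is all the cost computation uses; and the cheap case $\mu\leq x-1$ is dispatched by a single purchase with no changemaker input at all. Two small remarks: the reduction to the cost-minimal realizer $b_0$ is harmless but unnecessary, since the same two cases applied to an optimal vector of cost at most $b$ give $a-b\leq x$ directly; and the facts you quote without proof (a tail of a decreasingly ordered changemaker is again a changemaker, and $\sigma_{j^\ast}-1$ is at most the tail sum) are standard consequences of Definition \ref{changemaker def}, used at the same level of detail elsewhere in the paper, though a one-line justification would make the write-up self-contained. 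Your use of the hypothesis $a\leq\tfrac{x^2p+x|\sigma|_1}{2}$ through Corollary \ref{multiples of p on V} (to get $T^\sigma_{a-1}<xp$ and hence a deficient coordinate) is a clean alternative to the paper's cost-based deficiency argument.
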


\begin{proof}
Let $\alpha\in S_{\leq b}$ be such that $\alpha\cdot\sigma=T^{\sigma}_b$.\\
\\
\indent To show that $a-b\leq x$, it suffices to show that $\exists\alpha'\in S_{\leq b+x}$ such that $\alpha'\cdot\sigma\geq T^{\sigma}_b+1$ because the existence of such an $\alpha'$ implies $T^{\sigma}_{\leq b+x}\geq T^{\sigma}_b+1>T^{\sigma}_b=T^{\sigma}_{a-1}$, which implies $b+x>a-1$, which implies $a-b\leq x$.\\
\\
\indent Let $i$ be minimal such that $\alpha_i<x\sigma_i$. Such an $i$ much exists, otherwise
$$\dfrac{1}{2}\sum\limits_j\alpha_j(\alpha_j+1)\geq\dfrac{x^2p+x|\sigma|_1}{2}\geq a>b$$
which contradicts $\alpha\in S_{\leq b}$.\\
\\
\indent Since $\sigma$ is a changemaker, $\exists\sigma_{i_1},\dots,\sigma_{i_k}$ such that $\sigma_{i_1}+\dots+\sigma_{i_k}=\sigma_i-1$. (when $\sigma_i=1$, we set $k=0$ and this is an empty sum)\\
\\
\indent Define $\alpha'$ be such that
$$\alpha'_j=\begin{cases}\alpha_j+1\text{ if }j=i\\
\alpha_j-1\text{ if }j=i_1,\dots,i_k\\
\alpha_j\text{ otherwise}
\end{cases}$$
Then
\begin{align*}
\alpha'\cdot\sigma&=\alpha\cdot\sigma+\sigma_i-\sigma_{i_1}-\dots-\sigma_{i_k}\\
&=\alpha\cdot\sigma+1\\
&=T^{\sigma}_b+1
\end{align*}
and also
$$\alpha'\in S_{\leq b+(\alpha_i+1)-\alpha_{i_1}-\dots-\alpha_{i_k}}$$
We complete the proof by noting
\begin{align*}
b+(\alpha_i+1)-\alpha_{i_1}-\dots-\alpha_{i_k}&<b+(n\sigma_i+1)-n\sigma_{i_1}-\dots-n\sigma_{i_k}\\
&=b+x+1
\end{align*}
Hence
$$b+(\alpha_i+1)-\alpha_{i_1}-\dots-\alpha_{i_k}\leq b+x$$
\end{proof}

\begin{Cor} \label{V diff upperbound}
For any positive integer $x$, if $0<a\leq xp$, then $V^\sigma_a-V^\sigma_{a-1}\leq x$
\end{Cor}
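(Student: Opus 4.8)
The plan is to translate the claim $V^\sigma_a - V^\sigma_{a-1} \le x$ into a statement about the $T^\sigma$ sequence and then invoke Lemma \ref{a-b leq x}. The key observation is that $V^\sigma$ and $T^\sigma$ are ``inverse'' to each other in the sense of Definition \ref{sigma rel coeff}: $V^\sigma_a - V^\sigma_{a-1}$ measures the size of the largest ``gap'' in the relevant coefficients at index $a$, while $T^\sigma$ counts relevant coefficients below a threshold. Concretely, set $v := V^\sigma_a$. Then by definition $T^\sigma_v \ge a$ and $T^\sigma_{v-1} < a$, so in fact $T^\sigma_{v-1} \le a-1 < a \le T^\sigma_v$, which means there is a relevant coefficient exactly equal to $v$ (i.e. $T^\sigma_{v-1} < T^\sigma_{v}$) and $V^\sigma_{a-1} \le v-1$, hence $V^\sigma_{a-1}$ is the value $m$ at which $T^\sigma$ first reaches $a-1$.

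First I would let $b := V^\sigma_{a-1}$, so that $T^\sigma_b \ge a-1$ and $T^\sigma_{b-1} < a-1$; combined with monotonicity and the fact that $T^\sigma_{v-1} \le a-1$, one gets $T^\sigma_b = a-1 = T^\sigma_{v-1}$ (using $b \le v-1$ and that $T^\sigma$ is nondecreasing and jumps by at most... — actually one only needs $T^\sigma_b \ge a-1$ and $T^\sigma_b \le T^\sigma_{v-1} \le a-1$). So $T^\sigma_b = T^\sigma_{v-1}$ and $T^\sigma_{v-1} < T^\sigma_v$. Now I want to apply Lemma \ref{a-b leq x} with ``$a$'' there equal to $v$ and ``$b$'' there equal to $b$; its hypotheses are exactly $T^\sigma_{v-1} < T^\sigma_v$, $T^\sigma_b = T^\sigma_{v-1}$, and the size bound $v \le \tfrac{x^2 p + x|\sigma|_1}{2}$. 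The conclusion $v - b \le x$ is precisely $V^\sigma_a - V^\sigma_{a-1} \le x$.

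The one genuine thing to check — and the step I expect to be the main obstacle — is the hypothesis $v = V^\sigma_a \le \tfrac{x^2 p + x|\sigma|_1}{2}$. This is where the assumption $0 < a \le xp$ enters. By Corollary \ref{multiples of p on V}, $V^\sigma_{xp} = \tfrac{x^2 p + x|\sigma|_1}{2}$, and since $V^\sigma$ is monotonic increasing in its index, $a \le xp$ gives $V^\sigma_a \le V^\sigma_{xp} = \tfrac{x^2 p + x|\sigma|_1}{2}$, as needed. (The hypothesis $a > 0$ is only needed so that $V^\sigma_a$ refers to an actual relevant coefficient and $V^\sigma_{a-1}$ is defined, with the convention $V^\sigma_0 = 0$; when $a = 1$ the inequality reads $V^\sigma_1 \le x$, which follows from $T^\sigma_x \ge \sigma_0 \ge 1$ once $x \ge 1$, consistent with the general argument.) Assembling these pieces:

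\begin{proof}
Write $v := V^\sigma_a$ and $b := V^\sigma_{a-1}$, using the convention $V^\sigma_0 = 0$. By Definition \ref{sigma rel coeff}, $T^\sigma_v \ge a$ and $T^\sigma_{v-1} < a$, hence $T^\sigma_{v-1} \le a-1 < T^\sigma_v$; in particular $T^\sigma_{v-1} < T^\sigma_v$. Also $T^\sigma_b \ge a-1$ and $b \le v-1$, so by monotonicity $a-1 \le T^\sigma_b \le T^\sigma_{v-1} \le a-1$, giving $T^\sigma_b = T^\sigma_{v-1}$. Since $0 < a \le xp$, Corollary \ref{multiples of p on V} and the monotonicity of the index $\mapsto V^\sigma$ give
$$v = V^\sigma_a \le V^\sigma_{xp} = \dfrac{x^2 p + x|\sigma|_1}{2}.$$
Now apply Lemma \ref{a-b leq x} with the roles of its ``$a$'' and ``$b$'' played by $v$ and $b$ respectively: the hypotheses $v \le \tfrac{x^2 p + x|\sigma|_1}{2}$, $T^\sigma_{v-1} < T^\sigma_v$, and $T^\sigma_b = T^\sigma_{v-1}$ all hold, so $v - b \le x$, i.e. $V^\sigma_a - V^\sigma_{a-1} \le x$.
\end{proof}
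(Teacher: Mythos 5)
Your proposal takes the same route as the paper: bound $V^\sigma_a\le V^\sigma_{xp}=\tfrac{x^2p+x|\sigma|_1}{2}$ via Corollary \ref{multiples of p on V} and monotonicity, then feed the hypotheses of Lemma \ref{a-b leq x}. But one step in your write-up is not justified and is in fact false in general: the claim $b=V^\sigma_{a-1}\le v-1=V^\sigma_a-1$. Consecutive relevant coefficients of $\sigma$ can coincide (the sequence $V^\sigma_i$ repeats a value whenever $T^\sigma$ jumps by more than one; e.g.\ for $\sigma=(2,1)$ one has $T^\sigma_0=0$, $T^\sigma_1=2$, so $V^\sigma_1=V^\sigma_2=1$). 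In that situation $b=v$, and your chain $a-1\le T^\sigma_b\le T^\sigma_{v-1}\le a-1$ breaks down (in the example with $a=2$: $T^\sigma_b=T^\sigma_1=2$ while $T^\sigma_{v-1}=T^\sigma_0=0$). The root of the slip is in your preliminary discussion, where you infer $V^\sigma_{a-1}\le v-1$ from $T^\sigma_{v-1}\le a-1$; that implication would require $T^\sigma_{v-1}\ge a-1$, which is precisely what can fail. The repair is immediate and is exactly what the paper does: first dispose of the case $V^\sigma_a=V^\sigma_{a-1}$, where the conclusion is the trivial $0\le x$, and then run your argument under the assumption $V^\sigma_a>V^\sigma_{a-1}$, which forces $b\le v-1$ and hence $T^\sigma_b=T^\sigma_{v-1}=a-1$. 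With that one-line case split your proof coincides with the paper's.
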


\begin{proof}
If $V^\sigma_a=V^\sigma_{a-1}$, then the result is trivial. So, we assume $V^\sigma_a>V^\sigma_{a-1}$.

By Corollary \ref{multiples of p on V}, we know that $V_a\leq\tfrac{x^2p+x|\sigma|_1}{2}$. Since there is at least one relevant coefficient with value $V^\sigma_a$ (namely $V^\sigma_a$ itself), we know that $T^\sigma_{V_a-1}<T^\sigma_{V_a}$. Also, since $V^\sigma_a>V^\sigma_{a-1}$, we have $T^\sigma_{V_{a-1}}=T^\sigma_{V_a-1}=a-1$. Result follows from Lemma $\ref{a-b leq x}$.
\end{proof}

\begin{Lemma} \label{a-b geq x+1}
For any positive integer $x$, if $a>\dfrac{x^2p-x|\sigma|_1}{2}$ and $T^{\sigma}_a>T^{\sigma}_{a-1}$ and $T^{\sigma}_a-T^{\sigma}_b\geq 2$, then $a-b\geq x+1$.
\end{Lemma}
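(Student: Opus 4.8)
The plan is to prove this by contradiction, essentially running the ``raise the value by $1$'' move from the proof of Lemma \ref{a-b leq x} in reverse. So I would assume $a-b\le x$ and construct some $\alpha'\in S_{\le b}$ with $\sigma\cdot\alpha'=T^\sigma_a-1$; since $T^\sigma_b\ge\sigma\cdot\alpha'$ this forces $T^\sigma_a-T^\sigma_b\le 1$, contradicting the hypothesis $T^\sigma_a-T^\sigma_b\ge 2$.

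First I would take $\alpha\in S_{\le a}$ with $\sigma\cdot\alpha=T^\sigma_a$ and observe that, because $T^\sigma_a>T^\sigma_{a-1}$, such an $\alpha$ must use exactly $a$ coins, i.e.\ $\tfrac12\sum_j\alpha_j(\alpha_j+1)=a$ (otherwise $\alpha\in S_{\le a-1}$ and $T^\sigma_{a-1}\ge T^\sigma_a$). Next I would exploit the hypothesis on $a$: since $\tfrac{x^2p-x|\sigma|_1}{2}=\tfrac12\sum_j(x\sigma_j)(x\sigma_j-1)$ (using $\sum_j\sigma_j^2=p$) and $t\mapsto t(t+1)$ is increasing on $\mathbb Z_{\ge0}$, the inequality $a>\tfrac{x^2p-x|\sigma|_1}{2}$ rules out $\alpha_j\le x\sigma_j-1$ holding for every $j$; hence some coordinate satisfies $\alpha_i\ge x\sigma_i$, and I would take $i$ to be the largest such index, so that $\alpha_j\le x\sigma_j-1$ for all $j>i$. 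Then, invoking the changemaker property of $\sigma$ in the sharp form $\sigma_i\le 1+\sigma_{i+1}+\dots+\sigma_n$ together with the completeness of the tail $(\sigma_{i+1},\dots,\sigma_n)$ (the boundary case $i=n$ being handled by $\sigma_n=1$), I would choose $A\subseteq\{i+1,\dots,n\}$ with $\sum_{j\in A}\sigma_j=\sigma_i-1$ and set $\alpha'_i:=\alpha_i-1$, $\alpha'_j:=\alpha_j+1$ for $j\in A$, and $\alpha'_j:=\alpha_j$ otherwise. All entries stay nonnegative, one has $\sigma\cdot\alpha'=\sigma\cdot\alpha-\sigma_i+\sum_{j\in A}\sigma_j=T^\sigma_a-1$, and the coin cost of $\alpha'$ is
$$a-\alpha_i+\sum_{j\in A}(\alpha_j+1)\ \le\ a-x\sigma_i+x\sum_{j\in A}\sigma_j\ =\ a-x\ \le\ b,$$
using $\alpha_i\ge x\sigma_i$, $\alpha_j+1\le x\sigma_j$ for $j\in A$, and $a-b\le x$. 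Thus $\alpha'\in S_{\le b}$, which is the contradiction sought.

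The step I expect to require the most care is the selection of $A$ within the small indices $\{i+1,\dots,n\}$: this relies on the refined changemaker inequality $\sigma_i\le 1+\sigma_{i+1}+\dots+\sigma_n$ and on the tail of a changemaker being again a changemaker, rather than on the bare subset-sum definition alone; it is precisely that restriction which makes the per-coordinate bound $\alpha_j+1\le x\sigma_j$ available for the rebuilt coordinates. The degenerate case $i=n$ (which forces $\sigma_n=1$ and $A=\varnothing$) should be checked separately, and everything else is coin-cost bookkeeping parallel to Lemma \ref{a-b leq x}.
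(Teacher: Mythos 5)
Your proof is correct and is essentially the paper's own argument: the same changemaker swap $\alpha\mapsto\alpha'$ (decrement a coordinate with $\alpha_i\geq x\sigma_i$, increment a subset of coordinates whose $\sigma$-values sum to $\sigma_i-1$) with the same coin-cost bookkeeping, merely packaged as a contradiction instead of directly exhibiting $\alpha'\in S_{\leq a-x}$ and concluding $a-x>b$. One remark: your choice of $i$ as the \emph{largest} index with $\alpha_i\geq x\sigma_i$, which forces $A\subseteq\{i+1,\dots,n\}$ where $\alpha_j+1\leq x\sigma_j$, is the correct one, whereas the paper's proof says ``minimal''---under its stated ordering $\sigma_0\geq\dots\geq\sigma_n$ that should be ``maximal'' (and its closing display writes $n$ where $x$ is meant)---so your write-up in fact cleans up exactly the detail that needs the most care.
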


\begin{proof}
Let $\alpha\in S_{\leq a}$ be such that $\alpha\cdot\sigma=T^{\sigma}_a$.\\
\\
\indent To show that $a-b\geq x+1$, it suffices to show that $\exists\alpha'\in S_{\leq a-x}$ such that $\alpha'\cdot\sigma\geq T^{\sigma}_a-1$ because the existence of such an $\alpha'$ implies $T^{\sigma}_{\leq a-x}\geq T^{\sigma}_a-1\geq T^{\sigma}_{b}+1>T^{\sigma}_b$, which implies $a-x>b$, which implies $a-b\geq x+1$.\\
\\
\indent Let $i$ be minimal such that $\alpha_i\geq x\sigma_i$. Such an $i$ much exists, otherwise
\begin{align*}
\dfrac{1}{2}\sum\limits_j\alpha_j(\alpha_j+1)&\leq\dfrac{1}{2}\sum\limits_j(x\sigma_j-1)(x\sigma_j)\\
&=\dfrac{x^2p-x|\sigma|_1}{2}\\
&<a
\end{align*}
\noindent which implies $\alpha\in S_{\leq a-1}$, which contradicts with $T^{\sigma}_a>T^{\sigma}_{a-1}$.\\
\\
\indent Since $\sigma$ is a changemaker, $\exists\sigma_{i_1},\dots,\sigma_{i_k}$ such that $\sigma_{i_1}+\dots+\sigma_{i_k}=\sigma_i-1$. (when $\sigma_i=1$, we set $k=0$ and this is an empty sum)\\
\\
\indent Define $\alpha'$ be such that
$$\alpha'_j=\begin{cases}\alpha_j-1\text{ if }j=i\\
\alpha_j+1\text{ if }j=i_1,\dots,i_k\\
\alpha_j\text{ otherwise}
\end{cases}$$
Then
\begin{align*}
\alpha'\cdot\sigma&=\alpha\cdot\sigma-\sigma_i+\sigma_{i_1}+\dots+\sigma_{i_k}\\
&=\alpha\cdot\sigma-1\\
&=T^{\sigma}_a-1
\end{align*}
and also
$$\alpha'\in S_{\leq a-\alpha_i+(\alpha_{i_1}+1)+\dots+(\alpha_{i_k}+1)}$$
We complete the proof by noting
\begin{align*}
a-\alpha_i+(\alpha_{i_1}+1)+\dots+(\alpha_{i_k}+1)&\leq a-\alpha_i+n\sigma_{i_1}+\dots+n\sigma_{i_k}\\
&=a-n-(\alpha_i-n\sigma_i)\\
&\leq a-n
\end{align*}
\end{proof}

\begin{Cor} \label{V diff lowerbound}
For any positive integer $x$, if $a\geq xp-|\sigma|_1$, then $V^\sigma_{a+1}-V^\sigma_a\geq x$
\end{Cor}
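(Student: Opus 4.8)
The plan is to prove this by dualizing the proof of Corollary~\ref{V diff upperbound}, replacing Lemma~\ref{a-b leq x} by Lemma~\ref{a-b geq x+1} and Corollary~\ref{multiples of p on V} by Corollary~\ref{multiples of p on V but minus}. Write $m' := V^\sigma_a$ and $m := V^\sigma_{a+1}$; the goal is $m - m' \ge x$. From the definition $V^\sigma_i = \min\{\,m : i \le T^\sigma_m\,\}$ and the monotonicity of $T^\sigma$, one reads off the bracketing inequalities $T^\sigma_{m'-1} \le a-1$, $T^\sigma_m \ge a+1$, and $T^\sigma_{m-1} \le a$; in particular $T^\sigma_m > T^\sigma_{m-1}$ and $T^\sigma_m - T^\sigma_{m'-1} \ge 2$.

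First I would clear away the degenerate case $a = 0$. Since $p = \sigma^2 \ge |\sigma|_1$, the hypothesis $a \ge xp - |\sigma|_1$ with $a = 0$ forces $x = 1$, and then the assertion is merely $V^\sigma_1 \ge 1$, which holds because $T^\sigma_0 = 0$. For $a \ge 1$ we have $m' = V^\sigma_a \ge V^\sigma_1 \ge 1$, so $T^\sigma_{m'-1}$ is well defined and the bracketing above is legitimate.

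The main step is to apply Lemma~\ref{a-b geq x+1} with its ``$a$'' equal to $m$ and its ``$b$'' equal to $m'-1$. Two of the three hypotheses, $T^\sigma_m > T^\sigma_{m-1}$ and $T^\sigma_m - T^\sigma_{m'-1} \ge 2$, are the bracketing inequalities just recorded. The third, $m > \tfrac{1}{2}(x^2 p - x|\sigma|_1)$, is the only place the hypothesis $a \ge xp - |\sigma|_1$ is used: by Lemma~\ref{multiples of p on T but minus} we have $T^\sigma_{\frac{1}{2}(x^2p - x|\sigma|_1)} = xp - |\sigma|_1$, so the $(xp-|\sigma|_1+1)$-st relevant coefficient of $\sigma$ already exceeds $\tfrac{1}{2}(x^2p-x|\sigma|_1)$, and monotonicity of $V^\sigma$ together with $a+1 \ge xp-|\sigma|_1+1$ gives $m = V^\sigma_{a+1} > \tfrac{1}{2}(x^2p-x|\sigma|_1)$. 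Lemma~\ref{a-b geq x+1} then yields $m - (m'-1) \ge x+1$, i.e. $V^\sigma_{a+1} - V^\sigma_a \ge x$. I do not anticipate a real obstacle here — the only care needed is in correctly extracting the bracketing inequalities from the definition of $V^\sigma$ and in verifying the strict inequality $m > \tfrac{1}{2}(x^2p - x|\sigma|_1)$; the argument is otherwise a routine mirror image of the proof of Corollary~\ref{V diff upperbound}.
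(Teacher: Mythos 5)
Your proof is correct and follows essentially the same route as the paper: both apply Lemma \ref{a-b geq x+1} with its ``$a$'' equal to $V^\sigma_{a+1}$ and its ``$b$'' equal to $V^\sigma_a-1$, verifying the hypothesis $V^\sigma_{a+1}>\tfrac{1}{2}(x^2p-x|\sigma|_1)$ via Lemma \ref{multiples of p on T but minus} and the other two hypotheses from the fact that $V^\sigma_a,V^\sigma_{a+1}$ are relevant coefficients. Your explicit treatment of the degenerate case $a=0$ is a minor extra care not spelled out in the paper, but otherwise the arguments coincide.
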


\begin{proof}
By Lemma \ref{multiples of p on T but minus}, there are only $xp-|\sigma|_1$ relevant coefficients less than or equal to $\tfrac{x^2p-x|\sigma|_1}{2}$. So we know that
$$V^\sigma_{a+1}>\dfrac{x^2p-x|\sigma|_1}{2}$$
Also, $V^\sigma_a$ and $V^\sigma_{a+1}$ are relevant coefficients less than or equal to $V^\sigma_{a+1}$ but not less than or equal to $V^\sigma_a-1$. Hence
$$T^\sigma_{V^\sigma_{a+1}}-T^\sigma_{V^\sigma_a-1}\geq 2$$
Since $V^\sigma_{a+1}$ is a relevant coefficient, we have $T^\sigma_{V^\sigma_{a+1}}>T^\sigma_{V^\sigma_{a+1}-1}$. By Lemma \ref{a-b geq x+1}, we know that
$$V^\sigma_{a+1}-(V^\sigma_a-1)\geq x+1$$
\end{proof}

Another corollary of Lemma \ref{a-b geq x+1} lemma is that

\begin{Cor} \label{linear system equality}
When $r\geq 2$, statement (3) of Lemma \ref{linear system} is an equality. In other words
$$T^{rel}_{V_0^{rel}}=T^\sigma_{V_0^{rel}}$$
\end{Cor}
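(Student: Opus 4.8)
The plan is to deduce this from Lemma~\ref{a-b geq x+1} with $x=1$, together with the observation recorded in Remark~2 following the proof of Lemma~\ref{linear system}: if $T^\sigma_{V^{rel}_0}-T^\sigma_{V^{rel}_0-1}\le 1$, then the argument proving statement~(1) of Lemma~\ref{linear system} already extends to $m=V^{rel}_0$, which is precisely the assertion $T^{rel}_{V^{rel}_0}=T^\sigma_{V^{rel}_0}$.

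First I would dispose of the degenerate cases. If $V^{rel}_0=0$ then $S_{\le 0}=\{0\}$ gives $T^\sigma_0=0$, while $\nu^{+rel}=0$ gives $T^{rel}_0=0$, so both sides vanish. If $V^{rel}_0=1$ then Lemma~\ref{t0<2} forces every entry of $\sigma$ to equal $1$ with $\nu^{+rel}=1$, whence $T^\sigma_1=\sigma_0=1=\nu^{+rel}=T^{rel}_1$. So from now on we may assume $V^{rel}_0\ge 2$.

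The heart of the matter is to show $T^\sigma_{V^{rel}_0}-T^\sigma_{V^{rel}_0-1}\le 1$. This is clear if the difference is $0$, so suppose $T^\sigma_{V^{rel}_0}>T^\sigma_{V^{rel}_0-1}$ and apply Lemma~\ref{a-b geq x+1} with $x=1$, $a=V^{rel}_0$, and $b=V^{rel}_0-1$. The hypothesis $T^\sigma_a>T^\sigma_{a-1}$ is our assumption, and the hypothesis $a>\tfrac{p-|\sigma|_1}{2}$ follows from Lemma~\ref{t0 lower bound} together with $n+1\le|\sigma|_1$ (all entries of $\sigma$ are nonzero): since $r\ge 2$ one gets $V^{rel}_0\ge\tfrac18(4p-|\sigma|_1-3)>\tfrac{p-|\sigma|_1}{2}$ whenever $|\sigma|_1\ge 2$, and in the one remaining possibility $\sigma=(1)$ (so $p=1$) the inequality is immediate from $V^{rel}_0\ge 2>0=\tfrac{p-|\sigma|_1}{2}$. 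If we had $T^\sigma_a-T^\sigma_b\ge 2$, Lemma~\ref{a-b geq x+1} would force $a-b\ge 2$, contradicting $a-b=1$; hence $T^\sigma_{V^{rel}_0}-T^\sigma_{V^{rel}_0-1}=1$.

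It remains to close the loop as in Remark~2. By statement~(1) of Lemma~\ref{linear system} we have $T^{rel}_{V^{rel}_0-1}=T^\sigma_{V^{rel}_0-1}$, and its proof shows that the least index $i$ for which some $\alpha\in S_{\le V^{rel}_0-1}$ satisfies $(\ref{eq:7})$ equals both $\nu^{+rel}-T^\sigma_{V^{rel}_0-1}$ and $\min\{t:V^{rel}_t\le V^{rel}_0-1\}$; the latter is at least $1$ because $t=0$ fails the condition (as $V^{rel}_0>V^{rel}_0-1$). Combined with the previous paragraph, the corresponding least index for $m=V^{rel}_0$, namely $\nu^{+rel}-T^\sigma_{V^{rel}_0}$, is therefore $\ge 0$, so the argument in the proof of Lemma~\ref{linear system}, which uses only the non-negativity of that index, identifies it with $\min\{t:V^{rel}_t\le V^{rel}_0\}=0$. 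Hence $\nu^{+rel}=T^\sigma_{V^{rel}_0}$, and since $T^{rel}_{V^{rel}_0}=\nu^{+rel}$ by monotonicity we conclude $T^{rel}_{V^{rel}_0}=T^\sigma_{V^{rel}_0}$. The substantive inputs are Lemma~\ref{a-b geq x+1} and Lemma~\ref{t0 lower bound}; the step that needs care is this last one, where one must check that the ``negative index'' loophole which forced the separate treatment of $m=V^{rel}_0$ in Lemma~\ref{linear system} is genuinely closed.
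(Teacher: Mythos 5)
Your proof is correct and takes essentially the same route as the paper: reduce via Remark 2 to showing $T^\sigma_{V_0^{rel}}-T^\sigma_{V_0^{rel}-1}\leq 1$, then obtain this by applying Lemma \ref{a-b geq x+1} contrapositively with $x=1$, $a=V_0^{rel}$, $b=V_0^{rel}-1$. The only minor deviations are that you verify the hypothesis $V_0^{rel}>\tfrac{p-|\sigma|_1}{2}$ using Lemma \ref{t0 lower bound} together with $n+1\leq|\sigma|_1$ (the paper instead uses statement (2) of Lemma \ref{linear system} and Lemma \ref{multiples of p on T but minus}), and that you split off the cases $V_0^{rel}\in\{0,1\}$ via Lemma \ref{t0<2}, whereas the paper only separates $r=2$, $\sigma=(1)$; both variants are valid.
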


\begin{proof}
When $r=2$ and $\sigma=(1)$, statement (2) of Lemma \ref{linear system} says that $\nu^{+rel}=0$. Hence $V_0^{rel}=0$ and the result is trivial. Now we assume that it is not this special case.\\
\\
\indent By Remark 2 after the proof of Lemma \ref{linear system}, it suffices to show that
\begin{equation}\label{eq:10}\tag{10}
T^\sigma_{V_0^{rel}}-T^\sigma_{V_0^{rel}-1}\leq 1
\end{equation}
\indent We assume that $T^\sigma_{V_0^{rel}}>T^\sigma_{V_0^{rel}-1}$, otherwise (\ref{eq:10}) is trivially true.\\
\\
\indent Whenever $r\geq 2$, apart from the special case we dealt with at the start of this proof, statement (2) of Lemma \ref{linear system} implies that
$$\nu^{+rel}>p-|\sigma|_1$$
Lemma \ref{multiples of p on T but minus} implies that
$$V^{rel}_0=V^\sigma_{\nu^{+rel}}>\dfrac{p-|\sigma|_1}{2}$$
Now we look at the statement of Lemma \ref{a-b geq x+1}. When $a=V^{rel}_0$, $b=V^{rel}_0-1$, $x=1$, the first two conditions of Lemma \ref{a-b geq x+1} are true but the conclusion is false. Therefore, the third condition
$$T^\sigma_{V_0^{rel}}-T^\sigma_{V_0^{rel}-1}\geq 2$$
must be false.
\end{proof}

Note that Corollary \ref{V diff upperbound} implies that when $0\leq k<p$, $V^\sigma_{k+1}=V^\sigma_k\text{ or }V^\sigma_k+1$.\\
\\
\indent Also, note that Corollary \ref{V diff upperbound} and Corollary \ref{V diff lowerbound} together imply that for any positive integer $x$, when $0\leq k<p$, $V^\sigma_{xp+k+1}-V^\sigma_{xp+k}$ must be either $x$ or $x+1$.\\
\\
\indent To prove equation (\ref{eq:9}), by induction on $k$, it suffices to show that
$$V^\sigma_{xp+k+1}-V^\sigma_{xp+k}=\begin{cases}x+1\text{ if }V^\sigma_{k+1}=V^\sigma_k+1\\
x\phantom{+11}\text{ if }V^\sigma_{k+1}=V^\sigma_k
\end{cases}$$

\begin{Lemma} \label{xc}
Let $x$ be a positive integer, $0\leq k<p$. If $V^\sigma_k=V^\sigma_{k+1}$, then
$$V^\sigma_{xp+k+1}-V^\sigma_{xp+k}=x$$
\end{Lemma}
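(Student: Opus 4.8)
The plan is to sandwich $V^\sigma_{xp+k}$ from below and $V^\sigma_{xp+k+1}$ from above by quantities differing by exactly $x$, and then combine. For $\gamma\in\mathbb{Z}^{n+1}$ write $\lVert\gamma\rVert:=\tfrac12\sum_i\gamma_i(\gamma_i+1)\ge 0$; when $\gamma\ge 0$ this is the coin cost of the configuration $\gamma$ in the buying game of this section, and in general $\lVert x\sigma+\beta\rVert=\lVert x\sigma\rVert+x(\sigma\cdot\beta)+\lVert\beta\rVert$ with $\lVert x\sigma\rVert=\tfrac{x^2p+x|\sigma|_1}{2}$; moreover replacing a negative coordinate $\beta_i$ by $-1-\beta_i$ leaves $\lVert\beta\rVert$ unchanged and does not decrease $\sigma\cdot\beta$. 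First I would note that Corollary~\ref{V diff upperbound} applied with $x+1$ (legitimate since $xp+k+1\le(x+1)p$) together with Corollary~\ref{V diff lowerbound} applied with $x$ (legitimate since $xp+k\ge xp-|\sigma|_1$) already give
\[x\le V^\sigma_{xp+k+1}-V^\sigma_{xp+k}\le x+1,\]
so everything reduces to proving $V^\sigma_{xp+k+1}\le V^\sigma_{xp+k}+x$. We may assume $k\ge 1$, since for $k=0$ the hypothesis $V^\sigma_0=V^\sigma_1$ is impossible.

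For the lower bound, choose $\alpha\ge 0$ realizing $T^\sigma_{V^\sigma_{xp+k}}$, so $\sigma\cdot\alpha\ge xp+k$ and $\lVert\alpha\rVert\le V^\sigma_{xp+k}$, and set $\beta:=\alpha-x\sigma$. Then $\sigma\cdot\beta\ge k$ and $\lVert\alpha\rVert=\lVert x\sigma\rVert+x(\sigma\cdot\beta)+\lVert\beta\rVert\ge\lVert x\sigma\rVert+xk+\lVert\beta\rVert$. Flipping the negative coordinates of $\beta$ produces $\beta'\ge 0$ with $\sigma\cdot\beta'\ge k$ and $\lVert\beta'\rVert=\lVert\beta\rVert$, so $\beta'$ certifies $\lVert\beta\rVert\ge V^\sigma_k$. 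Hence $V^\sigma_{xp+k}\ge\lVert x\sigma\rVert+xk+V^\sigma_k$ (this step uses nothing special about $k$).

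For the upper bound, where the hypothesis $V^\sigma_k=V^\sigma_{k+1}$ enters, put $m_0:=V^\sigma_k=V^\sigma_{k+1}$ and pick $\delta\ge 0$ realizing $T^\sigma_{m_0}$; minimality of $V^\sigma_k$ forces $\lVert\delta\rVert=m_0$, and $\sigma\cdot\delta\ge k+1$ because $V^\sigma_{k+1}=m_0$. The crucial point is that for every $i$ with $\delta_i>0$ the configuration $\delta-e_i$ costs $m_0-\delta_i\le m_0-1$ coins, hence scores at most $T^\sigma_{m_0-1}\le k-1$ points, giving $\sigma_i\ge\sigma\cdot\delta-k+1$; thus $\delta$ is supported only on coordinates where $\sigma_i$ is large. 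Set $d:=\sigma\cdot\delta-(k+1)\ge 0$. If $d\ge 1$, the entries of $\sigma$ that are $\le d$ form a nonempty proper suffix of $\sigma$ (not all entries are $\le d$, else $\delta$ would be forced to vanish); the entry just before this suffix is $\ge d+1$, so by the changemaker condition the sum of the suffix is $\ge d$, and since the suffix is itself a changemaker vector we may write $d=\sigma_{i_1}+\dots+\sigma_{i_t}$ with distinct $i_1,\dots,i_t$ and each $\sigma_{i_t}\le d<\sigma\cdot\delta-k+1$, so $\delta_{i_t}=0$. Then $\gamma:=\delta-e_{i_1}-\dots-e_{i_t}$ (and $\gamma:=\delta$ if $d=0$) has $\sigma\cdot\gamma=k+1$ and $\lVert\gamma\rVert=\lVert\delta\rVert=m_0$, as changing a coordinate from $0$ to $-1$ does not affect $\lVert\,\cdot\,\rVert$. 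Now $\alpha:=x\sigma+\gamma$ is non-negative (each coordinate equals $x\sigma_i+\gamma_i\ge x\sigma_i-1\ge 0$), satisfies $\sigma\cdot\alpha=xp+k+1$, and $\lVert\alpha\rVert=\lVert x\sigma\rVert+x(k+1)+m_0$, so $V^\sigma_{xp+k+1}\le\lVert x\sigma\rVert+x(k+1)+V^\sigma_{k+1}$. Subtracting the two displayed bounds and invoking $V^\sigma_k=V^\sigma_{k+1}$ gives $V^\sigma_{xp+k+1}-V^\sigma_{xp+k}\le x$, which combined with the a priori inequality $\ge x$ completes the argument.

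I expect the upper bound to be the genuine obstacle: one must absorb the overshoot $d=\sigma\cdot\delta-(k+1)$ without increasing the coin count, and this succeeds only because the hypothesis confines the support of $\delta$ to coordinates with $\sigma_i\ge d+2$, while the changemaker property guarantees that $d$ is a sum of distinct entries of $\sigma$ each at most $d$ — exactly the coordinates where $\delta$ is forced to be $0$. Everything else (the a priori bracketing and the lower bound via the sign-flip trick) is routine.
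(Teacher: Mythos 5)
Your proof is correct. It shares the paper's basic mechanism --- the decomposition $\alpha=x\sigma+\beta$ together with the identity $\tfrac12\sum_j(x\sigma_j+\beta_j)(x\sigma_j+\beta_j+1)=\tfrac{x^2p+x|\sigma|_1}{2}+x(\sigma\cdot\beta)+\tfrac12\sum_j\beta_j(\beta_j+1)$ --- but the two halves are organized differently. For the lower bound $V^\sigma_{xp+k}\geq\tfrac{x^2p+x|\sigma|_1}{2}+xk+V^\sigma_k$ you handle a possibly non-nonnegative $\beta=\alpha-x\sigma$ uniformly via the sign flip $\beta_i\mapsto-1-\beta_i$, whereas the paper splits into cases: if some $\alpha_i<x\sigma_i$ it re-runs the proof of Lemma \ref{a-b leq x} (whose hypothesis is not literally satisfied there) to get the conclusion at once, and only otherwise does it have $\beta\geq 0$. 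For the upper bound the routes genuinely diverge: the paper passes to the maximal run length $c$ with $V^\sigma_k=V^\sigma_{k+c}$, builds one witness at index $xp+k+c$ from an optimizer $\beta'$ with $\beta'\cdot\sigma=k+c$, and then pigeonholes the $c$ consecutive gaps against Corollary \ref{V diff lowerbound}; you instead build a witness at index $xp+k+1$ directly, taking an optimal $\delta$ at cost $m_0=V^\sigma_k=V^\sigma_{k+1}$, noting that its support only meets coordinates with $\sigma_i\geq d+2$ where $d=\sigma\cdot\delta-(k+1)$, and using the changemaker property to write $d$ as a sum of distinct entries $\leq d$, which are then pushed from $0$ to $-1$ at no cost. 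This gives a more local argument (no run length, no averaging), and the key absorption step needs nothing beyond Definition \ref{changemaker def}: any subset of positive entries summing to $d$ automatically consists of entries $\leq d$ and hence avoids the support, so your detour through ``a suffix of a changemaker is a changemaker'' is dispensable. Both arguments use Corollary \ref{V diff lowerbound} for the matching inequality $\geq x$, and your dismissal of $k=0$ (where the hypothesis would force $0=V^\sigma_0=V^\sigma_1$, impossible since $T^\sigma_0=0$) is consistent with the statement being vacuous there.
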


\begin{proof}
By Corollary \ref{V diff lowerbound}, we know that $V^\sigma_{xp+k+1}>V^\sigma_{xp+k}$, and therefore $T^\sigma_{V^\sigma_{xp+k}}=xp+k$.\\
\\
\indent Let $\alpha\in S_{\leq V^\sigma_{xp+k}}$ be such that $\alpha\cdot\sigma=T^\sigma_{V^\sigma_{xp+k}}=xp+k$. Since $V^\sigma_{xp+k}$ is a relevant coefficient, we know that $T^\sigma_{V^\sigma_{xp+k}}>T^\sigma_{V^\sigma_{xp+k}-1}$, and hence $\alpha\not\in S_{\leq V^\sigma_{xp+k}-1}$. So, we know that
$$\dfrac{1}{2}\sum\limits_j\alpha_j(\alpha_j+1)=V^\sigma_{xp+k}$$
Since $V^\sigma_{xp+k+1}$ is a relevant coefficient, we know that $T^\sigma_{V^\sigma_{xp+k+1}}>T^\sigma_{V^\sigma_{xp+k+1}-1}$. Since there are no relevant coefficients between $V^\sigma_{xp+k+1}$ and $V^\sigma_{xp+k}$, we also know that $T^\sigma_{V^\sigma_{xp+k}}=T^\sigma_{V^\sigma_{xp+k+1}-1}$.\\
\\
\indent Now we try to apply Lemma \ref{a-b leq x} with $a=V^\sigma_{xp+k+1}$, $b=V^\sigma_{xp+k}$. We do not have the first condition $a\leq\tfrac{x^2p+x|\sigma|_1}{2}$. However, in the proof of Lemma \ref{a-b leq x}, the only place where the condition is used is to show that there exists some $i$ such that $\alpha_i<x\sigma_i$. If such an $i$ exists, even if the condition $a\leq\tfrac{x^2p+x|\sigma|_1}{2}$ is not satisfied, the proof of Lemma \ref{a-b leq x} still goes through and we can conclude that
$$V^\sigma_{xp+k+1}-V^\sigma_{xp+k}\leq x$$
Hence, we can assume that such an $i$ does not exist. In that case, $\alpha=\beta+x\sigma$ for some $\beta$ with all entries being non-negative. $\beta$ satisfies
\begin{align*}
\beta\cdot\sigma&=(\alpha-x\sigma)\cdot\sigma\\
&=\alpha\cdot\sigma-xp\\
&=k
\end{align*}
\indent $V^\sigma_k-1$ is less than the $k^{th}$ relevant coefficient $V_k$. Therefore, $T^\sigma_{V^\sigma_k-1}<k$ and we can conclude that
$$\dfrac{1}{2}\sum\limits_j\beta_j(\beta_j+1)\geq V^\sigma_k$$
Now we have
\begin{align*}
V^\sigma_{xp+k}&=\sum\limits_j\alpha_j(\alpha_j+1)\\
&=\sum\limits_j(\beta_j+x\sigma_j)(\beta_j+x\sigma_j+1)\\
&=\dfrac{x^2p+x|\sigma|_1}{2}+x(\beta\cdot\sigma)+\dfrac{1}{2}\sum\limits_j\beta_j(\beta_j+1)\\
&\geq\dfrac{x^2p+x|\sigma|_1}{2}+xk+V^\sigma_k
\end{align*}
\indent Let $c$ be maximal such that $V^\sigma_k=V^\sigma_{k+c}$.\\
\\
\indent By the maximality of $c$, $T^\sigma_{V^\sigma_k}=k+c$. Let $\beta'\in S_{\leq V_k}$ be such that $\beta'\cdot\sigma=k+c$.\\
\\
\indent Let $\alpha':=\beta'+x\sigma$. Then we have $\alpha'\cdot\sigma=xp+k+c$. Also
\begin{align*}
\sum\limits_j\alpha'_j(\alpha'_j+1)&=\sum\limits_j(\beta'_j+x\sigma_j)(\beta'_j+x\sigma_j+1)\\
&=\dfrac{x^2p+x|\sigma|_1}{2}+x(\beta'\cdot\sigma)+\dfrac{1}{2}\sum\limits_j\beta'_j(\beta'_j+1)\\
&\leq\dfrac{x^2p+x|\sigma|_1}{2}+xk+xc+V^\sigma_k\\
&\leq V^\sigma_{xp+k}+xc
\end{align*}
Hence, $\alpha'\in S_{V^\sigma_{xp+k}+xc}$ and we can conclude that
$$T^\sigma_{V^\sigma_{xp+k}+xc}\geq xp+k+c$$
So, there are at least $xp+k+c$ relevant coefficients less than or equal to $V^\sigma_{xp+k}+xc$. Hence, we can conclude that
$$V^\sigma_{xp+k+c}\leq V^\sigma_{xp+k}+xc$$
which can be arranged into
$$V^\sigma_{xp+k+c}-V^\sigma_{xp+k}\leq xc$$
By Corollary \ref{V diff lowerbound}, we know that
\begin{align*}
V^\sigma_{xp+k+1}-V^\sigma_{xp+k}&\geq x\\
V^\sigma_{xp+k+2}-V^\sigma_{xp+k+1}&\geq x\\
\dots\\
V^\sigma_{xp+k+c}-V^\sigma_{xp+k+c-1}&\geq x
\end{align*}
The only way that $V^\sigma_{xp+k+c}-V^\sigma_{xp+k}\leq xc$ can hold is that all these inequalities are all equalities.
\end{proof}

Now we prove equation (\ref{eq:9}):

\begin{Lemma} \label{V structure}
Let $x$ be a positive integer and $0\leq k<p$. Then
$$V^{\sigma}_{xp+k}=\dfrac{x^2p+x|\sigma|_1}{2}+xk+V^{\sigma}_k$$
\end{Lemma}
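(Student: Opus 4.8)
The plan is to prove equation (\ref{eq:9}) by induction on $k$, with the base case $k=0$ being exactly Corollary \ref{multiples of p on V}. For the inductive step, I will use the observation recorded just before Lemma \ref{xc}: by Corollary \ref{V diff upperbound} and Corollary \ref{V diff lowerbound}, the consecutive difference $V^\sigma_{xp+k+1}-V^\sigma_{xp+k}$ is always either $x$ or $x+1$, and likewise $V^\sigma_{k+1}-V^\sigma_k\in\{0,1\}$ by Corollary \ref{V diff upperbound}. So everything reduces to showing that the difference is $x$ precisely when $V^\sigma_k=V^\sigma_{k+1}$ and $x+1$ precisely when $V^\sigma_{k+1}=V^\sigma_k+1$.

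The first of these two cases is exactly Lemma \ref{xc}, which is already proved. So the remaining work is the second case: assuming $V^\sigma_{k+1}=V^\sigma_k+1$, show $V^\sigma_{xp+k+1}-V^\sigma_{xp+k}=x+1$. The strategy here mirrors the proof of Lemma \ref{xc}. I would first produce the lower bound $V^\sigma_{xp+k+1}\geq \tfrac{x^2p+x|\sigma|_1}{2}+x(k+1)+V^\sigma_{k+1}$: take a minimizing $\alpha\in S_{\leq V^\sigma_{xp+k+1}}$ with $\alpha\cdot\sigma=xp+k+1$, and argue that $\alpha$ must be of the form $\beta+x\sigma$ with $\beta\geq 0$ — if not, i.e. if some $\alpha_i<x\sigma_i$, then the changemaker trick used in Lemma \ref{a-b leq x} would force $V^\sigma_{xp+k+1}-V^\sigma_{xp+k}\leq x$, contradicting Corollary \ref{V diff lowerbound}; then $\beta\cdot\sigma=k+1$ and $\tfrac12\sum\beta_j(\beta_j+1)\geq V^\sigma_{k+1}$, and expanding $\sum\alpha_j(\alpha_j+1)$ gives the bound. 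Conversely, for the upper bound I would take $\beta'\in S_{\leq V^\sigma_{k+1}}$ with $\beta'\cdot\sigma=k+1$ (using that $V^\sigma_{k+1}>V^\sigma_k$ so a minimizer has cost exactly $V^\sigma_{k+1}$), set $\alpha'=\beta'+x\sigma$, compute that $\alpha'\cdot\sigma=xp+k+1$ and $\tfrac12\sum\alpha'_j(\alpha'_j+1)=\tfrac{x^2p+x|\sigma|_1}{2}+x(k+1)+V^\sigma_{k+1}$, so $T^\sigma_{\,\tfrac{x^2p+x|\sigma|_1}{2}+x(k+1)+V^\sigma_{k+1}}\geq xp+k+1$, giving $V^\sigma_{xp+k+1}\leq\tfrac{x^2p+x|\sigma|_1}{2}+x(k+1)+V^\sigma_{k+1}$. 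Combined with the inductive hypothesis $V^\sigma_{xp+k}=\tfrac{x^2p+x|\sigma|_1}{2}+xk+V^\sigma_k$ and $V^\sigma_{k+1}=V^\sigma_k+1$, this yields $V^\sigma_{xp+k+1}-V^\sigma_{xp+k}=x+1$, as desired.

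I expect the main obstacle to be the bookkeeping around when a minimizing $\alpha$ achieves cost \emph{exactly} $V^\sigma_{xp+k+1}$ versus merely $\leq V^\sigma_{xp+k+1}$ — this relies on $V^\sigma_{xp+k+1}$ actually being a relevant coefficient (i.e. $T^\sigma$ jumps there), which follows since $V^\sigma_{xp+k+1}>V^\sigma_{xp+k}$ by Corollary \ref{V diff lowerbound}, but the argument needs to be stated carefully. A secondary subtlety is verifying that the "$\alpha = \beta + x\sigma$ or else reach a contradiction" dichotomy genuinely covers all cases: the "else" branch invokes the proof technique of Lemma \ref{a-b leq x} in the regime where its stated hypothesis $a\leq\tfrac{x^2p+x|\sigma|_1}{2}$ may fail, exactly as was already done inside the proof of Lemma \ref{xc}, so I would cite that argument rather than re-derive it. Everything else is the routine quadratic expansion $\sum_j(\beta_j+x\sigma_j)(\beta_j+x\sigma_j+1)=\tfrac{x^2p+x|\sigma|_1}{2}+x(\beta\cdot\sigma)+\tfrac12\sum_j\beta_j(\beta_j+1)$, which is the identity already used twice above.
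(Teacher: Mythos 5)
Your reduction to the two cases is fine, and the case $V^\sigma_k=V^\sigma_{k+1}$ is indeed Lemma \ref{xc}; but your treatment of the remaining case has a genuine gap, located exactly where the real difficulty sits. In the lower-bound step you dismiss the branch ``some $\alpha_i<x\sigma_i$'' by claiming that the trick of Lemma \ref{a-b leq x} would give $V^\sigma_{xp+k+1}-V^\sigma_{xp+k}\leq x$, contradicting Corollary \ref{V diff lowerbound}. There is no contradiction: Corollary \ref{V diff lowerbound} only says the difference is $\geq x$, which is perfectly compatible with $\leq x$ --- together they would force the difference to equal $x$, which is precisely the possibility you are trying to exclude in this case, and nothing in your argument excludes it (the hypothesis $V^\sigma_{k+1}=V^\sigma_k+1$ is never used in that branch). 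Moreover, the trick of Lemma \ref{a-b leq x} applied to your $\alpha$, which sits at cost level $V^\sigma_{xp+k+1}$, increases the point count by one at extra cost at most $x$, so it bounds the gap above, $V^\sigma_{xp+k+2}-V^\sigma_{xp+k+1}$, not the gap $V^\sigma_{xp+k+1}-V^\sigma_{xp+k}$; to bound the latter you would have to run it at the level-$V^\sigma_{xp+k}$ maximizer, as in Lemma \ref{xc}, and then the bad branch outputs exactly ``difference $=x$'' with no way to rule it out. So the dichotomy ``either $\alpha=\beta+x\sigma$ or contradiction'' does not close: a minimal-cost achiever of $xp+k+1$ points with some $\alpha_i<x\sigma_i$ is not ruled out by anything you have shown. (Your upper bound is essentially correct, modulo the exactness bookkeeping you flag, which can be handled by taking the maximizer $\beta'$ at cost $V^\sigma_{k+1}$ with $\beta'\cdot\sigma=k+1+e'$ and then descending $e'$ steps via Corollary \ref{V diff lowerbound}; but without the lower bound the case is not settled.)

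The paper never proves this second implication directly; it counts. By Corollary \ref{multiples of p on V}, $V^\sigma_{(x+1)p}-V^\sigma_{xp}=xp+\tfrac{p+|\sigma|_1}{2}$, and since each of the $p$ consecutive differences in this block is $x$ or $x+1$ (Corollaries \ref{V diff upperbound} and \ref{V diff lowerbound}), exactly $\tfrac{p+|\sigma|_1}{2}$ of them equal $x+1$; on the other hand $V^\sigma_p=\tfrac{p+|\sigma|_1}{2}$ forces exactly $\tfrac{p+|\sigma|_1}{2}$ of the base differences $V^\sigma_{k+1}-V^\sigma_k$ to equal $1$. Lemma \ref{xc} gives one inclusion between the two index sets, and equality of cardinalities upgrades it to the biconditional, after which the formula follows by induction on $k$. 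If you want to keep your direct route, you must find an argument that actually uses $V^\sigma_{k+1}=V^\sigma_k+1$ to control achievers with $\alpha_i<x\sigma_i$; the clean fix is to replace that step by the counting argument.
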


\begin{proof}
First of all, by Corollary \ref{multiples of p on V}, we know this is true when $k=0$. Now we consider
\begin{align*}
V^{\sigma}_{xp+p}-V^{\sigma}_{xp}&=\dfrac{(x+1)^2p+(x+1)|\sigma|_1}{2}-\dfrac{x^2p+x|\sigma|_1}{2}\\
&=xp+\dfrac{p+|\sigma|_1}{2}
\end{align*}
Therefore, by considering Corollary \ref{V diff upperbound} and Corollary \ref{V diff lowerbound}, we know that exactly $\tfrac{p+|\sigma|_1}{2}$ values of $k\in\{0,\dots,p-1\}$ satisfy $V^\sigma_{xp+k+1}-V^\sigma_{xp+k}=x+1$, and the rest of the $k$ values satisfies $V^\sigma_{xp+k+1}-V^\sigma_{xp+k}=x$.\\
\\
\indent Since $V^\sigma_p=\tfrac{p+|\sigma|_1}{2}$, by Corollary \ref{V diff upperbound} we know that exactly $\tfrac{p+|\sigma|_1}{2}$ values of $k\in\{0,\dots,p-1\}$ satisfy $V^\sigma_{k+1}=V^\sigma_k+1$. Therefore, by considering Lemma \ref{xc}, we conclude that for $k\in\{0,\dots,p-1\}$,
$$V^\sigma_{xp+k+1}-V^\sigma_{xp+k}=x+1\text{ if and only if }V^\sigma_{k+1}=V^\sigma_k+1$$
and
$$V^\sigma_{xp+k+1}-V^\sigma_{xp+k}=x\text{ if and only if }V^\sigma_{k+1}=V^\sigma_k$$
Result follows from induction on $k$.
\end{proof}

Before we prove Theorem \ref{main result 6}, we make a few more evaluations of relevant coefficients.\\
\\
\indent Let $odd(\sigma)$ be the number of odd entries in $\sigma$.

\begin{Lemma}\label{some values}
The following five statements are true:\\
(1) If $p$ is even, $V^\sigma_{\tfrac{2p-|\sigma|_1}{2}}=\tfrac{p}{2}$.\\
(2) If $p$ is odd, $V^\sigma_{\tfrac{2p-|\sigma|_1-1}{2}}=\tfrac{p-1}{2}$.\\
(3) $V^\sigma_{\tfrac{p-|\sigma|_1}{2}}\geq\tfrac{p-odd(\sigma)}{8}$\\
(4) In (3), equality holds if and only if the even entries of $\sigma$ can be partitioned into two sets with equal sum.\\
(5) $V^\sigma_{\tfrac{p-|\sigma|_1}{2}}\leq\tfrac{p+3odd(\sigma)}{8}$
\end{Lemma}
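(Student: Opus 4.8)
\emph{Setup and parts (1)--(2).} My plan is to read all five assertions as bounds on $V^\sigma_m=\min\{\tfrac12\sum_j\beta_j(\beta_j+1):\beta\in\mathbb{Z}^{n+1}_{\ge0},\ \beta\cdot\sigma\ge m\}$, the ``shop game'' quantity. The device I would carry throughout is the substitution $\gamma:=2\beta+(1,\dots,1)$, a vector of positive odd integers with $\beta\cdot\sigma=\tfrac12\bigl(\sum_j\gamma_j\sigma_j-|\sigma|_1\bigr)$ and $\tfrac12\sum_j\beta_j(\beta_j+1)=\tfrac18\bigl(\sum_j\gamma_j^2-(n+1)\bigr)$; writing $\gamma_j=\sigma_j+\delta_j$ gives $\sum_j\gamma_j^2=p+2\sum_j\delta_j\sigma_j+\sum_j\delta_j^2$, and the parity of $\gamma_j$ forces $\delta_j$ even when $\sigma_j$ is odd and $\delta_j$ odd (hence nonzero) when $\sigma_j$ is even. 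Since $p\equiv|\sigma|_1\pmod 2$, every index and value in the lemma is an integer; write $\mathrm{ev}(\sigma)=(n+1)-odd(\sigma)$ and $E=\sum_{\sigma_j\text{ even}}\sigma_j$ (even, $\le|\sigma|_1$). For (1)--(2) I would use the changemaker property to pick $A\subseteq\{0,\dots,n\}$ with $\sum_{j\in A}\sigma_j=\lfloor|\sigma|_1/2\rfloor$ and take the valid purchase $\beta_j=\sigma_j$ for $j\in A$, $\beta_j=\sigma_j-1$ otherwise; then $\beta\cdot\sigma=p-|\sigma|_1+\sum_{j\in A}\sigma_j$ and the cost is $\tfrac12\bigl(p-|\sigma|_1+2\sum_{j\in A}\sigma_j\bigr)$, which become the claimed index and value after substituting $\sum_{j\in A}\sigma_j=\tfrac12|\sigma|_1$ ($p$ even) or $\tfrac12(|\sigma|_1-1)$ ($p$ odd); this gives ``$\le$''. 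For ``$\ge$'' I would invoke Corollary~\ref{multiples of p on V but minus} ($V^\sigma_{p-|\sigma|_1}=\tfrac12(p-|\sigma|_1)$) together with Corollary~\ref{V diff lowerbound} at $x=1$: since $p-\tfrac12|\sigma|_1$ (resp.\ $p-\tfrac12(|\sigma|_1+1)$) exceeds $p-|\sigma|_1$, every step $V^\sigma_{a+1}-V^\sigma_a$ across that range is at least $1$, and summing the $\tfrac12|\sigma|_1$ (resp.\ $\tfrac12(|\sigma|_1-1)$) steps gives the lower bound.

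\emph{Parts (3)--(4).} For (3) I would take any purchase $\beta$ with $\beta\cdot\sigma\ge\tfrac12(p-|\sigma|_1)$; then $\sum_j\gamma_j\sigma_j=2\beta\cdot\sigma+|\sigma|_1\ge p$ gives $\sum_j\delta_j\sigma_j\ge0$, so the cost $\tfrac18\bigl(p+2\sum_j\delta_j\sigma_j+\sum_j\delta_j^2-(n+1)\bigr)$ is at least $\tfrac18\bigl(p+\sum_j\delta_j^2-(n+1)\bigr)\ge\tfrac18\bigl(p+\mathrm{ev}(\sigma)-(n+1)\bigr)=\tfrac18\bigl(p-odd(\sigma)\bigr)$, using $\delta_j^2\ge1$ at each of the $\mathrm{ev}(\sigma)$ even entries; minimizing over $\beta$ proves (3). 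For (4) I would note that equality in (3) forces equality in both inequalities just used: $\sum_j\delta_j\sigma_j=0$ (so $\beta\cdot\sigma=\tfrac12(p-|\sigma|_1)$) and $\sum_j\delta_j^2=\mathrm{ev}(\sigma)$, the latter forcing $\delta_j=0$ on odd entries and $\delta_j=\pm1$ on even entries; then $0=\sum_j\delta_j\sigma_j$ splits the even entries into the $\delta_j=+1$ block and the $\delta_j=-1$ block, two sets of equal sum. Conversely, from a partition of the even entries into sets $A_+,A_-$ of equal sum, the purchase $\beta_j=\tfrac12(\sigma_j-1)$ on odd entries, $\beta_j=\tfrac12\sigma_j$ on $A_+$, $\beta_j=\tfrac12\sigma_j-1$ on $A_-$ has $\delta_j\in\{0,\pm1\}$, $\sum_j\delta_j\sigma_j=0$, and cost $\tfrac18\bigl(p+\mathrm{ev}(\sigma)-(n+1)\bigr)=\tfrac18(p-odd(\sigma))$, so equality is attained.

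\emph{Part (5).} Here the changemaker hypothesis does the one genuinely substantive step. Since $E/2$ is a nonnegative integer $\le|\sigma|_1$, there is $S\subseteq\{0,\dots,n\}$ with $\sum_{j\in S}\sigma_j=E/2$. I would define $\gamma_j=\sigma_j+2$ for $j\in S$ with $\sigma_j$ odd, $\gamma_j=\sigma_j$ for $j\notin S$ with $\sigma_j$ odd, $\gamma_j=\sigma_j+1$ for $j\in S$ with $\sigma_j$ even, and $\gamma_j=\sigma_j-1$ for $j\notin S$ with $\sigma_j$ even; all $\gamma_j$ are positive and odd (even entries are $\ge2$), so $\beta=\tfrac12(\gamma-(1,\dots,1))$ is a valid purchase. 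Then $\sum_j\gamma_j\sigma_j=(p-E)+2\sum_{j\in S}\sigma_j=p$, so $\beta\cdot\sigma=\tfrac12(p-|\sigma|_1)$, while $\sum_j\delta_j^2=4\,|\{j\in S:\sigma_j\text{ odd}\}|+\mathrm{ev}(\sigma)\le4\,odd(\sigma)+\mathrm{ev}(\sigma)$, the only estimate used being $|\{j\in S:\sigma_j\text{ odd}\}|\le odd(\sigma)$. Hence the cost equals $\tfrac18\bigl(p+\sum_j\delta_j^2-(n+1)\bigr)\le\tfrac18\bigl(p+4\,odd(\sigma)+\mathrm{ev}(\sigma)-(n+1)\bigr)=\tfrac18\bigl(p+3\,odd(\sigma)\bigr)$, giving (5).

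\emph{The main obstacle.} I expect the difficulty to be organizational rather than conceptual: choosing, for each part, the right explicit purchase or the right instance of the substitution $\gamma=2\beta+(1,\dots,1)$, and keeping the parity constraints and the three-way translation among $\beta$, $\gamma$, $\delta$ consistent. The only genuinely nontrivial input is the use of the changemaker condition (producing $A$ in (1)--(2) and $S$ in (5)); once those subsets are in hand, the matching lower bounds in (1)--(2) follow for free from Corollaries~\ref{multiples of p on V but minus} and~\ref{V diff lowerbound}, provided one checks that the relevant indices lie in the range $\ge p-|\sigma|_1$ where Corollary~\ref{V diff lowerbound} applies.
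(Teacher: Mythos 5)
Your proof is correct. The reformulation $V^\sigma_m=\min\{\tfrac12\sum_j\beta_j(\beta_j+1)\mid\beta\in\mathbb{Z}^{n+1}_{\ge0},\ \beta\cdot\sigma\ge m\}$ agrees with Definition \ref{sigma rel coeff}, the changemaker subsets you invoke exist (both $\lfloor|\sigma|_1/2\rfloor$ and $E/2$ are at most $|\sigma|_1$), and your appeals to Corollary \ref{multiples of p on V but minus} and Corollary \ref{V diff lowerbound} at $x=1$ are within their range of validity. The route differs from the paper's in two places. For (1)--(2) the paper constructs no purchase at all: it interpolates between $V^\sigma_{p-|\sigma|_1}=\tfrac{p-|\sigma|_1}{2}$ and $V^\sigma_p=\tfrac{p+|\sigma|_1}{2}$ using Corollaries \ref{multiples of p on V}, \ref{V diff upperbound} and \ref{V diff lowerbound} to get $V^\sigma_{p-k}=\tfrac{p+|\sigma|_1}{2}-k$ for all $0\le k\le|\sigma|_1$, then specializes $k$; your explicit $A$-purchase replaces half of that interpolation and is equally valid. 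For (3)--(4) the paper runs its rational greedy algorithm up to $\tfrac12$ coin per point, computes $T^{\sigma,\mathbb{Q}}_{(p-odd(\sigma))/8}=\tfrac{p-|\sigma|_1}{2}$, and characterizes when an integral implementation exists (a subset of the even entries summing to half their total), whereas your substitution $\gamma=2\beta+(1,\dots,1)$, $\delta=\gamma-\sigma$, with the parity constraint that $\delta_j$ is odd exactly at the even entries, yields the same lower bound and the same equality criterion directly, without the fractional-purchase formalism --- a more self-contained argument, at the cost of not reusing the machinery the paper exploits elsewhere in Section 6. Your part (5) is essentially the paper's proof verbatim: the same changemaker subset $S$ (the paper's $A$), the same purchase vector, and the same estimate that the number of odd entries indexed by $S$ is at most $odd(\sigma)$.
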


\begin{proof}
Corollary \ref{multiples of p on V} and Corollary \ref{V diff upperbound} and Corollary \ref{V diff lowerbound} together implies that for any $0\leq k\leq |\sigma|_1$,
$$V^\sigma_{p-k}=\dfrac{p+|\sigma|_1}{2}-k$$
This implies statements (1) and (2) of Lemma \ref{some values}.\\
\\
\indent Now we investigate $V^\sigma_{\tfrac{p-|\sigma|_1}{2}}$.\\
\\
\indent Consider the full rational greedy algorithm that purchases all options strictly less than $\tfrac{1}{2}$ coin per point. The amount of coins spent is
\begin{align*}
\sum\limits_{\sigma_i \ even}\dfrac{\left(\tfrac{\sigma_i}{2}-1\right)\left(\tfrac{\sigma_i}{2}\right)}{2}-\sum\limits_{\sigma_i \ odd}\dfrac{\left(\tfrac{\sigma_i-1}{2}\right)\left(\tfrac{\sigma_i-1}{2}+1\right)}{2}&=\sum\limits_{\sigma_i \ even}\left(\dfrac{\sigma_i^2}{8}-\dfrac{\sigma_i}{4}\right)+\sum\limits_{\sigma_i \ odd}\left(\dfrac{\sigma_i^2}{8}-\dfrac{1}{8}\right)\\
&=\dfrac{p-odd(\sigma)}{8}-\dfrac{1}{4}\sum\limits_{\sigma_i \ even}\sigma_i
\end{align*}
The amount of points obtained is
\begin{align*}
\sum\limits_{\sigma_i \ even}\left(\dfrac{\sigma_i}{2}-1\right)\sigma_i+\sum\limits_{\sigma_i \ odd}\left(\dfrac{\sigma_i-1}{2}\right)\sigma_i&=\sum\limits_{\sigma_i \ even}\left(\dfrac{\sigma^2_i}{2}-\sigma_i\right)+\sum\limits_{\sigma_i \ odd}\left(\dfrac{\sigma^2_i}{2}-\dfrac{\sigma_i}{2}\right)\\
&=\dfrac{p-|\sigma|_1}{2}-\dfrac{1}{2}\sum\limits_{\sigma_i \ even}\sigma_i
\end{align*}
Hence, $T^\sigma_{\tfrac{p-odd(\sigma)}{8}-\tfrac{1}{4}\sum\limits_{\sigma_i \ even}\sigma_i}=\dfrac{p-|\sigma|_1}{2}-\dfrac{1}{2}\sum\limits_{\sigma_i \ even}\sigma_i$.\\
\\
\indent By also considering the purchase options that are exactly $\tfrac{1}{2}$ coin per point, we conclude that for any $0\leq k\leq\tfrac{1}{2}\sum\limits_{\sigma_i \ even}\sigma_i$,
$$T^{\sigma,\mathbb{Q}}_{\tfrac{p-odd(\sigma)}{8}-\tfrac{1}{4}\sum\limits_{\sigma_i \ even}\sigma_i+k}=\dfrac{p-|\sigma|_1}{2}-\dfrac{1}{2}\sum\limits_{\sigma_i \ even}\sigma_i+\dfrac{k}{2}$$
where $k$ represents the amount of coins spent on purchasing options that exactly $\tfrac{1}{2}$ coin per point under a rational greedy algorithm.\\
\\
\indent Setting $k=\tfrac{1}{4}\sum\limits_{\sigma_i \ even}\sigma_i$, we conclude that
$$T^{\sigma,\mathbb{Q}}_{\tfrac{p-odd(\sigma)}{8}}=\dfrac{p-|\sigma|_1}{2}$$
In particular $T^{\sigma,\mathbb{Q}}_{\tfrac{p-odd(\sigma)}{8}}=T^\sigma_{\tfrac{p-odd(\sigma)}{8}}$ if and only if there is an integral implementation of a greedy algorithm that spends exactly $\tfrac{1}{4}\sum\limits_{\sigma_i \ even}\sigma_i$ coins on purchasing options that are exactly $\tfrac{1}{2}$ coin per point. Such an integral implementation is possible if and only if there is a subset of even $\sigma_i$'s that sum to $\tfrac{1}{2}\sum\limits_{\sigma_i \ even}$, which is equivalent to saying that the even entries of $\sigma$ can be partitioned into two sets with equal sum. Therefore,
$$T^\sigma_{\tfrac{p-odd(\sigma)}{8}}\leq\dfrac{p-|\sigma|_1}{2}$$
with equality if and only if the even entries of $\sigma$ can be partitioned into two sets with equal sum.\\
\\
\indent Note that $T^\sigma_{\tfrac{p-odd(\sigma)}{8}}=\tfrac{p-|\sigma|_1}{2}$ implies $V^\sigma_{\tfrac{p-|\sigma|_1}{2}}\leq\tfrac{p-odd(\sigma)}{8}$ and $V^\sigma_{\tfrac{p-|\sigma|_1}{2}+1}>\tfrac{p-odd(\sigma)}{8}$, which implies $V^\sigma_{\tfrac{p-|\sigma|_1}{2}}=\tfrac{p-odd(\sigma)}{8}$ because of Corollary \ref{V diff upperbound}. Also, $T^\sigma_{\tfrac{p-odd(\sigma)}{8}}<\tfrac{p-|\sigma|_1}{2}$ implies $V^\sigma_{\tfrac{p-|\sigma|_1}{2}}>T^\sigma_{\tfrac{p-odd(\sigma)}{8}}$. Therefore, this implies statements (3) and (4) of Lemma \ref{some values}.\\
\\
\indent Since $\sigma$ is a changemaker vector, there is some $A\subseteq\{0,\dots,n\}$ such that $\sum\limits_{i\in A}\sigma_i=\tfrac{1}{2}\sum\limits_{\sigma_i \ even}\sigma_i$. Consider $\alpha$ defined with
$$\alpha_i=\begin{cases}\tfrac{\sigma_i}{2}-1\text{ if }\sigma_i\text{ is even and }i\not\in A\\
\tfrac{\sigma_i}{2}\text{ if }\sigma_i\text{ is even and }i\in A\\
\tfrac{\sigma_i-1}{2}\text{ if }\sigma_i\text{ is odd and }i\not\in A\\
\tfrac{\sigma_i-1}{2}+1\text{ if }\sigma_i\text{ is odd and }i\in A
\end{cases}$$
Then
\begin{align*}
\alpha\cdot\sigma&=\dfrac{p-|\sigma|_1}{2}-\dfrac{1}{2}\sum\limits_{\sigma_i \ even}\sigma_i+\sum\limits_{i\in A}\sigma_i\\
&=\dfrac{p-|\sigma|_1}{2}
\end{align*}
and
\begin{align*}
\sum\limits_j\alpha_j(\alpha_j+1)&=\dfrac{p-odd(\sigma)}{8}-\dfrac{1}{4}\sum\limits_{\sigma_i \ even}\sigma_i+\sum_{\substack{\sigma_i \ even \\ i\in A}}\dfrac{\sigma_i}{2}+\sum_{\substack{\sigma_i \ odd \\ i\in A}}\dfrac{\sigma_i+1}{2}\\
&=\dfrac{p-odd(\sigma)}{8}-\dfrac{1}{4}\sum\limits_{\sigma_i \ even}\sigma_i+\dfrac{1}{2}\sum_{i \in A}\sigma_i+\sum_{\substack{\sigma_i \ odd \\ i\in A}}\dfrac{1}{2}\\
&=\dfrac{p-odd(\sigma)}{8}+\dfrac{\text{number of }i\in A\text{ such that }\sigma_i\text{ is odd}}{2}
\end{align*}
\indent Let $odd_A$ denote the number of $i\in A$ such that $\sigma_i$ is odd. Now we have
$$T^\sigma_{\tfrac{p-odd(\sigma)}{8}+\tfrac{odd_A}{2}}\geq\dfrac{p-|\sigma|_1}{2}$$
Therefore,
$$V^\sigma_{\tfrac{p-|\sigma|_1}{2}}\leq\tfrac{p+4odd_A-odd(\sigma)}{8}$$
Since $odd_A\leq odd(\sigma)$, we conclude that
$$V^\sigma_{\tfrac{p-|\sigma|_1}{2}}\leq\tfrac{p+3odd(\sigma)}{8}$$
\end{proof}

Now we can prove Theorem \ref{main result 6}

\begin{proof}[Proof or Theorem \ref{main result 6}]
When $r,p$ are both even, let $r=2x$.

\begin{align*}
\nu^{+rel}&=\dfrac{2xp-|\sigma|_1}{2}\\
&=(x-1)p+\dfrac{2p-|\sigma|_1}{2}
\end{align*}

\begin{align*}
V_0&=V^{rel}_0\\
&=V^\sigma_{(x-1)p+\tfrac{2p-|\sigma|_1}{2}}\\
&=\dfrac{(x-1)^2p+(x-1)|\sigma|_1}{2}+(x-1)\dfrac{2p-|\sigma|_1}{2}+V^\sigma_{\tfrac{2p-|\sigma|_1}{2}}\\
&=\dfrac{\left(\tfrac{r}{2}-1\right)^2p+\left(\tfrac{r}{2}-1\right)|\sigma|_1}{2}+\left(\dfrac{r}{2}-1\right)\dfrac{2p-|\sigma|_1}{2}+\dfrac{p}{2}
\end{align*}
which simplifies to $V_0=\dfrac{r^2p}{8}$. Hence, $r^2p=8V_0$.\\
\\
\indent When $r$ is even and $p$ is odd, let $r=2x$.

\begin{align*}
\nu^{+rel}&=-\dfrac{1}{2}+\dfrac{2xp-|\sigma|_1}{2}\\
&=(x-1)p+\dfrac{2p-|\sigma|_1-1}{2}
\end{align*}

\begin{align*}
V_{\tfrac{r}{2}}&=V^{rel}_0\\
&=V^\sigma_{(x-1)p+\tfrac{2p-|\sigma|_1-1}{2}}\\
&=\dfrac{(x-1)^2p+(x-1)|\sigma|_1}{2}+(x-1)\dfrac{2p-|\sigma|_1-1}{2}+V^\sigma_{\tfrac{2p-|\sigma|_1-1}{2}}\\
&=\dfrac{\left(\tfrac{r}{2}-1\right)^2p+\left(\tfrac{r}{2}-1\right)|\sigma|_1}{2}+\left(\dfrac{r}{2}-1\right)\dfrac{2p-|\sigma|_1-1}{2}+\dfrac{p-1}{2}
\end{align*}
which simplifies to $V_{\tfrac{r}{2}}=\dfrac{r^2p}{8}-\dfrac{r}{4}$.\\
\\
\indent Hence, $V_0\geq V_{\tfrac{r}{2}}=\dfrac{r^2p}{8}-\dfrac{r}{4}$ and $V_0\leq V_{\tfrac{r}{2}}+\tfrac{r}{2}=\dfrac{r^2p}{8}+\dfrac{r}{4}$. This implies
$$8V_0-2r\leq r^2p\leq 8V_0+2r$$
\indent When $r$ is odd, let $r=2x+1$.

\begin{align*}
\nu^{+rel}&=\dfrac{(2x+1)p-|\sigma|_1}{2}\\
&=xp+\dfrac{p-|\sigma|_1}{2}
\end{align*}

\begin{align*}
V_0&=V^{rel}_0\\
&=V^\sigma_{xp+\tfrac{p-|\sigma|_1}{2}}\\
&=\dfrac{(x^2p+x|\sigma|_1}{2}+x\dfrac{p-|\sigma|_1}{2}+V^\sigma_{\tfrac{p-|\sigma|_1}{2}}\\
&=\dfrac{\left(\tfrac{r-1}{2}\right)^2p+\left(\tfrac{r-1}{2}\right)|\sigma|_1}{2}+\left(\dfrac{r-1}{2}\right)\dfrac{2p-|\sigma|_1}{2}+V^\sigma_{\tfrac{p-|\sigma|_1}{2}}
\end{align*}
which simplifies to $V_0=\tfrac{r^2p-p}{8}+V^\sigma_{\tfrac{p-|\sigma|_1}{2}}$. Hence,
$$\dfrac{r^2p-p}{8}+\dfrac{p-odd(\sigma)}{8}\leq V_0\leq \dfrac{r^2p-p}{8}+\dfrac{p+3odd(\sigma)}{8}$$
and $\tfrac{r^2p-p}{8}+\tfrac{p-odd(\sigma)}{8}=V_0$ if and only if the even entries of $\sigma$ can be partitioned into two sets with equal sum. Result follows from rearranging this inequality.
\end{proof}

\newpage

\section{Linear changemakers}

Greene \cite{realization} showed that $\sigma$ must take 1 of the 33 forms, including 26 small families which have a simple structure and 7 large families that involves a process called ``weight expansion''. In this section, we work on 1 of the 26 small families. We believe that a similar approach can be used on all small families. Unfortunately, we do not see how does this generalizes to the large families.\\
\\
\indent The small family structure that we investigate in the section has changemakers in the form

$$\sigma=\begin{pmatrix}
    4s+3,&2s+1,&s+1,&s,&\smash{\underbrace{\begin{matrix}
        1,&\dots&,&1
    \end{matrix}}_{s\text{ copies of }1\text{'s}}}
\end{pmatrix}$$

\bigskip

\noindent where $s\geq 2$.\\
\\
\indent According to Greene's work \cite{realization}, this family corresponds to the $j\leq -3$ case in Berge Type X in Rasmussen's table \cite[\S 6.2]{rasmussen2007}.\\
\\
\indent At the end of this section we prove Theorem \ref{greene type 1}, which states that any knot in $S^3$ has at most 7 lensbordant surgeries with an associated changemaker coming from this family.\\
\\
\indent First we use the following lemma to convert that into a statement about the number of possible $\sigma$ instead of the number of possible surgeries.

\begin{Lemma} \label{at most one r}
Given a knot $K$ in $S^3$ and a changemaker vector $\sigma$, if $\sigma\neq(1)$ or $\nu^+(K)$ is not a triangular number, there is at most one lensbordant surgeries on $K$ associated with $\sigma$. When $\sigma=(1)$ and $\nu^+(K)$ is a triangular number, there are at most two lensbordant surgeries on $K$ associated with $\sigma$.
\end{Lemma}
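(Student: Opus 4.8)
The plan is to reduce the statement to counting the positive integers $r$ for which there is an $(r,p)$-lensbordant surgery on $K$ associated with $\sigma$. A reduced $L(p,q)$ associated with $\sigma$ has $p=\sum_i\sigma_i^2$ (since $\langle\sigma,\sigma\rangle=-p$ in $-\mathbb{Z}^{n+1}$), so $p$ is determined by $\sigma$; hence every lensbordant surgery on $K$ associated with $\sigma$ has slope $r^2p$ for a unique positive integer $r$, and distinct $r$ give distinct surgeries. It therefore suffices to bound the number of feasible $r$. Setting $f(r):=r^2p-r|\sigma|_1$, Theorem \ref{main result 4} says that every feasible $r$ satisfies
$$2\nu^+(K)\le f(r)\le 2\nu^+(K)+2(r-1).$$

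I would first record two elementary facts: $|\sigma|_1\le p$ (each $\sigma_i\ge 1$, so $\sum_i\sigma_i\le\sum_i\sigma_i^2$), and $p\ge 2$ unless $\sigma=(1)$ (otherwise $\sigma$ has at least two entries, or an entry $\ge 2$). Then, for positive integers $r<r'$, I would telescope:
\begin{align*}
f(r')-f(r)&=\sum_{j=r}^{r'-1}\big((2j+1)p-|\sigma|_1\big)\\
&\ge\sum_{j=r}^{r'-1}2jp\;=\;p\,(r+r'-1)(r'-r),
\end{align*}
using $|\sigma|_1\le p$. Meanwhile, if $r$ and $r'$ are both feasible, combining $2\nu^+(K)\le f(r)$ with $f(r')\le 2\nu^+(K)+2(r'-1)$ gives $f(r')-f(r)\le 2(r'-1)$.

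For $\sigma\ne(1)$ we have $p\ge 2$, so $f(r')-f(r)\ge 2(r+r'-1)(r'-r)\ge 2(r+r'-1)\ge 2r'>2(r'-1)$, using $r'-r\ge 1$ and $r\ge 1$; this contradicts $f(r')-f(r)\le 2(r'-1)$, so at most one $r$ is feasible, which is the first assertion. For $\sigma=(1)$ we have $p=|\sigma|_1=1$ and $f(r)=r(r-1)$, so the telescoping identity becomes $f(r')-f(r)=(r+r'-1)(r'-r)$. If two values $r<r'$ are feasible, then $(r+r'-1)(r'-r)\le 2(r'-1)$; when $r'-r\ge 2$ the left side is at least $2(r+r'-1)>2(r'-1)$, forcing $r'=r+1$. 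In that case the chain $2\nu^+(K)\le f(r)=f(r')-2(r'-1)\le 2\nu^+(K)$ forces $f(r)=r(r-1)=2\nu^+(K)$, i.e.\ $\nu^+(K)=\tfrac{1}{2}r(r-1)$ is a triangular number. Hence for $\sigma=(1)$ there are at most two feasible $r$, and two occur only when $\nu^+(K)$ is triangular; this is the remaining assertion.

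All of the arithmetic above is routine; the only point requiring care is the $\sigma=(1)$ case, where the strict inequality $f(r')-f(r)>2(r'-1)$ that rules out two solutions in general degenerates to an equality, and that equality is precisely the condition that $\nu^+(K)$ be triangular. I do not expect a real obstacle here, since Theorem \ref{main result 4} supplies all the input; the work is just isolating the boundary case correctly.
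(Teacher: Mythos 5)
Your proof is correct and follows essentially the same route as the paper: both arguments feed the two-sided bound of Theorem \ref{main result 4} into an algebraic comparison of $r^2p-r|\sigma|_1$ for two candidate values of $r$, conclude that $p\geq 2$ (i.e.\ $\sigma\neq(1)$) forces a single $r$, and that $p=1$ permits only two consecutive values of $r$, with the boundary equality forcing $\nu^+(K)=\tfrac12 r(r-1)$ to be triangular. The only cosmetic difference is that you telescope $f(r')-f(r)$ while the paper rearranges the combined inequality into a sum of manifestly non-negative terms; the content is the same.
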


Remark: When $\sigma=(1)$, being a lensbordant sugery associated with $\sigma$ means that the surgery bounds a rational homology ball.

\begin{proof}[Proof of Lemma \ref{at most one r}]
Suppose if the $r_1^2p$-surgery and the $r_2^2p$-surgery on $K$ are both lensbordant associated with $\sigma$. Let $r_2\geq r_1$.\\
\\
\indent Theorem \ref{main result 4} states that
$$2\nu^++2(r-1)\geq r^2p-r|\sigma|_1\geq 2\nu^+$$
Hence,
$$r_1^2p-r_1|\sigma|_1\geq r_2^2p-r_2|\sigma|_1-2(r_2-1)$$
which can be rearranged into
$$0\geq(r_2-r_1)\left((r_2-r_1)p-|\sigma|_1\right)+(2r_1p-2)(r_2-r_1-1)+2r_1(p-1)$$
Therefore, either $r_2=r_1$ or ($r_2=r_1+1$ and $p=1$).\\
\\
\indent When $p=1$, we know that
$$2\nu^++2(r-1)\geq r^2-r\geq 2\nu^+$$
which can be rearranged into
$$\dfrac{r(r-1)}{2}\geq\nu^+\geq\dfrac{(r-1)(r-2)}{2}$$
Hence, we can determine the value of $r$ unless $\nu^+$ is a triangular number, in which case there can be two values of $r$.
\end{proof}

Remark: The case with two values of $r$ can indeed happen. Both the $p^2$-surgery and the $(p+1)^2$-surgery on the torus knot $T_{p,(p+1)}$ bound a smooth rational homology ball \cite[Th. 1.4]{aceto2017}.\\
\\
\indent Given Lemma \ref{at most one r}, we now focus on the number of $\sigma$'s instead of the number of surgeries.\\
\\
\indent The majority of the content in this section is for proving the following

\begin{Th}\label{greene type 1 2}
Let $s\geq 5$ and $r\geq 2$. Let $K$ be a knot in $S^3$ and there is a $(r,p)$-lensbordant surgery on $K$ associated with a changemaker vector in the form

$$\sigma=\begin{pmatrix}
    4s+3,&2s+1,&s+1,&s,&\smash{\underbrace{\begin{matrix}
        1,&\dots&,&1
    \end{matrix}}_{s\text{ copies of }1\text{'s}}}
\end{pmatrix}$$

\bigskip

\bigskip

\noindent Let $a$ be the number of elements in the set $\{i\geq 0 \mid 296\leq V_i(K)\leq 300\}$.\\
\\
\noindent Let $b$ be the biggest value satisfying the property that there are at least $\dfrac{a}{2}$ elements in the set $\{i\geq 0 \mid V_i(K)=b\}$.\\
\\
\noindent Then
$$s=\left\lfloor\sqrt{\dfrac{b}{11}}\right\rfloor\text{ or }\left\lfloor\sqrt{\dfrac{b}{11}}\right\rfloor+1\text{ or }\left\lfloor\sqrt{\dfrac{b}{11}}\right\rfloor+2$$
\end{Th}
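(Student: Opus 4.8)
The plan is to run the machinery of Section 6 with the specific changemaker $\sigma = (4s+3,\,2s+1,\,s+1,\,s,\,1,\dots,1)$ ($s$ copies of $1$), and then read off $s$ from the $V$-coefficients of $K$. First I would compute the three invariants of $\sigma$ that appear in the Section 6 formulas: $p = \sigma\cdot\sigma = (4s+3)^2 + (2s+1)^2 + (s+1)^2 + s^2 + s\cdot 1 = 22s^2 + 33s + 11 = 11(2s^2+3s+1) = 11(2s+1)(s+1)$, the $\ell^1$-norm $|\sigma|_1 = (4s+3) + (2s+1) + (s+1) + s + s = 9s+5$, and $odd(\sigma)$, which depends on the parity of $s$ but is always bounded (it is either $s+3$ or $s+4$ depending on parity). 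The key point is that $p$ is a fixed quadratic in $s$, so the surgery slope $r^2p$ pins down $s$ once $r$ is controlled; by Lemma \ref{at most one r} we may treat $r$ as essentially determined, and by Corollary \ref{main result 5} and Theorem \ref{main result 4} the slope $r^2p$ is sandwiched in terms of $\nu^+(K)$.

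Next I would use Lemma \ref{V structure} (equation \eqref{eq:9}) together with Lemma \ref{some values} to locate where the relevant coefficients $V^\sigma_i$ cross the window $[296, 300]$. The idea: $a$ counts indices $i$ with $296 \le V_i(K) \le 300$; because of the rational-greedy description of $T^\sigma_m$ and the relation $V^{rel}_i = V^\sigma_{\nu^{+rel}-i}$, the lengths of the plateaus of the function $m \mapsto T^\sigma_m$ near $m \approx 298$ are governed by $\sigma$ in a way that is ``quadratic in $s$'' — specifically, Lemma \ref{V structure} shows that on the block $\{xp+k : 0 \le k < p\}$ the jumps $V^\sigma_{xp+k+1} - V^\sigma_{xp+k}$ are either $x$ or $x+1$, so a value $b$ is repeated roughly $\frac{p}{(\text{jump size})} \sim \frac{p}{x}$ times among the $V^\sigma$, and $x$ is (via Theorem \ref{main result 4}) of order $\sqrt{r^2p}/r = \sqrt{p}$, i.e. of order $s$. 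Thus the most-repeated value $b$ near the window behaves like $11x \cdot (\text{something linear in } x)$, and after tracking constants one gets $b \approx 11 s^2$ up to an error that is $O(s)$, which forces $s \in \{\lfloor\sqrt{b/11}\rfloor, \lfloor\sqrt{b/11}\rfloor+1, \lfloor\sqrt{b/11}\rfloor+2\}$. The role of the threshold $s \ge 5$ and of choosing the window at $296$–$300$ (rather than near $0$) is to ensure we are deep enough into the ``generic'' regime that the exceptional small-index behavior in Lemma \ref{some values} and Lemma \ref{t0<2} does not interfere, and that the plateau counts are unambiguous; the choice of ``at least $a/2$ elements'' is a robustness device against the two cases in the parity of $r$ (which can shift plateau lengths by a bounded amount) and against $odd(\sigma)$ contributing an $O(s)$ wobble.

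The main obstacle I expect is bookkeeping the error terms precisely enough to guarantee the three-value conclusion rather than a four- or five-value one. Concretely: one must show that $b$ and $11s^2$ differ by strictly less than what would push $\lfloor\sqrt{b/11}\rfloor$ more than two away from $s$, i.e. roughly $|b - 11s^2| < 11(2s) = 22s$ is not quite tight enough and one needs the sharper estimate coming from the exact formula $V_0 = \frac{r^2p - p}{8} + V^\sigma_{(p-|\sigma|_1)/2}$ (in the $r$-odd case of Theorem \ref{main result 6}, and its analogues for $r$ even) combined with the sandwich $\frac{p - odd(\sigma)}{8} \le V^\sigma_{(p-|\sigma|_1)/2} \le \frac{p+3\,odd(\sigma)}{8}$ from Lemma \ref{some values}(3),(5). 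Substituting $p = 11(2s+1)(s+1)$ and $odd(\sigma) \le s+4$ into these bounds and into Corollary \ref{main result 5}, one extracts that the relevant plateau value $b$ satisfies $11s^2 - Cs \le b \le 11s^2 + Cs$ for an explicit small constant $C$, and then $\sqrt{b/11} \in [s\sqrt{1 - C/(11s)},\, s\sqrt{1+C/(11s)}] \subseteq (s-2, s+2)$ once $s \ge 5$, whence $s - \lfloor\sqrt{b/11}\rfloor \in \{0,1,2\}$. The only real subtlety beyond arithmetic is verifying that the ``most repeated value among $V_i(K)$ in the window'' genuinely equals the plateau value of the $V^\sigma$ sequence — this uses that $V^{rel}_i = V^\sigma_{\nu^{+rel}-i}$ is an exact identity (Lemma \ref{linear system} plus Corollary \ref{linear system equality}), so the multiset of relevant $V_i(K)$ is literally a segment of the $V^\sigma$ sequence, and the $r$-fold (or similar) sparsification only rescales indices, not values.
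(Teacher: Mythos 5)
There is a genuine gap: your outline identifies the right framework (the identity $V^{rel}_i=V^\sigma_{\nu^{+rel}-i}$ plus the Section 6 structure results), but the quantitative core is missing and the mechanism you propose for pinning down $b$ does not work. First, a computational error propagates through your sketch: $p=(4s+3)^2+(2s+1)^2+(s+1)^2+s^2+s=22s^2+31s+11$, not $22s^2+33s+11=11(2s+1)(s+1)$. More importantly, you never explain what the window $[296,300]$ is actually for. The paper computes $T^\sigma_{295}=110s+75$ and $T^\sigma_{300}=110s+80$ exactly (for all $s\ge 5$), so that by the conversion Lemma \ref{conversion} one gets $4r<a<6r$; the whole point of $a$ is to serve as a proxy for the unknown $r$, so that the threshold ``at least $a/2$ repetitions'' can be compared with the multiplicities $>3r$ and $\le 2r$ coming from jumps of $T^{rel}$. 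Your stated rationale (avoiding small-index pathologies, robustness against parity of $r$) misses this, and without those two explicit evaluations of $T^\sigma$ the definition of $b$ cannot be activated at all.

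Second, your route to the two-sided bound on $b$ is not viable. You propose to extract $11s^2-Cs\le b\le 11s^2+Cs$ from the formula $V_0=\tfrac{r^2p-p}{8}+V^\sigma_{(p-|\sigma|_1)/2}$ and Lemma \ref{some values}; but $V_0$ grows like $r^2p/8$ with $r$ and has no direct bearing on which values of $V_i(K)$ occur with multiplicity at least $a/2$, so it cannot bound $b$. In the paper the upper bound is $b\le\tfrac{p-|\sigma|_1}{2}=11s^2+11s+3$, obtained because multiplicity $\ge a/2>2r$ forces $T^{rel}_b-T^{rel}_{b-1}\ge 2$ (Lemma \ref{conversion}) and then Lemma \ref{a-b geq x+1} with $x=1$ applies; the lower bound $b\ge 11s^2-33s+25$ comes from two more explicit evaluations, $T^\sigma_{11s^2-33s+24}\le 22s^2-22s-28$ and $T^\sigma_{11s^2-33s+25}\ge 22s^2-22s-24$, giving a jump of at least $4$ and hence a value repeated more than $3r\ge a/2$ times. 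One must also verify that $300$ and $11s^2-33s+25$ lie below $V^{rel}_0$ (via $\nu^{+rel}\ge 22s^2+\tfrac{53}{2}s+8$ when $r\ge 2$), which is precisely where the hypotheses $s\ge 5$, $r\ge 2$ are used; your sketch does not address this. Finally, even granting your bounds, the interval you claim, $\sqrt{b/11}\in(s-2,s+2)$, is too weak: it permits $\lfloor\sqrt{b/11}\rfloor=s+1$, which is outside the allowed set; the conclusion needs $11(s-2)^2<b<11(s+1)^2$, which is exactly what the paper's bounds $11s^2-33s+25\le b\le 11s^2+11s+3$ deliver.
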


From this point onwards, until we finish proving Theorem \ref{greene type 1 2}, we let

$$\sigma=\begin{pmatrix}
    4s+3,&2s+1,&s+1,&s,&\smash{\underbrace{\begin{matrix}
        1,&\dots&,&1
    \end{matrix}}_{s\text{ copies of }1\text{'s}}}
\end{pmatrix}$$

\bigskip

\noindent and we assume that $s\geq 5$ and $r\geq 2$.\\
\\
\indent The following lemma converts information about $V^{rel}_i$'s into information about $V_i$'s.

\begin{Lemma}\label{conversion}
For any $m_1>m_2\geq 0$,
$$(T^{rel}_{m_1}-T^{rel}_{m_2}+1)r>|\{i\geq 0 \mid m_1\geq V_i>m_2\}|>(T^{rel}_{m_1}-T^{rel}_{m_2}-1)r$$
\end{Lemma}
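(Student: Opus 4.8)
The plan is to reduce the statement about the knot's $V_i$ coefficients to the already-understood behaviour of the relevant coefficients $V^{rel}_i$, using the explicit way $V^{rel}_i$ is extracted from $V_i$ in Definition \ref{rel coeff as a seq}. Recall that each relevant coefficient $V^{rel}_j$ equals $V_{jr}$ (or $V_{r/2+jr}$ in the exceptional parity case), so the relevant coefficients are a subsequence of the $V_i$'s sampled at an arithmetic progression of step $r$. Since the full sequence $V_i$ is monotone non-increasing with consecutive differences in $\{0,1\}$ (the first two bullet properties of $V_i$), the values taken by $V_i$ on any window are controlled by the values taken at the endpoints of the corresponding block of relevant indices, up to an error of at most one sampled block on each side.

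First I would fix $m_1 > m_2 \geq 0$ and count $N := |\{i \geq 0 \mid m_1 \geq V_i > m_2\}|$. By monotonicity, this is an interval of indices $i$, say $i \in [A, B]$ with $B - A + 1 = N$ (allowing $N = 0$). The key observation is that $T^{rel}_{m_1} - T^{rel}_{m_2}$ counts exactly the number of \emph{relevant} indices $j$ (i.e.\ indices of the form $jr$ or $r/2 + jr$) lying in that same value-window, namely those with $m_1 \geq V^{rel}_{(\cdot)} > m_2$ — this is immediate from the definitions of $T^{rel}_m$ as $|\{0 \le i < \nu^{+rel} \mid 0 < V^{rel}_i \le m\}|$ and the monotonicity of the relevant coefficients. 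Now, the relevant indices form an arithmetic progression with common difference $r$; hence within any interval $[A,B]$ of integers the number of relevant indices is within $1$ of $\frac{B-A+1}{r} = \frac{N}{r}$, i.e.
\begin{equation*}
\frac{N}{r} - 1 < T^{rel}_{m_1} - T^{rel}_{m_2} < \frac{N}{r} + 1.
\end{equation*}
Rearranging both inequalities gives $(T^{rel}_{m_1} - T^{rel}_{m_2} + 1)r > N > (T^{rel}_{m_1} - T^{rel}_{m_2} - 1)r$, which is the claim.

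The one subtlety I would be careful about is the boundary: the window of $V_i$-values $(m_2, m_1]$ need not begin or end exactly at a relevant index, so the interval $[A,B]$ of indices realizing that window may overhang the nearest relevant indices by up to $r-1$ on each side. But this is exactly the ``$\pm 1$ block'' slack that the $\pm 1$ inside the two factors $(T^{rel}_{m_1} - T^{rel}_{m_2} \pm 1)$ absorbs: each missing or extra relevant index changes $T^{rel}_{m_1} - T^{rel}_{m_2}$ by one and corresponds to at most $r$ indices in the $V_i$-count. I expect this endpoint bookkeeping — carefully matching the half-open value interval to the index interval and checking the exceptional parity case where relevant indices are $r/2 + jr$ rather than $jr$ — to be the only real point requiring care; the rest is the pigeonhole count of lattice points of an arithmetic progression in an interval. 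I would also note the degenerate cases ($N = 0$, or $m_1 \geq V_0$) can be checked directly and cause no trouble since the inequalities are strict.
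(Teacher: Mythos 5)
Your proposal is correct and follows essentially the same route as the paper: the paper likewise identifies the relevant coefficients with values in $(m_2,m_1]$ with the step-$r$ arithmetic progression of relevant indices sitting inside the index interval $[A,B]$, and obtains the two bounds by sandwiching that interval between $[t,T]$ and $[t-r+1,T+r-1]$, which is exactly your pigeonhole count of progression terms in an interval of $N$ consecutive integers. The one point left implicit in your write-up (and in the paper's) is that the progression of relevant indices is not truncated inside $[A,B]$; this holds because every index in the set satisfies $V_i>m_2\geq 0$, hence $i<\nu^+\leq\tfrac{1}{2}r(rp-|\sigma|_1)$, and the latter is itself a relevant index, so every integer of the correct residue class in $[A,B]$ is genuinely relevant.
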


\begin{proof}
$T^{rel}_{m_1}-T^{rel}_{m_2}$ is the number of relevant coefficients having values that are both $\leq m_1$ and $>m_2$. Let $T$ be maximal such that $V_T$ is relevant and $>m_2$. Let $t$ be minimal such that $V_t$ is relevant and $\leq m_1$. Since relevant coefficients having indices $r$ apart from each other,
$$T-t=(T^{rel}_{m_1}-T^{rel}_{m_2}-1)r$$
Hence, by monotonicity of the $V$ coefficients, we conclude that
$$|\{i\geq 0 \mid m_1\geq V_i>m_2\}|>(T^{rel}_{m_1}-T^{rel}_{m_2}-1)r$$
Also, since $V_{T+r}\leq m_2$, and also if $t\geq r$ then $V_{t-r}>m_1$, 
\begin{align*}
|\{i\geq 0 \mid m_1\geq V_i>m_2\}|&\leq(T+r-1)-(t-r+1)+1\\
&=T-t+2r-1\\
&=(T^{rel}_{m_1}-T^{rel}_{m_2}+1)r-1
\end{align*}
\end{proof}

Now we calculate some values of $T^\sigma$.

\begin{Lemma}
$$T^\sigma_{295}=110s+75$$
\end{Lemma}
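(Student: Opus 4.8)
The plan is to trap $T^\sigma_{295}$ between a lower bound from one explicit feasible vector and an upper bound from the rational relaxation $T^{\sigma,\mathbb{Q}}_{295}\ge T^\sigma_{295}$. For the lower bound I would take $\alpha:=(20,10,5,5,0,\dots,0)$, which satisfies $\sum_j\alpha_j(\alpha_j+1)=20\cdot 21+10\cdot 11+5\cdot 6+5\cdot 6=590=2\cdot 295$, so $\alpha\in S_{\le 295}$, and $\sigma\cdot\alpha=20(4s+3)+10(2s+1)+5(s+1)+5s=110s+75$; hence $T^\sigma_{295}\ge 110s+75$. In the shop-game language this is the strategy buying $20,10,5,5$ copies of $I_0,I_1,I_2,I_3$ and nothing else.

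For the upper bound I would cut the purchase options at $x_0:=\tfrac{21}{4s+3}$ coins per point, the coins-per-point of the $21$st copy of $I_0$. The key step is to count, for $s\ge 5$, the copies of each item of coins-per-point strictly below $x_0$, by solving $k<\tfrac{21\sigma_i}{4s+3}$ in positive integers. This gives copies $1$ through $20$ of $I_0$, copies $1$ through $10$ of $I_1$, copies $1$ through $5$ of $I_2$, no copies of any weight-$1$ item $I_4,\dots,I_{s+3}$ (as $x_0<1$ for $s\ge 5$), and — the only place the value of $s$ intervenes — copies $1$ through $4$ of $I_3$ when $5\le s\le 15$, but copies $1$ through $5$ of $I_3$ when $s\ge 16$. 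Buying exactly these options costs $C_0$ coins and yields $P_0$ points, with $(C_0,P_0)=(290,\,109s+75)$ in the first range and $(C_0,P_0)=(295,\,110s+75)$ in the second. Since these are strictly cheaper in coins-per-point than every other option and cost $C_0\le 295$ in total, the rational greedy algorithm buys all of them and then spends its remaining $295-C_0$ coins on options of coins-per-point $\ge x_0$, earning at most $(4s+3)/21$ further points per coin; therefore
$$T^\sigma_{295}\le T^{\sigma,\mathbb{Q}}_{295}\le P_0+\frac{295-C_0}{x_0}=\begin{cases}109s+75+\dfrac{20s+15}{21}&\text{if }5\le s\le 15,\\[2ex]110s+75&\text{if }s\ge 16.\end{cases}$$
Because $\tfrac{20s+15}{21}<s+1$, the right-hand side is $<110s+76$ in both cases, so the integer $T^\sigma_{295}$ is at most $110s+75$, and with the lower bound $T^\sigma_{295}=110s+75$.

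The hard part is the combinatorial book-keeping in the upper bound. One must verify the sub-$x_0$ counts carefully, and the jump in the $I_3$-count at $s=15$ is genuine: that is exactly the value at which the coins-per-point $5/s$ of the $5$th copy of $I_3$ overtakes the coins-per-point $21/(4s+3)$ of the $21$st copy of $I_0$. Cutting at $x_0=21/(4s+3)$ instead of at the more obvious $5/s$ is what makes the estimate uniform in $s$ and leaves precisely the slack $\tfrac{20s+15}{21}<s+1$ that closes the argument; a cut at $5/s$ would fail for small $s$, where the sub-cutoff purchases alone already cost more than $295$.
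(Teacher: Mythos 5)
Your proposal is correct and follows essentially the same route as the paper: the lower bound comes from the integral vector $(20,10,5,5,0,\dots)$, and the upper bound comes from the rational relaxation $T^{\sigma,\mathbb{Q}}_{295}$ via a greedy cut at $\tfrac{21}{4s+3}$ coin per point, exactly as in the paper's treatment of the range $5\leq s\leq 14$. The only cosmetic difference is that the paper switches to the cut $\tfrac{5}{s}$ for $s\geq 15$ (where the greedy is full, hence integral) while you keep the cut $\tfrac{21}{4s+3}$ uniformly and split at $s\leq 15$ versus $s\geq 16$; the bookkeeping in both versions checks out.
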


\begin{proof}
With 295 coins, when $s\geq 15$, an integral rational greedy algorithm up to $\tfrac{5}{s}$ coin per point gives
$$\alpha=(20,10,5,5,0,\dots)$$
which implies
$$T^\sigma_{295}=110s+75$$
\indent When $14\geq s\geq 5$, rational greedy algorithm up to $\tfrac{21}{4s+3}$ coin per point gives
$$\alpha=(20+\dfrac{5}{21},10,5,4,0,\dots)$$
which implies
$$T^{\sigma,\mathbb{Q}}_{295}=110s+75+\dfrac{15-s}{21}<110s+76$$
Hence,
$$T^\sigma_{295}\leq 110s+75$$
By considering $\alpha=(20,10,5,5,0,\dots)$, we conclude that
$$T^\sigma_{295}=110s+75$$
\end{proof}

\begin{Lemma}
$$T^\sigma_{300}=110s+80$$
\end{Lemma}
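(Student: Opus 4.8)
The plan is to prove the two inequalities $T^\sigma_{300}\ge 110s+80$ and $T^\sigma_{300}\le 110s+80$ separately. The lower bound comes from an explicit optimal vector; the upper bound is the substantive part and I would get it by isolating the coefficient of $s$ in $\sigma\cdot\alpha$ and then running two small constrained‑quadratic (Lagrange multiplier) estimates.

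For the lower bound I would take $\alpha=(20,10,5,5,1,1,1,1,1,0,\dots,0)$, with the five entries equal to $1$ placed among the last $s$ coordinates — legitimate since $s\ge 5$. Then $\sum_j\alpha_j(\alpha_j+1)=420+110+30+30+10=600$, so $\alpha\in S_{\le 300}$, and $\sigma\cdot\alpha=20(4s+3)+10(2s+1)+5(s+1)+5s+5=110s+80$, giving $T^\sigma_{300}\ge 110s+80$. (This is precisely the vector realizing $T^\sigma_{295}$ from the previous lemma with five extra $1$'s appended, which explains the increase of $5$.)

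For the upper bound, fix $\alpha\in S_{\le 300}$ and write $\sigma\cdot\alpha=sQ+R$ where $Q:=4\alpha_0+2\alpha_1+\alpha_2+\alpha_3$ and $R:=3\alpha_0+\alpha_1+\alpha_2+\sum_{j\ge 4}\alpha_j$. First I would show $Q\le 110$: minimizing the convex quadratic $\sum_{j=0}^3\alpha_j(\alpha_j+1)$ over the real relaxation of $\{4\alpha_0+2\alpha_1+\alpha_2+\alpha_3=111,\ \alpha_j\ge 0\}$ yields a minimum of $600+\tfrac{33}{242}>600$, so no element of $S_{\le 300}$ has $Q\ge 111$. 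Next I would bound $R$ in terms of $Q$: because $\alpha_j(\alpha_j+1)\ge 2\alpha_j$ for non‑negative integers, $\sum_{j\ge 4}\alpha_j\le 300-\tfrac12\sum_{j=0}^3\alpha_j(\alpha_j+1)$, hence $R\le 300+H$ with $H:=3\alpha_0+\alpha_1+\alpha_2-\tfrac12\sum_{j=0}^3\alpha_j(\alpha_j+1)$. The function $H+5Q$ is concave in $(\alpha_0,\alpha_1,\alpha_2,\alpha_3)$, its unconstrained maximum occurs at $(\tfrac{45}{2},\tfrac{21}{2},\tfrac{11}{2},\tfrac{9}{2})$, where $Q=121>110$, so on the region $Q\le 110$ it is maximized on the boundary $Q=110$, at $(\tfrac{41}{2},\tfrac{19}{2},5,4)$, with value $\tfrac{1323}{4}=330.75$. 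Thus $300+H\le 630.75-5Q<631-5Q$, and since $R\in\mathbb Z$ we get $R\le 630-5Q$. Combining, $\sigma\cdot\alpha=sQ+R\le sQ+630-5Q=630+(s-5)Q\le 630+110(s-5)=110s+80$, using $Q\le 110$ and $s\ge 5$. This gives $T^\sigma_{300}\le 110s+80$.

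The main obstacle is that both Lagrange estimates land by a hair on the correct side of an integrality threshold — the constrained minimum is $600+\tfrac{33}{242}$, just above $600$, and $300+\max H=80.75$, just below $81$ (equivalently $\max(H+5Q)=330.75$, just below $331$) — so these must be carried out with exact arithmetic rather than numerical approximation, and one must confirm that the critical points used lie in the non‑negative orthant (which is immediate here: the unconstrained optimizer of $H+5Q$ has all coordinates positive, and the boundary optimizer $(\tfrac{41}{2},\tfrac{19}{2},5,4)$ on $Q=110$ likewise). Everything else is the routine verification that the displayed vectors satisfy the constraints and the claimed values.
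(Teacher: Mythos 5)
Your proof is correct, and it takes a genuinely different route from the paper's. The paper proves the upper bound by a case analysis on optimal vectors: if some optimum has $\alpha_3\geq 5$, exchange arguments force it to be $(20,10,5,5,1,1,1,1,1)$; otherwise every optimum has $\alpha_3\leq 4$ and at most four nonzero tail entries, and the paper splits $\sigma=(s-4)\sigma_1+\sigma_2$ with $\sigma_1=(4,2,1,1,0,\dots)$, $\sigma_2=(19,9,5,4,1,1,1,1,0,\dots)$, evaluating $T^{\sigma_1}_{300}=110$ and $T^{\sigma_2}_{300}=520$ via the structural Lemma \ref{V structure} from Section 6. Your $Q=4\alpha_0+2\alpha_1+\alpha_2+\alpha_3$ is exactly $\alpha\cdot\sigma_1$, so your bound $Q\leq 110$ plays the role of the paper's $T^{\sigma_1}_{300}=110$ but is proved by an exact convex relaxation instead of the Section 6 machinery, and your single concave estimate $R+5Q\leq 630$ (from $\max(H+5Q)=\tfrac{1323}{4}$ plus integrality) replaces the paper's case split and $T^{\sigma_2}$ computation, with no restriction on $\alpha_3$ or on the tail support. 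I checked both critical computations exactly: the constrained minimum is $600+\tfrac{33}{242}$, attained at $\left(\tfrac{449}{22},\tfrac{219}{22},\tfrac{52}{11},\tfrac{52}{11}\right)$, and the constrained maximum is $\tfrac{1323}{4}$ at $\left(\tfrac{41}{2},\tfrac{19}{2},5,4\right)$; the final chain $\sigma\cdot\alpha=(s-5)Q+(R+5Q)\leq 110(s-5)+630=110s+80$ is valid for $s\geq 5$, and the bound is tight at your lower-bound vector, as it must be. What your approach buys is self-containedness and the absence of case analysis; what the paper's buys is reuse of Lemma \ref{V structure} and of the $\sigma_1$-splitting (which reappears in the next lemma) and an explicit description of the extremal vector.

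Two details you should spell out in a final write-up, though neither is a gap: deducing that no element of $S_{\leq 300}$ has $Q\geq 111$ from the $Q=111$ computation needs the routine remark that the constrained minimum is nondecreasing in the constraint value (e.g.\ scale $\alpha$ down to reach $Q=111$, which only decreases $\sum_j\alpha_j(\alpha_j+1)$); and the claim that $H+5Q$ is maximized on the face $Q=110$ of $\{Q\leq 110,\ \alpha\geq 0\}$ is cleanest if you simply drop the nonnegativity constraints, since enlarging the feasible set can only increase the maximum and the maximizer over the half-space $\{Q\leq 110\}$ is then forced onto the hyperplane (the unconstrained critical point has $Q=121$) and equals the Lagrange point you computed.
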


\begin{proof}
By considering $\alpha=(20,10,5,5,1,1,1,1,1,0,\dots)$, we know that
$$T^\sigma_{300}\geq 110s+80$$
Hence, it suffices to show that
$$T^\sigma_{300}\leq 110s+80$$
\indent Let $m=\tfrac{1}{2}\sum\limits_j \alpha_j(\alpha_j+1)$. Let $\alpha=(\alpha_0,\dots,\alpha_n)$. Then
$$\alpha\cdot\sigma=\alpha_0(4s+3)+\alpha_1(2s+1)+\alpha_2(s+1)+\alpha_3s+(\alpha_4+\dots+\alpha_n)$$
Let
$$S=\{\alpha\in S_{\leq 300} \mid \alpha\cdot\sigma=T^\sigma_{300}\text{ and }\alpha_3\geq 5\}$$
First of all, we argue that if the set $S$ is non-empty, then $T^\sigma_{300}=110s+80$.\\
\\
\indent If $S$ is non-empty, pick an $\alpha$ in $S$ with minimal $\alpha_3$. We know that $\alpha_2\geq 5$ because otherwise we can swap $\alpha_2,\alpha_3$ to increase $\alpha\cdot\sigma$ without increasing $m$. So, $\alpha_1\geq 10$ because otherwise we can decrease $\alpha_2,\alpha_3$ by 1 and increase $\alpha_1$ by 1, which does not increase $m$ and keep $\alpha\cdot\sigma$ unchanged while decreasing $\alpha_3$. So, $\alpha_0\geq 20$ because otherwise we can decrease $\alpha_1,\alpha_2,\alpha_3$ by 1 and increase $\alpha_0$ by 1 to increase $\alpha\cdot\sigma$ without increasing $m$. Notice that
$$\dfrac{20\times 21}{2}+\dfrac{10\times 11}{2}+\dfrac{5\times 6}{2}+\dfrac{5\times 6}{2}=295$$
Therefore $\alpha$ must be $(20,10,5,5,1,1,1,1,1)$ because otherwise $m$ will be bigger than 300. This gives
$$T^\sigma_{300}=110s+80$$
\indent Now we consider the case when $S$ is empty. In this case, all optimal $\alpha$'s have $\alpha_3\leq 4$. Among these $\alpha$'s, pick one with maximal $\alpha_3$ and satisfies
$$\alpha_4\geq\alpha_5\geq\dots$$
We know that at most $4$ values of $\alpha_4,\alpha_5,\dots$ can be non-zero, because otherwise we can decrease $(\alpha_3+1)$ of those values by 1 and increase $\alpha_3$ by 1, which increases $\alpha_3$ but does not increase $m$ and does not decrease $\alpha\cdot\sigma$.\\
\\
\indent Let $\sigma_1=(4,2,1,1,0,\dots)$ and $\sigma_2=(19,9,5,4,1,1,1,1,0,\dots)$. We have
$$\alpha\cdot\sigma=(s-4)\alpha\cdot\sigma_1+\alpha\cdot\sigma_2$$
\indent First we investigate $\sigma_1$. It has $p_1=22$ and $|\sigma_1|_1=8$. Lemma \ref{V structure} implies
$$V^{\sigma_1}_{110}=295\text{ and }V^{\sigma_1}_{111}=301$$
Therefore, $T^{\sigma_1}_{300}=110$.\\
\\
\indent Now we investigate $\sigma_2$. It has $p_2=487$ and $|\sigma_2|_1=41$. Since $T^{\sigma_2}_2=28$ and $T^{\sigma_2}_3=38$, we know that $V^{\sigma_2}_{33}=V^{\sigma_2}_{34}=3$. Lemma \ref{V structure} implies
$$V^{\sigma_2}_{520}=300\text{ and }V^{\sigma_1}_{521}=301$$
Therefore, $T^{\sigma_2}_{300}=520$.\\
\\
\indent Hence,
\begin{align*}
\alpha\cdot\sigma&=(s-4)\alpha\cdot\sigma_1+\alpha\cdot\sigma_2\\
&\leq 110(s-4)+520\\
&=110s+80
\end{align*}
\end{proof}

\begin{Lemma}
$$T^\sigma_{11s^2-33s+24}\leq 22s^2-22s-28$$
\end{Lemma}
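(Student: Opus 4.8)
The bound is in fact sharp: one checks that the vector $\alpha^{*}:=(4s-7,\,2s-3,\,s-2,\,s-2,\,\underbrace{1,\dots,1}_{s-2})$ lies in $S_{\le m}$ for $m=11s^2-33s+24$ — its cost $\tfrac12\sum_j\alpha^{*}_j(\alpha^{*}_j+1)$ is exactly $11s^2-33s+24$ — and satisfies $\alpha^{*}\cdot\sigma=22s^2-22s-28$. So the plan is to prove that every $\alpha\in S_{\le m}$ has $\alpha\cdot\sigma\le 22s^2-22s-28$, which together with $\alpha^{*}$ would actually determine $T^\sigma_m$ exactly.

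The first step is to normalise the ``tail'' coordinates of $\alpha$, meaning the $s$ coordinates paired with a $1$ in $\sigma$. If some tail coordinate is $\ge2$ while another is $0$, moving one unit between them strictly lowers $\tfrac12\sum_j\alpha_j(\alpha_j+1)$ and leaves $\alpha\cdot\sigma$ unchanged, so after iterating we may assume every tail coordinate is $0$ or $1$ (the remaining possibility — all $s$ tail coordinates $\ge1$ with one of them $\ge2$ — forces at least $s+2$ worth of cost onto the tail and is ruled out by a crude direct estimate). Write the result as $\alpha=(\alpha_0,\alpha_1,\alpha_2,\alpha_3,1,\dots,1)$ with $t$ ones, $0\le t\le s$; then $\sum_{i=0}^{3}\tfrac12\alpha_i(\alpha_i+1)\le m-t$ and $\alpha\cdot\sigma=(4s+3)\alpha_0+(2s+1)\alpha_1+(s+1)\alpha_2+s\alpha_3+t$.

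The second step is to pin down the shape of an optimiser. Among the $\alpha$ realising $T^{\sigma}_m$, fix one of smallest cost, and exploit the identities $\sigma_0=\sigma_1+\sigma_2+\sigma_3+1$, $\sigma_1=2\sigma_3+1$, $\sigma_2=\sigma_3+1$, where each ``$+1$'' is provided by a tail coordinate. For instance, $(\alpha_0,\alpha_1,\alpha_2,\alpha_3,t)\mapsto(\alpha_0-1,\alpha_1+1,\alpha_2+1,\alpha_3+1,t+1)$ preserves $\alpha\cdot\sigma$ and changes the cost by $-\alpha_0+\alpha_1+\alpha_2+\alpha_3+4$; since our $\alpha$ has least cost (and, when this move is applicable, $t\le s-1$), we get $\alpha_0\le\alpha_1+\alpha_2+\alpha_3+4$, and the reverse move gives $\alpha_0\ge\alpha_1+\alpha_2+\alpha_3$. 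The analogous moves built from $\sigma_1=2\sigma_3+1$ and $\sigma_2=\sigma_3+1$ give $2\alpha_3-1\le\alpha_1\le 2\alpha_3+4$ and $\alpha_3\le\alpha_2\le\alpha_3+2$. Hence, putting $w:=\alpha_3$, the optimiser has the shape $\alpha=(4w+\varepsilon_0,\,2w+\varepsilon_1,\,w+\varepsilon_2,\,w,\,1,\dots,1)$ for integers $\varepsilon_0,\varepsilon_1,\varepsilon_2$ confined to explicit bounded ranges, with the degenerate situations (some $\alpha_i=0$, or $t\in\{0,s\}$, or $s\in\{5,6\}$) handled by separate direct checks.

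The final step is the computation. With $\alpha$ of the above shape the cost equals $11w^2+(4\varepsilon_0+2\varepsilon_1+\varepsilon_2+4)w+t+O(1)$, so $\sum_{i=0}^{3}\tfrac12\alpha_i(\alpha_i+1)\le m$ together with optimality forces $w=s-d$ for a small $d\in\{2,3,4\}$. For each of the finitely many tuples $(d,\varepsilon_0,\varepsilon_1,\varepsilon_2)$ one pushes $t$ up to the largest value the budget permits (the objective increases in $t$) and evaluates $\alpha\cdot\sigma=(22s+15)w+(4s+3)\varepsilon_0+(2s+1)\varepsilon_1+(s+1)\varepsilon_2+t$; comparing these finitely many quadratics in $s$ shows the maximum is $22s^2-22s-28$, attained exactly at the tuple corresponding to $\alpha^{*}$. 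The main difficulty is the exchange-move analysis of the second step: the various fractional relaxations of this optimisation exceed $22s^2-22s-28$, so the argument has no slack and must engage the integer structure directly, which forces one to treat with care the degenerate cases — a vanishing coordinate, $t=0$ or $t=s$, and the small parameters $s=5,6$ — checking in each that the objective still cannot exceed $22s^2-22s-28$.
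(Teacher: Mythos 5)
Your route is genuinely different from the paper's. The paper never analyses the integer optimisation for $\sigma$ directly: it writes $\sigma=2\sigma'+\sigma''$ with $\sigma'=(4,2,1,1,0,\dots)$ and $\sigma''=(4s-5,2s-3,s-1,s-2,1,\dots,1)$, bounds $\alpha\cdot\sigma'\leq T^{\sigma'}_{m}$ for $m=11s^2-33s+24$ exactly (computing $T^{\sigma'}_{m}=22s-38$ from the first few relevant coefficients of the small changemaker $(4,2,1,1)$ via Lemma \ref{V structure}), bounds $\alpha\cdot\sigma''$ by the fractional relaxation $T^{\sigma'',\mathbb{Q}}_{m}$ coming from the rational greedy algorithm, and adds the two bounds; this takes a few lines and requires no case analysis. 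You instead attack $\max_{\alpha\in S_{\leq m}}\alpha\cdot\sigma$ head-on with exchange moves. What you make explicit checks out: your $\alpha^{*}$ really does have cost exactly $11s^2-33s+24$ and $\alpha^{*}\cdot\sigma=22s^2-22s-28$ (so the bound is sharp, consistent with the paper's subsequent lemma), and the identities $\sigma_0=\sigma_1+\sigma_2+\sigma_3+1$, $\sigma_1=2\sigma_3+1$, $\sigma_2=\sigma_3+1$ do yield the stated shape constraints whenever the corresponding moves are applicable. If completed, your argument would even give the exact value of $T^{\sigma}_{m}$, which is more than the paper needs.

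The caveat is that the decisive portion of your argument is deferred rather than carried out, and -- as you yourself observe -- there is no slack, so nothing can be waved through. Concretely: (i) ruling out a minimal-cost optimum whose tail coordinates are all $\geq 1$ with one $\geq 2$ is not a ``crude estimate'': decreasing that coordinate frees only two coins, which buy nothing when the head coordinates are large, so you need a real reallocation argument or must carry these configurations into the final check; (ii) the degenerate situations ($t\in\{0,s\}$, some $\alpha_i=0$, small $s$) are exactly the ones in which the moves producing the $(4w+\varepsilon_0,2w+\varepsilon_1,w+\varepsilon_2,w)$ normal form are unavailable, so they are not covered by the shape analysis and must each be settled separately; (iii) the concluding ``comparison of finitely many quadratics in $s$'' is asserted, not exhibited. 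None of this looks fatal, but until those verifications are written out this is a plan rather than a proof, and it is substantially heavier than the paper's decomposition trick, which is precisely what the machinery of Section 6 (Lemma \ref{V structure} and the rational greedy relaxation) was built to enable.
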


\begin{proof}
Let $\alpha\in S_{\leq 11s^2-33s+24}$ be such that $\alpha\cdot\sigma=T^\sigma_{11s^2-33s+24}$.\\
\\
\indent Let $\sigma_1=(4,2,1,1,0,\dots)$ and $\sigma_2=(4s-5,2s-3,s-1,s-2,1,\dots,1)$. We have
$$\alpha\cdot\sigma=2\alpha\cdot\sigma_1+\alpha\cdot\sigma_2$$
First we investigate $\sigma_1$. It has $p_1=22$ and $|\sigma_1|_1=8$. Since $T^{\sigma_1}_1=4$ and $T^{\sigma_1}_2=6$ and $T^{\sigma_1}_3=8$, we know that $V^{\sigma_1}_6=2$ and $V^{\sigma_1}_7=3$. Lemma \ref{V structure} implies
$$V^{\sigma_1}_{22(s-2)+6}=11s^2-34s+26\text{ and }V^{\sigma_1}_{22(s-2)+7}=11s^2-33s+25$$
Therefore,
$$T^{\sigma_1}_{11s^2-33s+24}=22(s-2)+6=22s-38$$
\indent Now we investigate $\sigma_2$. Rational greedy algorithm up to $\tfrac{4s-6}{4s-5}$ coin per point gives
$$\alpha=(4s-7+\dfrac{4s-7}{4s-6},2s-4,s-2,s-3,0,\dots)$$
which implies
$$T^{\sigma_2,\mathbb{Q}}_{11s^2-33s+24}=22s^2-66s+48+\dfrac{4s-7}{4s-6}$$
which implies
$$T^{\sigma_2}_{11s^2-33s+24}\leq 22s^2-66s+48$$
Hence,
\begin{align*}
\alpha\cdot\sigma&=2\alpha\cdot\sigma_1+\alpha\cdot\sigma_2\\
&\leq 2(22s-38)+22s^2-66s+48\\
&=22s^2-22s-28
\end{align*}
\end{proof}

\begin{Lemma}
$$T^\sigma_{11s^2-33s+25}\geq 22s^2-22s-24$$
\end{Lemma}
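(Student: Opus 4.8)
Since $T^\sigma_m=\max_{\alpha\in S_{\leq m}}\sigma\cdot\alpha$ by Definition \ref{sigma rel coeff}, it suffices to exhibit a single vector $\alpha\in S_{\leq 11s^2-33s+25}$ with $\sigma\cdot\alpha= 22s^2-22s-24$. The whole proof will consist of writing down the right $\alpha$ and verifying two quadratic identities.

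\textbf{Finding the witness.} To locate $\alpha$ I would start from the purchase made by the rational greedy algorithm at all cost–per–point strictly below $1$, namely $(4s+2,\,2s,\,s,\,s-1,\,0,\dots,0)$; by Lemma \ref{multiples of p on T but minus} with $x=1$ this lies in $S_{\leq (p-|\sigma|_1)/2}$, where $(p-|\sigma|_1)/2=11s^2+11s+3$, and attains $\sigma\cdot\alpha=p-|\sigma|_1=22s^2+22s+6$. The target differs from this by roughly $44s$ in both the coin count and the point count, so I would delete the most expensive copies of the first four items in the proportions $4:2:1:1$ dictated by the coordinates. The choice that lands exactly on the target is to delete $8$ copies of $I_0$, $4$ of $I_1$, $2$ of $I_2$ and $2$ of $I_3$, that is
$$\alpha=(4s-6,\ 2s-4,\ s-2,\ s-3,\ 0,\dots,0).$$
Note all four nonzero entries are non-negative because $s\geq 5$ (only $s\geq 3$ is actually needed), so $\alpha$ is a genuine element of the relevant $S_{\leq m}$.

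\textbf{Verification and conclusion.} The proof then reduces to two computations: first $\sum_j\alpha_j(\alpha_j+1)=(4s-6)(4s-5)+(2s-4)(2s-3)+(s-2)(s-1)+(s-3)(s-2)=22s^2-66s+50=2(11s^2-33s+25)$, so that $\alpha\in S_{\leq 11s^2-33s+25}$ (in fact with equality); and second $\sigma\cdot\alpha=(4s-6)(4s+3)+(2s-4)(2s+1)+(s-2)(s+1)+(s-3)s=22s^2-22s-24$. Combining these with the definition of $T^\sigma$ gives $T^\sigma_{11s^2-33s+25}\geq\sigma\cdot\alpha=22s^2-22s-24$, as required. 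There is no real obstacle beyond guessing the deletion pattern; the heuristic above makes that step routine, and the fact that both the cost bound and the value are met with exact equality is a reassuring cross-check against the preceding lemma's upper bound $T^\sigma_{11s^2-33s+24}\leq 22s^2-22s-28$.
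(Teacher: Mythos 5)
Your proposal is correct and is essentially the paper's proof: the paper verifies the bound by exhibiting exactly the same witness $\alpha=(4s-6,2s-4,s-2,s-3,0,\dots,0)$, checking that it lies in $S_{11s^2-33s+25}$ and that $\sigma\cdot\alpha=22s^2-22s-24$. Your additional heuristic for locating the witness and the explicit verification of the two quadratic identities are fine and match the computations implicit in the paper.
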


\begin{proof}
This follows from observing that when $\alpha=(4s-6,2s-4,s-2,s-3)$, we have
$$\alpha\in S_{11s^2-33s+25}\text{ and }\alpha\cdot\sigma=22s^2-22s-24$$
\end{proof}

Now we can prove Theorem \ref{greene type 1 2}, which states that when $s\geq 5$ and $r\geq 2$, if we let $a$ be the number of elements in the set $\{i\geq 0 \mid 296\leq V_i\leq 300\}$ and let $b$ be the biggest value satisfying the property that there are at least $\dfrac{a}{2}$ elements in the set $\{i\geq 0 \mid V_i=b\}$, then
$$s=\left\lfloor\sqrt{\dfrac{b}{11}}\right\rfloor\text{ or }\left\lfloor\sqrt{\dfrac{b}{11}}\right\rfloor+1\text{ or }\left\lfloor\sqrt{\dfrac{b}{11}}\right\rfloor+2$$

\begin{proof}[Proof of Theorem \ref{greene type 1 2}]
First of all, note that $p=22s^2+31s+11$ and $|\sigma|_1=9s+5$. So we have
$$T^{rel}_{V_0^{rel}}=\nu^{+rel}\geq -\dfrac{1}{2}+\dfrac{1}{2}(2p-|\sigma|_1)=22s^2+\dfrac{53}{2}s+8$$
Hence, when $r\geq 2$,
$$T^\sigma_{V_0^{rel}}\geq 22s^2+\dfrac{53}{2}s+8$$
Notice that when $s\geq 5$, $22s^2+\dfrac{53}{2}s+8$ is strictly bigger than $110s+80$ and $22s^2-22s-28$.  Lemma 7.5 and Lemma 7.6 implies that $300$ and $11s^2-33s+24$ are strictly less than $V_0^{rel}$. This implies $295$, $300$, $11s^2-33s+24$, and $11s^2-33s+25$ are all less than or equal to $V_0^{rel}$. Hence, in the statements of Lemma 7.4-7.7, $T^\sigma$ can be replaced by $T^{rel}$.\\
\\
\indent Together with Lemma \ref{conversion}, we conclude that 
$$6r>a>4r$$
and
$$|\{i\geq 0 \mid V_i={11s^2-33s+25}\}|>3r$$
Hence, we know that $b\geq 11s^2-33s+25$.\\
\\
\indent To get an upper bound of $b$ in terms of $s$, notice that since 
$$|\{i\geq 0 \mid V_i=b\}|\geq\dfrac{a}{2}>2r$$
Lemma \ref{conversion} implies that $T^{rel}_b-T^{rel}_{b-1}\geq 2$. Lemma \ref{a-b geq x+1} implies that
$$b\leq\dfrac{p-|\sigma|_1}{2}=11s^2+11s+3$$
Result follows from
$$11(s-2)^2<11s^2-33s+25\leq b\leq 11s^2+11s+3<11(s+1)^2$$
\end{proof}

We conclude this section by proving Theorem \ref{greene type 1}, which states that any knot in $S^3$ has at most 7 lensbordant surgeries with an associated changemaker coming from this family.

\begin{proof}[Proof of Theorem \ref{greene type 1}]
Fix a knot in $S^3$.\\
\\
\indent When $r=1$, $T^{rel}_1=4s+3$ is the number of $V_i$'s with value 1. Therefore, there is at most 1 possible value of $s$.\\
\\
\indent When $r\geq 2$, Theorem \ref{greene type 1 2} implies that there are at most 3 possible values of $s\geq 5$.\\
\\
\indent Hence, together with the possibilities $s=2,3,4$, there are at most 7 possible values of $s$. Result follows from Lemma \ref{at most one r}.
\end{proof}

\newpage

\section{Knots in the Poincar\'e homology sphere}

In this section, we work on knots in the Poincar\'e homology sphere (denoted as $\mathcal{P}$) instead of in $S^3$. We use Caudell's work \cite{caudell} to generalize our work in Section 3 of this paper to knots in $\mathcal{P}$.\\
\\
\indent We make the following definitions that can be found in \cite[\S 1.4]{caudell}:\\
\\
\indent Let $a\leq b$ be integers that are either both odd or both even. We define $PI(a,b)$ to be the set of integers between $a$ and $b$ (including $a,b$) that have the same parity (odd or even) as $a,b$.\\
\\
\indent For every negative-definite unimodular lattice $L$,
$$m(L):=\max\{\langle\mathfrak{c},\mathfrak{c}\rangle \mid \mathfrak{c}\in Char(L)\}$$

$$short(L):=\{\mathfrak{c} \mid \mathfrak{c}\in Char(L)\text{ and }\langle\mathfrak{c},\mathfrak{c}\rangle=m(L)\}$$

$$Short(L):=\{\mathfrak{c} \mid \mathfrak{c}\in Char(L)\text{ and }\langle\mathfrak{c},\mathfrak{c}\rangle=m(L)-8\}$$
\indent For every vector $v\in L$,
$$c(v):=\max\{\langle\mathfrak{c},v\rangle \mid \mathfrak{c}\in short(L)\}$$

$$C(v):=\max\{\langle\mathfrak{c},v\rangle \mid \mathfrak{c}\in Short(L)\}$$

\begin{Def}\cite[Def 1.6]{caudell} \label{e8}
A vector $\tau=(s,\sigma)\in -E_8\oplus-\mathbb{Z}^{n-7}$ is an $E_8$-changemaker if 
$$PI(-c(\tau),c(\tau))=\{\langle\mathfrak{c},\tau\rangle \mid \mathfrak{c}\in short(-E_8\oplus-\mathbb{Z}^{n-7})\}$$
and
$$PI(c(\tau)+2,C(\tau))\subset\{\langle\mathfrak{c},\tau\rangle \mid \mathfrak{c}\in Short(-E_8\oplus-\mathbb{Z}^{n-7})\}$$
\end{Def}

Caudell noted that $short(-E_8\oplus-\mathbb{Z}^{n-7})=\{0\}\oplus\{\pm 1\}^{n-7}$, and therefore the first condition of Definition \ref{e8} is equivalent to saying that $\sigma$ satisfies the changemaker condition (as in Definition \ref{changemaker def}, but the entries of $\sigma$ are allowed to be 0).\\
\\
\indent Note that the definition of changemaker in \cite{caudell} allows the entries to be 0. That is different from Definition \ref{changemaker def} which comes from \cite{realization}.\\
\\
\indent One important result by Caudell \cite[Th. 1.19]{caudell} is that if the canonical negative definite plumbing 4-manifold of $L(p,q)$ embeds in $-E_8\oplus-\mathbb{Z}^{n-7}$ as an orthogonal complement of an $E_8$-changemaker, then $L(p,q)$ is a positive integer surgery on one of the knots described by Tange \cite{tange} in $\mathcal{P}$. In this section, we use this to generalize the content of Section 3 to L-space knots in $\mathcal{P}$.\\
\\
\indent The aim of this section is to prove

\begin{theorem*}[\ref{poincare knot}]
Suppose $r^2p$-surgery on a knot $K\subset\mathcal{P}$ produces an L-space that is smoothly $\mathbb{Z}_2$-homology cobordant to a reduced $L(p,q)$. Let
$$i(K,r,p)=\begin{cases}2g(K)+r\text{ if }p\text{ is odd} \\ 2g(K) \quad \ \ \text{ if }p\text{ is even}\end{cases}$$
\noindent If $r^2p\geq i(K,r,p)$, then $L(p,q)$ must be a positive integer surgery on a knot in $\mathcal{P}$.\\
\\
\noindent If $r^2p<i(K,r,p)$, then $L(p,q)$ must be a positive integer surgery on a knot in $S^3$.
\end{theorem*}

We use a similar set up as Section 3. In this section, we let $K$ be an L-space knot in $K\subset\mathcal{P}$. $L(p,q)$ is reduced and is smoothly $Z_2$-homology cobordant to the $r^2p$-surgery on $K$ which is denoted as $K_{r^2p}$. Let $W$ be the cobordism. Let $P$ be the canonical negative definite plumbed 4-manifold with boundary $L(p,q)$. Let $W_{r^2p}$ be the 4-manifold obtained by attaching a $r^2p$-framed 2-handle along $K\subset\mathcal{P}\subset\partial(\mathcal{P}\times[0,1])$. Let $i(K,r,p)$ be as described in Theorem \ref{poincare knot}. (sorry for using $\mathcal{P}$ and $P$ to denote different things, which came from using notations from both \cite{caudell} and \cite{ACP})\\
\\
\indent Since $L(p,q)$ is smoothly $Z_2$-homology cobordant to $K_{r^2p}$, it must be smoothly rational homology cobordant to $K_{r^2p}$, and we also know that $|H_1(W)|$ is odd.\\
\\
\indent We also define the surface $\Sigma$ and label the $\text{spin}^\text{c}$ structures on $K_{r^2p}$ in the same way as in Section 3.\\
\\
\indent Let $X=P\cup_{L(p,q)}W$ and $Z=X\cup_{K_{r^2p}}W_{-r^2p}$.

\begin{center}
\includegraphics[width=0.5\textwidth]{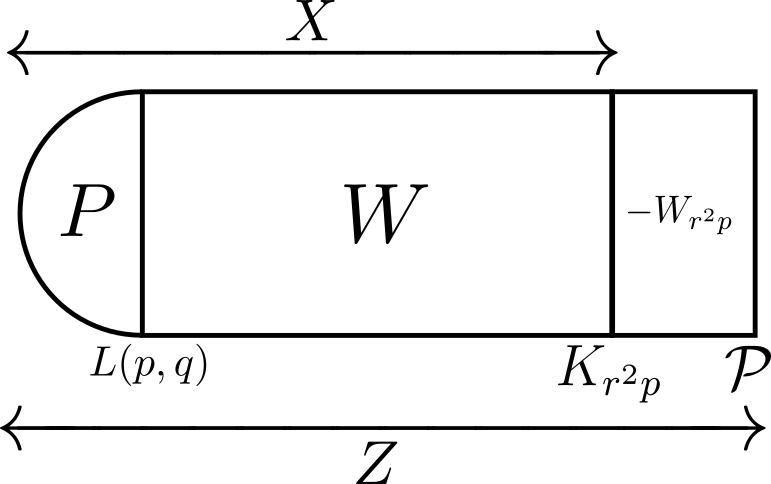}\\
Figure 3. The set up of Section 8.
\end{center}

Since $\mathcal{P}$ is an integer homology 3-sphere, Lemma 3.2-3.8 and Definition 3.9 still applies. To use notations similar to \cite{caudell}, we let $\tau$ (instead of $\sigma$) to be a generator of the free part of $H_2(W\cup-W_{r^2p})$ such that
$$[\Sigma]=r\tau+torsion$$
\indent In Section 3, the first time we used the information that $K\subset S^3$ was in the discussion between Lemma 3.8 and Definition 3.9 when we introduced the $V$ coefficients and how they relate the $d$-invariants. So, we start again from there.\\
\\
\indent Instead of using those $V_i$ coefficients, we use the torsion coefficients $t_i$ defined as follows. Consider the symmetrized Alexander polynomial of $K$
$$\Delta_K(T)=\sum\limits_ja_jT^j$$
The torsion coefficients are defined as
$$t_i(K):=\sum\limits_{j\geq 1}ja_{|i|+j}$$
\indent Some properties of torsion coefficients of L-space knots in $\mathcal{P}$ addressed in \cite{tange} and also mentioned in \cite{caudell2021lens}. For $i\geq 0$, the $t_i$ coefficients form a monotonic decreasing sequence of non-negative integers. Also, for any positive integer $k$, whenever $|i|\leq\tfrac{k}{2}$,
\begin{equation}\label{eq:11}\tag{11}
2-2t_i(K)=d(K_k,i)-d(U_k,i)
\end{equation}
where $K_k$ is the $k$-surgery on $K\subset\mathcal{P}$, and $U_k$ is the $k$-surgery on the unknot in $S^3$ (which is $L(k,k-1)$). The $i$ behind denotes the $\text{spin}^\text{c}$ structure with label $i$.\\
\\
\indent Equation (\ref{eq:11}) is almost the same as what we saw in Section 3, but with an extra 2 on the left hand side coming from the fact that $d(\mathcal{P})=2$.

\begin{Lemma} \label{new 8V}
Let $i$ be relevant. Then
$$8-8t_i=\max_{\substack{\mathfrak{c}\in Char(Z) \\ \langle\mathfrak{c},[\Sigma]\rangle+r^2p=2i}}(\mathfrak{c}^2+(n+1))$$
\end{Lemma}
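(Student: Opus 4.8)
The plan is to adapt the proof of Lemma \ref{8V} (the $S^3$ case) essentially verbatim, tracking the single change that $d(\mathcal{P})=2$ rather than $d(S^3)=0$. First I would recall that since $K$ is an L-space knot in $\mathcal{P}$ and $i$ is relevant, the $\text{spin}^\text{c}$ structure on $K_{r^2p}$ with label $i$ extends (by Lemma \ref{mult r spinc}, which still applies since $\mathcal{P}$ is an integer homology sphere) to a $\text{spin}^\text{c}$ structure $\mathfrak{s}^W_i$ on $W$, with restriction $\mathfrak{s}^{L(p,q)}_i$ on $L(p,q)$. The canonical negative definite plumbing $P$ of the lens space is sharp (\cite[Lemma 2.1]{l-space}), so for any $\text{spin}^\text{c}$ structure $\mathfrak{s}^P_i$ on $P$ extending $\mathfrak{s}^{L(p,q)}_i$ we get $c_1(\mathfrak{s}^P_i)^2+n\leq 4d(\mathfrak{s}^{L(p,q)}_i)$, with equality attainable. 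The key difference enters on the $W_{r^2p}$ side: here the bounding 4-manifold $W_{r^2p}$ has boundary $K_{r^2p}$ but is built from $\mathcal{P}\times[0,1]$, so the relevant $d$-invariant inequality reads $c_1(\mathfrak{s}^{-W_{r^2p}}_i)^2+1\leq -4\big(d(U_{r^2p},i)-2\big)$, i.e. the $d(\mathcal{P})=2$ correction shifts the bound by $-8$; this is exactly the source of the ``$8-8t_i$'' versus ``$-8V_i$''. I would justify this either by the same sharpness-type argument applied after capping $\mathcal{P}$ with its negative definite bounding manifold, or more cleanly by invoking equation (\ref{eq:11}) directly in place of the defining property of $V_i$ used in Lemma \ref{8V}.

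Next I would combine the two inequalities. Adding them and using that $L(p,q)$ and $K_{r^2p}$ are rational homology spheres while $H_2(W)$ is finite (so Chern classes add: $c_1(\mathfrak{s})^2=c_1(\mathfrak{s}^P_i)^2+c_1(\mathfrak{s}^{-W_{r^2p}}_i)^2$ for $\mathfrak{s}$ on $Z$), together with the cobordism invariance of $d$-invariants under $\mathbb{Z}_2$-homology cobordism — here I use that $\mathbb{Z}_2$-homology cobordant manifolds are in particular rational homology cobordant, so $d(\mathfrak{s}^{L(p,q)}_i)=d(K_{r^2p},i)$ — I get
$$c_1(\mathfrak{s})^2+(n+1)\leq 4d(K_{r^2p},i)-4d(U_{r^2p},i)+8=8-8t_i,$$
using (\ref{eq:11}), with equality attainable for relevant $i$. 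This gives the ``$\equiv 2i\pmod{2r^2p}$'' version of the statement exactly as (\ref{eq:3}) in the proof of Lemma \ref{8V}.

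Finally I would upgrade ``$\equiv 2i\pmod{2r^2p}$'' to ``$=2i$'' under the max, which is the one genuinely combinatorial step. This is identical to the corresponding part of Lemma \ref{8V}: given $\mathfrak{c}=c_1(\mathfrak{s})$ with $\langle\mathfrak{c},[\Sigma]\rangle+r^2p=2i+2mr^2p$, set $\mathfrak{c}'=\mathfrak{c}+2m[\Sigma]$; then $\langle\mathfrak{c}',[\Sigma]\rangle+r^2p=2i$ and ${\mathfrak{c}'}^2-\mathfrak{c}^2=4r^2pm^2+(8i-4r^2p)m$, a quadratic in $m$ with roots $0$ and $1-\tfrac{2i}{r^2p}$; since $i$ relevant gives $0\leq 2i\leq r^2p$, no integer lies strictly between the roots, so ${\mathfrak{c}'}^2\geq\mathfrak{c}^2$. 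I expect the main obstacle to be purely expository: making sure the $d(\mathcal{P})=2$ shift is inserted in the correct inequality (the $-W_{r^2p}$ side, not the $P$ side) and with the correct sign, and confirming that sharpness / the surgery formula (\ref{eq:11}) is legitimately available for L-space knots in $\mathcal{P}$ — both of which are handled by the references \cite{tange}, \cite{caudell2021lens} cited just before the lemma. Everything else transfers mechanically from Section 3.
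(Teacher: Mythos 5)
Your overall route is the paper's: the paper proves Lemma \ref{new 8V} by observing that the proof of Lemma \ref{8V} goes through verbatim with $t_i$ in place of $V_i$, the extra $8$ coming solely from the extra $2$ in equation (\ref{eq:11}); your treatment of the $P$-side sharpness, the additivity $c_1(\mathfrak{s})^2=c_1(\mathfrak{s}^P_i)^2+c_1(\mathfrak{s}^{-W_{r^2p}}_i)^2$, the cobordism invariance of $d$, and the upgrade from ``$\equiv 2i$'' to ``$=2i$'' via $\mathfrak{c}'=\mathfrak{c}+2m[\Sigma]$ all match. The problem is the one step you single out as the ``key difference''. The inequality on the $-W_{r^2p}$ side does \emph{not} acquire a $d(\mathcal{P})$-correction: it is a lattice-theoretic statement about the reversed $2$-handle attachment, whose second homology is still generated by $[\Sigma]$ with $\langle[\Sigma],[\Sigma]\rangle=-r^2p$ and whose admissible evaluations $\langle c_1(\mathfrak{s}),[\Sigma]\rangle$ are governed by the label $i$ exactly as over $S^3$; hence $c_1(\mathfrak{s}^{-W_{r^2p}}_i)^2+1\leq-4d(U_{r^2p},i)$ holds verbatim, with equality attainable, whether the handle is attached to $D^4$ or to $\mathcal{P}\times[0,1]$. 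The entire effect of $d(\mathcal{P})=2$ is already encoded in (\ref{eq:11}).

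As written you insert that correction twice: once by replacing $-4d(U_{r^2p},i)$ with $-4\bigl(d(U_{r^2p},i)-2\bigr)$, and once again by invoking (\ref{eq:11}). Your displayed chain $c_1(\mathfrak{s})^2+(n+1)\leq 4d(K_{r^2p},i)-4d(U_{r^2p},i)+8=8-8t_i$ is therefore inconsistent with (\ref{eq:11}), which says $4d(K_{r^2p},i)-4d(U_{r^2p},i)=8-8t_i$; taken literally your upper bound is $16-8t_i$, which is too weak to give the equality asserted in the lemma (the ``$\geq$'' direction, coming from sharpness of $P$ and the lattice computation on the handle, only produces $8-8t_i$). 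The fix is exactly the second alternative you mention yourself: keep both geometric inequalities unchanged from Lemma \ref{8V} and use (\ref{eq:11}) in place of the defining property of $V_i$ at the very end -- which is precisely the paper's proof. (Minor point: $-4\bigl(d(U_{r^2p},i)-2\bigr)=-4d(U_{r^2p},i)+8$ shifts the bound up by $8$, not by $-8$ as stated.)
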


\begin{proof}
Everything in the proof of Lemma \ref{8V} still applies, with $t_i$ replacing $V_i$, and an extra 8 on the left hand side coming from the extra 2 on the left hand side of equation (\ref{eq:11}).
\end{proof}

Recall that in this section, $|H_1(W)|$ is odd. Consider the Mayer-Vietoris sequence
$$\rightarrow H_1(P\cup-W_{-r^2p})\oplus H_1(W)\rightarrow H_1(Z)\rightarrow H_0(L(p,q)\cup K_{r^2p})\xrightarrow{injective}$$
Since $H_1(P\cup-W_{-r^2p})=0$, we know that $|H_1(Z)|$ divides $|H_1(W)|$. Hence, $H_1(Z)$ has no 2-torsion.

\begin{Lemma}\label{which one is which}
If $r^2p\geq i(K,r,p)$, the intersection form of $Z$ is $-E_8\oplus-\mathbb{Z}^{n-7}$.\\
\\
If $r^2p<i(K,r,p)$, the intersection form of $Z$ is $-\mathbb{Z}^{n+1}$.
\end{Lemma}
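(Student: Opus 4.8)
The plan is to cut the possibilities for the intersection lattice $Q_Z$ down to two, and then to let Lemma \ref{new 8V} choose between them according to how $r^2p$ compares with $i(K,r,p)$.

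First I would collect what is already available. By the analogue of Lemma \ref{ortho comp} (valid since $\mathcal{P}$ is a $\mathbb{Z}$-homology sphere), $H_2(P)$ is the orthogonal complement inside $H_2(Z)$ of $H_2(W\cup -W_{r^2p})$, whose free part is generated by $\tau$ with $\langle\tau,\tau\rangle=-p<0$; hence $Q_Z$ is negative definite, has rank $n+1$, and is unimodular because $\partial Z=\mathcal{P}$ is a $\mathbb{Z}$-homology sphere. Feeding every $\mathrm{spin}^{\mathrm{c}}$ structure on $Z$ into the Ozsv\'ath--Szab\'o $d$-invariant inequality and using $d(\mathcal{P})=2$ gives $\mathfrak{c}^2+(n+1)\le 8$ for all $\mathfrak{c}\in Char(Z)$, so $m(Q_Z)\le 7-n$; by van der Blij's congruence ($Q_Z$ is unimodular of signature $-(n+1)$) one has $m(Q_Z)\equiv -(n+1)\pmod{8}$, whence $m(Q_Z)\in\{-(n+1),\,7-n\}$. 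Combining this with the classification of negative-definite unimodular forms that bound the Poincar\'e sphere in the $2$-torsion-free case (available here because $|H_1(Z)|$, a divisor of the odd number $|H_1(W)|$, is odd; see \cite{caudell}, and Elkies' theorem for the underlying lattice statement) one gets $Q_Z\cong -\mathbb{Z}^{n+1}$, in which case $m(Q_Z)=-(n+1)$, or $Q_Z\cong -E_8\oplus -\mathbb{Z}^{n-7}$, in which case $m(Q_Z)=7-n=-(n+1)+8$.

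To decide which case holds I would use that $K$ is an L-space knot in $\mathcal{P}$, so $t_i(K)=0$ for $i\ge g(K)$ and $t_i(K)\ge 1$ for $0\le i<g(K)$, together with the observation --- read off from Definition \ref{relevant} --- that the largest relevant index equals $r^2p/2$ if $p$ is even and $r^2p/2-r/2$ if $p$ is odd; thus a relevant index $\ge g(K)$ exists precisely when $r^2p\ge i(K,r,p)$. If $r^2p\ge i(K,r,p)$, pick a relevant $i\ge g(K)$, so $t_i=0$; Lemma \ref{new 8V} then yields $\mathfrak{c}\in Char(Z)$ with $\mathfrak{c}^2+(n+1)=8$, so $m(Q_Z)\ge 7-n>-(n+1)$, which rules out $-\mathbb{Z}^{n+1}$ and forces $Q_Z\cong -E_8\oplus -\mathbb{Z}^{n-7}$. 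If $r^2p<i(K,r,p)$, so that every relevant index is $<g(K)$, assume toward a contradiction that $Q_Z\cong -E_8\oplus -\mathbb{Z}^{n-7}$: choose $\mathfrak{c}$ with $\mathfrak{c}^2=m(Q_Z)=7-n$ and apply the substitution $\mathfrak{c}\mapsto\pm\mathfrak{c}+2\ell[\Sigma]$ from the proof of Lemma \ref{8V}, which produces a relevant index $i$ with $\langle\mathfrak{c},[\Sigma]\rangle+r^2p=2i$ and cannot decrease $\mathfrak{c}^2$, so $\mathfrak{c}^2$ stays equal to $m(Q_Z)=7-n$; Lemma \ref{new 8V} then gives $8-8t_i\ge\mathfrak{c}^2+(n+1)=8$, hence $t_i=0$ and $i\ge g(K)$, contradicting that $i$ is relevant. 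Therefore $Q_Z\cong -\mathbb{Z}^{n+1}$.

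I expect the main obstacle to be the narrowing step in the second paragraph: turning the gauge-theoretic bound on the single invariant $m(Q_Z)$ into a full identification of $Q_Z$ needs the Elkies/Fr\o yshov-type classification of negative-definite forms bounding the Poincar\'e sphere and real care with the hypothesis that $H_1(Z)$ carries no $2$-torsion (which is why the oddness of $|H_1(W)|$, hence of $|H_1(Z)|$, was established beforehand). Everything after that is bookkeeping with Lemma \ref{new 8V} and the range of relevant indices.
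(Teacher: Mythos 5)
Your argument is correct and its skeleton is the same as the paper's: show $Z$ is negative definite with $\partial Z=\mathcal{P}$ and with $H_1(Z)$ free of $2$-torsion (via oddness of $|H_1(W)|$), quote the classification restricting the intersection form to $-\mathbb{Z}^{n+1}$ or $-E_8\oplus-\mathbb{Z}^{n-7}$, and then let Lemma \ref{new 8V} decide between the two according to whether some relevant index carries $t_i=0$, which happens precisely when $r^2p\geq i(K,r,p)$. Two points of comparison. First, your detour through the $d$-invariant bound, van der Blij's congruence and Elkies does not by itself do the work: knowing only $m(Q_Z)\in\{-(n+1),7-n\}$ does not identify the lattice (for instance $-D_{12}^{+}\oplus-\mathbb{Z}^{n-11}$ also has maximal characteristic square $7-n$), so the dichotomy genuinely rests on the classification of negative definite forms bounded by $\mathcal{P}$ without $2$-torsion, i.e.\ Scaduto's result \cite[Cor. 1.4]{scaduto}, which is exactly what the paper cites and what you also invoke via \cite{caudell}; so this is redundancy, not a gap. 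Second, your treatment of the direction ``$-E_8$ summand forces a relevant $i$ with $t_i=0$'' differs from the paper's: the paper exhibits the explicit short vector $(0,(1,\dots,1))$, obtains $t_{\frac{1}{2}r(rp-|\sigma|_1)}=0$, and uses that $E_8$ is an even lattice (so $p\equiv|\sigma|_1\pmod 2$) to verify this index is relevant, whereas you take an arbitrary characteristic vector of maximal square and shift it by $\pm\mathfrak{c}+2\ell[\Sigma]$ into the relevant range, reusing the quadratic estimate from the proof of Lemma \ref{8V}. That works, but your claim that the shift ``produces a relevant index'' silently absorbs the congruence check mod $r$: one needs $\langle\mathfrak{c},[\Sigma]\rangle=r\langle\mathfrak{c},\tau\rangle$ together with $\langle\mathfrak{c},\tau\rangle\equiv\langle\tau,\tau\rangle=-p\pmod 2$ (the characteristic condition) to see that the resulting $i$ lies in the class $r/2$ or $0$ modulo $r$ as Definition \ref{relevant} requires; this is the analogue of the paper's evenness-of-$E_8$ step and should be written out. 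Likewise, your blanket assertion that $t_i\geq 1$ for $i<g(K)$ and $t_i=0$ for $i\geq g(K)$ is where the paper needs genus to equal the leading Alexander degree for L-space knots in $\mathcal{P}$ (its citations to Wu, Ni and Rasmussen), so cite that rather than treating it as automatic. With those small additions your proof is complete; the paper's explicit-vector route produces the relevant index directly, while yours avoids choosing the splitting $\tau=s+\sigma$ at this stage.
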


\begin{proof}
According to Scaduto's work \cite[Cor. 1.4]{scaduto}, if a smooth compact oriented negative definite 4-manifold with $\mathcal{P}$ boundary and $b_2=n+1$ has no 2-torsion in its homology, then its intersection form is either $-\mathbb{Z}^{n+1}$ or $-E_8\oplus-\mathbb{Z}^{n-7}$.\\
\\
\indent Since $K$ is an L-space knot in $\mathcal{P}$, by combining \cite[Prop. 3.7]{Wu}, \cite[Prop. 3.5]{Ni}, and \cite[Prop. 3.1]{rasmussen2007}, we know that the genus $g(K)$ coincides with the leading degree of the symmetrized Alexander polynomial.\\
\\
\indent By comparing the definition of $i(K,r,p)$ and Definition \ref{relevant}, and consider the fact that relevant coefficients are monotonic decreasing, we see that $r^2p\geq i(K,r,p)$ if and only if there exists some relevant $i$ such that $t_i=0$.\\
\\
\indent Hence, it suffices to show that when there exists some relevant $i$ such that $t_i=0$, the intersection form of $Z$ cannot be $-\mathbb{Z}^{n+1}$, and show that when the intersection form of $Z$ is $-E_8\oplus-\mathbb{Z}^{n-7}$, there exists some relevant $i$ such that $t_i=0$.\\
\\
\indent Suppose there exists some relevant $i$ such that $t_i=0$.\\
\\
\indent Similar to the argument in \cite[\S 3]{caudell}, observe that for any characteristic vector $\mathfrak{c}$ in $-\mathbb{Z}^{n+1}$, $\mathfrak{c}^2$ is at most $-(n+1)$. Hence, it is impossible to obtain
$$8=\mathfrak{c}^2+(n+1)$$
Therefore, we can conclude that the intersection form of $Z$ cannot be $-\mathbb{Z}^{n+1}$.\\
\\
\indent Suppose the intersection form of $Z$ is $-E_8\oplus-\mathbb{Z}^{n-7}$.\\
\\
\indent Let $\tau=s+\sigma$, with $s\in-E_8$ and $\sigma\in-\mathbb{Z}^{n+1}$. By choosing a suitable orthonormal basis, we can assume that all entries of $\sigma$ are non-negative.\\
\\
\indent By considering $\mathfrak{c}=(0,(1,\dots,1))\in short(-E_8\oplus-\mathbb{Z}^{n-7})$, Lemma \ref{new 8V} implies that
$$t_{\tfrac{1}{2}r(rp-|\sigma|_1)}=0$$
Note that
$$p=\langle\tau,\tau\rangle=-\langle s,s\rangle-\langle\sigma,\sigma\rangle$$
Since $E_8$ is an even lattice, $\langle s,s\rangle$ is always even. Hence, $p$ has the same parity (odd or even) as $|\sigma|_1$. Therefore, $t_{\tfrac{1}{2}r(rp-|\sigma|_1)}$ is relevant.
\end{proof}

\begin{Lemma}\label{intersection form case 1}
If the intersection form of $Z$ is $-\mathbb{Z}^{n+1}$, then $L(p,q)$ is a positive integer surgery on a knot in $S^3$.
\end{Lemma}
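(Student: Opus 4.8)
The plan is to reduce to Greene's lens space realization theorem \cite[Th.~1.7]{realization} exactly as in the proof of Theorem \ref{main result 1}. Since $\mathcal{P}$ is an integer homology sphere, Lemma \ref{ortho comp} applies here, so $H_2(P)$ is the orthogonal complement of $H_2(W\cup -W_{r^2p})$ inside $H_2(Z)$; as $\tau$ generates the free part of $H_2(W\cup -W_{r^2p})$ and torsion classes pair trivially, passing to the free quotient identifies $\Lambda(p,q)$ with $\tau^{\perp}$ inside $-\mathbb{Z}^{n+1}$. Because $Z$ has intersection form $-\mathbb{Z}^{n+1}$ there is no $-E_8$ summand, so $\tau\in -\mathbb{Z}^{n+1}$; choose the orthonormal basis so that all entries of $\tau$ are non-negative. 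By \cite[Th.~1.7]{realization} it then suffices to show that $\tau$ is a changemaker vector, for this forces $L(p,q)$ --- equivalently the orientation-preservingly homeomorphic $L(p,q')$ --- to be a positive integer surgery on a knot in $S^3$.

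To prove $\tau$ is a changemaker I would repeat the argument of Lemma \ref{is changemaker} with Lemma \ref{new 8V} in place of Lemma \ref{8V} and the torsion coefficients $t_i$ in place of $V_i$. Applying Lemma \ref{new 8V} to $\mathfrak{c}=(1,\dots,1)$, which is characteristic, has $\mathfrak{c}^2=-(n+1)$, and satisfies $\langle\mathfrak{c},[\Sigma]\rangle+r^2p=r(rp-|\tau|_1)$, shows that the maximum at the index $i_0:=\tfrac12 r(rp-|\tau|_1)$ is at least $0$; since every characteristic vector of $-\mathbb{Z}^{n+1}$ has square at most $-(n+1)$, the maximum is also at most $0$, so $t_{i_0}=1$. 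From $p=-\langle\tau,\tau\rangle=\sum_j\tau_j^2\equiv|\tau|_1\pmod 2$ one checks that $i_0$ is an integer, that $0\le i_0\le\tfrac{r^2p}{2}$ (using $|\tau|_1\le p$), and that $i_0$ satisfies the congruence of Definition \ref{relevant}, i.e. $i_0$ is relevant. For any relevant $i$ with $i_0\le i\le\tfrac{r^2p}{2}$, monotonicity of $(t_i)$ forces $t_i\le 1$ while the bound on characteristic squares forces $t_i\ge 1$; hence $t_i=1$, the maximum in Lemma \ref{new 8V} equals $0$, and there is $\mathfrak{c}\in\{\pm1\}^{n+1}$ with $r\langle\mathfrak{c},\tau\rangle+r^2p=2i$. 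This is exactly the input consumed in the two parity cases of the proof of Lemma \ref{is changemaker}, and the identical reindexing --- together with the observation that $\tau$ has no zero entry (else $\tau^{\perp}=\Lambda(p,q)$ would split off a $-\mathbb{Z}$ summand, impossible for the nontrivial indecomposable linear lattice $\Lambda(p,q)$ by \cite[Cor.~3.5]{realization}; the degenerate case $p=1$ is $S^3$ and is immediate) --- gives the changemaker condition for $\tau$.

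What makes this work, rather than a genuine obstacle, is that passing from ``$V_i=0$'' in the $S^3$ story to ``$t_i=1$'' here costs nothing: it is precisely offset by the extra $8$ in Lemma \ref{new 8V} and by the fact that on $-\mathbb{Z}^{n+1}$ the largest characteristic square is $-(n+1)$, so the equality case still pins $\mathfrak{c}$ to $\{\pm1\}^{n+1}$, which is all the reindexing argument needs. The only steps requiring care are checking that $i_0$ is relevant and lies in range and that $\tau$ has no zero entry; both are routine. With $\tau$ shown to be a changemaker vector, Lemma \ref{ortho comp} and \cite[Th.~1.7]{realization} complete the proof exactly as in Theorem \ref{main result 1}.
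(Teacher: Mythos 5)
Your proposal is correct and follows essentially the same route as the paper: the paper proves this lemma by rerunning the Section 3 argument with $t_i$ in place of $V_i$, $\tau$ in place of $\sigma$, Lemma \ref{new 8V} in place of Lemma \ref{8V}, and with relevant coefficients equal to $1$ playing the role formerly played by those equal to $0$, which is exactly what you do (you merely spell out the details — relevance of $i_0$, the bound $\mathfrak{c}^2\le -(n+1)$ pinning the maximizers to $\{\pm1\}^{n+1}$, and the non-vanishing of the entries of $\tau$ — that the paper leaves implicit).
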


\begin{proof}
This is implied by the argument in Section 3, with $t_i$ replacing $V_i$, $\tau$ replacing $\sigma$, Lemma \ref{new 8V} replacing Lemma \ref{8V}, and in the proof of Lemma \ref{is changemaker} we consider the relevant coefficients that equals to 1 instead of relevant coefficients that equals to 0.
\end{proof}

\textbf{From this point onwards, until we start proving Theorem \ref{poincare knot}, we assume that the intersection form of $Z$ is $-E_8\oplus-\mathbb{Z}^{n-7}$.}\\
\\
\indent Let $\tau=s+\sigma$, with $s\in-E_8$ and $\sigma\in-\mathbb{Z}^{n-7}$. By choosing a suitable orthonormal basis, we can assume that all entries of $\sigma$ are non-negative.\\
\\
\indent We now prove that $\tau$ is an $E_8$-changemaker. The definition of $E_8$-changemaker consists of two conditions. We prove them one by one using arguments similar to the ones in \cite[\S 3]{caudell}.

\begin{Lemma}
$\sigma$ is a changemaker (with entries allowed to be 0)
\end{Lemma}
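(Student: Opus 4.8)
The plan is to run the proof of Lemma~\ref{is changemaker} almost verbatim, with the torsion coefficients $t_i$ in place of the $V_i$, the class $\tau$ in place of $\sigma$, and Lemma~\ref{new 8V} in place of Lemma~\ref{8V}; the only genuinely new input is an analysis of how the $-E_8$ summand enters the maximization in Lemma~\ref{new 8V}.

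First I would record the characteristic vectors of $-E_8\oplus-\mathbb{Z}^{n-7}$. Since $E_8$ is even and unimodular, its characteristic covectors are exactly $2(-E_8)$, so in a suitable orthonormal basis for the $\mathbb{Z}^{n-7}$ summand every $\mathfrak{c}\in Char(Z)$ has the form $\mathfrak{c}=(2e,\mathfrak{c}_{\mathbb{Z}})$ with $e\in-E_8$ and all entries of $\mathfrak{c}_{\mathbb{Z}}$ odd. Using that a nonzero vector of $E_8$ has square $\geq 2$ and that there are $n-7$ coordinates in the $\mathbb{Z}$-part, one gets $\mathfrak{c}^2+(n+1)=\langle 2e,2e\rangle+\mathfrak{c}_{\mathbb{Z}}^2+(n+1)\leq 0-(n-7)+(n+1)=8$, with equality precisely when $e=0$ and $\mathfrak{c}_{\mathbb{Z}}\in\{\pm1\}^{n-7}$.

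Next I would feed $\mathfrak{c}=(0,(1,\dots,1))$ into Lemma~\ref{new 8V}. This $\mathfrak{c}$ satisfies $\langle\mathfrak{c},[\Sigma]\rangle+r^2p=r^2p-r|\sigma|_1=2\cdot\tfrac12 r(rp-|\sigma|_1)$, which is even, non-negative (since $p=Q_{E_8}(s)+\sum_j\sigma_j^2\geq|\sigma|_1$), at most $\tfrac{r^2p}{2}$, and satisfies the relevance congruence because $E_8$ is even, forcing $p\equiv|\sigma|_1\pmod 2$ exactly as in the proof of Lemma~\ref{which one is which}. So Lemma~\ref{new 8V} gives $8-8t_{\frac12 r(rp-|\sigma|_1)}\geq 8$, hence that torsion coefficient is $0$; by monotonicity $t_i=0$ for every relevant $i\geq\tfrac12 r(rp-|\sigma|_1)$. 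For each such $i$ the maximum in Lemma~\ref{new 8V} equals $8$ and is attained, so by the equality analysis there is $\mathfrak{c}_{\mathbb{Z}}\in\{\pm1\}^{n-7}$ with $r\langle\mathfrak{c}_{\mathbb{Z}},\sigma\rangle+r^2p=2i$ (the $E_8$-component of $\tau$ contributes nothing since it is orthogonal to $(0,\mathfrak{c}_{\mathbb{Z}})$). This is exactly the identity that drives the case analysis on the parities of $r$ and $p$ in the proof of Lemma~\ref{is changemaker}: using the same substitutions and the $-\mathfrak{c}_{\mathbb{Z}}$ symmetry, one concludes that for every integer $j$ with $-|\sigma|_1\leq j\leq 0$ there is $\chi\in\{0,1\}^{n-7}$ with $\langle\chi,\sigma\rangle=j$, which is the changemaker condition. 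The one simplification compared with Section~3 is that we do not invoke indecomposability of linear lattices to force the entries of $\sigma$ to be nonzero, since Definition~\ref{e8} allows zero entries.

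The only place requiring care — the ``main obstacle'', though it is mild — is the equality analysis for $-E_8\oplus-\mathbb{Z}^{n-7}$ together with the verification that $\tfrac12 r(rp-|\sigma|_1)$ is a relevant index; both rest on $E_8$ being even, which simultaneously makes $p\equiv|\sigma|_1\pmod 2$ and pins the $E_8$-component of every optimal characteristic vector to $0$. Once these are in hand, the remainder of the argument is precisely the one already carried out in Section~3.
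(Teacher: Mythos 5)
Your proposal is correct and follows essentially the same route as the paper: plug $\mathfrak{c}=(0,(1,\dots,1))$ into Lemma \ref{new 8V} to get vanishing of the relevant torsion coefficient at index $\tfrac12 r(rp-|\sigma|_1)$ (with relevance secured by $p\equiv|\sigma|_1\pmod 2$ from the evenness of $E_8$), use $short(-E_8\oplus-\mathbb{Z}^{n-7})=\{0\}\oplus\{\pm 1\}^{n-7}$ to force the optimal characteristic vectors into the $\{\pm1\}$-part, and then rerun the parity case analysis of Lemma \ref{is changemaker}. The only cosmetic difference is that you derive the description of the short characteristic vectors directly from the evenness and minimal norm of $E_8$, where the paper simply cites Caudell's observation.
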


\begin{proof}
We follow a similar argument as in the proof of Lemma \ref{is changemaker}.\\
\\
\indent When $\mathfrak{c}=(0,(1,\dots,1))\in short(-E_8\oplus-\mathbb{Z}^{n-7})$, we have $\mathfrak{c}^2+(n+1)=8$ and
$$\langle\mathfrak{c},[\Sigma]\rangle+r^2p=2\left(\dfrac{r^2p-r|\sigma|_1}{2}\right)$$
By Lemma \ref{new 8V}, we conclude that
$$t_{\tfrac{1}{2}r(rp-|\sigma|_1)}=0$$
By Lemma \ref{new 8V}, we know that for all relevant $i\geq\tfrac{1}{2}r(rp-|\sigma|_1)$, there exists some $\mathfrak{c}\in Char(-E_8\oplus-\mathbb{Z}^{n-7})$ such that $\mathfrak{c}^2=-(n-7)$ and $\langle\mathfrak{c},[\Sigma]\rangle+r^2p=2i$. Since $short(-E_8\oplus-\mathbb{Z}^{n-7})=\{0\}\oplus\{\pm 1\}^{n-7}$, we know that such a $\mathfrak{c}$ must be in $\{0\}\oplus\{\pm 1\}^{n-7}$. The rest of the proof of Lemma \ref{is changemaker} applies.
\end{proof}

\begin{Lemma}\label{-4}
If $\langle s,s\rangle\leq-4$, then
$$PI(c(\tau)+2,C(\tau))\subset\{\langle\mathfrak{c},\tau\rangle \mid \mathfrak{c}\in Short(-E_8\oplus-\mathbb{Z}^{n-7})\}$$
\end{Lemma}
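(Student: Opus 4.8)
The goal is to establish the second defining condition of an $E_8$-changemaker for $\tau = s + \sigma$, under the extra hypothesis $\langle s,s\rangle \leq -4$. Following the structure of Section 3 and Caudell's argument, the key tool is Lemma \ref{new 8V}: for relevant $i$, the quantity $8 - 8t_i$ equals the maximum of $\mathfrak{c}^2 + (n+1)$ over characteristic vectors $\mathfrak{c}$ of $Z = -E_8 \oplus -\mathbb{Z}^{n-7}$ with $\langle \mathfrak{c},[\Sigma]\rangle + r^2p = 2i$. Since $[\Sigma] = r\tau + \text{torsion}$, the constraint $\langle\mathfrak{c},[\Sigma]\rangle + r^2p = 2i$ becomes $r\langle\mathfrak{c},\tau\rangle + r^2p = 2i$, i.e. $\langle\mathfrak{c},\tau\rangle = \frac{2i}{r} - rp$. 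So the set $\{\langle\mathfrak{c},\tau\rangle \mid \mathfrak{c}\in\text{something}\}$ is controlled by which relevant $t_i$ vanish, and the vanishing of $t_i$ for $i$ in a certain range is forced by the monotonicity of torsion coefficients together with the fact (already used in the preceding lemma) that $t_{\frac12 r(rp - |\sigma|_1)} = 0$.

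First I would translate the target statement $PI(c(\tau)+2, C(\tau)) \subset \{\langle\mathfrak{c},\tau\rangle \mid \mathfrak{c}\in\text{Short}(-E_8\oplus-\mathbb{Z}^{n-7})\}$ into a statement about characteristic vectors with $\mathfrak{c}^2 = m(Z) - 8 = -(n-7) - 8 + 8 = $ the appropriate value; concretely $\text{Short}$ consists of characteristic vectors $\mathfrak{c}$ with $\mathfrak{c}^2 = -(n+1) + 8 - 16$, wait — one must be careful: $m(-E_8\oplus-\mathbb{Z}^{n-7}) = -(n-7)$ coming from $(0,(1,\dots,1))$, so $\text{short}$ has square $-(n-7)$ and $\text{Short}$ has square $-(n-7) - 8 = -(n+1)$. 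Thus $\mathfrak{c}\in\text{Short}$ corresponds exactly to $\mathfrak{c}^2 + (n+1) = 0$, i.e. by Lemma \ref{new 8V} to relevant indices $i$ with $t_i = 0$. So the claim reduces to: every integer $k \in PI(c(\tau)+2, C(\tau))$ is of the form $\langle\mathfrak{c},\tau\rangle$ for some $\mathfrak{c}\in\text{Short}$, which via Lemma \ref{new 8V} amounts to showing there is a relevant index $i$ with $t_i = 0$ and $\langle\mathfrak{c},\tau\rangle = k$. By the same Cauchy–Schwarz/quadratic manipulation as in Section 3 (the trick $\mathfrak{c}' := \mathfrak{c} + 2m[\Sigma]$ replacing the $\equiv$ by $=$), it is enough to find, for each such $k$ of the right parity and in the right range, a relevant $i \geq \frac12 r(rp-|\sigma|_1)$ with $\frac{2i}{r} - rp = k$; the existence of such $i$ follows since all relevant $t_i$ with $i \geq \frac12 r(rp - |\sigma|_1)$ vanish (monotonicity and $t_{\frac12 r(rp-|\sigma|_1)} = 0$), provided $\frac{k + rp}{2}\cdot r$ lands on a relevant index — which is where the parity of $PI$ and Definition \ref{relevant} must be matched up, exactly as in Case 1 vs. Case 2 of the proof of Lemma \ref{is changemaker}.

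The role of the hypothesis $\langle s,s\rangle \leq -4$ is to pin down $c(\tau)$ and $C(\tau)$, i.e. the endpoints of the interval $PI(c(\tau)+2, C(\tau))$, in terms of $|\sigma|_1$ and the $E_8$-component $s$. Recall $c(\tau) = \max\{\langle\mathfrak{c},\tau\rangle : \mathfrak{c}\in\text{short}\}$ and $\text{short} = \{0\}\oplus\{\pm1\}^{n-7}$, so $c(\tau) = |\sigma|_1$; similarly $C(\tau) = \max\{\langle\mathfrak{c},\tau\rangle : \mathfrak{c}\in\text{Short}\}$, and the condition $\langle s,s\rangle \leq -4$ guarantees that $\text{Short}$ contains vectors of the form $(\mathfrak{d}, (\pm1,\dots,\pm1))$ with $\mathfrak{d}\in -E_8$ characteristic of square $\leq -4 + (\text{something})$ — here one uses the explicit structure of short characteristic vectors of $-E_8$ (which have square $-8$, with the "next shortest" having square $-16$, etc.), so that $C(\tau) = |\sigma|_1 + (\text{contribution from } s)$ can be computed. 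The plan is: (i) compute $c(\tau) = |\sigma|_1$; (ii) using $\langle s,s\rangle\leq -4$, identify $C(\tau)$ and check that the whole progression $PI(|\sigma|_1 + 2, C(\tau))$ consists of values $k$ with $\frac{r(k+rp)}{2}$ relevant and $\geq \frac12 r(rp - |\sigma|_1)$; (iii) invoke Lemma \ref{new 8V} plus the vanishing $t_i = 0$ on that range to produce $\mathfrak{c}\in\text{Short}$ with $\langle\mathfrak{c},\tau\rangle = k$.

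**Main obstacle.** The delicate point is step (ii): matching the arithmetic progression $PI(c(\tau)+2, C(\tau))$ — whose parity is dictated by the $E_8$-lattice structure and the parity of $\langle s,s\rangle$ — against the set of relevant indices from Definition \ref{relevant}, whose spacing is $r$ (or $r$ with an offset $\frac r2$ when $r$ even, $p$ odd). One must verify that $\frac{r(k + rp)}{2}$ is an integer lying in the correct residue class mod $r$ for every $k \in PI(c(\tau)+2, C(\tau))$; this is the analogue of the "$p \equiv |\sigma|_1 \pmod 2$" parity bookkeeping in Section 3, now complicated by the even lattice $E_8$ contributing to $p = -\langle s,s\rangle - \langle\sigma,\sigma\rangle$. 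The hypothesis $\langle s,s\rangle \leq -4$ is presumably exactly what makes $C(\tau)$ large enough that this progression is non-degenerate and the relevant-index count works out; I expect the bulk of the work to be a careful case split (paralleling Cases 1 and 2 of Lemma \ref{is changemaker}) on the parities of $r$ and $p$, combined with an explicit description of $\text{Short}(-E_8)$.
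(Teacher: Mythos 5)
There is a genuine gap, and it originates in a sign/offset slip with Lemma \ref{new 8V}. In this section the identity reads $8-8t_i=\max(\mathfrak{c}^2+(n+1))$, so a vector in $Short$ (with $\mathfrak{c}^2+(n+1)=0$) witnesses $t_i=1$, while a vector in $short$ (with $\mathfrak{c}^2+(n+1)=8$) witnesses $t_i=0$. You assert the opposite, namely that $\mathfrak{c}\in Short$ "corresponds to relevant indices $i$ with $t_i=0$", and you then plan to conclude by citing the vanishing of all relevant $t_i$ with $i\geq\tfrac12 r(rp-|\sigma|_1)$. This cannot work: for $j\in PI(c(\tau)+2,C(\tau))$ one has $j>c(\tau)=|\sigma|_1$, so the associated index $i=\tfrac r2(rp-j)$ is \emph{strictly below} $\tfrac12 r(rp-|\sigma|_1)$, i.e. outside the range where the torsion coefficients are known to vanish — and indeed the point of the lemma is precisely that on this window $t_i$ equals $1$, not $0$. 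The correct mechanism is: set $i_1=\tfrac r2(rp-C(\tau))$ and $i_0=\tfrac r2(rp-c(\tau))$ (the smallest relevant indices with $t\leq 1$ and $t=0$ respectively); for $j$ of the same parity as $C(\tau)$ the index $i=\tfrac r2(rp-j)$ is relevant and satisfies $i_1\leq i<i_0$, whence $t_i=1$ by monotonicity, and then Lemma \ref{new 8V} produces $\mathfrak{c}\in Short$ with $\langle\mathfrak{c},\tau\rangle+rp=2i/r$, so $\langle-\mathfrak{c},\tau\rangle=j$.

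You also misidentify the role of the hypothesis $\langle s,s\rangle\leq-4$. It is not used to compute $C(\tau)$ explicitly from the structure of short characteristic vectors of $-E_8$ (no such computation is needed); it is used, via Caudell's Lemma 3.4, to get the bound $|\langle\mathfrak{c},\tau\rangle|\leq\langle\tau,\tau\rangle=p$ for all $\mathfrak{c}\in Short$, hence $\langle\mathfrak{c},\tau\rangle+rp\geq(r-1)p\geq0$. This is what guarantees that the pairings with $Short$ vectors land at indices inside the window $0\leq i\leq r^2p/2$ where Lemma \ref{new 8V} applies, so that $i_1$ and $i_0$ are legitimately identified with $\tfrac r2(rp-C(\tau))$ and $\tfrac r2(rp-c(\tau))$. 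Your parity bookkeeping (relevant indices spaced by $r$, with the $\tfrac r2$ offset when $r$ is even and $p$ is odd, and $j\equiv C(\tau)$ mod $2$) and the observation $c(\tau)=|\sigma|_1$ are fine, and the $\mathfrak{c}'=\mathfrak{c}+2m[\Sigma]$ trick is already absorbed into Lemma \ref{new 8V}; but without correcting the $t_i=1$ versus $t_i=0$ identification and redirecting the hypothesis to the range bound, the proposed argument does not produce elements of $Short$ at all.
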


\begin{proof}
From \cite[Lemma 3.4]{caudell}, we know that if $\langle s,s\rangle\leq-4$, then for all $\mathfrak{c}\in Short(-E_8\oplus-\mathbb{Z}^{n-7})$, we have $|\langle\mathfrak{c},\tau\rangle|\leq|\langle\tau,\tau\rangle|=p$. Hence, for all $\mathfrak{c}\in Short(-E_8\oplus-\mathbb{Z}^{n-7})$, we have
\begin{equation}\label{eq:12}\tag{12}
\langle\mathfrak{c},\tau\rangle+rp\geq(r-1)p
\end{equation}
\indent Let $i_{0}$ be the smallest relevant index such that $t_{i_{0}}=0$. Let $i_{1}$ is the smallest relevant index relevant index such that $t_{i_{1}}\leq 1$.\\
\\
\underline{Case 1: $r$ is odd or $p$ is even}\\
\indent Let $i':=\tfrac{i}{r}$. Lemma \ref{new 8V} can be rewritten as:\\
\\
\indent For all $0\leq i'\leq\tfrac{rp}{2}$,
$$8-8t_{ri'}=\max_{\substack{\mathfrak{c}\in Char(Z) \\ \langle\mathfrak{c},\tau\rangle+rp=2i'}}(\mathfrak{c}^2+(n+1))$$
Hence, when $0\leq i'\leq\tfrac{rp}{2}$,
$$t_{ri'}=0\Leftrightarrow\exists\mathfrak{c}\in short\text{ such that }\langle\mathfrak{c},\tau\rangle+rp=2i'$$
and 
$$t_{ri'}=1\Leftrightarrow t_{ri'}\neq 0 \text{ and }\exists\mathfrak{c}\in Short\text{ such that }\langle\mathfrak{c},\tau\rangle+rp=2i'$$
By (\ref{eq:12}), we know that
$$i_1=\dfrac{r}{2}\left(rp-C(\tau)\right)\text{ and }i_0=\dfrac{r}{2}\left(rp-c(\tau)\right)$$
\indent Let $j\in PI(c(\tau)+2,C(\tau))$. We want to show that $j\in\{\langle\mathfrak{c},\tau\rangle \mid \mathfrak{c}\in Short(-E_8\oplus-\mathbb{Z}^{n-7})\}$. Let
$$i=\dfrac{r}{2}\left(rp-j\right)$$
Since $j$ has the same parity (odd or even) as $C(\tau)$, $i$ differs from $i_1$ by a multiple of $r$. So, $i$ is relevant. Since $i_1\leq i<i_0$, we must have $t_i=1$. Hence, there exists some $\mathfrak{c}\in Short$ such that
$$\langle\mathfrak{c},\tau\rangle+rp=\dfrac{2i}{r}$$
This implies $\langle-\mathfrak{c},\tau\rangle=j$.\\
\\
\underline{Case 2: $r$ is even and $p$ is odd}\\
\indent Let $i':=\tfrac{i}{r}-\tfrac{1}{2}$. Lemma \ref{new 8V} can be rewritten as:\\
\\
\indent For all $0\leq i'\leq\tfrac{rp}{2}-1$,
$$8-8t_{ri'}=\max_{\substack{\mathfrak{c}\in Char(Z) \\ \langle\mathfrak{c},\tau\rangle+rp=2i'+1}}(\mathfrak{c}^2+(n+1))$$
Since $r$ is even, (\ref{eq:12}) implies that for all $\mathfrak{c}\in Short$, we have
$$\langle\mathfrak{c},\tau\rangle+rp-1\geq 0$$
The rest of the proof of Case 1 applies.
\end{proof}

\begin{Lemma}\label{is E8 changemaker}
$\tau$ is an $E_8$-changemaker.
\end{Lemma}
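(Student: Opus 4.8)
The plan is to verify the two conditions of Definition \ref{e8} in turn. By the remark recorded after Definition \ref{e8}, the first condition is equivalent to the statement that $\sigma$ satisfies the changemaker condition with zero entries allowed, and this is exactly the previous lemma. Thus everything comes down to the inclusion
$$PI(c(\tau)+2,C(\tau))\subset\{\langle\mathfrak{c},\tau\rangle \mid \mathfrak{c}\in Short(-E_8\oplus-\mathbb{Z}^{n-7})\}.$$
Since $E_8$ is an even lattice, $\langle s,s\rangle$ is a non-positive even integer; when $\langle s,s\rangle\leq-4$ this inclusion is precisely Lemma \ref{-4}, so the remaining work is to treat $\langle s,s\rangle\in\{0,-2\}$.

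The input for this is that, just as in Section 3, Lemma \ref{ortho comp} together with the primitivity of $\tau$ identifies $\Lambda(p,q)\cong H_2(P)$ with the orthogonal complement $\tau^{\perp}$ of $\tau$ inside the unimodular lattice $-E_8\oplus-\mathbb{Z}^{n-7}$; since $\Lambda(p,q)$ is a linear lattice it is indecomposable \cite[Cor. 3.5]{realization}, and it has rank $n$ and determinant $p\geq n+1$. If $\langle s,s\rangle=0$ then $s=0$, so $\tau^{\perp}=-E_8\oplus\sigma^{\perp}$ splits off a copy of $-E_8$, which contradicts the indecomposability (and, in the degenerate case $\sigma^{\perp}=0$, the determinant) of the linear lattice $\Lambda(p,q)$. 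Hence $\langle s,s\rangle=0$ does not occur.

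For $\langle s,s\rangle=-2$ I would split on $\sigma_{\max}:=\max_i\sigma_i$. From $p=2+\sum_i\sigma_i^2$ and the fact that $\sigma$ has $n-7$ coordinates one gets: if $\sigma_{\max}\leq1$ then $\sum_i\sigma_i^2=\sum_i\sigma_i\leq n-7$, so $p\leq n-5<n+1$, contradicting $\det\Lambda(p,q)=p\geq n+1$; hence $\sigma_{\max}\geq2$. In that case a direct computation from the explicit form of $Short(-E_8\oplus-\mathbb{Z}^{n-7})$ — whose elements are the vectors $(0,c)$ with $c$ having one entry $\pm3$ and the others $\pm1$, together with the vectors $(2v,c)$ with $v$ a root of $E_8$ and $c\in\{\pm1\}^{n-7}$ — yields $C(\tau)=|\sigma|_1+2\sigma_{\max}$, and then
$$p-C(\tau)=2+\sum_i(\sigma_i^2-\sigma_i)-2\sigma_{\max}\geq 2+\sigma_{\max}(\sigma_{\max}-3)\geq0.$$
Therefore $|\langle\mathfrak{c},\tau\rangle|\leq C(\tau)\leq p$ for every $\mathfrak{c}\in Short(-E_8\oplus-\mathbb{Z}^{n-7})$. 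This is exactly the only consequence of the hypothesis $\langle s,s\rangle\leq-4$ used in the proof of Lemma \ref{-4} (there supplied by \cite[Lemma 3.4]{caudell}), so the argument of that proof applies verbatim and delivers the required inclusion.

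I expect the case $\langle s,s\rangle=-2$ to be the main obstacle: there the clean bound $|\langle\mathfrak{c},\tau\rangle|\leq p$ is no longer free, so one must first discard the exceptional configurations — essentially those in which $\sigma$ is a string of $1$'s — using the rank/determinant bound for linear lattices before the reflection-style argument behind Lemma \ref{-4} can be run, whereas the case $\langle s,s\rangle=0$ is dispatched directly by indecomposability.
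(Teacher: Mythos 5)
Your proposal is correct, and its skeleton (first condition from the previous lemma, case split on $\langle s,s\rangle$, Lemma \ref{-4} for $\langle s,s\rangle\leq-4$) matches the paper; where you diverge is in the cases $\langle s,s\rangle\in\{0,-2\}$. The paper disposes of these by citing the arguments of \cite[Prop. 3.6]{caudell} and \cite[Prop. 4.2]{caudell}, asserting they carry over to the cobordism setting, whereas you replace that citation with a self-contained lattice argument: you rule out $s=0$ because $\tau^{\perp}\cong\Lambda(p,q)$ would then split off a $-E_8$ summand, contradicting indecomposability \cite[Cor. 3.5]{realization} (or, in the degenerate case, $\det\Lambda(p,q)=p\geq n+1$), and for $\langle s,s\rangle=-2$ you first force $\sigma_{\max}\geq 2$ from $p=2+\sum_i\sigma_i^2\geq n+1$, compute $C(\tau)=|\sigma|_1+2\sigma_{\max}$ from the explicit description of $Short(-E_8\oplus-\mathbb{Z}^{n-7})$ (note this equality needs $\sigma_{\max}\geq2$, since the vectors $(2v,c)$ contribute $|\sigma|_1+4$ -- you do have this), and deduce $C(\tau)\leq p$, which is indeed the only consequence of $\langle s,s\rangle\leq-4$ used in the proof of Lemma \ref{-4}, so that argument reruns verbatim. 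Your claims check out: $Short$ consists exactly of the two families you describe, the inequality $p-C(\tau)\geq(\sigma_{\max}-1)(\sigma_{\max}-2)\geq0$ is correct, and $p\geq n+1$ holds because the canonical plumbing has all weights at least $2$; the only cosmetic slip is that primitivity of $\tau$ is not needed for the identification $\Lambda(p,q)\cong\tau^{\perp}$, which follows from Lemma \ref{ortho comp} alone. What the paper's route buys is brevity and alignment with Caudell's original treatment (which handles these cases without excluding them); what yours buys is independence from the internals of \cite{caudell} for the low-square cases, at the cost of verifying the $Short$ computation and the rank/determinant bound, and it proves the slightly stronger statement that $s=0$ never occurs in this setting.
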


\begin{proof}
According to the arguments in the proofs of \cite[Prop. 3.6]{caudell} and \cite[Prop. 4.2]{caudell}, $\tau$ is an $E_8$-changemaker when $\langle s,s\rangle=0,-2$. Since $E_8$ is an even lattice, the only other possibility is $\langle s,s\rangle\leq-4$, which Lemma \ref{-4} says that $\tau$ is an $E_8$-changemaker. Hence, in all cases, $\tau$ is an $E_8$-changemaker.
\end{proof}

Now we can prove Theorem \ref{poincare knot}.

\begin{proof}[Proof of Theorem \ref{poincare knot}]
By Lemma \ref{ortho comp}, Lemma \ref{is E8 changemaker}, and \cite[Th. 1.19]{caudell}, we conclude that when the intersection form of $Z$ is $-E_8\oplus-\mathbb{Z}^{n-7}$, $L(p,q)$ is a positive integer surgery on a knot in the Poincar\'e homology sphere described by Tange. The rest of the proof of Theorem \ref{poincare knot} follows from Lemma \ref{which one is which} and Lemma \ref{intersection form case 1}.
\end{proof}

\bibliographystyle{amsplain}
\bibliography{lensbordant_v2}

\end{document}